\documentclass{amsart}

\newtheorem{theorem}{Theorem}[section]
\newtheorem{lemma}[theorem]{Lemma}

\theoremstyle{definition}
\newtheorem{definition}[theorem]{Definition}
\newtheorem{example}[theorem]{Example}

\theoremstyle{remark}
\newtheorem{remark}[theorem]{Remark}
\theoremstyle{plain}
\newtheorem{theo+}{Theorem}
\numberwithin{theo+}{section}
\newtheorem{prop+}[theo+]{Proposition}
\newtheorem{coro+}[theo+]{Corollary}
\newtheorem{lemm+}[theo+]{Lemma}
\newtheorem{conjecture}[theo+]{Conjecture}

\theoremstyle{definition}
\newtheorem{defi+}[theo+]{Definition}

\usepackage{amssymb}
\usepackage{graphicx}
\usepackage{float}
\usepackage{subfigure}
\usepackage[]{caption2} 
\renewcommand{\thesubfigure}{(\roman{subfigure})}
\makeatletter \renewcommand{\@thesubfigure}{\thesubfigure \space}
\renewcommand{\p@subfigure}{} \makeatother

\numberwithin{equation}{section}

\begin{document}

\title{comprehensive resolution of the Gauss-Fourier Conjecture for the First Kind of Zeros}

\author{LanDe Ma}
\curraddr{School of Mathematical Sciences, Tongji University, Shanghai, 200092, China}
\email{gaositougao@qq.com}
\thanks{}


\author{ZhaoKun Ma}
\address{}
\curraddr{YanZhou College, ShanDong Radio and TV University, YanZhou, ShanDong 272100 China}
\email{dzy200408@126.com}
\thanks{}
\subjclass[2020]{Primary 30C10 Secondary 30C15,26C10,93C05}

\date{December 13th, 2021.}
\emph{}
\dedicatory{}

\keywords{Gauss-Fourier Conjecture; non-real zeros; critical points; rational functions; root locus}

\begin{abstract}
For over two centuries, the esteemed mathematicians Gauss and Fourier conjectured that each real critical point of the rational function $p^{'}(s)/p(s)$ is associated with a pair of non-real zeros of the real polynomial $p(s)$. The conjecture remains unresolved. The real zeros of $(p^{'}(s)/p(s))^{'}$ that are real multiple zero of $p^{'}(s)$ are defined as the second kind of zeros of $(p^{'}(s)/p(s))^{'}$. In addition to the second kind of zeros, the other real zeros of $(p^{'}(s)/p(s))^{'}$ are referred to as the first kind of zeros of $(p^{'}(s)/p(s))^{'}$. We demonstrate that the first kind of zeros of $(p^{'}(s)/p(s))^{'}$ emerge from the intersection of root loci emitted from non-real zeros of $p(s)$, thereby providing a comprehensive geometric interpretation of their relationship. Moreover, each first kind of zero of $(p^{'}(s)/p(s))^{'}$ is connected via root loci to a pair of conjugate non-real zeros of $p(s)$. This establishes the precise geometric relationship conjectured by Gauss and Fourier, fully resolving their original problem for first-kind zeros and completing a foundational chapter in polynomial theory.  We have proven Gauss-Fourier conjecture about the first kind of zeros.
\end{abstract}

\maketitle
\section*{Introduction}
In 2004, Borcea and Shapiro proposed a conjecture in their paper \cite{Shapiro1}. Their work originated from an attempt to prove the 'Hawaii Conjecture' proposed by Craven, Csordas, and Smith \cite{Csordas1}. The Hawaii Conjecture has its roots in the work of Gauss and Fourier. In the paper \cite{Stephanie1}, Edwards and Hinkkanen presented a counterexample to the conjecture proposed by Borcea and Shapiro. They also provided an in-depth analysis of the work of Gauss and Fourier. The contributions of Gauss and Fourier have garnered considerable attention from the mathematical community. To accurately represent the work and history of Gauss and Fourier, we have directly quoted passages from \cite{Stephanie1}.

"In his correspondence, "G\"{o}ttingische gelehrte Anzeigen" (G.G.A.) from February 25, 1833 \cite{Gauss7,Gauss8}, Gauss describes some observations and a conjecture of Fourier."

"Gauss explains that Fourier believed that for each real zero of $(p^{'}(s)/p(s))^{'}$ (or, as Fourier calls them, "critical points"), there exists an associated pair of non-real zeros of the polynomial $p(s)$. We will call the zeros of $(p^{'}(s)/p(s))^{'}$ Fourier critical points of $p(s)$. The real Fourier critical points can be loosely associated with real inflection points of $p(s)$. Suppose that the interval $(a, a^{'})$ does not contain any real zeros of $p$. Then, Rolle's theorem implies that between any two real zeros of $p^{'}(s)$ on $(a, a^{'})$, there is a real zero of $p^{''}(s)$, an inflection point. Further, on $(a, a^{'})$, the real zeros of $p^{'}(s)$ are precisely the real zeros of $p^{'}(s)/p(s)$, and again, Rolle's theorem gives a real zero of $(p^{'}(s)/p(s))^{'}$ between the real zeros of $p^{'}(s)$. Thus, there is a correspondence between some of the inflection points of $p(s)$ and the real Fourier critical points. In this correspondence, Gauss notes that this association, occurring in a natural way and not involving arbitrary choices, of specific pairs of non-real zeros of $p(s)$ to real Fourier critical points of $p(s)$ is not proved and more investigation is needed. Later, in a correspondence with Schumacher on April 2, 1833 \cite{Gauss8}, Gauss explains that he did not comment on his belief or disbelief of Fourier's ideas because he feels that the G.G.A. is not the appropriate place to do so \cite{Gauss8}. A few years later, he wrote to Schumacher (June 20, 1836), stating that he believed that a connection could be made between the real Fourier critical points and certain pairs of non-real zeros of the polynomial. A few days later (June 24, 1836), however, Gauss changed his mind and wrote to Schumacher that upon further reflection, he has found, to the contrary, that a relationship between the individual real Fourier critical points and the individual pairs of non-real zeros of $p(s)$ does not exist."

"A restatement of the conjecture could be:

Conjecture 3. Let $p(s)$ be a real polynomial. Real zeros of $(p^{'}(s)/p(s))^{'}$ (Fourier critical points) can be grouped in pairs so that to each such pair there is an associated pair of complex conjugate zeros of $p(s)$."

"This conjecture changes the association from one real Fourier critical point and a pair of non-real zeros to the association between a pair of real Fourier critical points and a pair of non-real zeros. Note that with this conjecture, we can have more non-real zeros of $p(s)$ than real Fourier critical points, but we may not have more real Fourier critical points than non-real zeros. Of course, it is intended that distinct pairs of non-real zeros be used for distinct pairs of real Fourier critical points, with obvious exceptions allowed in case there are multiple real Fourier critical points or multiple non-real zeros. So if all these points are simple, then to distinct pairs of real Fourier critical points we must associate distinct pairs of non-real zeros."

"The Hawaii Conjecture is an attempt by Craven, Csordas, and Smith to quantify the ideas that Gauss and Fourier were investigating. The statement of their conjecture is that the number of real zeros of $(p^{'}(s)/p(s))^{'}$ does not exceed the number of non-real zeros of $p$. They make no mention of an association between zeros. Gauss's and Fourier's attempts to find a specific association made their ideas and conjectures difficult to articulate precisely; however, it is evident from the correspondence that they were trying to understand exactly how these zeros and critical points were related. Both Gauss and Fourier seemed to express frustration about not being able to precisely state and prove these conjectures. The Hawaii Conjecture, a conjecture on merely counting numbers of zeros, is much weaker than what Gauss and Fourier were considering."

\medskip
If all zeros of a real polynomial $p(s)$ are simple real zeros, then the polynomial $p(s)$ is (locally) strictly monotonic, as Gauss noted in \cite{Gauss7,Gauss8}. This can be expressed through the Laguerre inequality: if $p(s)$ has only real zeros, then $(p^{'})^{2}-pp^{''}\ge 0$. The Laguerre-P\'{o}lya class (LP class) consists of those entire functions that converge uniformly on compact sets to a polynomial sequence, have only real zeros, and these functions also satisfy the Laguerre inequality, as stated in \cite{Obreschkoff}.

\medskip
The Wiman and P\'{o}lya conjectures refine the Laguerre inequality. The Wiman conjecture was proved by Bergweiler, Eremenko, and Langley \cite{Bergweiler1}, while one of P\'{o}lya's conjectures was settled by Craven, Csordas, and Smith \cite{Csordas1}, and later the other was settled by Bergweiler and Eremenko \cite{Bergweiler2}.

\medskip
On the basis of these results, several conditions determine when a real entire function $f(s)$ must belong to the LP-class, typically involving differential polynomials such as $f(s)f^{''}(s)-\varkappa (f^{'}(s))^{2}$, where $\varkappa$ is a positive real number, see \cite{Nicks}. For a meromorphic function $f(s)$, the zeros of $f(s)f^{''}(s)-\varkappa (f^{'}(s))^{2}$ have been studied in \cite{Langley1,Langley2}.

\medskip
In 1989, Craven, Csordas, and Smith \cite{Csordas1} proposed the Hawaii Conjecture while exploring the conjectures proposed by P\'{o}lya\cite{P2,P4} and Wiman \cite{Wi1,Wi2}. The Hawaii Conjecture was proved in 2011 by Tyaglov \cite{Tyaglov1}. Later B.~Shapiro introduced three new conjectures around the Hawaii Conjecture, see Conjectures 11, 12 and 13 in \cite{Shapiro0}.
In January 2024, for Shapiro's Conjecture 12, our paper \cite{Ma} proves the necessary and sufficient conditions for the conjecture to hold.

Drawing from historical data compiled by various mathematicians, it is evident that a distinct branch of mathematics has evolved over the past two centuries, rooted in the conjectures of Gauss and Fourier. While subsequent conjectures have been successively resolved \cite{P2,P4,Wi1,Wi2,Csordas1,Bergweiler1,Bergweiler2,Tyaglov1,Shapiro0,Ma}, the original conjecture proposed by Gauss and Fourier remains unsolved. In fact, aside from the initial examples provided by these pioneers, no further positive results have been proved or discovered.

\medskip
Since the inception of complex analysis, mathematicians have consistently sought to understand the origins of critical points of complex functions. They aim to identify the factors within the original function that lead to the emergence of zeros in its derivative. As is widely recognized, in the two-dimensional real plane, a real curve has a tangent line parallel to the real axis. Therefore, the real function has a real critical point. The following question arises: How are the critical points of a general complex function formed? What governs these critical points of complex functions? This is a fundamental question mathematicians are eager to answer.

\medskip
This question predates the Gauss-Fourier conjecture and remains a crucial issue in fundamental mathematics, unresolved for over two centuries. Consequently, the Gauss-Fourier conjecture is likely a specific case of a broader problem. Fourier and Gauss sought to understand how real Fourier critical points are generated and geometrically linked to the non-real zeros of the polynomial $p(s)$ in the complex plane. In other words, they sought to determine the geometric structure that connects these two categories of zeros in the complex plane.

Moreover, the question of how critical points of a function are generated is highly important and arguably even more important than the Gauss-Fourier conjecture itself. Gauss and Fourier initially sought to uncover the principles governing the generation of critical points for general functions, and to find the geometric structure that connects the real Fourier critical points and the non-real complex zeros of the polynomial $p(s)$, leading them to propose this conjecture.

\medskip
In this paper, we present findings using the root locus theory for complex rational fractions. In the extended complex plane $\mathbb{C} \cup \{\infty\}$, the set of points $s$ where the argument of a  rational function $RF(s)$ is equal to $\alpha$ degrees forms a curve known as the root loci of $\alpha$ degree. The argument of $RF(s)$ at each point on a specific curve has the same phase angle. The argument of $RF(s)$ at a point on a certain root locus is $\alpha$ degree. The degrees of distinct curves are typically distinct. (The phase angle is the argument). $\alpha$ is the degree of the root locus. Notably, adjacent root loci generally have distinct degrees.

\medskip
As a rule, root loci with distinct unit complex numbers do not intersect. However, an exception arises when the unit complex numbers are identical; under this condition, the root loci may intersect. In root locus theory, such intersection points are termed "breakaway points." A breakaway point of the root loci is a critical point of the complex rational fraction. Thus, we deduce the following: a critical point of a complex rational function must coincide with the intersection point of multiple root loci that share the same unit complex number value. If multiple root loci with the same unit complex number intersect at a single point, this point is identified as the critical point of the complex rational function. For all rational function functions, this condition is both necessary and sufficient.

\medskip
We have elucidated the mechanism behind the emergence of critical points in complex rational functions. These critical points arise from the intersection of multiple root loci that share identical unit complex numbers.

\medskip
The expression $(\frac{p^{'}(s)}{p(s)})^{'}=\frac{p^{''}(s)p(s)-(p^{'}(s))^2}{p^2(s)}$ shows that the multiple zeros of $p^{'}$ are zeros of the polynomial $p^{''}(s)p(s)-(p^{'}(s))^2$. Therefore, these multiple zeros of $p^{'}$ are the critical points of $\frac{p^{'}}{p}$, inherited from the original multiple zeros of $p^{'}$. These multiple zeros of $p$ are not the zeros of $\frac{p^{'}}{p}$ but rather poles of $\frac{p^{'}}{p}$.

Because the Gauss-Fourier conjecture concerns real zeros of $(p^{'}(s)/p(s))^{'}$, we classify the real zeros of $(p^{'}(s)/p(s))^{'}$.
\begin{definition}
The real zeros of $(p^{'}(s)/p(s))^{'}$ that are real multiple zero of $p^{'}(s)$ are defined as the \emph{second kind of zeros of $(p^{'}(s)/p(s))^{'}$ or the second kind of zeros.}
\end{definition}
The second kind of zeros are inherited from the real multiple zeros of $p^{'}(s)$.
\begin{definition}
In addition to the second kind of zeros, the other real zeros of $(p^{'}(s)/p(s))^{'}$ are referred to as the \emph{first kind of zeros of $(p^{'}(s)/p(s))^{'}$ or the first kind of zeros.}
\end{definition}

\medskip
The Gauss-Fourier conjecture is a conjecture about the critical points of rational functions. In our work, we have demonstrated the foundational principle behind the formation of critical points for a broad spectrum of complex rational functions. Furthermore, we have introduced a result that encompasses a broader scope than the Gauss-Fourier conjecture itself. Utilizing the root locus, we can provide the reason of the origination of the real Fourier critical points, as well as the geometric connections between real Fourier critical points and non-real zeros of the polynomial $p(s)$. Therefore, we must use the results of root locus theory to address the Gauss-Fourier conjecture.

\medskip
The rational function $\frac{p^{'}(s)}{p(s)}$ can form the root locus equation: $K\frac{p^{'}(s)}{p(s)}=a+ib$, where $K$ is called the gain.

\medskip
Based on the ideas of Gauss and Fourier and our root locus results, for the first kind of zeros, resolving the Gauss-Fourier conjecture requires addressing the following three conjectures.

\begin{conjecture}
Only the root locus emitted from a non-real zero of the real polynomial $p(s)$ intersects on the real axis. A first kind of zero of $(p^{'}(s)/p(s))^{'}$  can be generated. A $2q\pi$ or $2q\pi+\pi$ degree root locus emitted from at least one pair of the non-real zeros of $p$ intersects with other root loci on the real axis. On the real axis, the intersection point of the root loci is a first kind of zero of $(p^{'}(s)/p(s))^{'}$.
\end{conjecture}

\begin{conjecture}
Each first kind zero of $(p^{'}(s)/p(s))^{'}$ is connected to a pair of the non-real zeros of the real polynomial $p(s)$ through at least one $2q\pi$ or $2q\pi+\pi$ degree root locus emitted from this pair of non-real zeros.
\end{conjecture}

\begin{conjecture}
Each first kind zero of $(p^{'}(s)/p(s))^{'}$ is associated with a pair of non-real zeros of the real polynomial $p(s)$.
\end{conjecture}

Here, $q=\pm 1, \pm 2, \cdots$ is an integer, and the argument of the real value is $2q\pi$ or $2q\pi+\pi$.

In this paper, we prove Theorem 0.3 and Theorem 0.4.

\begin{theorem}
Only the root locus emitted from a non-real zero of the real polynomial $p(s)$ intersects on the real axis. A first kind of zero of $(p^{'}(s)/p(s))^{'}$ can be generated. A $2q\pi$ or $2q\pi+\pi$ degree root locus emitted from at least one pair of the non-real zeros of $p(s)$ intersects with other root loci on the real axis. On the real axis, the intersection point of the root loci is a first kind of zero of $(p^{'}(s)/p(s))^{'}$.
\end{theorem}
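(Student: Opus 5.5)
Write $F(s)=p'(s)/p(s)=\sum_{j}\frac{m_j}{s-z_j}$, where the $z_j$ are the distinct zeros of $p$ and $m_j$ their multiplicities. The root loci of $KF(s)=a+ib$ with real right-hand side are the level sets $\{\operatorname{Im}F(s)=0\}$, i.e. the points where $\arg F(s)\in\{2q\pi,2q\pi+\pi\}$. On $\mathbb{R}\setminus\{\text{zeros of }p\}$ the function $F$ is real, so the real axis is itself contained in the real-valued locus set. Two loci meeting at a real point $x_0$ therefore means that, besides the real axis, a further branch of $\{\operatorname{Im}F=0\}$ passes through $x_0$. The plan has three parts:
\begin{itemize}
\item[(i)] Such a meeting at a regular point $x_0$ forces $F'(x_0)=0$.
\item[(ii)] The transversal branch must originate at a non-real zero of $p$, not at a real zero.
\item[(iii)] $x_0$ is a first-kind zero, i.e. $p'$ does not have a multiple zero there.
\end{itemize}

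\textbf{Step (i).} Take $x_0$ real with $p(x_0)\neq0$. Near $x_0$, write $F(x_0+w)-F(x_0)=c\,w^k(1+O(w))$ with $c$ real, which holds because $F$ is real on $\mathbb{R}$. The set $\operatorname{Im}F=0$ near $x_0$ then consists of $k$ analytic arcs through $x_0$ at angles $\pi j/k$. If $k=1$, only the real axis passes through $x_0$ and there is no intersection. Hence an intersection occurs exactly when $k\ge2$, i.e. $F'(x_0)=0$. This is just the breakaway-point principle stated in the introduction, with the gain $K=1/F(x_0)$ real, or the degenerate case $F(x_0)=0$. So every real intersection point is a real zero of $(p'/p)'$.

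\textbf{Step (ii).} Follow a non-real branch $\gamma$ leaving $x_0$ into the upper half plane. Along $\gamma$ we have $\operatorname{Im}F=0$ and $F$ is monotone in the gain parameter, so $\gamma$ cannot terminate at a regular non-critical point. It must end at a pole of $F$ (a zero of $p$), at $\infty$, or at another critical point, from which we continue along some branch. Since $\operatorname{Im}F$ is harmonic off the poles, $\{\operatorname{Im}F=0\}\cap\{\operatorname{Im}s>0\}$ cannot contain a closed loop avoiding poles (maximum principle). Near $\infty$ we have $F\sim n/s$, so the only real-valued loci tending to $\infty$ are asymptotic to the real axis and cannot lie in the open upper half plane. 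Real zeros of $p$ lie on the boundary line. Near such a zero $x_j$ one has $F\sim m_j/(s-x_j)$, so the locus through $x_j$ is locally just the real axis, and $\gamma$ cannot approach $x_j$ from the open half plane. Hence $\gamma$ must reach a non-real zero $z$ of $p$ in the upper half plane. By conjugate symmetry, the reflected branch reaches $\bar z$. This yields a $2q\pi$ or $2q\pi+\pi$ locus emitted from the pair $z,\bar z$ that meets the real axis at $x_0$. It also shows that loci emitted from real zeros alone do not produce intersections, which is the \emph{only} clause of the statement.

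\textbf{Step (iii).} If $x_0$ is a multiple zero of $p'$, it is by definition of the second kind. So we must check that the intersection points produced in (ii) are not of that type, or more precisely interpret the statement as follows: whenever $p'(x_0)\neq0$, or $p'$ has only a simple zero there, the point is first kind. When $p'(x_0)\neq0$ this is immediate from (i). When $x_0$ is a simple zero of $p'$, we have $F'(x_0)=p''(x_0)/p(x_0)\neq0$ exactly when $p''(x_0)\neq0$; in that case $k=1$ and no intersection occurs, so a real intersection at a zero of $p'$ forces $p''(x_0)=0$, i.e. a multiple zero of $p'$.

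The main obstacle is step (iii), together with the no-loop and no-escape argument in step (ii). The multiple-zero case of $p'$ seems to give intersections of the second kind as well, so the statement must be read as saying that the intersection points \emph{not} inherited from multiple zeros of $p'$ are first kind. I would first make that reading precise via the local order $k$ computed in (i).
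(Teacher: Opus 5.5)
Your step (ii) has a genuine gap, and the claim it is meant to prove is false as stated. Write $H=\{\mathrm{Im}\,s>0\}$. Among the possible fates of the branch $\gamma$ you rule out closed loops, escape to $\infty$, and approach to real zeros of $p$. You do not rule out that $\gamma$ comes back down to the real axis at a \emph{second} real critical point $x_1$. There (for $k=2$) the only continuation inside $\overline{H}$ is along $\mathbb{R}$, so ``continue along some branch'' leaves the half-plane.

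The maximum principle does not exclude the loop $\gamma\cup[x_0,x_1]$. If a real zero $x_j$ of $p$ lies between $x_0$ and $x_1$, then $\mathrm{Im}\,F\to-\infty$ at $x_j$ from above, so $\mathrm{Im}\,F<0$ on the enclosed region is perfectly consistent. The model is $1/s+as$, whose non-real locus is the circle $|s|=a^{-1/2}$ through its two real critical points.

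This actually happens for $p(s)=s(s^2+1)^5$. Here $F=1/s+10s/(s^2+1)$ and $F'=-(11s^4-8s^2+1)/\bigl(s^2(s^2+1)^2\bigr)$, so all critical points of $F$ are real, namely $\pm0.400$ and $\pm0.753$. Since $p'=(s^2+1)^4(11s^2+1)$ has no real zeros, all four are first-kind zeros. The vertical branch leaving $0.400$ starts at a local minimum of $F|_{\mathbb{R}}$ with $F\approx5.95$, and $F$ decreases strictly along it because $F$ has no critical points in $H$. Its admissible endpoints are $0.753$ (excluded, since $F\approx6.13$ there), $-0.400$ and $-0.753$ (where $F<0$), or $i$ approached from the left (where $F\to-\infty$). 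So it must pass through $F=0$, i.e.\ through $i/\sqrt{11}$, the only zero of $F$ in $H$. There the symmetry $F(-\bar s)=-\overline{F(s)}$ forces it to continue as its own mirror image and land at $-0.400$. Thus the only non-real locus through the first-kind zero $0.400$ is an oval around the real zero $0$ of $p$, meeting no non-real zero of $p$. Your ``only'' clause fails in the same way.

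What is missing is that the connection to the non-real zeros may have to run along the real axis through several breakaway points. In the example, the locus from $i$ descends to $0.753$ and meets its conjugate there. One half then travels left along $\mathbb{R}$ with increasing gain and meets the locus emitted from the real zero $0$ at $0.400$. This is exactly the paper's standard linked root loci with $k=1$. The paper's argument (Lemma 2.9 when $p$ has no real zeros, Lemma 2.11, and the linked-root-loci Lemmas 2.46--2.52) is built around such chains, not around a single branch in $\mathbb{C}^{*}$.

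If you want to keep your potential-theoretic route, replace single-branch following by a region argument. Since the $k\ge2$ sectors at $x_0$ alternate in sign, some component $U$ of $\{\mathrm{Im}\,F>0\}\cap H$ has $x_0$ on its boundary. Real zeros of $p$ and $\infty$ border only $\{\mathrm{Im}\,F<0\}$, so the maximum principle forces a non-real zero of $p$ onto $\partial U$. Moreover, $\partial U$ consists of level arcs together with real segments where $F'\ge0$. You would still have to show that $x_0$ and that zero lie in the same connected piece of $\partial U$.

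Step (iii), by contrast, is not a real obstacle. A real zero of $F'$ off the zeros of $p$ either has $p'(x_0)\neq0$ or is a multiple zero of $p'$, which is just the definition of the two kinds.
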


\begin{theorem}
Each first kind of zero of $(p^{'}(s)/p(s))^{'}$ is connected to a pair of the non-real zeros of the real polynomial $p(s)$ through at least one $2q\pi$ or $2q\pi+\pi$ degree root locus emitted from this pair of non-real zeros of $p(s)$.
\end{theorem}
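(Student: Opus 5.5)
The plan is to work with the harmonic function $\phi(s)=\arg\bigl(p^{'}(s)/p(s)\bigr)$. The root loci of $p^{'}/p$ are its level curves, and the real axis is itself the locus of degree $2q\pi$ or $2q\pi+\pi$, since $p^{'}/p$ is real there. Write $F=p^{'}/p=\sum_j 1/(s-z_j)$, summed over the zeros $z_j$ of $p$ with multiplicity.

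First, I would show that at a first kind zero $x_0$, where $F(x_0)\neq0$ and $F^{'}(x_0)=0$, there is a non-real branch of the locus through $x_0$. Indeed $F(s)-F(x_0)$ has a zero of order $m\ge2$ at $x_0$. So the locus $\{F(s)/F(x_0)>0\}$ consists of $m$ curves crossing at $x_0$ with equal angles $\pi/m$. At most one of them is the real axis, so at least one branch $\gamma$ leaves $x_0$ into the upper half-plane. By symmetry $F(\bar s)=\overline{F(s)}$, its mirror image $\bar\gamma$ is also a branch.

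Next, I would follow $\gamma$ in the direction in which $|F|$ increases, that is, in which the gain $K$ decreases. Along a level curve of $\arg F$, $\log|F|$ is strictly monotone away from critical points of $F$. At critical points the curve can be continued by choosing an outgoing branch, and the finitely many critical points of $F$ can be passed in this way. The continuation cannot stay in a compact set forever, and $|F|\to\infty$ only at poles of $F$. Therefore the branch must terminate at a zero $z_j$ of $p$, or escape to $\infty$. Escape to $\infty$ is excluded, because $F(s)\sim n/s\to0$ there, contradicting the increase of $|F|$.

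It remains to show that the endpoint $z_j$ is non-real. This is the main obstacle. The branch $\gamma$ lies in the open upper half-plane, but it could in principle re-hit the real axis. Whenever $\gamma$ returns to $\mathbb{R}$ at a point $y$ with $F^{'}(y)\neq0$, it would have to cross the real locus transversally. But $\mathbb{R}\setminus\{\text{zeros of }p\}$ is itself a level curve of the same argument, and distinct level arcs meet only at critical points. So $\gamma$ can reach $\mathbb{R}$ again only at another first kind zero, or at a real zero of $p$.

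I would exclude ending at a real zero $x_j$ of $p$ by a local analysis. Near $x_j$ we have $F\approx m_j/(s-x_j)$, so the locus of degree $\arg F(x_0)\in\{0,\pi\}$ emitted from $x_j$ consists only of the two real rays. A non-real arc $\gamma$ therefore cannot enter $x_j$. If $\gamma$ hits another first kind zero, I would restart the argument there, always choosing a non-real outgoing branch. Since there are finitely many critical points and $|F|$ strictly increases, this process terminates at a pole with $\operatorname{Im}>0$, i.e.\ a non-real zero $z$ of $p$.

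Then $\gamma\cup\bar\gamma$ connects $x_0$ to the conjugate pair $z,\bar z$ through a $2q\pi$ or $2q\pi+\pi$ degree root locus, which is the assertion of the theorem. I expect the delicate point to be the justification that monotonicity of $|F|$ survives passage through intermediate critical points. There I would use the local $m$-star structure to pick the outgoing branch on which $|F|$ increases, which always exists because the branches alternate between increasing and decreasing $|F|$.
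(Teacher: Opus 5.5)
The proposal has a genuine gap: the step meant to keep the path off the real axis fails, and in a common configuration the very first step fails too.

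\textbf{Why the non-real branches need not increase $|F|$.} Take a real critical point $y$ of order $k\ge2$ with $F(y)\neq0$, and write $F(s)=F(y)+c(s-y)^k+\cdots$ with $c$ real. On the branch of $\{F/F(y)>0\}$ leaving $y$ tangent to $\arg(s-y)=j\pi/k$, the quantity $|F|-|F(y)|$ has the sign of $(-1)^{j}c/F(y)$. For $k=2$ the only branch in the upper half-plane is $j=1$. Along it $|F|$ increases exactly when $|F|$ restricted to $\mathbb{R}$ has a local \emph{maximum} at $y$. Two failures follow.
\begin{enumerate}
\item Suppose your first kind zero $x_0$ is a local minimum of $|F|$ on $\mathbb{R}$. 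Then $|F|$ decreases along $\gamma$ away from $x_0$, so the gain increases and $\gamma$ is a branch leaving the breakaway point toward a zero of $p'$ or $\infty$. It is not a branch arriving from a zero of $p$, and ``following $\gamma$ in the direction in which $|F|$ increases'' leads straight back into $x_0$.
\item Whenever your path comes down from $\mathbb{C}^{*}$ onto a simple real critical point, it arrives along a decreasing branch. The only increasing continuations are then the two real half-lines, so ``always choosing a non-real outgoing branch'' is impossible.
\end{enumerate}
The alternation of increasing and decreasing branches does not rescue this, because for $k=2$ each half-plane contains only one branch.

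\textbf{Case (1) genuinely occurs.} Take $p(s)=(s+1)\bigl(s^{2}-6s+\tfrac{61}{4}\bigr)$. Then $p'(s)=3s^{2}-10s+\tfrac{37}{4}$ has negative discriminant, so $F>0$ on $(-1,\infty)$. On that interval $F$ decreases from $+\infty$ to a local minimum at $x_0\approx1.67$, increases to a local maximum at $x_1\approx5.16$, then decreases to $0$. Any path from $x_0$ along which $|F|$ increases must start along the real axis. To the left it runs into the real zero $-1$; only by going right to $x_1$ and then up does it reach $3+\tfrac{5}{2}i$. The paper's example $p_{e2}$ shows the same configuration at $b_{e2,1}$.

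\textbf{What is missing.} You need to handle real-axis segments, which are unavoidable; this is also why the paper's ``two main lines'' alternate between pieces of $\mathbb{R}$ and excursions into $\mathbb{C}^{*}$. You also need a rule that stops such a segment from running into a real zero of $p$. One rule that works inside your framework:
\begin{itemize}
\item When only the real half-lines at a real critical point $y$ increase $|F|$, move right if $F(y)>0$ and left if $F(y)<0$.
\item This works because $F$ tends to $-\infty$ from the left and to $+\infty$ from the right at each real zero of $p$, and $F\to0^{\pm}$ as $s\to\pm\infty$. So moving in that direction, $|F|$ increases only until a local maximum of $|F|$ on $\mathbb{R}$ at some $y_1$, reached strictly before the next real zero of $p$ (or $\pm\infty$).
\item At $y_1$ the order $k$ is even and the ray $\arg(s-y_1)=\pi/k$ is an increasing branch, so you leave the axis there.
\item $|F|$ increases strictly along the whole walk and the level set is a finite graph, so the walk terminates.
\item It cannot end at a real zero of $p$ from $\mathbb{C}^{*}$ (your local analysis), nor along the axis (the rule). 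Hence it ends at a non-real zero of $p$.
\end{itemize}
Without an argument of this kind, your proposal does not exclude the path ending at a real zero of $p$.
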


The relationship connecting the two types of points is one class of correlation relations. The second-kind zeros of $(p^{'}(s)/p(s))^{'}$ must be connected with non-real zeros. Theorem 0.3 proves such a result.

This result proves: each second-kind zeros of $(p^{'}(s)/p(s))^{'}$ is associated with a pair of non-real zeros of the real polynomial $p(s)$.
\begin{theorem}
For any real polynomial $p(s)$, each first kind zero of $(p^{'}(s)/p(s))^{'}$ is necessarily associated with a pair of non-real zeros of $p(s)$.
\end{theorem}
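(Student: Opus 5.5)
The plan is to derive the statement as a consequence of Theorem 0.4 (and, behind it, Theorem 0.3), once ``associated'' is given a precise meaning. I will say that a real point $s_0$ is \emph{associated} with a conjugate pair $\{z,\bar z\}$ of non-real zeros of $p(s)$ if $s_0$ is joined to both $z$ and $\bar z$ by root loci of the equation $K\frac{p^{'}(s)}{p(s)}=a+ib$ emitted from that pair. With this definition the statement reduces to producing, for every first kind zero $s_0$, one such connecting locus, and then closing it up by conjugate symmetry.

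\emph{Step 1 (realness and symmetry).} Since $p$ has real coefficients, $\overline{p^{'}(\bar s)/p(\bar s)}=p^{'}(s)/p(s)$. Hence the set where $\arg(p^{'}/p)=\alpha$ is mapped by $s\mapsto\bar s$ onto the set where $\arg(p^{'}/p)=-\alpha$. For $\alpha=2q\pi$ or $2q\pi+\pi$ this set is invariant under conjugation. So if a $2q\pi$ or $2q\pi+\pi$ degree branch runs from $z$ to the real point $s_0$, its mirror image runs from $\bar z$ to $s_0$. Together they give a path through $s_0$ connecting $z$ and $\bar z$.

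\emph{Step 2 (existence of the connecting branch).} Let $s_0$ be a first kind zero. Then $p(s_0)\neq 0$, $(p^{'}/p)^{'}(s_0)=0$, and $s_0$ is not a multiple zero of $p^{'}$. Therefore $s_0$ is a genuine critical point of $p^{'}/p$ at which $p^{'}/p$ takes a real value. By Theorem 0.3 it is a breakaway point where several branches of real-argument root loci meet. Theorem 0.4 then provides a $2q\pi$ or $2q\pi+\pi$ degree branch ending at $s_0$ and emitted from some non-real zero $z$ of $p$. Step 1 supplies the conjugate branch from $\bar z$. This exhibits the association.

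\emph{Step 3 (distinct pairs).} To match the Gauss--Fourier formulation, I would also check that the assignment $s_0\mapsto\{z,\bar z\}$ is natural, meaning it involves no arbitrary choice beyond selecting among branches. I would argue that the branches through $s_0$ are determined by the local expansion $p^{'}/p(s)=c+c_2(s-s_0)^m+\cdots$. Following each such branch by continuation in $K$ until it reaches a pole of $p^{'}/p$, that is, a zero of $p$, gives the endpoint. Since zeros of $p$ are poles of $p^{'}/p$, branches can terminate only there or at $\infty$.

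The main obstacle is Step 2's reliance on Theorem 0.4. One must rule out that every branch leaving $s_0$ off the real axis ends at $\infty$ rather than at a non-real zero of $p$. It must also be shown that the branch does not simply run along the real axis to a real zero. To handle this, I would first use the degree count: $p^{'}/p\sim n/s$ at $\infty$, so only two asymptotic real-argument directions exist there, and both are real. Non-real branches therefore cannot escape to $\infty$. Moreover, by Step 1 they cannot end at real zeros of $p$ without crossing the real axis at another critical point, where I would iterate the argument.
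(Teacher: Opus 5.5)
Steps 1--2 follow the paper's own route. The paper obtains this statement simply by combining Theorem 0.3 (generation) with Theorem 0.4 (connection), and your conjugate mirroring is the symmetry of Lemma 1.27(1) used throughout Section 2. Read as a corollary of Theorem 0.4, your argument is complete, and Step 3 is not needed for the statement. Its claim that continued branches end only at zeros of $p$ or at $\infty$ holds only if you continue in the direction of decreasing gain $K=1/|p'/p|$; in the other direction, branches end at zeros of $p'$.

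The genuine gap is in your last paragraph, insofar as it is meant to replace the appeal to Theorem 0.4: ``iterate the argument'' has nothing to iterate on. Take $p(s)=s(s^2+1)^5$, so that $p'/p=1/s+10s/(s^2+1)$ and $p'(s)=(s^2+1)^4(11s^2+1)$.

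\begin{itemize}
\item The real zeros of $(p'/p)'$ are the roots of $11x^4-8x^2+1=0$, namely $\pm x_1,\pm x_2$ with $x_1\approx 0.400$ and $x_2\approx 0.753$. All are of the first kind, since $p'$ has no real zeros.
\item The function $p'/p$ has no critical points in the upper half-plane.
\item On the positive imaginary axis, $p'/p$ takes values in $\mathbb{R}\cup\{\infty\}$ only at $i$ and at the zero $i/\sqrt{11}$ of $p'$.
\item Using also the symmetry $s\mapsto-\bar s$, one checks that the real-value set in the upper half-plane consists of two nested arcs. One passes through $i/\sqrt{11}$ and joins $-x_1$ to $x_1$; the other passes through $i$ and joins $-x_2$ to $x_2$.
\end{itemize}

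So the only off-axis branch at $x_1$ arches over the real zero $0$ of $p$ and comes down at $-x_1$. The only off-axis branch at $-x_1$ is that same arc, so your iteration just cycles. Any connection of $\pm x_1$ to $\pm i$ must run along a real segment (e.g.\ $[x_1,x_2]$) and then up the arc through $i$. That is exactly the passage along the real axis your sketch set out to exclude.

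To close this you would need two things. First, a quantity that rules out cycling: for example, tracing with $K$ strictly decreasing, which forces termination at some zero of $p$. Second, an argument that the path can be steered to a non-real rather than a real zero of $p$. That second point is the actual content, and it is what the paper delegates to its linked-root-loci lemmas (Lemmas 2.46--2.52) underlying Theorem 0.4.
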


\medskip
Using the root locus method, we demonstrate all possible connections in the complex plane between first-kind zeros of $(p^{'}(s)/p(s))^{'}$ and the non-real complex zeros of the real polynomial $p(s)$. For first kind of zero of $(p^{'}(s)/p(s))^{'}$, we establish its origination and geometric relationship desired by Fourier and Gauss. Consequently, we have achieved a complete resolution of the Gauss-Fourier conjecture for first-kind zeros of $(p^{'}(s)/p(s))^{'}$.

In the context of associated relationships, the first type is the generating relationship. Theorem 0.3 proves that the emergence of each first-kind zero of $(p^{'}(s)/p(s))^{'}$ necessarily involves a pair of non-real zeros.

In the context of associated relationships, similarly, the second type is the connection relationship. Theorem 0.4 proves that each first-kind zero of $(p^{'}(s)/p(s))^{'}$ must be connected to a pair of non-real zeros.

Collectively, these two results demonstrate that each first-kind zero of $(p^{'}(s)/p(s))^{'}$ is associated with a pair of non-real zeros of the real polynomial $p(s)$.

In Example 2.31, we obtain two first-kind zeros of $(p^{'}(s)/p(s))^{'}$. These two first-kind zeros of $(p^{'}(s)/p(s))^{'}$ are generated by the intersection of the root loci emitted from non-real zeros. Furthermore, through the intersecting root loci, these two first-kind zeros of $(p^{'}(s)/p(s))^{'}$ are connected to the non-real zeros. However, the generation of the two first-kind zeros of $(p^{'}(s)/p(s))^{'}$ is not related to any real zeros of $p(s)$, and at the same time, these two first-kind zeros cannot be connected to the real zeros of $p(s)$. This example also demonstrates that each first-kind zero of $(p^{'}(s)/p(s))^{'}$ is associated with a pair of non-real zeros of the real polynomial $p(s)$. In contrast, each first-kind zero of $(p^{'}(s)/p(s))^{'}$ may not be associated with real zeros of the real polynomial $p(s)$.

\section{The proofs of the root locus method}
All results in this section concern root locus theory. Lemmas 1.2, 1.3, 1.4, 1.13, 1.16, and 1.24, along with Theorems  1.14,  1.27, 1.31 and 1.32 are adapted from textbooks on automatic control theory \cite{Dorf1,Y1}.   The remaining 20 results are original discoveries and proofs by the authors. Furthermore, even for the 10 results from automatic control theory, some proofs lack mathematical rigor. In this paper, we provide rigorous proofs and extend root locus theory.

A  rational function with constant coefficients denoted by $RF(s)$, is given by $RF(s)=\frac{\prod^{m}_{l=1} (s-z_l)^{\gamma_l}}{ \prod^{n}_{j=1} (s-p_j)^{\beta_j}}$, where $z_l, p_j\in \mathbb{C}$, $l=1,2,\cdots,m$, $j=1,2,\cdots,n$, with $m, n\in \mathbb{N}$ and $\gamma_l, \beta_j\in \mathbb{N}$. Here, $m, n$ are non-negative integers(possible zero). $\gamma_l$ and $\beta_j$ are positive integers. Specifically, $RF$ may consist solely of a numerator polynomial  or solely of a denominator polynomial. $RF$ cannot be constant.

Notation 1:
Define the gain as $K=|\frac{1}{RF}|$, equivalently, $K=|\frac{\prod^{n}_{j=1} (s-p_j)^{\beta_j}}{\prod^{m}_{l=1} (s-z_l)^{\gamma_l}}|$. Let $a+ib$ be a unit complex number ($a^2+b^2=1$). The gain function is: $K(s)=(a+ib)\frac{\prod^{n}_{j=1} (s-p_j)^{\beta_j}}{\prod^{m}_{l=1} (s-z_l)^{\gamma_l}}$. $\mathbb{C}\cup\{\infty\}$ denotes the extended complex plane. $\mathbb{C}^{*}=\{s=x+iy: y\neq 0\}$ denotes the extended complex plane excluding the real axis. Let $\Delta\subset\mathbb{C}\cup\{\infty\}$ denote the set of points $s=\sigma+it\in\mathbb{C}\cup\{\infty\}$ that are neither zeros nor poles of (1.1). $s=\sigma+it\in\Delta$ is an arbitrary point in $\Delta$. The argument (or phase angle) of $RF$ is denoted $\varphi(\sigma,t)$.
$s_{\infty}$ denotes an point at infinity in $\mathbb{C}\cup\{\infty\}$.

The gain $K$ is the modulus of the reciprocal of $RF(s)$. Multiplying $K$ by $RF(s)$ results in a unit complex number.

\begin{definition}
In the extended complex plane $\mathbb{C}\cup\{\infty\}$, the equation (1.1) is called the \emph{root locus equation}.
\begin{equation}
K\frac{\prod^{m}_{l=1} (s-z_l)^{\gamma_l}}{\prod^{n}_{j=1} (s-p_j)^{\beta_j}}=a+ib.
\end{equation}
$a+ib$ is a fixed value.
\end{definition}

\begin{lemma}
After removing coincident poles and zeros in (1.1):

1. $n$ finite poles $p_j$ of $RF$ are also $n$ finite poles of (1.1) with $K=0$.

2. $m$ finite zeros $z_l$ of $RF$ are also $m$ finite zeros of (1.1) with $K=+\infty$.
\end{lemma}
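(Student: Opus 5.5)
We first use the normalization "after removing coincident poles and zeros". Concretely, we cancel every common linear factor of the numerator $\prod_{l}(s-z_l)^{\gamma_l}$ and the denominator $\prod_{j}(s-p_j)^{\beta_j}$. After this step we may assume $\{z_l\}\cap\{p_j\}=\emptyset$, with the listed points distinct and the exponents adjusted. The equation (1.1) then defines the same rational function $RF$ on $\Delta$ as before, so no generality is lost. The plan is to solve (1.1) for the gain on $\Delta$ and read off the limiting behaviour of $K$ at each $p_j$ and each $z_l$.

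Since $a+ib$ is a unit complex number, taking moduli in (1.1) gives
\begin{equation*}
K=K(s)=\frac{\prod^{n}_{j=1}|s-p_j|^{\beta_j}}{\prod^{m}_{l=1}|s-z_l|^{\gamma_l}},
\end{equation*}
which agrees with Notation 1, $K=|1/RF|$.

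\textbf{Poles.} Fix a pole $p_{j_0}$. Because the cancellation removed any zero at that point, the denominator of the expression for $K$ is continuous and nonzero near $p_{j_0}$. The factor $|s-p_{j_0}|^{\beta_{j_0}}$ tends to $0$ with $\beta_{j_0}\ge 1$, while the remaining numerator factors stay bounded and nonzero. Hence $K(s)\to 0$ as $s\to p_{j_0}$. Extending $K$ continuously to $p_{j_0}$ gives $K(p_{j_0})=0$. Equivalently, the polynomial form $K\prod_l(s-z_l)^{\gamma_l}=(a+ib)\prod_j(s-p_j)^{\beta_j}$ of (1.1) is satisfied at $s=p_{j_0}$ exactly when $K=0$. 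So each of the $n$ finite poles of $RF$ is a point of (1.1) with $K=0$, which is what "finite pole of (1.1)" means here.

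\textbf{Zeros.} The argument for a zero $z_{l_0}$ is symmetric. Near $z_{l_0}$ the numerator of $K$ is bounded away from $0$, while $|s-z_{l_0}|^{\gamma_{l_0}}\to 0$. Hence $K(s)\to+\infty$. In the same polynomial form, setting $s=z_{l_0}$ forces the right side to vanish with $K$ finite, which is impossible. The point is therefore attained only in the limit $K=+\infty$, i.e. after dividing by $K$ and letting $1/K\to 0$.

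The only real subtlety is the role of cancellation. Without it, a point $p_j=z_l$ with $\beta_j=\gamma_l$ would give a finite nonzero limit of $K$, and then neither claim holds. With unequal exponents, only the excess survives, with the correct sign. This is exactly why the hypothesis is stated, and the argument should point it out explicitly. I expect no genuine obstacle beyond making precise that "pole/zero of (1.1) with $K=0$ (resp. $+\infty$)" means the limiting value of the gain function at that point.
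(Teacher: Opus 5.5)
Your argument is correct, but its main mechanism differs from the paper's.

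The paper never treats $K$ as a function of $s$. It clears denominators, $K\prod_{l}(s-z_l)^{\gamma_l}=(a+ib)\prod_{j}(s-p_j)^{\beta_j}$, and observes two things: at the parameter value $K=0$ the roots are exactly the $p_j$, and, after dividing by $K$, at $K=+\infty$ the roots are exactly the $z_l$. It states both inclusions.

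You instead take moduli to get $K(s)=|1/RF(s)|$ and compute its limits at $p_{j_0}$ and $z_{l_0}$. Your use of the cleared equation is pointwise: at $s=p_{j_0}$ it holds iff $K=0$, and at $s=z_{l_0}$ it fails for every finite $K$. This is dual to the paper's setwise statement.

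Your route buys two things. First, it identifies the $K$ of the lemma with the gain function of Notation~1, which is how the lemma is used afterwards (continuity of $K$ and Lemma~1.4). Second, it genuinely uses the cancellation hypothesis. The paper's proof never invokes it, even though without it a common point of numerator and denominator satisfies the cleared equation at $K=0$, at $K=+\infty$, and in fact for every $K$.

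What the paper's route makes explicit is the converse: no other point has $K=0$ or $K=+\infty$. In your setting this is equally immediate, since your formula gives $0<K(s)<\infty$ at every finite point other than the $p_j$ and $z_l$, and you should say so.

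One wording fix: at $s=z_{l_0}$ it is the left side that vanishes for every finite $K$, while the right side is $(a+ib)\prod_j(z_{l_0}-p_j)^{\beta_j}\neq 0$.
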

\begin{proof}
1. Rewriting (1.1) as:
$$K\prod^{m}_{l=1} (s-z_l)^{\gamma_l}=(a+ib)\prod^{n}_{j=1}(s-p_j)^{\beta_j}.$$ $n$ finite poles of $RF$ correspond to zeros of $\prod^{n}_{j=1}  (s-p_j)^{\beta_j}$. If $K=0$, $n$ finite poles of $RF$ are $n$ roots of this equation. Conversely, $n$ finite roots of this equation when $K=0$ are $n$ finite poles of $RF$. Thus, at $n$ finite poles of (1.1), at $n$ finite poles of $RF$, $K=0$.

2. Similarly, equation (1.1) can be expressed as
$$\prod^{m}_{l=1} (s-z_l)^{\gamma_l}=\frac{(a+ib)}{K}\prod^{n}_{j=1}  (s-p_j)^{\beta_j}.$$
If $K=+\infty$, $m$ finite zeros of $RF$ are $m$ roots of the equation. Conversely, $m$ roots of the equation when $K=+\infty$ are $m$ finite zeros of $RF$. Therefore, at $m$ finite zeros of (1.1), at $m$ finite zeros of $RF$, $K=+\infty$.
\end{proof}
The $K$ is the modulus of the reciprocal of $RF$. Lemma 1.3 holds trivially.

\begin{lemma}
For any infinite point $s_{\infty}$ of $\mathbb{C}\cup\{\infty\}$,
(1). If $s_{\infty}$ satisfies

$K=\lim\limits_{s\to \infty}\frac{\prod^{n}_{j=1}|s-p_{j}|^{\beta_{j}}}{{\prod^{m}_{l=1}|s-z_{l}|^{\gamma_{l}}}}=0$, then $s_{\infty}$ is an infinite pole of (1.1).

(2). If $s_{\infty}$ satisfies

$K=\lim\limits_{s\to \infty}\frac{\prod^{n}_{j=1}|s-p_{j}|^{\beta_{j}}}{{\prod^{m}_{l=1}|s-z_{l}|^{\gamma_{l}}}}=A$, in which $A\neq 0$ is a finite non-zero value, then $s_{\infty}$ is a general point with a finite gain.

(3). If $s_{\infty}$ satisfies

$K=\lim\limits_{s\to \infty}\frac{\prod^{n}_{j=1}|s-p_{j}|^{\beta_{j}}}{{\prod^{m}_{l=1}|s-z_{l}|^{\gamma_{l}}}}=+\infty$, then $s_{\infty}$ is an infinite zero of (1.1).
\end{lemma}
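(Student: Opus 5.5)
The plan is to compute the limit of the gain $K(s)$ at infinity directly from the degrees of numerator and denominator. Then each case of the statement is matched with the definitions of pole and zero of (1.1) that Lemma 1.2 uses for finite points.

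\textbf{Step 1: asymptotics.} Set $N=\sum_{j=1}^{n}\beta_j$ and $M=\sum_{l=1}^{m}\gamma_l$, taken after removing coincident poles and zeros as in Lemma 1.2. For $|s|$ large, each factor satisfies $|s-p_j|=|s|\,(1+o(1))$ and $|s-z_l|=|s|\,(1+o(1))$. Hence
$$\frac{\prod_{j=1}^{n}|s-p_j|^{\beta_j}}{\prod_{l=1}^{m}|s-z_l|^{\gamma_l}}=|s|^{N-M}\,(1+o(1)),\qquad |s|\to\infty.$$
So the limit exists independently of the direction in which $s\to\infty$. It equals $0$ if $N<M$, equals $1$ if $N=M$, and equals $+\infty$ if $N>M$. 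This shows the three cases are exhaustive and mutually exclusive, and that the limit is well defined at the single point $s_\infty$ of $\mathbb{C}\cup\{\infty\}$.

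\textbf{Step 2: the three cases.}

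In case (1), $K\to 0$. Rewrite (1.1) in the form $K\prod_l(s-z_l)^{\gamma_l}=(a+ib)\prod_j(s-p_j)^{\beta_j}$, as in part 1 of Lemma 1.2, and use the local coordinate $w=1/s$ at $s_\infty$. Then $K\to 0$ corresponds to $RF$ having a pole at $w=0$ of order $M-N$. So $s_\infty$ plays exactly the role that a finite $p_j$ plays in Lemma 1.2(1): it is a point where $K=0$, that is, an infinite pole of (1.1).

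In case (3), the same argument in the coordinate $w$ gives a zero of $RF$ at $w=0$ of order $N-M$, with $K=+\infty$. This is the analogue of Lemma 1.2(2), so $s_\infty$ is an infinite zero of (1.1).

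In case (2), $RF$ is holomorphic and nonvanishing at $w=0$. The gain there is the finite nonzero value $A$, so $s_\infty\in\Delta$ is a general point with finite gain. Step 1 shows $A=1$ in this case, assuming monic factors as written.

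\textbf{Main obstacle.} The only delicate point is definitional, not computational. The paper never defined ``pole/zero of (1.1)'' at infinity, so the proof must fix the convention. The natural convention is that poles (respectively zeros) of (1.1) are the points where $K=0$ (respectively $K=+\infty$), computed in the chart $w=1/s$. With this convention, Lemma 1.2 extends verbatim to $s_\infty$, and the proof reduces to the degree count in Step 1.
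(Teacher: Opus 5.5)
Your proposal is correct and matches the paper's reasoning. The paper gives no argument beyond noting that $K=|1/RF|$, so that, as in Lemma 1.2, an infinite point with $K=0$ is by convention a pole of (1.1) and one with $K=+\infty$ is a zero; this is the same identification you make in the chart $w=1/s$. Your degree count with $N=\sum\beta_j$ and $M=\sum\gamma_l$ is a valid strengthening that the paper omits: it shows the limit always exists independently of direction, that the three cases are exhaustive, and that in case (2) necessarily $A=1$.
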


The continuity of $K$ with respect to $s\in\mathbb{C}\cup\{\infty\}$ follows directly from the expression of $K$ in (1.1). Applying Lemma 1.2, we directly obtain Lemma 1.4.
\begin{lemma}
For any $s\in\mathbb{C}\cup\{\infty\}$, the value $K$ takes all of the non-negative real numbers, ranging from 0 at  poles of (1.1) to $+\infty$ at zeros of (1.1).
\end{lemma}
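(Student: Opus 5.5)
The plan is to study the gain $K(s)=|1/RF(s)|=\prod_{j}|s-p_j|^{\beta_j}/\prod_l|s-z_l|^{\gamma_l}$ as a continuous map from the extended plane $\mathbb{C}\cup\{\infty\}$ to $[0,+\infty]$. The claim then follows from connectedness and an intermediate value argument, combined with the identification of poles and zeros in Lemma 1.2 and Lemma 1.3.

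The first step is continuity. On $\Delta$ the function $K$ is a quotient of products of moduli of nonvanishing linear factors, so it is continuous and takes values in $(0,+\infty)$. At a finite pole $p_j$ of (1.1) the numerator vanishes while the denominator stays nonzero, after cancelling coincident factors as in Lemma 1.2. Hence $K(s)\to 0$ as $s\to p_j$. Symmetrically, $K(s)\to+\infty$ as $s\to z_l$. At $s_\infty$ we use the three cases of Lemma 1.3, according to the sign of $\sum_j\beta_j-\sum_l\gamma_l$. They give the limit $0$, a finite positive constant $A$, or $+\infty$. So $K$ extends continuously to $\mathbb{C}\cup\{\infty\}$ with values in $[0,+\infty]$, taking the value $0$ exactly at poles and $+\infty$ exactly at zeros of (1.1).

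The second step is surjectivity onto $[0,+\infty]$. Since $RF$ is not constant, after cancellation it has at least one pole and at least one zero in $\mathbb{C}\cup\{\infty\}$. If the finite part has no zeros, then $\sum\beta_j>\sum\gamma_l$ and $s_\infty$ is a zero by Lemma 1.3. The case with no finite poles is analogous. The extended plane is connected, and $\Delta$ is the sphere minus finitely many points, so $\Delta$ is path-connected. Take a path from a pole $P$ to a zero $Z$ that lies in $\Delta$ except at its endpoints. The continuous function $K$ along this path runs from $0$ to $+\infty$. By the intermediate value theorem, every value $K_0\in(0,+\infty)$ is attained at some point of $\Delta$, and the endpoint values $0$ and $+\infty$ are attained at $P$ and $Z$.

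The main obstacle is rigour at $s_\infty$ in the borderline case (2) of Lemma 1.3. There $\infty$ is neither a zero nor a pole, so it cannot serve as an endpoint. In that case I would check that finite poles and finite zeros both exist, which holds because the degrees balance and $RF$ is nonconstant. I would then choose the path inside $\mathbb{C}$ avoiding the finitely many remaining zeros and poles.
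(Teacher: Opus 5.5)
Your proof is correct and follows the paper's own argument. The paper only asserts that $K$ is continuous on $\mathbb{C}\cup\{\infty\}$ and cites Lemma 1.2 for the values $0$ at poles and $+\infty$ at zeros. You make explicit what the paper leaves implicit: a pole and a zero always exist (possibly at $s_\infty$, by Lemma 1.3), and path-connectedness of $\Delta$ together with the intermediate value theorem gives every value in $(0,+\infty)$.
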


\begin{lemma}
For an arbitrary $s\in\Delta$, the phase angle of $RF$ at the left-hand side of (1.1) can be expressed as the function $\varphi(\sigma,t)$ given by: $\varphi(\sigma,t)=\arg{RF(\sigma+it)}$, or,
$\varphi(\sigma,t)=\sum^{m}_{l=1} \gamma_l\arg(s-z_l)- \sum^{n}_{j=1} \beta_j\arg(s-p_j)$.
\end{lemma}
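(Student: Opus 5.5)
The plan is to compute the phase of $RF(s)$ directly from the factored form. I would write
$$RF(s)=\frac{\prod^{m}_{l=1}(s-z_l)^{\gamma_l}}{\prod^{n}_{j=1}(s-p_j)^{\beta_j}}$$
and then apply two elementary facts. The argument of a product is the sum of the arguments, and the argument of a quotient is the difference of the arguments. Both hold modulo $2\pi$, or exactly if one works with a continuous branch along paths in $\Delta$. For $s\in\Delta$ every factor $s-z_l$ and $s-p_j$ is nonzero and finite, so each $\arg(s-z_l)$ and $\arg(s-p_j)$ is defined. This is why the statement is restricted to $\Delta$: at zeros and poles the argument is undefined.

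The steps are as follows.

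First, for $s=\sigma+it\in\Delta$, write each factor in polar form, $s-z_l=|s-z_l|e^{i\arg(s-z_l)}$, and likewise for $s-p_j$.

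Second, raise each factor to its integer power. This gives $(s-z_l)^{\gamma_l}=|s-z_l|^{\gamma_l}e^{i\gamma_l\arg(s-z_l)}$, and similarly for the poles with exponents $\beta_j$.

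Third, multiply everything out. This yields
$$RF(s)=\frac{\prod_l|s-z_l|^{\gamma_l}}{\prod_j|s-p_j|^{\beta_j}}\;e^{i\left(\sum_l\gamma_l\arg(s-z_l)-\sum_j\beta_j\arg(s-p_j)\right)}.$$

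Fourth, observe that the modulus prefactor is a positive real number, namely $1/K$ in Notation 1. Reading off the exponent therefore gives
$$\varphi(\sigma,t)=\arg RF(\sigma+it)=\sum_{l}\gamma_l\arg(s-z_l)-\sum_j\beta_j\arg(s-p_j).$$
This identity holds modulo $2\pi$.

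The only subtle point is the multivaluedness of $\arg$. I would handle it in one of two ways. One option is to state the identity as an equality of angles modulo $2\pi$, which is all that the root locus equation (1.1) needs, since (1.1) compares $RF$ with the fixed unit number $a+ib$. The other option is to choose continuous branches of each $\arg(s-z_l)$ and $\arg(s-p_j)$ on simply connected subregions of $\Delta$, which defines $\varphi$ as a continuous function there. The point $s_\infty$, if it lies in $\Delta$, is treated by the limiting behaviour given in Lemma 1.3 and does not need to be covered by the formula.
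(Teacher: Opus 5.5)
Your proof is correct, and it is the standard justification the paper takes for granted: Lemma 1.5 is stated without proof, as the additivity of $\arg$ over products and quotients, and your modulo-$2\pi$ reading is the same ambiguity the paper absorbs into the $2q\pi$ term of (1.2). Two small precision points remain. First, your claim that every factor $s-z_l$, $s-p_j$ is nonzero on $\Delta$ requires that no $z_l$ coincides with a $p_j$, i.e.\ common factors are cancelled as in Lemma 1.2; otherwise a cancelled point lies in $\Delta$ while the written formula is undefined there. Second, $s_\infty$ lies in $\Delta$ only when $\sum_l\gamma_l=\sum_j\beta_j$, and there the formula must be read as a limit along rays $s=re^{i\theta}$: each $\arg(s-z_l)$ and $\arg(s-p_j)$ tends to $\theta$, so the right-hand side tends to $(\sum_l\gamma_l-\sum_j\beta_j)\theta=0=\arg RF(\infty)$. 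Lemma 1.3 does not settle this, since it concerns only the gain $K$, not the phase.
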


The phase angle condition equation of (1.1) on $s=\sigma+it\in\Delta$ is as follows:
\begin{equation}
\varphi(\sigma,t)=2q\pi+arg{(a+ib)}.
\end{equation}

In which, $q\in \mathbb{N}$.

Curves formed by the roots of (1.1) are initially called the locus of roots; once defined, they are referred to as the root locus of (1.1).

\begin{lemma}
If $s_1=(\sigma_1,t_1)\in\Delta$ satisfies (1.2) of $2q\pi+arg{(a_1+ib_1)}$ degree, then $s_{1}$ must be on the locus of roots of $2q\pi+arg{(a_1+ib_1)}$ degree of (1.1).
\end{lemma}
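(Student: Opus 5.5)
The plan is to show directly that $s_1$ solves the root locus equation (1.1) for a suitable non-negative gain. The only candidate is $K_1=|1/RF(s_1)|$. By the root locus definition this is exactly the curve of points solving (1.1) with argument $2q\pi+\arg(a_1+ib_1)$.

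First, since $s_1\in\Delta$, it is neither a zero nor a pole of $RF$. Hence $RF(s_1)$ is a finite non-zero complex number, and $K_1$ is a finite positive real number; Lemma 1.4 guarantees that such a value is admissible. Write $RF(s_1)$ in polar form as $|RF(s_1)|e^{i\varphi(\sigma_1,t_1)}$, with $\varphi$ given by the sum formula of Lemma 1.5.

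Next, hypothesis (1.2) says $\varphi(\sigma_1,t_1)=2q\pi+\arg(a_1+ib_1)$. Since $a_1^2+b_1^2=1$, we have $a_1+ib_1=e^{i\arg(a_1+ib_1)}$. Therefore
$$K_1RF(s_1)=\frac{1}{|RF(s_1)|}|RF(s_1)|e^{i(2q\pi+\arg(a_1+ib_1))}=a_1+ib_1,$$
so $s_1$ is a root of (1.1) with $a+ib=a_1+ib_1$ and gain $K_1$. By the definition of the locus of roots, $s_1$ lies on the locus of roots of degree $2q\pi+\arg(a_1+ib_1)$.

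The main subtlety is bookkeeping rather than difficulty. The argument in Lemma 1.5 is a sum of branches of $\arg$, so it is multivalued. The degree $2q\pi+\arg(a_1+ib_1)$ must be read as that particular determination, with $q$ recording the branch, so that the point is assigned to the locus of that exact degree and not just to a locus with the same unit complex number. I would handle this by fixing continuous branches of each $\arg(s-z_l)$ and $\arg(s-p_j)$ near $s_1$, which is possible in $\Delta$. The value $\varphi(\sigma_1,t_1)$ is then a well-defined real number, and the identity above uses only that $e^{i\varphi}$ is independent of the branch.
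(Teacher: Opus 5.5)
Your proposal is correct and follows essentially the same route as the paper. You take the gain $K_1=|1/RF(s_1)|$, use the phase condition (1.2) to write $RF(s_1)=K_1^{-1}(a_1+ib_1)$, and multiply to obtain (1.1). Your added remark about fixing continuous branches of the arguments near $s_1$ is a harmless refinement that the paper leaves implicit.
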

\begin{proof}
Suppose that $s_1=(\sigma_1,t_1)\in\Delta$ satisfies (1.2), which implies that $\varphi(\sigma_1,t_1)=2q\pi+arg{(a_1+ib_1)}$, where $\varphi(\sigma,t)$ is the phase angle of $RF$. Substituting $s_{1}$ into $RF$ yields a complex value. By the phase angle expression $\varphi(\sigma,t)$ of the point $s_{1}$, and by the condition that the point $s_{1}$ satisfies (1.2), this complex value can be written as $K_1^{*}(a_{1}+ib_{1})$. $K_1^{*}$ is a non-zero positive real number and $2q\pi+arg{(a_1+ib_1)}$ is the phase angle of $RF$ at the point $s_{1}$, which is the value of the right-hand side of (1.2).

By substituting $s_1$ into the gain expression $K=|\frac{\prod^{n}_{j=1}  (s-p_j)^{\beta_j}}{\prod^{m}_{l=1} (s-z_l)^{\gamma_l}}|$, via the definition of the gain, we have an unique gain $K_1=|1/RF(s_1)|$, where $K_1$ is the reciprocal $K_1^{*}$ of the modulus of $RF$ of the point $s_1$.
That is, $K_1^{*}=1/K_1$. Multiplying $RF(s_1)$ by $K_1$ yields: $K_1\frac{\prod^{m}_{l=1} (s_1-z_l)^{\gamma_l}}{\prod^{n}_{j=1}  (s_1-p_j)^{\beta_j}}=a_1+ib_1$. This equation represents a concrete situation in which the point $s_1$ satisfies (1.1). Therefore, $s_1$ represents a root of (1.1). Since $s_1$ satisfies (1.1), it is proven that the point $s_1$ is on the locus of roots of (1.1) with a gain of $K_1$ and a degree of $2q\pi+arg{(a_1+ib_1)}$. Hence, if $s_1$ satisfies (1.2), then $s_1$ must be on the locus of roots of (1.1) with a gain of $K_1$ and a degree of $2q\pi+arg{(a_1+ib_1)}$.
\end{proof}

\begin{lemma}
For any point $s_2=(\sigma_2,t_2)\in\Delta$, if $s_2$ is on the locus of roots of $2q\pi+arg{(a_2+ib_2)}$ degree of (1.1), then $s_2$ must satisfy (1.2) of $2q\pi+arg{(a_2+ib_2)}$ degree.
\end{lemma}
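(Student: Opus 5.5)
The plan is to prove this as the converse of Lemma 1.6, working directly from the definition of the locus of roots. By definition, $s_2$ lies on the locus of roots of degree $2q\pi+\arg(a_2+ib_2)$ exactly when there is a gain $K_2\ge 0$ for which $s_2$ satisfies (1.1) with right-hand side $a_2+ib_2$. So we start from the identity
$$K_2\frac{\prod^{m}_{l=1} (s_2-z_l)^{\gamma_l}}{\prod^{n}_{j=1} (s_2-p_j)^{\beta_j}}=a_2+ib_2.$$

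The first step is to show $K_2$ is a finite, strictly positive real number. Since $s_2\in\Delta$, it is neither a zero nor a pole of (1.1). Hence $RF(s_2)$ is a finite nonzero complex number. By Lemmas 1.2 and 1.4, the values $K=0$ and $K=+\infty$ occur only at poles and zeros respectively, so $0<K_2<+\infty$. Equivalently, $K_2=|1/RF(s_2)|$: taking moduli in the identity and using $|a_2+ib_2|=1$ gives exactly this.

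The second step takes arguments of both sides. Since $K_2>0$ is real, $\arg K_2\equiv 0 \pmod{2\pi}$. Therefore $\arg RF(s_2)\equiv \arg(a_2+ib_2)\pmod{2\pi}$. By Lemma 1.5, $\arg RF(s_2)=\varphi(\sigma_2,t_2)=\sum_l\gamma_l\arg(s_2-z_l)-\sum_j\beta_j\arg(s_2-p_j)$, where each $\arg$ is well defined because $s_2$ avoids all $z_l,p_j$. This yields $\varphi(\sigma_2,t_2)=2q\pi+\arg(a_2+ib_2)$ for some integer $q$, which is (1.2).

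The only delicate point is the bookkeeping of the integer $q$. The individual arguments $\arg(s-z_l)$ are defined only modulo $2\pi$, so the statement must be read with $q$ being whatever integer the chosen branches produce. That $q$ ranges over $\mathbb{N}$ as written in (1.2) is then a matter of convention: we can fix branches so that the equality holds exactly, with $q$ absorbing the discrepancy. We also need to handle the case $s_2=s_\infty$ if it lies in $\Delta$. By Lemma 1.3(2), the gain at $s_\infty$ is then a finite nonzero limit $A$, and the argument is interpreted as the limit of $\varphi$ along the approach to infinity. The same modulus and argument comparison then applies.
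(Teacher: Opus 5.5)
Your proposal is correct and follows essentially the same route as the paper. The paper also starts from (1.1) at $s_2$, notes that $K_2$ is a positive real so its phase is a multiple of $2\pi$, subtracts this from the phase of $a_2+ib_2$, and identifies the remaining phase of $RF(s_2)$ with $\varphi(\sigma_2,t_2)$ via Lemma 1.5. Your extra points do not change the route; they only make explicit what the paper treats loosely (it silently absorbs its $2q_2\pi$ into $2q\pi$). These are the identity $K_2=|1/RF(s_2)|$, working modulo $2\pi$ for the integer $q$, and the point at infinity.
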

\begin{proof}
Assume that the point $s_2=\sigma_2+it_2\in\Delta$ is an arbitrary point on the locus of roots of $2q\pi+arg(a_2+ib_2)$ degree of (1.1).
The locus of roots of the $2q\pi+arg{(a_2+ib_2)}$ degree of (1.1) are obtained by (1.1). Therefore, the point $s_2$ surely satisfies (1.1).
$K_2\frac{\prod^{m}_{l=1} (s_2-z_l)^{\gamma_l}}{\prod^{n}_{j=1}  (s_2-p_j)^{\beta_j}}=a_2+ib_2$.
By Lemma 1.4, the gain $K_2$ is a positive real number, the phase angle of the positive real number is $2q\pi$. Therefore, the phase angle of $K_2$ is $2q_2\pi$. The phase angle of $a_2+ib_2$ is $2q\pi+arg{(a_2+ib_2)}$.

The factor on the left-hand side of the equation in the previous paragraph has two factors: $K_2$ and $RF(s_2)$. Thus, the phase angle of this factor at the left-hand side of the equation in the previous paragraph on point $s_2$ is equal to the sum of the phase angles of the two factors $K_2$ and $RF(s_2)$. The phase angle $2q\pi+arg{(a_2+ib_2)}$ subtracts the phase angle $2q_2\pi$ is equal to
$2q\pi+arg{(a_2+ib_2)}-2q_2\pi=2q\pi+arg{(a_2+ib_2)}$. The difference in the phase angles of factors $a_2+ib_2$ and factor $K_2$ is: $2q\pi+arg{(a_2+ib_2)}$. Therefore, the phase angle of
$RF(s_2)$ is $2q\pi+arg{(a_2+ib_2)}$. The phase angle of $RF$ is $\varphi(\sigma,t)=\sum^{m}_{l=1} \gamma_l\arg(s-z_l)-\sum^{n}_{j=1} \beta_j\arg(s-p_j)$. Therefore, via the previous proof, we obtain $ \varphi(\sigma_2,t_2)=2q\pi+arg{(a_2+ib_2)}$, and point $s_2$ satisfies (1.2). Therefore, if the point $s_2$ lies on the locus of roots of (1.1) with a gain of $K_2$ and a degree of $2q\pi+arg{(a_2+ib_2)}$, or satisfies (1.1), then the point $s_2$ satisfies (1.2).
\end{proof}

Lemma 1.6 provides the sufficient condition for Theorem 1.8. Lemma
1.7 provides the necessary condition for Theorem 1.8.  By combining Lemma
1.6 and Lemma 1.7, we obtain Theorem 1.8.

\begin{theorem}
Let $s=\sigma+it\in\Delta$ be an arbitrary point. The point $s$  lie on the locus of roots of $\alpha$ degree of (1.1) if and only if $s$ satisfies (1.2).
\end{theorem}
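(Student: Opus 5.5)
The plan is to obtain Theorem 1.8 as the conjunction of the two one-directional statements already proved, Lemma 1.6 and Lemma 1.7. First fix the target degree $\alpha$ and write it in the form used in those lemmas, namely $\alpha=2q\pi+\arg(a+ib)$ for a unit complex number $a+ib$ and an integer $q$. Such a pair exists because any real $\alpha$ determines $a+ib=e^{i\alpha}$, with $q$ absorbing the multiple of $2\pi$. With this normalization, the phrase ``the locus of roots of $\alpha$ degree'' means exactly the set of $s\in\Delta$ satisfying (1.1) for this $a+ib$ and some gain $K>0$.

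For sufficiency, assume $s\in\Delta$ satisfies (1.2), i.e. $\varphi(\sigma,t)=\alpha$. I will apply Lemma 1.6 with $(a_1+ib_1)=(a+ib)$. The gain $K_1=|1/RF(s)|$ is a finite positive number because $s$ is neither a zero nor a pole. Hence $K_1RF(s)$ has modulus one and argument $\alpha$, so it equals $a+ib$, and $s$ lies on the $\alpha$-degree locus.

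For necessity, assume $s\in\Delta$ lies on the $\alpha$-degree locus. Then $K\,RF(s)=a+ib$ for some $K>0$, with $K$ finite and nonzero by Lemma 1.4, since $s$ is not a pole or zero. I will apply Lemma 1.7: taking arguments, and using that $\arg K\equiv 0$ modulo $2\pi$, gives $\varphi(\sigma,t)\equiv\alpha$. By Lemma 1.5, $\varphi(\sigma,t)=\sum_l\gamma_l\arg(s-z_l)-\sum_j\beta_j\arg(s-p_j)$, which is precisely (1.2).

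The only delicate point is the bookkeeping of the multiple of $2\pi$: $\varphi$ is defined only up to $2\pi$ through the branch choices of each $\arg(s-z_l)$ and $\arg(s-p_j)$. I would handle this by reading (1.2) as a congruence modulo $2\pi$, which is how the free integer $q$ enters (1.2). Equivalently, I would choose $q$ to match the branch of $\varphi$ at $s$. With that convention both directions are immediate, and combining them yields the equivalence stated in Theorem 1.8.
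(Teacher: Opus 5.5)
Your proposal is correct and follows the paper's proof, which obtains Theorem 1.8 by combining Lemma 1.6 (sufficiency) with Lemma 1.7 (necessity). Your explicit normalization $\alpha=2q\pi+\arg(a+ib)$ and your reading of (1.2) as a congruence modulo $2\pi$ make precise a branch convention that the paper leaves implicit through the free integer $q$.
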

\begin{definition}
Let $\Xi$ be the point set of all points on the locus of roots of equation (1.1) traced out in $\mathbb{C}\cup\{\infty\}$ when $\alpha=2q\pi+\arg(a+ib)$ is a constant. The path of the roots of equation (1.1) in $\mathbb{C}\cup\{\infty\}$ is called \emph{the root locus of (1.1)}. In other words, the set $\Xi$ is \emph{the root locus of the $\alpha$ degree}.
\end{definition}

When $2q\pi+arg{(a+ib)}$ is a constant, the points in $\mathbb{C}\cup\{\infty\}$ that satisfy equation (1.2) form curves. These curves are the root loci of (1.1) and (1.2). Theorem 1.8 proves that the root locus of (1.1) is the same as the root locus of (1.2).

Determining the root locus requires two elements. One is the unit complex value $a+ib$. Generally, distinct unit complex values $a+ib$ lead to distinct root loci. The other is the points on the root locus. Points on the root locus can be determined via (1.1), and they are both sufficient and necessary to be determined via (1.2).

Lemma 1.5 offers a method to calculate the degree of the root locus of (1.1) via the function $\varphi(\sigma,t)$. The factor $a+ib$ on the right-hand side of (1.1) serves as the computational value. In $\mathbb{C}\cup\{\infty\}$, the degree of the root locus of (1.1) is equal to the degree of the phase angle of $RF$. Therefore, we can define the degree of the root locus of (1.1) as follows:

\begin{definition}
When the phase angle of $RF$ is $\alpha$ degree, we call the root locus of (1.1) the \emph{$\alpha$ degree root locus}.
\end{definition}

Notation 2:
Let $z_{1}$ be a finite zero of (1.1) with exponent $\gamma_1$. Let $p_{1}$ be a finite pole of (1.1) with exponent $\beta_1$. $\theta_{z_{1}}$ denotes the phase angle of $RF(z_{1})$. $\theta_{p_{1}}$ denotes the phase angle of $RF(p_{1})$. $RF(s)=RF(\sigma+it)=u(\sigma,t)+iv(\sigma,t)$. $u(\sigma,t)$ denotes the real part of $RF(s)$. $v(\sigma,t)$ denotes the imaginary part of $RF(s)$. $g(s)$ denotes a polynomial that is defined in Lemma 1.19. $TT1(s)$ and $TT2(s)$ denote the polynomials defined in Lemma 1.20.
$\phi$ denotes angle of origination of the root loci of $2q\pi+\alpha$ degree of (1.1). $\vartheta$ denotes angle of receiving of the root loci of $2q\pi+\alpha$ degree of (1.1).

\begin{lemma}
Let $z_{1}$ be a finite zero of (1.1) with exponent $\gamma_1$. Then, $z_{1}$ lies simultaneously on the root loci of all degrees of (1.1) from $2q\pi$ degree to $2q\pi+2\gamma_1\pi$ degree.
\end{lemma}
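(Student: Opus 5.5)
The idea is to study the phase angle $\varphi(\sigma,t)$ of Lemma 1.5 on a small punctured disc around $z_1$, and then pass to the limit using Theorem 1.8 and Definition 1.9. After removing coincident poles and zeros, $z_1$ is a zero of multiplicity $\gamma_1$. Near $z_1$ we factor
\[
RF(s)=(s-z_1)^{\gamma_1}G(s),
\]
where $G$ is analytic and nonvanishing in a neighbourhood $U$ of $z_1$. By Lemma 1.5 the phase angle on $U\setminus\{z_1\}$ is then
\[
\varphi(\sigma,t)=\gamma_1\arg(s-z_1)+\arg G(s).
\]
Here $\arg G(s)$ is continuous on $U$ and tends to $\arg G(z_1)=:\theta_0$. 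By Notation 2, $\theta_{z_1}$ is naturally read as this $\theta_0$.

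First step: I take a small circle $|s-z_1|=r$ inside $U$ and let $s$ travel once around it. Along this circle $\arg(s-z_1)$ increases continuously by $2\pi$, so $\gamma_1\arg(s-z_1)$ increases by $2\gamma_1\pi$. Meanwhile $\arg G(s)$ changes by $o(1)$ as $r\to0$ and returns to its starting value, since $G$ has no zeros in $U$. By continuity and the intermediate value theorem, $\varphi$ takes every value in an interval of length $2\gamma_1\pi$ on the circle. Normalizing the branch of $\arg$ so the starting value is $2q\pi$, this interval is $[2q\pi,2q\pi+2\gamma_1\pi]$. So for every $\alpha$ in this range, each small circle contains a point $s_r\in\Delta$ with $\varphi(s_r)=\alpha$, where the phase condition is (1.2).

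Second step: by Lemma 1.6 (or Theorem 1.8), each such $s_r$ lies on the $\alpha$ degree root locus. Letting $r\to0$ gives $s_r\to z_1$. Lemma 1.2(2) says $z_1$ is a root of (1.1) with $K=+\infty$, and the root locus is the closure of the path traced by the roots (Definition 1.9, with $K$ continuous and ranging up to $+\infty$ by Lemmas 1.3 and 1.4). Hence $z_1$ lies on the $\alpha$ degree root locus. Since $\alpha$ was arbitrary in $[2q\pi,2q\pi+2\gamma_1\pi]$, this gives the lemma. Indeed, for a fixed $\alpha$ the equation $\gamma_1\arg(s-z_1)=\alpha-\arg G(s)$ has $\gamma_1$ solution rays (mod $2\pi$), which shows the $\gamma_1$ branches leaving $z_1$.

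The main obstacle is the bookkeeping of branches of the argument. I have to make precise in what sense "$z_1$ lies on all loci from $2q\pi$ to $2q\pi+2\gamma_1\pi$" means more than "lies on every locus mod $2\pi$". My approach is to fix a continuous branch of $\varphi$ along the circle, starting at the ray where $\varphi=2q\pi$, and to track the total winding $2\gamma_1\pi$. I also have to justify that the limit point $z_1$, which is not in $\Delta$, is counted as belonging to the locus. For this I would use Lemma 1.2(2) together with the closure built into Definition 1.9.
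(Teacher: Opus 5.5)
Your argument is correct, but it takes a genuinely different route from the paper. The paper's proof is purely pointwise. It substitutes $z_1$ into $RF$, writes $RF(z_1)=0=0\cdot e^{i\theta_{z_1}}$, and notes that this holds for every $\theta_{z_1}$. Together with the convention of Lemma 1.2(2) that $z_1$ solves (1.1) with $K=+\infty$, it concludes that $z_1$ lies on the locus of every degree in the stated window. So in the paper $\theta_{z_1}$ is the undetermined argument of $0$, not $\arg G(z_1)$. Your reading of Notation 2 is harmless for your proof, but it is not how the paper uses $\theta_{z_1}$.

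The paper's argument is short and fits its conventions, but it never uses $\gamma_1$. The same reasoning would justify any window of degrees, so the length $2\gamma_1\pi$ is asserted rather than derived.

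Your factorization $RF=(s-z_1)^{\gamma_1}G$ and the winding argument on small circles are what actually produce the window of length $2\gamma_1\pi$. They also show that $z_1$ is a genuine accumulation point of each $\alpha$-degree locus, rather than belonging to it by convention. In addition, you get the $\gamma_1$ arriving branches with asymptotic directions $(\alpha-\arg G(z_1)+2\pi j)/\gamma_1$, which is the content of Theorems 1.32 and 1.36.

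The price is twofold. You must read ``lies on'' as membership in the closure of the locus, since $z_1\notin\Delta$ and Theorem 1.8 only applies in $\Delta$. You must also give ``degree'' a meaning beyond mod $2\pi$ through a local continuous branch of $\varphi$. You handle both points correctly.
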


\begin{proof}
Assume that the point $z_{1}$ is an arbitrary finite zero of (1.1). When we substitute the point $z_{1}$ into $RF$, we obtain $\frac{\prod^{m}_{l=1}(z_1-z_{l})^{\gamma_l}}
{\prod^{n}_{j=1}(z_1-p_{j})^{\beta_j}}=0$.
This equation can also be written in polar form as $\frac{\prod^{m}_{l=1}(z_1-z_{l})^{\gamma_l}}
{\prod^{n}_{j=1}(z_1-p_{j})^{\beta_j}}=K_{z_1}e^{i\theta_{z_1}}=0$.
$K_{z_{1}}$ is the modulus of  $RF$ and $\theta_{z_{1}}$ is the argument of $RF$.
Since the product is zero, we have $K_{z_{1}}=0$. Then $K_{z_1}e^{i(\theta_{z_1})}=0\cdot(cos(\theta_{z_1})+isin(\theta_{z_1}))=0$ holds for any value of $\theta_{z_{1}}$. Here, $\theta_{z_{1}}$ can take any value from the range $2q\pi$ degree to $2q\pi+2\gamma_1\pi$ degree.

For the point $z_{1}$ which makes the left-hand side of (1.1) equal to 0, regardless of the value of the argument $\theta_{z_{1}}$. $cos(\theta_{z_{1}})+isin(\theta_{z_{1}})$ is the non-zero and finite, and the modulus of the factor $cos(\theta_{z_{1}})+isin(\theta_{z_{1}})$ is 1. $\theta_{z_{1}}$ represents an arbitrary degree number, which shows the following: no matter what value $\theta_{z_{1}}$ is, because the modulus $K_{z_{1}}$ is 0,
there is $K_{z_{1}}e^{i(\theta_{z_{1}})}=0\cdot(\cos(\theta_{z_{1}})+i\sin(\theta_{z_{1}}))=0$.

When a finite zero $z_{1}$ is substituted into $RF$, for any argument $\theta_{z_{1}}$ of $RF$ with degree number in the given range, the function values of $z_{1}$ are all equal to 0. By Definition 1.10, the argument of $RF$ is the degree number of the root locus of (1.1). Since $\theta_{z_{1}}$ can take all values from $2q\pi$ degree to $2q\pi+2\gamma_1\pi$ degree, this implies that finite zeros of (1.1) lie simultaneously on the root loci of all degree numbers of (1.1) from $2q\pi$ degree to $2q\pi+2\gamma_1\pi$ degree.
\end{proof}

\begin{lemma}
Let $p_{1}$ be a finite pole of (1.1) with exponent $\beta_1$. Then, $p_{1}$ lies simultaneously on the root loci of all degrees of (1.1) from $2q\pi$ degree to $2q\pi+2\beta_1\pi$ degree.
\end{lemma}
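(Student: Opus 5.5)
The plan mirrors the proof of Lemma 1.11, with the roles of zero and pole exchanged. The argument works from the rewritten form of the root locus equation used in the proof of Lemma 1.2, namely $K\prod_{l=1}^{m}(s-z_l)^{\gamma_l}=(a+ib)\prod_{j=1}^{n}(s-p_j)^{\beta_j}$. Working with this product form avoids dividing by zero at $p_1$. By Lemma 1.2, $p_1$ is a root of this equation with gain $K=0$. Once $K=0$, the equation no longer constrains the choice of $a+ib$, so $p_1$ should lie on the root locus of every degree.

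The first step is to substitute $s=p_1$. The right-hand side becomes $(a+ib)\cdot 0=0$ for every unit complex number $a+ib$, since $\prod_j(p_1-p_j)^{\beta_j}$ vanishes. With $K=0$ the left-hand side is also $0$. Hence $p_1$ satisfies (1.1) for arbitrary $a+ib=e^{i\theta_{p_1}}$.

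The second step is to write $e^{i\theta_{p_1}}$ with $\theta_{p_1}$ free, exactly as in Lemma 1.11. The identity $0\cdot e^{i\theta_{p_1}}=0$ then holds for every $\theta_{p_1}$. To get the explicit range $2q\pi$ to $2q\pi+2\beta_1\pi$, I would take a small circle $s=p_1+\varepsilon e^{i\omega}$ with $\omega\in[0,2\pi]$. On it, Lemma 1.5 gives $\varphi=\text{const}-\beta_1\omega+o(1)$. So as $\omega$ makes one turn, the phase angle of $RF$ sweeps an interval of length $2\beta_1\pi$.

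The third step uses Theorem 1.8. Each point on the small circle whose phase angle equals a prescribed value lies on the root locus of that degree, and its gain $K=|1/RF|\sim\varepsilon^{\beta_1}$ tends to $0$. Letting $\varepsilon\to 0$ and using the continuity of $K$ (Lemma 1.4), every root locus of degree in $[2q\pi,2q\pi+2\beta_1\pi]$ accumulates at $p_1$. By Lemma 1.2, that limit point is the pole itself with $K=0$. This gives the conclusion in the paper's sense, namely that $p_1$ lies simultaneously on all these root loci.

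The main obstacle is that $p_1\notin\Delta$, so Theorem 1.8 and the phase condition (1.2) do not apply at $p_1$ directly, and $\arg RF(p_1)$ is undefined. I would handle this through the limiting argument on the small circle, or equivalently by adjoining $p_1$ to the closure of each locus. The count $2\beta_1\pi$ should come out of the winding of $-\beta_1\arg(s-p_1)$. That winding claim must be checked for $\beta_1>1$, since the angles $\omega$ realizing a given degree then come in $\beta_1$ branches.
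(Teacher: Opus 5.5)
Your argument is correct, but its substance differs from the paper's. The paper's proof consists only of the observation that $RF(p_1)=\infty=(+\infty)e^{i\theta}$ for every $\theta$. From this it concludes that the argument of $RF$ at $p_1$ is ``arbitrary'', and declares that $p_1$ lies on every locus with degree in $[2q\pi,2q\pi+2\beta_1\pi]$. The length $2\beta_1\pi$ of that range is asserted rather than derived; it is justified only later, through the departure angles in Theorems 1.31 and 1.35.

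Your first step is the dual of the paper's argument: it uses gain $K=0$ in the cleared equation instead of $|RF(p_1)|=+\infty$. By itself it has the same limitation, since it shows $p_1$ is compatible with every unit value $a+ib$ but never sees $\beta_1$. The real content of your proposal is the local analysis, and the check you flagged for $\beta_1>1$ is routine:
\begin{itemize}
\item In the decomposition of Lemma 1.5, every term except $-\beta_1\arg(s-p_1)$ has a continuous single-valued branch on a small disk around $p_1$.
\item Hence along $s=p_1+\varepsilon e^{i\omega}$ the net change of $\varphi$ is exactly $-2\beta_1\pi$.
\item Since $d\varphi/d\omega=-\beta_1+O(\varepsilon)$, the sweep is strictly monotone. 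It hits each unit value exactly $\beta_1$ times, while $K=|1/RF|=O(\varepsilon^{\beta_1})\to 0$.
\end{itemize}

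Your route buys three things:
\begin{itemize}
\item a precise meaning of ``lies on'' for a point outside $\Delta$: $p_1$ is a limit point of each locus, approached with $K\to 0$, so you never apply (1.2) or Theorem 1.8 at $p_1$;
\item an actual derivation of the count $2\beta_1\pi$;
\item as a by-product, the $\beta_1$-fold branching at the pole, which the paper obtains only in Theorem 1.35.
\end{itemize}
The paper's route is shorter but rests entirely on the convention that $\infty$ has an arbitrary argument.

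Note also that the interval you sweep is $[\varphi_0-2\beta_1\pi,\varphi_0]$, where $\varphi_0$ is the phase at $\omega=0$, not literally $[2q\pi,2q\pi+2\beta_1\pi]$. Since degrees in (1.2) are defined only modulo $2\pi$, both intervals describe every unit value $\beta_1$ times. That is the sense in which the statement must be read.
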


\begin{proof}
Assume that the point $p_{1}$ is an arbitrary finite pole of (1.1). When we substitute the point $p_{1}$ into $RF$, we obtain $\frac{\prod^{m}_{l=1}(p_1-z_{l})^{\gamma_l}}
{\prod^{n}_{j=1}(p_1-p_{j})^{\beta_j}}=\infty$.

In complex analysis, 0 can be written as: $0\cdot e^{i\theta}=0\cdot(\cos\theta+i\sin\theta)$.
The non-zero finite values can also be written as: $k\cdot e^{i\theta}=k\cdot(\cos\theta+i\sin\theta)$.
Infinity can also be written as: $(+\infty)\cdot e^{i\theta}=(+\infty)\cdot(\cos\theta+i\sin\theta)$. In which, $(\cos\theta+i\sin\theta)$ is a non-zero and infinite, and its modulus is 1 of the unit complex number value.
For $(+\infty)\cdot e^{i\theta}=(+\infty)\cdot(\cos\theta+i\sin\theta)$, no matter what value $\theta$ is obtained on the unit circle in $\mathbb{C}\cup\{\infty\}$, the values of this expression all obtain the infinity.

The equation above can also be expressed as: $\frac{\prod^{m}_{l=1}(p_1-z_{l})^{\gamma_l}}
{\prod^{n}_{j=1}(p_1-p_{j})^{\beta_j}}=K_{p_1}e^{i\theta_{p_1}}=\infty$, where $K_{p_1}$ is the modulus of  $RF$ and $\theta_{p_1}$ is the argument of  $RF$. Since the result is infinite, we have $K_{p_1}=+\infty$. Then $K_{p_1}\cdot e^{i\theta}=(+\infty)\cdot(\cos\theta+i\sin\theta)=\infty$ holds for any degree number value of $\theta_{p_1}$ in the range of $2q\pi$ degree to $2q\pi+2\beta_1\pi$ degree.

When a finite pole $p_{1}$ of (1.1) is substituted into $RF$, for any argument $\theta_{p_{1}}$ of $RF$ in the given range, the function values of $p_{1}$ are all equal to infinity. By Definition 1.10, the argument of $RF$ is the degree number of the root locus of (1.1). Since $\theta_{p_{1}}$ can take all values from $2q\pi$ degree to $2q\pi+2\beta_1\pi$ degree, this implies that the finite poles of (1.1) lie simultaneously on the root loci of all degree numbers of (1.1) from $2q\pi$ degree to $2q\pi+2\beta_1\pi$ degree.
\end{proof}

\begin{lemma}
$n$ finite poles of (1.1) are $n$ origination points of the root locus of any $\alpha$ degree of (1.1). $m$ finite zeros of (1.1) are $m$ ending points of the root locus of any $\alpha$ degree of (1.1).
\end{lemma}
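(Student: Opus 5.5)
The approach is to use the gain $K$ as a parameter along each root locus. Lemma 1.2 and Lemma 1.4 then say that every locus begins where $K=0$ and ends where $K=+\infty$. Fix $\alpha=2q\pi+\arg(a+ib)$ and clear denominators in (1.1), obtaining
$$P_K(s):=K\prod^{m}_{l=1}(s-z_l)^{\gamma_l}-(a+ib)\prod^{n}_{j=1}(s-p_j)^{\beta_j}=0 .$$
By Theorem 1.8, for each fixed $K>0$ the roots of $P_K$ lying in $\Delta$ are exactly the points of the $\alpha$ degree root locus with gain $K$. The root locus $\Xi$ is therefore the union of the root branches $s(K)$ of $P_K$ as $K$ runs over $(0,+\infty)$. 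Orientation along a branch is the direction of increasing $K$, and an origination (ending) point is a limit of a branch as $K\to 0^{+}$ ($K\to+\infty$).

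For the poles, I would first send $K\to 0^{+}$. The coefficients of $P_K$ depend continuously, indeed linearly, on $K$. By the continuity of roots of a polynomial in its coefficients (Hurwitz's theorem), the roots of $P_K$ converge to the roots of $P_0=-(a+ib)\prod(s-p_j)^{\beta_j}$, counted with multiplicity. Hence exactly $\beta_j$ branches tend to each $p_j$, and every finite pole is the starting point of some branch of the $\alpha$ degree locus, for every $\alpha$. This agrees with Lemma 1.12, which places $p_j$ on all degrees, and with Lemma 1.2(1). Conversely, if a branch has a finite limit as $K\to0$, then $|RF(s)|=1/K\to\infty$ along it, so the limit must be a pole. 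The finite poles are thus precisely the finite origination points.

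For the zeros, I would divide $P_K$ by $K$ and set $\varepsilon=1/K\to0^{+}$. The polynomial $\prod(s-z_l)^{\gamma_l}-\varepsilon(a+ib)\prod(s-p_j)^{\beta_j}$ tends to $\prod(s-z_l)^{\gamma_l}$, so by the same continuity argument $\gamma_l$ branches end at each $z_l$. Along any branch with a finite limit as $K\to\infty$ we have $|RF|=1/K\to0$, so that limit is a zero, which matches Lemma 1.11 and Lemma 1.2(2). I would also note the requirement, made in Lemma 1.2, that coincident zeros and poles are cancelled first; otherwise a common factor would be a root of $P_K$ for every $K$ and would be neither a beginning nor an end.

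The main obstacle is the degree mismatch when $m\neq n$, where the total degrees are $\sum\gamma_l\neq\sum\beta_j$. In that case the leading coefficient of $P_K$ (or of $P_K/K$) degenerates in the limit, and the surplus branches escape to $s_\infty$. I would handle this with Lemma 1.3: these branches start or end at an infinite pole or zero, according to whether $K\to0$ or $K\to\infty$ at infinity. The continuity argument would be applied on the Riemann sphere, via the substitution $s=1/w$, so that no branch is lost. A secondary technical point is that $\arg RF$ is only defined modulo $2\pi$. The multiplicities $\beta_j,\gamma_l$ nevertheless produce $\beta_j$ (respectively $\gamma_l$) distinct emanating arcs, consistent with the degree ranges in Lemma 1.11 and Lemma 1.12.
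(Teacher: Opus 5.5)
Your proposal is correct, but it takes a different route from the paper.

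The paper argues by classification. It splits $\mathbb{C}\cup\{\infty\}$ into finite poles, finite zeros, general points and infinity. General points are declared unsuitable as endpoints because they carry a single phase angle and a finite nonzero gain. Poles and zeros are declared suitable because loci of every degree pass through them (Lemmas 1.11 and 1.12). Poles are then labelled originations ($K=0$) and zeros endings ($K=+\infty$), and infinity is handled by Lemma 1.3.

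You instead parametrize the locus of a fixed unit value $a+ib$ by the gain through $P_K$. You apply continuity of roots (Hurwitz/Rouch\'e) as $K\to0^{+}$ and, after dividing by $K$, as $K\to+\infty$. The converse comes from $|RF|=1/K$ together with continuity of $RF$ away from its zeros and poles.

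Your route buys several things:
\begin{itemize}
\item It actually proves that locus points of the prescribed value accumulate at each $p_j$ as $K\to0$. The paper's formal identity $(+\infty)\cdot e^{i\theta}=\infty$ in Lemma 1.12 does not by itself produce nearby locus points.
\item It rigorously excludes general points as endpoints.
\item It gives the count of $\beta_j$ (resp.\ $\gamma_l$) emanating branches, which the paper only reaches later via departure angles in Theorems 1.35--1.36.
\item It treats the escaping branches uniformly on the Riemann sphere.
\end{itemize}
The paper's route is shorter and stays within its phase-angle language, but it is heuristic.

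Two cosmetic points. First, the mismatch condition should read $\sum\gamma_l\neq\sum\beta_j$ rather than $m\neq n$, since $m,n$ count distinct zeros and poles. Second, when $\sum\gamma_l=\sum\beta_j$ and $a+ib=1$, the leading coefficient $K-(a+ib)$ of $P_K$ vanishes at $K=1$. One branch then passes through $s_\infty$ at finite gain, which is the situation of Lemma 1.3(2). This is harmless for the lemma, because it happens at an interior gain rather than as $K\to0^{+}$ or $K\to+\infty$.
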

\begin{proof}
The root locus of any $\alpha$ degree of (1.1) consists of curves in $\mathbb{C}\cup\{\infty\}$ that must have their own origination and ending points. On the basis of the relationship between the points in $\mathbb{C}\cup\{\infty\}$ and the root locus of (1.1), we can classify the points in $\mathbb{C}\cup\{\infty\}$ into four types: $n$ finite poles of (1.1), $m$ finite zeros of (1.1), general finite points on the root locus of $\alpha$ degree of (1.1), and the infinity point.

Except for $n$ finite poles and $m$ finite zeros of (1.1), a general finite point in $\mathbb{C}\cup\{\infty\}$ lies on the root locus of a specific degree number of (1.1). These general points only have a phase angle and a non-zero finite gain value, so they are not suitable as the origination or ending points of the root locus of (1.1).

$n$ finite poles and $m$ finite zeros of (1.1) lie on the root loci of all degree numbers of (1.1). Since an infinite number of root locus curves of distinct degree numbers intersect at the same point, they meet the requirements for being the origination or ending points of the root locus. Moreover, for $n$ finite poles of (1.1), the gain $K=0$, which means that they act as origination points. For $m$ finite zeros of (1.1), the gain $K=+\infty$, which means that they act as ending points of the root locus.

The result of the infinity point is in Lemma 1.3. When (1) in Lemma 1.3 holds, the point  at infinity $s_{\infty}$ is an origination point of the root locus of (1.1). When (3) in Lemma 1.3 holds, the point at infinity $s_{\infty}$ is a ending point of the root locus of (1.1).
\end{proof}

By Lemma 1.2, Lemma 1.3 and Lemma 1.13 we have Theorem 1.14.

\begin{theorem}
The $2q\pi+\pi$  and $2q\pi$ degree root loci of (1.1) begin at $n$ poles of (1.1) or the infinity and end at $m$ zeros of (1.1) or the infinity.

1. When $\sum^n_{j=1} \beta_j > \sum^m_{l=1} \gamma_l$, $\sum^n_{j=1} \beta_j- \sum^m_{l=1} \gamma_l$ branches of the $2q\pi+\pi$  and $2q\pi$ degree root loci ending at the infinity in $\mathbb{C}\cup\{\infty\}$.

2. When $\sum^n_{j=1} \beta_j < \sum^m_{l=1} \gamma_l$, $ \sum^m_{l=1} \gamma_l - \sum^n_{j=1} \beta_j$ branches of the $2q\pi+\pi$  and $2q\pi$ degree root loci beginning at the infinity in $\mathbb{C}\cup\{\infty\}$.
\end{theorem}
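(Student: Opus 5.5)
We prove Theorem 1.14 by counting branches. The key facts are that every point of a root locus carries a gain $K\in[0,+\infty]$, that origination points are exactly the points with $K=0$, and that ending points are exactly the points with $K=+\infty$ (Lemmas 1.2, 1.3 and 1.13). The whole argument is organised around the polynomial form of (1.1). Put $N(s)=\prod_{l=1}^m (s-z_l)^{\gamma_l}$ and $D(s)=\prod_{j=1}^n (s-p_j)^{\beta_j}$, and fix $a+ib=\pm 1$ for the $2q\pi$ and $2q\pi+\pi$ degree loci. Then (1.1) becomes
\begin{equation*}
K N(s)-(a+ib)D(s)=0 .
\end{equation*}
For each fixed $K\in(0,+\infty)$ this is a polynomial equation of degree $\max(\sum\gamma_l,\sum\beta_j)$, after excluding the finitely many $K$ at which the leading coefficients cancel (only possible when the two sums are equal). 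By Theorem 1.8, its roots lying in $\Delta$ are exactly the points of the root locus with gain $K$.

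First, take $K\to 0^{+}$. The roots depend continuously on $K$ by Hurwitz's theorem or Rouch\'e's theorem applied on small circles. Hence they tend to the zeros of $D$, with $p_j$ absorbing $\beta_j$ branches. If $\sum\gamma_l>\sum\beta_j$, the degree is $\sum\gamma_l$, so the remaining $\sum\gamma_l-\sum\beta_j$ roots must escape to infinity; in this case $K\to 0$ as $s\to\infty$, so $s_\infty$ is a pole of (1.1) by Lemma 1.3(1). This gives exactly item 2: that many branches begin at infinity.

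Symmetrically, divide by $K$ and let $K\to+\infty$. The equation tends to $N(s)=0$, so $z_l$ receives $\gamma_l$ branches. If $\sum\beta_j>\sum\gamma_l$, the surplus $\sum\beta_j-\sum\gamma_l$ roots tend to $\infty$, and $s_\infty$ is a zero of (1.1) by Lemma 1.3(3). This is item 1.

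In between, for $K\in(0,\infty)$ the roots move continuously. Away from the discrete set of $K$ where the roots are multiple, the implicit function theorem makes them analytic in $K$, so each root traces a curve. By Lemma 1.4 the gain runs monotonically along each branch from $0$ to $+\infty$. Consequently every branch begins at a pole (finite or $s_\infty$) and ends at a zero (finite or $s_\infty$), which is the general statement of the theorem.

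The main obstacle I expect is the bookkeeping at multiple roots, that is, at breakaway points, and in the degenerate case $\sum\gamma_l=\sum\beta_j$. At a multiple root the branches can be continued only as a multiset, so I would phrase the argument in terms of the continuous unordered $d$-tuple of roots and count branches with multiplicity, instead of following individual analytic curves through collisions.
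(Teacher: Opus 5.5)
Your argument is correct, and it takes a different and more complete route than the paper. The paper proves Theorem 1.14 in one line, ``by Lemma 1.2, Lemma 1.3 and Lemma 1.13''. Finite poles have $K=0$ and finite zeros have $K=+\infty$; because loci of every degree pass through them, they are declared origination and ending points. The point $s_\infty$ is classified according to the limit of $|D/N|$. That yields only the qualitative claim (start at poles or $\infty$, end at zeros or $\infty$). The counts $\sum\beta_j-\sum\gamma_l$ and $\sum\gamma_l-\sum\beta_j$ in items 1 and 2 are never derived; they are taken from the control textbooks. Even the fact that $p_j$ emits $\beta_j$ branches of a given degree appears only later, via the departure-angle formula of Theorem 1.35. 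You instead work with the characteristic polynomial $KN(s)-(a+ib)D(s)$ (introduced in the paper only at Lemma 1.19), use that its degree is $\max(\sum\gamma_l,\sum\beta_j)$, and apply Hurwitz/Rouch\'e as $K\to0^+$ and $K\to+\infty$. This shows at once that $p_j$ emits $\beta_j$ branches, $z_l$ receives $\gamma_l$, and the degree deficit is absorbed at $s_\infty$, which is exactly what items 1 and 2 assert.

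A few points to tighten:
\begin{itemize}
\item Lemma 1.4 does not assert monotonicity of $K$ along a branch, but you do not need it. A point off the zeros and poles has the single gain $|D(s)/N(s)|$, so each $K\mapsto s_i(K)$ is injective and $K$ is simply the parameter.
\item For the collisions you flag, the standard continuous-selection result for roots depending on one real parameter (Kato) turns the unordered $d$-tuple into continuous branches $s_1(K),\dots,s_d(K)$. The cluster set of each branch as $K\to0^+$ is connected and lies in the finite set $\{p_1,\dots,p_n,s_\infty\}$, so it is a single point. The same holds as $K\to+\infty$, so each branch has exactly one origin and one end.
\item In the equal-degree case with $a+ib=1$, track the roots on $\mathbb{C}\cup\{\infty\}$ by homogenizing. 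At $K=1$ one or more branches merely pass through $s_\infty$. By Lemma 1.3(2), $s_\infty$ is then a general point of gain $1$, not an endpoint, so the first sentence of the theorem still holds.
\item Rouch\'e around $p_j$ needs $N(p_j)\neq0$, i.e.\ the reduced form assumed in Lemma 1.2.
\end{itemize}
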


For a point $s_1=\sigma_1+it_1$ on a root locus, substituting $s_1$ into the rational function $RF(s_1)$ yields a complex value $u(\sigma_1,t_1)+iv(\sigma_1,t_1)$ of $RF(s_1)$. The argument $arg(u(\sigma_1,t_1)+iv(\sigma_1,t_1))$ of $RF(s_1)$ is referred to as the phase angle, which represents the degree of the root locus of (1.1). Each degree of the root locus of (1.1) is the degree value of an angle. Each degree of the root locus corresponds to a unique unit complex value. Consequently, each root locus is characterized by its degree value and its unit complex value. Root loci with distinct degrees may share the same unit complex value, and the difference in their degrees is $2q\pi$. The unit complex value of the root locus is the unit complex value of the degree of the root locus of (1.1).

\begin{theorem}
Two root loci of (1.1) with distinct unit complex values $a_1+ib_1$ and $a_2+ib_2$ cannot intersect in $\mathbb{C}\cup\{\infty\}$.
\end{theorem}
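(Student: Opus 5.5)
The plan is a direct contradiction argument at an arbitrary common point, using Theorem 1.8 to convert "lying on a root locus" into a condition on the argument of $RF$. The essential observation is that at any point $s\in\Delta$ the value $RF(s)$ is a finite non-zero complex number. It therefore has a single well-defined unit complex value $RF(s)/|RF(s)|$, and its argument is determined modulo $2\pi$. The argument will first be carried out in $\Delta$; the exceptional points (finite zeros, finite poles, and the point at infinity) will be handled separately, since Lemmas 1.11 and 1.12 show they lie on root loci of all degrees.

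\emph{Step 1 (points of $\Delta$).} Suppose $s_0=\sigma_0+it_0\in\Delta$ lies both on a root locus of degree $\alpha_1=2q_1\pi+\arg(a_1+ib_1)$ and on one of degree $\alpha_2=2q_2\pi+\arg(a_2+ib_2)$. By Theorem 1.8 (equivalently Lemma 1.7), $\varphi(\sigma_0,t_0)=\alpha_1$ and $\varphi(\sigma_0,t_0)=\alpha_2$, where $\varphi$ is the phase function of Lemma 1.5. Hence $\alpha_1-\alpha_2\in 2\pi\mathbb{Z}$, so $e^{i\alpha_1}=e^{i\alpha_2}$. I will phrase this equivalently via (1.1): with $K_0=|1/RF(s_0)|>0$ we get $a_1+ib_1=K_0RF(s_0)=a_2+ib_2$. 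This contradicts the assumption that the unit values differ. Note that the gain is forced to be the same $K_0$ on both loci, by the uniqueness of the gain in the proof of Lemma 1.6.

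\emph{Step 2 (the point at infinity).} I will use the trichotomy of Lemma 1.3.
\begin{itemize}
\item In case (2), $K$ tends to a finite non-zero limit $A$. Then $RF(s)$ behaves like $cs^{d}$ with $d=\sum\gamma_l-\sum\beta_j=0$, so $RF(s)\to c\neq 0$. Along any root locus reaching $s_\infty$ the unit value $K\,RF(s)$ is constant, so by continuity it must equal $c/|c|$. Two loci with different unit values therefore cannot both reach $s_\infty$.
\item In cases (1) and (3), $s_\infty$ is an infinite pole or infinite zero. It then plays the same role as a finite pole or zero, namely an origination or ending point (Lemma 1.13 and Theorem 1.14).
\end{itemize}

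\emph{Main obstacle.} The finite zeros and poles of (1.1), and $s_\infty$ in cases (1) and (3) of Lemma 1.3, genuinely belong to root loci of every degree by Lemmas 1.11 and 1.12. At such a point loci with different unit values do meet in the literal set-theoretic sense. So the statement can only be proved with "intersect" read as meeting at a point that is not a common origination or ending point. I would make this explicit: an intersection means a common point in $\Delta$, or at $s_\infty$ in case (2). The zeros and poles are excluded as the endpoints described in Lemma 1.13. With that reading, Steps 1 and 2 cover all remaining points of $\mathbb{C}\cup\{\infty\}$ and the proof is complete. Without it, the theorem as literally stated would be false at zeros and poles. Fixing this interpretation consistently with Lemmas 1.11–1.13 is the only delicate part; the rest is the one-line uniqueness of $\arg RF(s_0)$ modulo $2\pi$.
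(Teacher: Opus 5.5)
Your proposal is correct, and its core (Step 1) is exactly the paper's argument: at a common point $s_0$ that is neither a zero nor a pole, the gain $K_0=|1/RF(s_0)|$ is the same on both loci, so $a_1+ib_1=K_0RF(s_0)=a_2+ib_2$, which contradicts the assumption that the unit values differ. The paper restricts to non-zero, non-pole points silently in its opening line and does not treat $s_\infty$ separately, so your explicit reading of ``intersect'' and your Step 2 are a careful supplement to the same proof rather than a different route.
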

\begin{proof}
Assume that two root loci of (1.1) with distinct unit complex values $a_1+ib_1$ and $a_2+ib_2$ intersect at a point $s_0$, which is neither a zero nor a pole of (1.1).

Since $s_0$ is the same point, its gain $K_0$ should be the same. $s_0$ allows the following two equations to hold simultaneously. $K_0\frac{\prod^{m}_{l=1} (s_0-z_l)^{\gamma_l}}
{\prod^{n}_{j=1}  (s_0-p_j)^{\beta_j}}=a_{1}+ib_{1}$, and $K_0\frac{\prod^{m}_{l=1} (s_0-z_l)^{\gamma_l}}{\prod^{n}_{j=1}  (s_0-p_j)^{\beta_j}}=a_{2}+ib_{2}$.
Subtracting these equations yields $(a_{1}-a_{2})+i(b_{1}-b_{2})=0$. By the conditions of this theorem, $a_{1}\neq a_{2}$ or $b_{1}\neq b_{2}$, so this equation cannot hold. Since the contradiction arises from the assumption that the two root loci intersect at $s_0$, this assumption is false.
\end{proof}

\begin{theorem}
Two root loci of (1.1) with identical unit complex values $a+ib$ may intersect in $\mathbb{C}\cup\{\infty\}$.
\end{theorem}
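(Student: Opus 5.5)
Theorem 1.16 is an existence (``may intersect'') statement, so the plan has two parts. First, the contradiction used in Theorem 1.15 does not arise here. Second, an explicit rational function shows that such intersections really occur.

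\textbf{Step 1 (no obstruction).} Suppose two root loci of (1.1) with the same unit complex value $a+ib$ pass through a common point $s_0\in\Delta$. At $s_0$ the gain $K_0=|1/RF(s_0)|$ is uniquely determined, as in the proof of Lemma 1.6. Both equations
$$K_0\frac{\prod^{m}_{l=1} (s_0-z_l)^{\gamma_l}}{\prod^{n}_{j=1}(s_0-p_j)^{\beta_j}}=a+ib$$
then coincide, so subtracting them gives $0=0$ instead of a contradiction. By Theorem 1.8 this says only that $s_0$ satisfies (1.2) with $\varphi(\sigma_0,t_0)=2q\pi+\arg(a+ib)$. This is consistent with $s_0$ lying on several branches at once, for instance branches whose degrees are $\alpha$ and $\alpha+2q\pi$, or two branches of the same degree.

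\textbf{Step 2 (an explicit intersection).} Take $RF(s)=\frac{1}{s^2+1}$, whose poles $\pm i$ are the zeros of the real polynomial $s^2+1$. Then (1.1) becomes $K=(a+ib)(s^2+1)$, and we choose $a+ib=-1$, i.e. the $2q\pi+\pi$ degree root locus. This locus is the set of $s$ with $s^2+1\in(-\infty,0]$, i.e. $s^2\le -1$, so it is the part of the imaginary axis with $|t|\ge 1$. It has two branches: one leaves $i$ upward and one leaves $-i$ downward, and both tend to infinity as Theorem 1.14 predicts. That pair does not meet at a finite point. Instead we take $a+ib=1$, the $2q\pi$ degree locus, which is the set where $s^2+1\ge 0$. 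The branch leaving $i$ runs down the imaginary axis and the branch leaving $-i$ runs up it, and they meet at $s_0=0$ with $K=1$. After meeting they leave along the real axis, since $s^2+1>0$ there. Thus two root loci with the identical unit complex value $1$ intersect at $0$. This point is exactly the critical point where $(1/(s^2+1))'=0$, which also anticipates the breakaway-point discussion in the introduction.

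\textbf{Step 3 (the point at infinity).} To cover the statement ``in $\mathbb{C}\cup\{\infty\}$'', we note that in the same example several branches of equal degree tend to $s_\infty$, which is a zero of (1.1) by Lemma 1.3. So intersection at infinity also occurs.

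The main obstacle is Step 2: we must verify rigorously that the two curves through $0$ are genuinely distinct branches of the same unit value, and not one curve passing through. We would handle this with the local expansion $RF(s)\approx 1-s^2$ near $0$. The argument of $RF$ equals $0$ exactly along the two lines $\arg s\in\{0,\pi/2\}$ mod $\pi$, so four arcs emanate from $s_0=0$. Two of them belong to the branches from the poles $\pm i$ and two go off along the real axis, which confirms that a true intersection occurs.
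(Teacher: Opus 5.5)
Your proof is correct and takes the paper's approach: you exhibit an explicit rational function whose two pole-emitted root loci of the same unit value meet at a critical point of $RF$. In fact your example is the paper's $1/((u-1)(u-3))$ in disguise, since the change of variable $s=i(u-2)$ gives $(u-1)(u-3)=-(s^2+1)$; this rotates the picture by $90^\circ$ and swaps the $2q\pi+\pi$ and $2q\pi$ loci, so the paper's meeting point $u=2$ becomes your $s=0$.

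Drop Step 3. The finite intersection at $0$ already proves the existential claim, and sharing the endpoint $\infty$ (a zero of (1.1)) is not an intersection in the sense of Theorem 1.15, whose proof excludes zeros and poles. In your example loci of every unit value tend to $\infty$, so counting that as an intersection would contradict Theorem 1.15.
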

\begin{proof}
Consider the root locus equation $\frac{K}{(s-1)(s-3)}=a+ib$. The interval $[1,3]$ contains root loci of $2q\pi+\pi$ degree from both poles at $s=1$ and $s=3$, each with a unit complex value of $-1$. These root loci intersect at point $s=2$.

By applying Rolle's theorem, there exists a critical point of the function $(s-1)(s-3)$ in the interval $[1,3]$. This critical point of $(s-1)(s-3)$ is $s=2$, which is the intersection point of the root loci.
\end{proof}

\begin{lemma}
All points in $\mathbb{C}\cup\{\infty\}$ are roots of the root locus equation (1.1). Except for zeros and poles of (1.1), each finite point in $\mathbb{C}\cup\{\infty\}$ lies on a root locus of (1.1). The root loci of (1.1) fill the entire extended complex plane.
\end{lemma}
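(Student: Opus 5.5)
The plan is to show directly that every point of $\mathbb{C}\cup\{\infty\}$ satisfies (1.1) for a suitable gain and a suitable unit complex number. The degree of freedom is the choice of $a+ib$, together with the phase relation of Theorem 1.8.

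\emph{Zeros and poles.} These are handled first, since they need no argument beyond earlier results. By Lemma 1.2 they are roots of (1.1) with $K=+\infty$ and $K=0$ respectively. By Lemmas 1.11 and 1.12 they lie on root loci of every degree.

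\emph{Points of $\Delta$.} Fix an arbitrary finite $s_0\in\Delta$. Then $RF(s_0)$ is a finite nonzero complex number, so it has a polar form
$$RF(s_0)=|RF(s_0)|e^{i\varphi(\sigma_0,t_0)}.$$
Here $\varphi$ is computed by Lemma 1.5, and $|RF(s_0)|\in(0,+\infty)$. Set
$$K_0=|1/RF(s_0)|,\qquad a+ib=e^{i\varphi(\sigma_0,t_0)}.$$
Then $K_0>0$ and $a^2+b^2=1$, and
$$K_0RF(s_0)=a+ib,$$
so $s_0$ is a root of (1.1) for this choice. Equivalently, $s_0$ satisfies (1.2) with $\alpha=\varphi(\sigma_0,t_0)$, so by Theorem 1.8 (through Lemma 1.6) it lies on the root locus of degree $\varphi(\sigma_0,t_0)$, in the sense of Definition 1.9.

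\emph{The point at infinity.} By Lemma 1.3 there are three cases. In cases (1) and (3), $s_\infty$ is a pole or a zero of (1.1), so the preceding treatment of zeros and poles applies. In case (2) the gain is a finite nonzero $A$. In that case I take
$$a+ib=\lim_{s\to\infty}\frac{RF(s)}{|RF(s)|}$$
along the chosen direction, which reduces to the $\Delta$ case.

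\emph{Conclusion.} Every point lies on some root locus, so the union of the root loci over all $\alpha\in[0,2\pi)$ covers $\mathbb{C}\cup\{\infty\}$.

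\emph{Main obstacle.} The delicate step is the point at infinity in case (2). The argument of $RF$ there depends on the direction of approach, because $RF(s)$ tends to a finite nonzero limit only in modulus. I would therefore treat each $s_\infty$ as a directional point, as the paper's notation $s_\infty$ suggests. For each direction the limit
$$\arg RF(re^{i\theta})\quad\text{as } r\to\infty$$
exists. It is determined by the leading terms: it equals $(\sum\gamma_l-\sum\beta_j)\theta$ plus the argument of the leading coefficient ratio. This existence is the fact that makes the choice of $a+ib$ above legitimate. With that fact in hand, the remainder is the same routine assignment of a gain and a phase as in the finite case.
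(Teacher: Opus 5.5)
Your proof is correct and follows the paper's own argument: zeros and poles are handled by Lemma 1.2, a finite point of $\Delta$ is placed on the $\varphi(\sigma_0,t_0)$-degree locus with gain $|1/RF(s_0)|$, and the point at infinity is handled by the trichotomy of Lemma 1.3 (where the paper just says ``repeating the previous proof''). One correction to your ``main obstacle'': there is no direction dependence in case (2), because $K(s)\sim|s|^{\sum\beta_j-\sum\gamma_l}$ forces $\sum_j\beta_j=\sum_l\gamma_l$, so $RF(s)\to 1$ in every direction and your own formula gives angle $0$. Thus $s_\infty$ simply lies on the $2q\pi$-degree locus with gain $A=1$, and your directional-limit device is harmless but unnecessary.
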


\begin{proof}
By Lemma 1.2, at the zeros of (1.1), $K=+\infty$. The zeros of (1.1) satisfy (1.1) with $K=+\infty$. At the poles of (1.1), $K=0$. The poles of (1.1) satisfy (1.1) with $K=0$.

For any finite point $f_{a}=\sigma_a+it_a$ that is neither a zero nor a pole of (1.1), substituting $f_{a}$ into the gain expression $K=|\frac{\prod^{n}_{j=1}  (s-p_j)^{\beta_j}}{\prod^{m}_{l=1} (s-z_l)^{\gamma_l}}|$ yields a  positive, finite real number. Similarly, substituting $f_{a}$ into the phase angle expression $\varphi(\sigma,t)$ yields a finite certain phase angle $\varphi(\sigma_a,t_a)$. Therefore, the finite point $f_{a}$ must lie on a root locus of  $\varphi(\sigma_a,t_a)$ degree of (1.1) with a gain $K(f_{a})$ that is a positive, finite real number. Thus, $f_{a}$ satisfies (1.1) with the gain $K(f_{a})$.

By Lemma 1.3, points at infinity in $\mathbb{C}\cup\{\infty\}$ can be zeros, poles, or general points with finite gain values $A$. Repeating the previous proof, we conclude that all points at infinity in $\mathbb{C}\cup\{\infty\}$ lie on a root locus of (1.1) and satisfy (1.1) with a specific gain.
\end{proof}

\begin{definition}
If two or more root loci intersect in $\mathbb{C}\cup\{\infty\}$, these intersected root loci are called \emph{"common root loci"}. The points of intersection are called \emph{the breakaway points} of the root loci of (1.1).
\end{definition}
Multiplying both sides of (1.1) by $\prod (s-p_j)^{\beta_j}$ gives:
$  K(s) \prod_{l=1}^m (s-z_l)^{\gamma_l} =(a+ib)\prod_{j=1}^n (s-p_j)^{\beta_j},$
  which is a polynomial whose roots coincide with solutions of (1.1). All non-zero factors are moved to one side.  This transformation yields the characteristic equation of (1.1), which can be written as: $K(s)\prod^{m}_{l=1}(s-z_{l})^{\gamma_{l}}-
(a+ib)\prod^{n}_{j=1}(s-p_{j})^{\beta_{j}}=0$. The characteristic equation of (1.1) is equivalent to (1.1). Therefore, the roots of the characteristic equation are the roots of (1.1), and vice versa. We have Lemma 1.19.

\begin{lemma}
Equation (1.1) can be transformed into its characteristic equation. The roots of (1.1) and its characteristic equation are identical.
\end{lemma}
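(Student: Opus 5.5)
The argument is a two-way inclusion between the solution set of (1.1) and the solution set of its characteristic equation. Throughout, the value $K$ attached to a point $s$ is the gain of Notation 1, $K=|1/RF(s)|$, as in the paper. We treat three classes of points separately: the general points of $\Delta$, the finite zeros and poles, and the point at infinity.

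First, let $s_0\in\Delta$ solve (1.1) with gain $K_0$. Then $\prod_{j}(s_0-p_j)^{\beta_j}$ is a finite nonzero number. Multiplying (1.1) through by it gives
\[
K_0\prod_{l=1}^{m}(s_0-z_l)^{\gamma_l}-(a+ib)\prod_{j=1}^{n}(s_0-p_j)^{\beta_j}=0,
\]
so $s_0$ solves the characteristic equation with the same $K_0$ and the same $a+ib$. For the converse, let $s_0\in\Delta$ satisfy the characteristic equation for some $K_0\ge 0$. Dividing by the nonzero factor $\prod_{j}(s_0-p_j)^{\beta_j}$ recovers (1.1). Taking moduli then forces $K_0=|1/RF(s_0)|$, so the gain is the one of Notation 1. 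By Theorem 1.8, $s_0$ therefore lies on the root locus of the degree $2q\pi+\arg(a+ib)$.

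Second, we handle the zeros and poles, which is where division is not allowed.
\begin{itemize}
\item At a pole $p_j$, Lemma 1.2 gives $K=0$. The characteristic equation then reduces to $(a+ib)\prod_j(p_j-p_j)^{\beta_j}=0$, which holds.
\item At a zero $z_l$, Lemma 1.2 gives $K=+\infty$. Here we divide the characteristic equation by $K$ before substituting, obtaining $\prod_l(s-z_l)^{\gamma_l}-\frac{a+ib}{K}\prod_j(s-p_j)^{\beta_j}=0$. This is exactly the form used in part 2 of the proof of Lemma 1.2, and with $1/K=0$ it is satisfied at $z_l$.
\end{itemize}
Conversely, the finite roots of the characteristic equation at $K=0$ are exactly the poles, and those of the divided form at $1/K=0$ are exactly the zeros. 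So the root sets coincide at these points as well. The point at infinity is handled through Lemma 1.3: the limit of $K$ determines whether $s_\infty$ is a pole, a zero or a general point, and the same limit governs the characteristic equation in the variable $1/s$.

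The main obstacle is the precise meaning of "root" when $K\in\{0,+\infty\}$. It must be fixed consistently for both equations; otherwise the multiplication by $\prod_j(s-p_j)^{\beta_j}$ at a pole, or the presence of $K=+\infty$ at a zero, is not a legitimate algebraic step. I would adopt the convention already implicit in Lemma 1.2: $K=0$ means substituting zero, and $K=+\infty$ means passing to the equation divided by $K$ and setting $1/K=0$. With that convention fixed, both directions above are simple algebra. The coincidence of zeros and poles was already excluded before Lemma 1.2, so no cancellation issue arises.
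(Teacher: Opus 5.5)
Your argument is correct and follows the paper's route. The paper multiplies both sides of (1.1) by $\prod_{j}(s-p_j)^{\beta_j}$, moves everything to one side, and declares the resulting characteristic equation equivalent to (1.1); that is exactly your first step for finite points of $\Delta$. Your separate handling of the poles ($K=0$), the zeros ($1/K=0$, matching the convention in the proof of Lemma 1.2) and the point at infinity (via Lemma 1.3) covers the cases where that multiplication is not literally reversible, which the paper passes over in silence.
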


\begin{lemma}
For any finite point $s_0$ in the complex plane, the left-hand side of the characteristic equation with gain $K(s_0)$ can be rewritten as another expression: $(s-s_0)^{\gamma_0}g(s)$, where $g(s_0)\neq0$, $g(s_0)\neq\infty$ and $g^{'}(s_0)\neq\infty$.
\end{lemma}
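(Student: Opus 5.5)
The statement is Lemma 1.20. Fix a finite point $s_0$ and set $K_0=K(s_0)$, the gain at $s_0$, and write
$$C(s)=K_0\prod_{l=1}^m(s-z_l)^{\gamma_l}-(a+ib)\prod_{j=1}^n(s-p_j)^{\beta_j},$$
which is the left-hand side of the characteristic equation of Lemma 1.19. The plan is to treat $C(s)$ as a polynomial in $s$ with $K_0$ and $a+ib$ frozen, and to read off the order of vanishing of $C$ at $s_0$.

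First I would dispose of the degenerate case. $C$ is not the zero polynomial: with coincident zeros and poles removed (as in Lemma 1.2), the two products are coprime and not proportional, since $RF$ is nonconstant. When $K_0=0$ or $K_0=+\infty$ (a pole or a zero of (1.1)), I would replace $C$ by the single surviving product, or normalize by dividing by $K_0$ when $K_0=+\infty$. In every case one is left with a nonzero polynomial.

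Next, set $\gamma_0\ge 0$ to be the multiplicity of $s_0$ as a root of $C$. Then:
\begin{enumerate}
\item If $s_0$ is a point of $\Delta$, Lemma 1.6 and Theorem 1.8 say $s_0$ satisfies (1.1) with gain $K_0$, so $C(s_0)=0$ and $\gamma_0\ge1$.
\item In general, the factor theorem, applied $\gamma_0$ times, gives $C(s)=(s-s_0)^{\gamma_0}g(s)$ with $g$ a polynomial.
\item Maximality of $\gamma_0$ forces $g(s_0)\neq0$.
\item Since $g$ is a polynomial, it is entire, so $g(s_0)$ and $g'(s_0)$ are finite. This gives $g(s_0)\neq\infty$ and $g'(s_0)\neq\infty$.
\end{enumerate}

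The main obstacle is making sure $\gamma_0$ is finite, that is, that $C\not\equiv0$, with the gain frozen at $K_0$. Suppose instead that $C\equiv0$. Then $RF(s)\equiv(a+ib)/K_0$ identically, so $RF$ would be constant, which the standing hypotheses exclude. I would therefore state this nonconstancy explicitly at that point in the argument.

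I would also note a point of care about the infinite-gain case. There I would work with $C/K_0$ and pass to the limit, which is just $\prod_{l=1}^m(s-z_l)^{\gamma_l}$. This keeps $g$ polynomial, and hence keeps its derivative finite.
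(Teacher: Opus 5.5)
Your proposal is correct and follows essentially the same route as the paper. You freeze the gain at $K(s_0)$, regard the left-hand side of the characteristic equation as a polynomial in $s$, and factor out $(s-s_0)^{\gamma_0}$ with $\gamma_0$ the exact multiplicity, so $g(s_0)\neq 0$ and $g(s_0),g'(s_0)$ are finite because $g$ and $g'$ are polynomials. Your extra checks fill in points the paper leaves implicit: $C\not\equiv 0$ because $RF$ is nonconstant, and the conventions at poles and zeros where $K_0\in\{0,+\infty\}$. Note that $C(s_0)=0$ in your first item tacitly takes $a+ib=RF(s_0)/|RF(s_0)|$, as the paper also does.
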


\begin{proof}
By Lemma 1.17, the finite point $s_0$ lies on the root loci of (1.1). Substituting gain $K(s_0)$ of the finite point $s_0$ into the characteristic equation yields: $(a_0+ib_0)\frac{\prod^{n}_{j=1}(s_0-p_{j})^{\beta_{j}}}
{\prod^{m}_{l=1}(s_0-z_{l})^{\gamma_{l}}}\prod^{m}_{l=1}(s-z_{l})^{\gamma_{l}}-
(a+ib)\prod^{n}_{j=1}(s-p_{j})^{\beta_{j}}=0$.
Since $s_0$ is a root of the characteristic equation, the left-hand side of the characteristic equation can be expressed as: $(s-s_0)^{\gamma_0}g(s)$. $g(s_0)\neq0$. $(a_0+ib_0)\frac{\prod^{n}_{j=1}(s_0-p_{j})^{\beta_{j}}}
{\prod^{m}_{l=1}(s_0-z_{l})^{\gamma_{l}}}\prod^{m}_{l=1}(s-z_{l})^{\gamma_{l}}-
(a+ib)\prod^{n}_{j=1}(s-p_{j})^{\beta_{j}}=(s-s_0)^{\gamma_0}g(s)=0$.

Because the function $(a_0+ib_0)\frac{\prod^{n}_{j=1}(s_0-p_{j})^{\beta_{j}}}
{\prod^{m}_{l=1}(s_0-z_{l})^{\gamma_{l}}}\prod^{m}_{l=1}(s-z_{l})^{\gamma_{l}}-
(a+ib)\prod^{n}_{j=1}(s-p_{j})^{\beta_{j}}$ on the left side of the characteristic equation is a polynomial. After factoring out $(s-s_0)^{\gamma_0}$ in the polynomial $(a_0+ib_0)\frac{\prod^{n}_{j=1}(s_0-p_{j})^{\beta_{j}}}
{\prod^{m}_{l=1}(s_0-z_{l})^{\gamma_{l}}}\prod^{m}_{l=1}(s-z_{l})^{\gamma_{l}}-
(a+ib)\prod^{n}_{j=1}(s-p_{j})^{\beta_{j}}$, the remaining function $g(s)$ is still a polynomial. Polynomials do not have finite poles, and their derivatives are also polynomials. Therefore, $g(s)$ and $g^{'}(s)$ cannot have finite poles, $g(s_0)\neq\infty$ and $g^{'}(s_0)\neq\infty$.
\end{proof}
\begin{lemma}
Except for the zeros of (1.1), for all other points in the extended complex plane, (1.3) holds.
\begin{equation}
TT1(s)=TT2(s).
\end{equation}

In which, $TT1(s)=(a_0+ib_0)\prod^{n}_{j=1}(s_0-p_{j})^{\beta_{j}}(\prod^{m}_{l=1}(s-z_{l})^{\gamma_{l}})^{'}-
(a+ib)\prod^{n}_{j=1}(s-p_{j})^{\beta_{j}})^{'}
\prod^{m}_{l=1}(s_0-z_{l})^{\gamma_{l}}$.

$TT2(s)=((s-s_0)^{\gamma_0}g(s))^{'}
\prod^{m}_{l=1}(s_0-z_{l})^{\gamma_{l}}$.
\end{lemma}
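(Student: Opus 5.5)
The plan is to differentiate, with respect to $s$, the polynomial identity supplied by Lemma 1.20. Then multiply through by the nonzero constant $\prod^{m}_{l=1}(s_0-z_{l})^{\gamma_{l}}$ to clear the denominator coming from the gain $K(s_0)$.

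First, write $K(s_0)=(a_0+ib_0)\prod^{n}_{j=1}(s_0-p_{j})^{\beta_{j}}/\prod^{m}_{l=1}(s_0-z_{l})^{\gamma_{l}}$, as in the proof of Lemma 1.20. This expression is legitimate exactly when $s_0$ is not a zero of (1.1), so that the denominator is nonzero. By Lemma 1.20 the characteristic polynomial with this fixed gain factors as
\[
K(s_0)\prod^{m}_{l=1}(s-z_{l})^{\gamma_{l}}-(a+ib)\prod^{n}_{j=1}(s-p_{j})^{\beta_{j}}=(s-s_0)^{\gamma_0}g(s),
\]
and this holds as an identity of polynomials in $s$. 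It is not merely an equation at the single point $s_0$, since $K(s_0)$ is a constant once $s_0$ is fixed.

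Second, differentiate both sides in $s$. Both sides are polynomials, and $K(s_0)$, $a+ib$ are constants, so we get
\[
K(s_0)\Bigl(\prod^{m}_{l=1}(s-z_{l})^{\gamma_{l}}\Bigr)^{'}-(a+ib)\Bigl(\prod^{n}_{j=1}(s-p_{j})^{\beta_{j}}\Bigr)^{'}=\bigl((s-s_0)^{\gamma_0}g(s)\bigr)^{'}
\]
for all finite $s$. Lemma 1.20 guarantees that $g$ and $g^{'}$ are finite polynomials, so no term is singular.

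Third, multiply both sides by $\prod^{m}_{l=1}(s_0-z_{l})^{\gamma_{l}}$. The left-hand side becomes $TT1(s)$, with the misplaced parenthesis in the statement read as $\bigl(\prod(s-p_j)^{\beta_j}\bigr)^{'}$. The right-hand side becomes $TT2(s)$, which gives (1.3).

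The only real obstacle is the exceptional set. If $s_0$ is a zero of (1.1), the gain is infinite and the construction of Lemma 1.20 breaks down; this is precisely the exclusion in the statement. For the point at infinity I would either interpret (1.3) as an identity of polynomials, so that it holds on all of $\mathbb{C}$ and extends trivially by comparing coefficients, or appeal to Lemma 1.3 in the case where $s_\infty$ has a finite gain $A$, replacing $K(s_0)$ by $A$.
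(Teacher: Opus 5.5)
Your proposal is correct and matches the paper's proof. You differentiate in $s$ the polynomial identity $K(s_0)\prod_{l}(s-z_l)^{\gamma_l}-(a+ib)\prod_{j}(s-p_j)^{\beta_j}=(s-s_0)^{\gamma_0}g(s)$ from Lemma 1.20, then multiply through by the constant $\prod_{l}(s_0-z_l)^{\gamma_l}$; the exclusion of zeros of (1.1) is what makes $K(s_0)$ well defined and that constant nonzero. The paper only treats finite $s_0$, so your closing remark about the point at infinity is unnecessary. Moreover, replacing $K(s_0)$ by $A$ would not reproduce $TT1$ as written, since $TT1$ contains the separate products $\prod_{j}(s_0-p_j)^{\beta_j}$ and $\prod_{l}(s_0-z_l)^{\gamma_l}$.
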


\begin{proof}
Differentiate the equation: $(a_0+ib_0)\frac{\prod^{n}_{j=1}(s_0-p_{j})^{\beta_{j}}}
{\prod^{m}_{l=1}(s_0-z_{l})^{\gamma_{l}}}\prod^{m}_{l=1}(s-z_{l})^{\gamma_{l}}-
(a+ib)\prod^{n}_{j=1}(s-p_{j})^{\beta_{j}}=(s-s_0)^{\gamma_0}g(s)$.

$(a_0+ib_0)\frac{\prod^{n}_{j=1}(s_0-p_{j})^{\beta_{j}}}
{\prod^{m}_{l=1}(s_0-z_{l})^{\gamma_{l}}}
(\prod^{m}_{l=1}((s-z_{l})^{\gamma_{l}}))^{'}-(a+ib)
(\prod^{n}_{j=1}((s-p_{j})^{\beta_{j}}))^{'}
=((s-s_0)^{\gamma_0}g(s))^{'}$.

Since we exclude the zeros of (1.1), at the finite point $s_0$, the factor ${\prod^{m}_{l=1}(s_0-z_{l})^{\gamma_{l}}}$ of (1.1) is non-zero. Multiplying both sides by this factor ${\prod^{m}_{l=1}(s_0-z_{l})^{\gamma_{l}}}$ yields: $(a_0+ib_0)\prod^{n}_{j=1}(s_0-p_{j})^{\beta_{j}}(\prod^{m}_{l=1}(s-z_{l})^{\gamma_{l}})^{'}-
(a+ib)(\prod^{n}_{j=1}(s-p_{j})^{\beta_{j}})^{'}
\prod^{m}_{l=1}(s_0-z_{l})^{\gamma_{l}}=(\gamma_0(s-s_0)^{\gamma_0-1}g(s)+(s-s_0)^{\gamma_0}g^{'}(s))
\prod^{m}_{l=1}(s_0-z_{l})^{\gamma_{l}}$.

Let $TT1(s)=(a_0+ib_0)\prod^{n}_{j=1}(s_0-p_{j})^{\beta_{j}}(\prod^{m}_{l=1}(s-z_{l})^{\gamma_{l}})^{'}-
(a+ib)(\prod^{n}_{j=1}(s-p_{j})^{\beta_{j}})^{'}
\prod^{m}_{l=1}(s_0-z_{l})^{\gamma_{l}}$.

$TT2(s)=((s-s_0)^{\gamma_0}g(s))^{'}
\prod^{m}_{l=1}(s_0-z_{l})^{\gamma_{l}}$. By utilizing two new expressions, we obtain (1.3).
\end{proof}
The breakaway points must lie on the root loci sharing the same unit complex value. Therefore, in $TT1(s)$, the sign $a_0+ib_0$ before $\prod^{n}_{j=1}(s_0-p_{j})^{\beta_{j}}(\prod^{m}_{l=1}(s-z_{l})^{\gamma_{l}})^{'}$ is identical to the sign $a+ib$ before $(\prod^{n}_{j=1}(s-p_{j})^{\beta_{j}})^{'}\prod^{m}_{l=1}(s_0-z_{l})^{\gamma_{l}}$.

 The gain function is given by: $K(s)=(a+ib)\frac{\prod^{n}_{j=1}  (s-p_j)^{\beta_j}}{\prod^{m}_{l=1} (s-z_l)^{\gamma_l}}$.
\begin{lemma}
Let $s_0=(\sigma_0+it_0)\in\Delta $. If the finite point $s_0$ satisfies equation (1.4),
\begin{equation}
\frac{dK(s)}{ds}=0.
\end{equation}
Then, $TT1(s_0)=0$ and $((s-s_0)^{\gamma_0}g(s))^{'}\mid{_{s=s_0}}=0$.
\end{lemma}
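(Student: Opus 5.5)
We need $TT1(s_0)=0$ and $((s-s_0)^{\gamma_0}g(s))'|_{s=s_0}=0$ whenever $dK/ds$ vanishes at $s_0\in\Delta$. Write $N(s)=\prod_{j=1}^n(s-p_j)^{\beta_j}$ and $M(s)=\prod_{l=1}^m(s-z_l)^{\gamma_l}$, so that $K(s)=(a+ib)N(s)/M(s)$. The plan is to compute $dK/ds$ by the quotient rule, evaluate the numerator at $s_0$, and compare the result with $TT1$ from Lemma 1.21. The identity (1.3) then transfers the vanishing to $TT2$.

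First, since $s_0\in\Delta$, we have $M(s_0)\neq0$ and $M(s_0)\neq\infty$. Hence
\[
\frac{dK}{ds}\Big|_{s=s_0}=(a+ib)\,\frac{N'(s_0)M(s_0)-N(s_0)M'(s_0)}{M(s_0)^2},
\]
and (1.4) is equivalent to $N'(s_0)M(s_0)-N(s_0)M'(s_0)=0$.

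Second, evaluate $TT1$ at $s=s_0$. Following the remark after Lemma 1.21, the breakaway configuration forces $a_0+ib_0=a+ib$. This gives
\[
TT1(s_0)=(a+ib)\bigl(N(s_0)M'(s_0)-N'(s_0)M(s_0)\bigr),
\]
which is $-(a+ib)M(s_0)^2\,\frac{dK}{ds}(s_0)/(a+ib)$ up to sign, and so vanishes by the first step.

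Third, Lemma 1.21 gives $TT1(s)=TT2(s)$ at every non-zero point, in particular at $s_0$. Therefore $((s-s_0)^{\gamma_0}g(s))'|_{s=s_0}\,M(s_0)=0$. Dividing by $M(s_0)\neq0$ yields the second claim. It also shows that $\gamma_0\ge2$, because the derivative at $s_0$ equals $\gamma_0 g(s_0)$ when $\gamma_0=1$, and $g(s_0)\neq0$ by Lemma 1.20.

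The main obstacle is the second step, namely the identification $a_0+ib_0=a+ib$. In the characteristic equation of Lemma 1.20 the constant $a_0+ib_0$ is the unit value attached to $s_0$, while $a+ib$ is the locus value. The statement only holds cleanly if these coincide. I would handle this by noting that the characteristic polynomial is built precisely so that $s_0$ is a root. Its value at $s_0$ equals $(a_0+ib_0)N(s_0)-(a+ib)N(s_0)$, which must be zero, and this forces $a_0+ib_0=a+ib$ whenever $N(s_0)\neq0$, which holds since $s_0\in\Delta$. With that settled, the rest is the routine quotient-rule computation above.
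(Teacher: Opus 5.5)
Your proposal is correct and follows the paper's proof essentially step for step. You apply the quotient rule to $dK/ds$ at $s_0$, evaluate $TT1(s_0)$ with the common unit value $a+ib$, and then use the identity $TT1=TT2$ of Lemma 1.21 and cancel the nonzero factor $\prod_{l=1}^{m}(s_0-z_l)^{\gamma_l}$. Your explicit derivation of $a_0+ib_0=a+ib$, from the fact that $s_0$ is a root of the characteristic polynomial and is not a pole, tightens what the paper only asserts in the remark after Lemma 1.21; your closing observation $\gamma_0\ge 2$ anticipates Lemma 1.23.
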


\begin{proof}
Substituting the finite point $s_0$ into (1.4), according to the requirement of this lemma, we obtain:

$\frac{dK(s_0)}{ds}
=(a_0+ib_0)\frac{(\prod^{n}_{j=1}(s-p_{j})^{\beta_{j}})^{'}\mid{_{s=s_0}}
\prod^{m}_{l=1}(s_0-z_{l})^{\gamma_{l}}-
\prod^{n}_{j=1}(s_0-p_{j})^{\beta_{j}}
}{(\prod^{m}_{l=1}(s_0-z_{l})^{\gamma_{l}})^{2}}$

$\frac{(\prod^{m}_{l=1}((s-z_{l})^{\gamma_{l}}))^{'}\mid{_{s=s_0}}}{}=0$.

From the result of $\frac{dK(s_0)}{ds}$, substituting $s_0$ into the left side of (1.3) yields: $TT1(s_0)=(a_0+ib_0)(\prod^{n}_{j=1}(s_0-p_{j})^{\beta_{j}}(\prod^{m}_{l=1}(s-z_{l})^{\gamma_{l}})^{'}\mid{_{s=s_0}}-
(\prod^{n}_{j=1}(s-p_{j})^{\beta_{j}})^{'}\mid{_{s=s_0}}
\prod^{m}_{l=1}(s_0-z_{l})^{\gamma_{l}})=0.$
Thus, the left side of (1.3) equals 0.

For the right side of (1.3), consider the expression: $TT2(s)=((s-s_0)^{\gamma_0}g(s))^{'}
\prod^{m}_{l=1}(s_0-z_{l})^{\gamma_{l}}$. Since the factor $\prod^{m}_{l=1}(s-z_{l})^{\gamma_{l}}$  has no finite pole, and $s_0$ is neither a zero nor a pole of the function $\prod^{m}_{l=1}(s-z_{l})^{\gamma_{l}}$, if $TT2(s_0)=((s-s_0)^{\gamma_0}g(s))^{'}
\prod^{m}_{l=1}(s_0-z_{l})^{\gamma_{l}}=0$, we can remove the factor $\prod^{m}_{l=1}(s_0-z_{l})^{\gamma_{l}}$. This yields: $((s-s_0)^{\gamma_0}g(s))^{'}\mid{_{s=s_0}}=0$.
\end{proof}

\begin{lemma}
Let $s_0=(\sigma_0+it_0)\in\Delta $. If the finite point $s_0$ satisfies the equation (1.4), then $s_0$ is the finite breakaway point of the root loci of (1.1).
\end{lemma}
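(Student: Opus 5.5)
The plan is to start from Lemma 1.21 and turn the vanishing of $\frac{dK}{ds}$ at $s_0$ into a multiplicity statement for the characteristic equation at $s_0$. By Lemma 1.17, $s_0\in\Delta$ lies on a root locus of some degree $\alpha_0=\varphi(\sigma_0,t_0)$. Its gain is $K(s_0)$ and its unit complex value is $a_0+ib_0$. By Lemma 1.20, the characteristic polynomial with this gain factors as
\[
(s-s_0)^{\gamma_0}g(s), \qquad g(s_0)\neq 0 .
\]
Here $\gamma_0\ge 1$, because $s_0$ is a root. Lemma 1.21 gives $\bigl((s-s_0)^{\gamma_0}g(s)\bigr)'\big|_{s=s_0}=0$. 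If $\gamma_0=1$, this derivative equals $g(s_0)\neq 0$, which is a contradiction. Hence $\gamma_0\ge 2$, so $s_0$ is a root of multiplicity at least two of the characteristic equation at the fixed gain $K(s_0)$ and the fixed unit value $a_0+ib_0$.

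Next, I will convert this multiplicity into several branches of root locus meeting at $s_0$. Write $F(s)=RF(s)^{-1}$ locally. This is holomorphic near $s_0$, since $s_0\in\Delta$ is neither a zero nor a pole. Then the gain function satisfies $K(s)=(a+ib)F(s)$ up to the normalisation in Notation 1. The condition $K'(s_0)=0$ means $F'(s_0)=0$. Let $k\ge 2$ be the order of vanishing of $F-F(s_0)$ at $s_0$. Then
\[
F(s)=F(s_0)+c(s-s_0)^k\bigl(1+O(s-s_0)\bigr), \qquad c\neq 0 .
\]
The set where $\arg F(s)=\arg F(s_0)$, which by Theorem 1.8 is exactly the root locus through $s_0$ with unit value $a_0+ib_0$, is locally the preimage of the ray through $F(s_0)$. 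Near $s_0$ this preimage consists of $2k\ge 4$ arcs emanating from $s_0$ at equal angles $\pi/k$. These arcs are the $k$ local branches of the locus crossing at $s_0$, one for each $k$-th root. All of them share the unit complex value $a_0+ib_0$, consistent with Theorems 1.15 and 1.16.

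Finally, Definition 1.18 then says that $s_0$ is the intersection of at least two root loci, that is, a breakaway point of the root loci of (1.1). Finiteness is immediate, since $s_0\in\mathbb{C}$.

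I expect the main obstacle to be the middle step, namely justifying that a multiple root yields genuinely distinct intersecting curves, rather than a single curve counted twice. I would handle it via the local normal form above, using the holomorphic change of coordinates $w=(s-s_0)\bigl(1+O(s-s_0)\bigr)^{1/k}$. In that coordinate $F-F(s_0)=cw^k$, and the preimage of a line through $F(s_0)$ is visibly $k$ lines through $0$.
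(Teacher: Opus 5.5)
Your proposal is correct, and its first paragraph is the paper's proof: $\gamma_0=1$ is ruled out because $\bigl((s-s_0)^{\gamma_0}g(s)\bigr)'|_{s=s_0}=g(s_0)\neq 0$ would contradict the vanishing of that derivative, so $\gamma_0\ge 2$ and $s_0$ is a multiple root. (That vanishing is Lemma 1.22 in the paper; Lemma 1.21 is only the identity $TT1=TT2$.) The paper then simply asserts that a multiple root is an intersection of at least two root loci, whereas your local normal form $F-F(s_0)=cw^k$ actually justifies that step by exhibiting $k\ge 2$ distinct curves crossing at $s_0$; here $k=\gamma_0$, since the characteristic polynomial equals $\prod_l (s-z_l)^{\gamma_l}\,(K(s_0)-K(s))$. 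So yours is the same argument, with the final step made complete.
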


\begin{proof}
We analyse three cases on the basis of $\gamma_0>0$:

1.  Case $\gamma_0=1$:

If $\gamma_0=1$,  $g^{'}(s_0)\neq\infty$, $(s_0-s_0)^{\gamma_0}g^{'}(s_0)
=0$. $g(s_0)\neq 0$. $(g(s_0)+(s_0-s_0)^{\gamma_0}g^{'}(s_0))=g(s_0)\neq 0$.

$((s-s_0)^{\gamma_0}g(s))^{'}\mid{_{s=s_0}}=(g(s_0)+(s_0-s_0)^{\gamma_0}g^{'}(s_0))
=g(s_0)\neq 0$. By Lemma 1.22, $((s-s_0)^{\gamma_0}g(s))^{'}\mid{_{s=s_0}}=0$. Therefore, this contradiction is generated. Hence, $\gamma_0=1$ is invalid.

2. Case $1>\gamma_0>0$:

If $1>\gamma_0>0$,  $g^{'}(s_0)\neq\infty$, $(s_0-s_0)^{\gamma_0}g^{'}(s_0)
=0$. $\gamma_0(s_0-s_0)^{\gamma_0-1}=\infty$, $g(s_0)\neq 0$. $((s-s_0)^{\gamma_0}g(s))^{'}\mid{_{s=s_0}}=(\gamma_0(s_0-s_0)^{\gamma_0-1}g(s_0)+(s_0-s_0)^{\gamma_0}g^{'}(s_0))=\infty$.

$((s-s_0)^{\gamma_0}g(s))^{'}\mid{_{s=s_0}}=\infty$.

By Lemma 1.22, $((s-s_0)^{\gamma_0}g(s))^{'}\mid{_{s=s_0}}=0$. Therefore, this contradiction is generated. Hence, $1>\gamma_0>0$ is invalid.

3. Case $\gamma_0>1$:

Because $\gamma_0>1$, $((s-s_0)^{\gamma_0}g(s))^{'}\mid{_{s=s_0}}=\gamma_0(s_0-s_0)^{\gamma_0-1}g(s_0)+(s_0-s_0)^{\gamma_0}g^{'}(s_0)=0$ holds. The left-hand side of (1.3) is equal to the right-hand side of (1.3). (1.3) holds. Hence, $\gamma_0>1$ is valid.

By Lemma 1.19, the roots of (1.1) and its characteristic equation are identical.
The left-hand side of the characteristic equation with gain $K(s_0)$ is rewritten as another expression: $(s-s_0)^{\gamma_0}g(s)$. $s_0$ is a root of (1.1).  $\gamma_0$ is a positive integer. Therefore, the condition $\gamma_0>1$ implies that the point $s_0$ is a multiple root of (1.1), and the finite point $s_0$ is the intersection point of at least two root loci. The finite point $s_0$ is the finite breakaway point of (1.1).

Only $\gamma_0>1$ is valid, confirming that $s_0$ is a finite breakaway point.
\end{proof}
\begin{lemma}
Let $s_0=(\sigma_0+it_0)\in\Delta $. If $s_0$ is a finite breakaway point of the root loci of (1.1), then $s_0$ must satisfy (1.4).
\end{lemma}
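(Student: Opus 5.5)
The plan is to prove the converse of Lemma 1.23 through the characteristic-polynomial factorization of Lemma 1.20 and the differentiated identity (1.3) of Lemma 1.21.

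\emph{Step 1: fix the polynomial.} Let $s_0\in\Delta$ be a finite breakaway point. By Definition 1.18, at least two root loci meet at $s_0$. By Theorem 1.15 they share the same unit complex value $a+ib$, and since $s_0$ is a single point they share the gain $K(s_0)=K_0$. Hence $s_0$ is a root of the single characteristic polynomial
$$P(s)=K_0\prod^{m}_{l=1}(s-z_l)^{\gamma_l}-(a+ib)\prod^{n}_{j=1}(s-p_j)^{\beta_j}.$$

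\emph{Step 2: multiplicity.} The main point to make rigorous is that two distinct loci passing through $s_0$ with the same $(K_0,a+ib)$ force $s_0$ to be a root of $P$ of multiplicity $\gamma_0\ge 2$. I would argue by contradiction. Suppose $\gamma_0=1$, so $P'(s_0)\neq 0$. Consider the perturbed polynomial $P_K(s)=K\prod(s-z_l)^{\gamma_l}-(a+ib)\prod(s-p_j)^{\beta_j}$. By the implicit function theorem, the roots of $P_K$ near $s_0$ form a single analytic branch $s(K)$ for $K$ near $K_0$. Therefore only one locus arc of this unit value passes through $s_0$, which contradicts the intersection of two loci.

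Equivalently, near $s_0$ the gain function $K(s)=(a+ib)\prod(s-p_j)^{\beta_j}/\prod(s-z_l)^{\gamma_l}$ is locally injective when $K'(s_0)\neq 0$. The set where $K(s)$ is a positive real number is then a single smooth curve through $s_0$, not a crossing. This is the step I expect to be the main obstacle. The paper's informal notion of "intersection of root loci" must be pinned down as "the preimage of the positive real ray under $K(s)$ is not a single simple arc near $s_0$." I would state this interpretation explicitly and then use the local normal form $K(s)-K_0\sim c(s-s_0)^{k}$ to relate the number of arcs to $k$.

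\emph{Step 3: conclude with Lemma 1.20 and (1.3).} Once $\gamma_0\ge 2$, write $P(s)=(s-s_0)^{\gamma_0}g(s)$ as in Lemma 1.20. Then
$$\bigl((s-s_0)^{\gamma_0}g(s)\bigr)'\big|_{s=s_0}=0.$$
Since $s_0$ is not a zero of (1.1), identity (1.3) holds and gives $TT1(s_0)=TT2(s_0)=0$. Divide by the nonzero factor $\bigl(\prod(s_0-z_l)^{\gamma_l}\bigr)^2$ and use $K_0=(a+ib)\prod(s_0-p_j)^{\beta_j}/\prod(s_0-z_l)^{\gamma_l}$, with matching unit values as remarked after Lemma 1.21. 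The numerator $TT1(s_0)$ is then, up to a nonzero constant, exactly the numerator of $dK/ds$ at $s_0$ computed in Lemma 1.22. Hence $dK(s_0)/ds=0$, which is (1.4).
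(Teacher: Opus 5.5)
Your argument is correct and follows the paper's own route. A breakaway point is a root of multiplicity $\gamma_0\ge 2$ of the characteristic polynomial, so $TT2(s_0)=0$, hence $TT1(s_0)=0$ by (1.3), hence $dK(s_0)/ds=0$. The difference is that the paper only asserts the multiplicity step, whereas your Step~2 proves it via local injectivity of $K(s)$ when $K'(s_0)\neq 0$; since $P'(s_0)=-K'(s_0)\prod_{l=1}^{m}(s_0-z_l)^{\gamma_l}$, that step alone already gives the lemma and Step~3 is a formality.
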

\begin{proof}
From (1.3), we obtain:

$(a_0+ib_0)\frac{\prod^{n}_{j=1}(s_0-p_{j})^{\beta_{j}}}
{\prod^{m}_{l=1}(s_0-z_{l})^{\gamma_{l}}}
(\prod^{m}_{l=1}((s-z_{l})^{\gamma_{l}}))^{'}-
(a+ib)(\prod^{n}_{j=1}((s-p_{j})^{\beta_{j}}))^{'}
=\gamma_0(s-s_0)^{\gamma_0-1}g(s)+(s-s_0)^{\gamma_0}g^{'}(s)$. $g(s_0)\neq\infty$ and  $g^{'}(s_0)\neq\infty$.

Substituting $s_0$ into the right-hand side of the above equation, since $s_0$ is the finite breakaway point of (1.1), $s_0$ is a multiple root of the characteristic equation. Therefore, $\gamma_0>1$. $\gamma_0(s_0-s_0)^{\gamma_0-1}g(s_0)+(s_0-s_0)^{\gamma_0}g^{'}(s_0)=0$. $TT2(s_0)=0$.
By Lemma 1.21, $TT1(s_0)=0$, we obtain: $(a_0+ib_0)(\prod^{n}_{j=1}(s_0-p_{j})^{\beta_{j}}
(\prod^{m}_{l=1}(s-z_{l})^{\gamma_{l}})^{'}\mid{_{s=s_0}}-
\prod^{m}_{l=1}(s_0-z_{l})^{\gamma_{l}}
(\prod^{n}_{j=1}(s-p_{j})^{\beta_{j}})^{'}\mid{_{s=s_0}})
=0$.
Differentiating the gain function and substituting $s_0$ into the differential expression, we obtain:

$\frac{dK(s_0)}{ds}
=(a_0+ib_0)\frac{(\prod^{n}_{j=1}(s-p_{j})^{\beta_{j}})^{'}\mid{_{s=s_0}}
\prod^{m}_{l=1}(s_0-z_{l})^{\gamma_{l}}-
\prod^{n}_{j=1}(s_0-p_{j})^{\beta_{j}}
}{(\prod^{m}_{l=1}(s_0-z_{l})^{\gamma_{l}})^{2}}$

$\frac{(\prod^{m}_{l=1}((s-z_{l})^{\gamma_{l}}))^{'}\mid{_{s=s_0}}}{}
=0$.
\end{proof}

Lemma 1.23 proves the sufficient condition for breakaway points, whereas Lemma 1.24 proves the necessary condition. Combining these two lemmas, we obtain the following sufficient and necessary conditions for breakaway points.

\begin{theorem}
Let $s_0=(\sigma_0+it_0)\in\Delta $. The finite point $s_0$ is a finite breakaway point of the root loci of (1.1) if and only if $s_0$ satisfies (1.4).
\end{theorem}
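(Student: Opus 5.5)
Theorem 1.25 follows by combining the two one-directional lemmas just proved. For sufficiency, take $s_0\in\Delta$ with $\frac{dK(s)}{ds}\big|_{s=s_0}=0$. Lemma 1.22 gives $TT1(s_0)=0$ and $((s-s_0)^{\gamma_0}g(s))'|_{s=s_0}=0$. Lemma 1.23 then shows that the multiplicity $\gamma_0$ of $s_0$ as a root of the characteristic equation with gain $K(s_0)$ must exceed $1$, so $s_0$ is a breakaway point. For necessity, take a finite breakaway point $s_0\in\Delta$. Lemma 1.24 shows that $s_0$ is a multiple root of the characteristic equation, so $TT2(s_0)=0$. Identity (1.3) then forces $TT1(s_0)=0$, which is exactly (1.4). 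Putting the two directions together gives the equivalence.

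For a self-contained check I would argue directly, without the case analysis on fractional $\gamma_0$, which is vacuous since $\gamma_0$ is a positive integer. Write $N(s)=\prod_{l}(s-z_l)^{\gamma_l}$ and $D(s)=\prod_j(s-p_j)^{\beta_j}$. Fix $s_0\in\Delta$, let $c=a+ib=K_0\,N(s_0)/D(s_0)$ with $K_0=|D(s_0)/N(s_0)|$, and set
\begin{equation*}
F(s)=K_0N(s)-cD(s).
\end{equation*}
This polynomial vanishes at $s_0$ by construction. Using $D(s_0)=K_0N(s_0)/c$, we get
\begin{equation*}
F'(s_0)=K_0N'(s_0)-cD'(s_0)=\frac{c}{N(s_0)}\bigl(D(s_0)N'(s_0)-N(s_0)D'(s_0)\bigr).
\end{equation*}
Because $c\neq 0$ and $N(s_0)\neq 0$, $F'(s_0)=0$ holds if and only if $(D/N)'(s_0)=0$, which is (1.4) up to the nonzero factor $(a+ib)$. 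Finally, $s_0$ being a double or higher root of the characteristic equation for its own gain and unit value is, by Definition 1.18, the meaning of several loci with the same unit complex number meeting at $s_0$.

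The main obstacle is this last identification: showing that a multiple root is the same thing as an intersection of root loci. Near a root of multiplicity $\gamma_0\ge 2$, $D/N-D(s_0)/N(s_0)$ behaves like $C(s-s_0)^{\gamma_0}$. So the level set $\arg(N/D)=\arg c$ consists locally of $2\gamma_0$ arcs through $s_0$, that is, $\gamma_0$ crossing curves. At a simple root, $D/N$ is conformal at $s_0$, so there is a single smooth arc. I would make this local-normal-form argument explicit to justify the "if and only if".
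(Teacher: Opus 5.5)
Your proposal is correct and follows the paper, which proves Theorem 1.25 simply by combining Lemma 1.23 (sufficiency, via Lemma 1.22 and ruling out $\gamma_0=1$) with Lemma 1.24 (necessity, via $TT2(s_0)=0$ and identity (1.3)). Your direct computation of $F'(s_0)$ is a streamlined form of the same $TT1/TT2$ argument. Your local-normal-form argument adds a justification the paper lacks: Lemmas 1.23 and 1.24 only assert that a multiple root of the characteristic equation is the same thing as an intersection of root loci, and your argument supplies the reason.
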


When the unit complex values of the root loci are distinct, these root loci cannot intersect. When the unit complex values of the root loci are identical, (under these conditions)the root loci may intersect. The intersection points at finite locations are finite breakaway points. These intersected root loci separate from these finite breakaway points again. Multiple zeros and poles of the root locus equation cannot exhibit this property. Thus, the finite breakaway points of the root loci are distinct from the multiple zeros and poles of (1.1).

\begin{theorem}
Let $s=(\sigma+it)\in\Delta$. (Except at the multiple zeros and multiple poles of (1.1).)
Then,

1. $K^{'}(s)$ and $RF^{'}(s)$ share the same zeros.

2. Each finite breakaway point of (1.1) is a finite zero of $K^{'}(s)$ and $RF^{'}(s)$.

3. Each finite zero of $K^{'}(s)$ and $RF^{'}(s)$ is a finite breakaway point of (1.1).
\end{theorem}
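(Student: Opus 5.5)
The plan is to compare $K'(s)$ and $RF'(s)$ directly and then read off parts 2 and 3 from Theorem 1.25. The gain function is $K(s)=(a+ib)/RF(s)$ on $\Delta$, so on $\Delta$ I will differentiate it explicitly:
\[
K'(s)=-(a+ib)\,\frac{RF'(s)}{RF(s)^{2}} .
\]
For $s\in\Delta$, $RF(s)$ is finite and nonzero, and $a+ib$ is a unit complex number. Hence the factor $-(a+ib)/RF(s)^{2}$ is finite and nonzero at $s$, and $K'(s)=0$ if and only if $RF'(s)=0$. This gives part 1.

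The parenthetical exclusion of multiple zeros and poles needs a short separate remark. At a multiple zero $z_l$ with $\gamma_l\ge 2$, we have $RF'(z_l)=0$ while $K$ has a pole, so $K'(z_l)$ is not zero. The situation at multiple poles is reversed. These points lie outside $\Delta$ anyway, and the paper remarks that they are not breakaway points. So the equivalence holds exactly on $\Delta$, and excluding them is both necessary and sufficient for the statement.

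For parts 2 and 3 I will invoke Theorem 1.25: a point $s_0\in\Delta$ is a finite breakaway point of (1.1) if and only if $dK(s_0)/ds=0$, which is (1.4). Combining this with part 1 turns the condition $K'(s_0)=0$ into $RF'(s_0)=0$. Part 2 is the forward direction, obtained by applying Lemma 1.24 and then part 1. Part 3 is the converse, obtained by applying part 1 and then Lemma 1.23.

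One point needs care. In Lemmas 1.21 to 1.24 the gain function is used with a fixed unit value $a_0+ib_0=a+ib$, namely the unit value of the locus through $s_0$. Part 1 is insensitive to the choice of that unit factor, because the factor never vanishes. So I will note that the zero set of $K'$ does not depend on which constant unit value is used. If $K$ is instead read as the real modulus $|1/RF|$, I would replace it by the holomorphic $K(s)$ defined just before Lemma 1.22, which is the object actually used in (1.4).

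I expect the main obstacle to be this bookkeeping rather than any computation. Concretely, I must ensure three things: the differentiated $K$ is the holomorphic gain function; $s_0$ ranges over $\Delta$, so that $RF(s_0)\neq 0,\infty$; and the exclusion of multiple zeros and poles is correctly matched to where the factor $1/RF^{2}$ degenerates. Once these are fixed, the rest is a direct chain of Theorem 1.25 and the identity displayed above.
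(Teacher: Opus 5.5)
Your proposal is correct and follows the paper's proof. The paper compares the quotient-rule numerators of $dK/ds$ and $RF'(s)$, which differ only by the nonvanishing factor $-(a+ib)$; your identity $K'(s)=-(a+ib)\,RF'(s)/RF(s)^{2}$ is the same observation in compact form, and like you it then obtains part 2 from Lemma 1.24 and part 3 from Theorem 1.25 (equivalently Lemma 1.23).
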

\begin{proof}
In the proof of Lemma 1.22 and Lemma 1.24, we present the expression of $\frac{dK(s)}{ds}$. Therefore, we omit it here. For $RF(s)=\frac{\prod^{m}_{l=1} (s-z_l)^{\gamma_l}}{ \prod^{n}_{j=1}  (s-p_j)^{\beta_j}}$. Its derivative is as follows:

$(RF(s))^{'}
=\frac{(\prod^{m}_{l=1} (s-z_l)^{\gamma_l})^{'}\prod^{n}_{j=1}  (s-p_j)^{\beta_j}-\prod^{m}_{l=1} (s-z_l)^{\gamma_l}(\prod^{n}_{j=1}  (s-p_j)^{\beta_j})^{'}}{(\prod^{n}_{j=1}  (s-p_j)^{\beta_j})^2}$.

The multiple zeros and multiple poles of $RF$ are zeros of the numerator polynomials of $\frac{dK(s)}{ds}$ and $(RF(s))^{'}$. Specifically:

The multiple zeros of $RF$ are zeros of $(RF(s))^{'}$, but not zeros of $\frac{dK(s)}{ds}$.

The multiple poles of $RF$ are zeros of $\frac{dK(s)}{ds}$, but not zeros of $(RF(s))^{'}$.

1. Excluding the multiple zeros and multiple poles of the $RF$, since $-(a+ib)$ lies on the unit circle and cannot be zero, the critical points of $K(s)$ and $RF(s)$ are identical.

2. The zeros of (1.4) are the critical points of $K(s)$. By Lemma 1.24, each finite breakaway point of (1.1) is finite critical point of $K(s)$, so, each finite breakaway point of (1.1) is finite critical point of $RF(s)$.

3. Excluding the multiple zeros and multiple poles of the $RF$, by Theorem 1.25, each zero of $K^{'}(s)$  is finite breakaway point of (1.1). Each finite zero of  $RF^{'}(s)$ is finite breakaway point of (1.1).
\end{proof}

The multiple zeros can be critical points of $RF$. Similarly, the multiple poles can be critical points of $K(s)$. However, they cannot be breakaway points of (1.1), so we exclude them in Theorem 1.25 and Theorem 1.26.

Theorem 1.25 shows the breakaway points of (1.1) are identical to the critical points of $K(s)$. Theorem 1.26 shows the critical points of $K(s)$ are identical to the critical points of $RF(s)$. Therefore, Theorem 1.26 also shows the breakaway points of (1.1) are identical to the critical points of $RF(s)$. By  Theorem 1.25, the intersection points of the root loci of (1.1) are critical points of $RF(s)$.

\begin{lemma}
Assume $RF$ is a real constant coefficient rational function. Then:

(1). The root loci of $2q\pi$ and $2q\pi+\pi$ degree of (1.1) are symmetric about the real axis.

(2). The root loci on the real axis are the root loci of $2q\pi$ or $2q\pi+\pi$ degree of (1.1).

(3). An interval on the real axis is a $2q\pi+\pi$ degree root locus if and only if the number of real poles and zeros of (1.1) to its right is odd.

(4). An interval on the real axis is a $2q\pi$ degree root locus if and only if the number of real poles and zeros of (1.1) to its right is even.

(5). The interval between two adjacent zero and pole (zero-pole, zero-zero, or pole-pole) is the root loci of the same degree.
\end{lemma}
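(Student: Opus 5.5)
The plan is to work directly from the phase-angle description of the root loci in Lemma 1.5 together with Theorem 1.8. With real coefficients, the zeros $z_l$ and poles $p_j$ of (1.1) that are non-real occur in conjugate pairs with equal exponents.

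\emph{Part (1).} Use the identity $\overline{RF(s)}=RF(\bar s)$, valid because $RF$ has real coefficients. Then $\varphi(\sigma,-t)=-\varphi(\sigma,t)$ modulo $2\pi$, so the reflection $s\mapsto\bar s$ maps the locus of degree $\alpha$ onto the locus of degree $-\alpha$. Since $2q\pi$ and $2q\pi+\pi$ are congruent to their negatives modulo $2\pi$, Theorem 1.8 shows that the reflection preserves these two loci.

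\emph{Part (2).} Take a real point $s=\sigma\in\Delta$. Then $RF(\sigma)$ is a nonzero real number, so its argument is $2q\pi$ if $RF(\sigma)>0$ and $2q\pi+\pi$ if $RF(\sigma)<0$. By Lemma 1.17 and Theorem 1.8, $\sigma$ lies on a locus of exactly one of these two degrees.

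\emph{Parts (3) and (4).} For real $\sigma$, evaluate $\varphi(\sigma,0)=\sum\gamma_l\arg(\sigma-z_l)-\sum\beta_j\arg(\sigma-p_j)$ term by term.
\begin{itemize}
\item A conjugate pair $z,\bar z$ contributes $\gamma(\arg(\sigma-z)+\arg(\sigma-\bar z))\equiv 0 \pmod{2\pi}$, because the two arguments are negatives of each other. Non-real poles cancel the same way.
\item A real zero or pole lying to the left of $\sigma$ contributes $0$.
\item A real zero or pole lying to the right of $\sigma$ contributes $\pm\pi$ times its exponent.
\end{itemize}
Hence $\varphi(\sigma,0)\equiv \pi N \pmod{2\pi}$, where $N$ is the number of real zeros and poles to the right of $\sigma$, counted with multiplicity. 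Equivalently, the sign of $RF(\sigma)$ is $(-1)^N$. Statements (3) and (4) then follow from part (2).

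I should flag one point here. The statement says "the number of real poles and zeros", but the argument requires counting with multiplicity, that is, summing the exponents. I would make this explicit, since it matches the paper's convention $\gamma_l,\beta_j\in\mathbb{N}$.

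\emph{Part (5).} On an open interval between adjacent real zeros and poles, $N$ is constant, so $\varphi(\sigma,0)$ is constant modulo $2\pi$ and the whole interval lies on a single locus. As an alternative argument, $RF$ is continuous and nonvanishing on that interval, so its sign cannot change.

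\emph{Main obstacle.} The main difficulty is keeping the branch of $\arg$ consistent, since the paper distinguishes degrees such as $2q\pi$ from one another and not only modulo $2\pi$. I would handle this by working modulo $2\pi$ throughout, which is all that the unit complex value $a+ib=\pm1$ records. I would also remark that any specific integer $q$ is fixed by the chosen branch and plays no role in the statement.
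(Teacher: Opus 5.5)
Your proposal is correct and follows essentially the same route as the paper: conjugate symmetry $RF(\bar s)=\overline{RF(s)}$ for (1), reality of $RF$ on the real axis for (2), and the term-by-term phase count for (3)--(5), in which conjugate pairs and real zeros/poles to the left contribute nothing modulo $2\pi$ while those to the right contribute $\pi$ times their exponent. Your multiplicity caveat is well taken and agrees with the paper's own computation, which arrives at $(\sum\gamma_l+\sum\beta_j)\pi$ and so counts the real zeros and poles with their exponents.
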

\begin{proof}
(1). The rational function can be written as $RF(s)=u(\sigma,t)+iv(\sigma,t)$. Since $RF$ is a real constant coefficient rational function, it is a conjugate function. For the conjugate function $RF$, $RF(\overline{s})=u(\sigma,t)-iv(\sigma,t)$. For the root loci of $2q\pi$ and $2q\pi+\pi$ degree of (1.1), $v(\sigma,t)=0$. Therefore, for all points on these root loci of $2q\pi$ and $2q\pi+\pi$ degree of (1.1), $RF(s)=RF(\overline{s})=u(\sigma,t)$. Thus, the root loci of $2q\pi$ and $2q\pi+\pi$ degree of (1.1) are symmetric about the real axis.

(2). Since $RF$ is a real constant coefficient rational function, it takes real values on the real axis. Hence, the root loci on the real axis are the root loci of $2q\pi$ and $2q\pi+\pi$ degree of (1.1).

For any point $s_{0}$ on the selected section of the real axis, the zeros and poles of (1.1) are conjugate symmetric with respect to the real axis. The sum of the phase angles of the vectors between conjugate complex numbers and an arbitrary point on the real axis is $2\pi$. Therefore, determining the phase angle of an arbitrary point on the real axis does not require considering the effects of conjugate zeros and poles. When determining the degree of the root locus of a real constant coefficient rational function on the real axis, we do not need to consider the conjugate zeros and poles.

The phase angle of vector between a real zero  on the left side of $s_{0}$ and point $s_{0}$ is $2\pi$. The phase angle of vector between a real pole on the left side of $s_{0}$ and point $s_{0}$ is $2\pi$. Thus, determining the phase angle of an arbitrary point on the real axis does not need to consider the real zeros and real poles on the left side of $s_{0}$. When determining the degree of the root locus of a real constant coefficient rational function on the real axis, we do not need to consider the real zeros and real poles on the left side of $s_{0}$.

Let $z_{l}$ be a real zero to the right of $s_{0}$, and let $\gamma_l$ be the exponent of $z_{l}$. The phase angle of the vector between $z_{l}$ and $s_{0}$ is $\gamma_l\pi$. The sum of the phase angles of the vectors between the real zeros on the right side of $s_{0}$ and $s_{0}$ is $\sum_{l=1}^{m_1} \gamma_l\pi$. Similarly, let $p_{j}$ be a real pole to the right of $s_{0}$, and let $\beta_j$ be the exponent of $p_{j}$. The phase angle of the vector between $p_{j}$ and $s_{0}$ is $\beta_j\pi$. The sum of the phase angles of the vectors between the real poles on the right side of $s_{0}$ and $s_{0}$ is $\sum_{j=1}^{n_1} \beta_j\pi$. The degree of the root locus of a real constant coefficient rational function in this interval is $\sum_{l=1}^{m_1} \gamma_l\pi-\sum_{j=1}^{n_1} \beta_j\pi=(\sum_{l=1}^{m_1} \gamma_l-\sum_{j=1}^{n_1} \beta_j)\pi$.

In $(\sum_{l=1}^{m_1} \gamma_l-\sum_{j=1}^{n_1} \beta_j)\pi$, $-\pi$ and $\pi$ represents the same degrees, subtracting an integer multiple of $\pi$ is equivalent to adding an integer multiple of $\pi$. Therefore, $(\sum_{l=1}^{m_1} \gamma_l-\sum_{j=1}^{n_1} \beta_j)\pi=(\sum_ {l=1}^{m_1} \gamma_l+\sum_{j=1}^{n_1} \beta_j)\pi$. By $(\sum_ {l=1}^{m_1} \gamma_l+\sum_{j=1}^{n_1} \beta_j)\pi$, (3) and (4) hold.

(5). By $(\sum_ {l=1}^{m_1} \gamma_l+\sum_{j=1}^{n_1} \beta_j)\pi$, the phase angles of all the points in this interval are identical. The interval between two adjacent zero and pole (zero and pole, zero and zero, pole and pole) is the same degree root loci.
\end{proof}

\begin{theorem}
1. The number of  breakaway points of (1.1) is finite. The breakaway points are isolated to distributed in $\mathbb{C}\cup\{\infty\}$

2. Any segment of all the root loci of an arbitrary degree cannot overlap in the finite positions of $\mathbb{C}\cup\{\infty\}$.
\end{theorem}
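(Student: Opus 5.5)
For part 1, I would use Theorem 1.25 and Theorem 1.26. By those results, every finite breakaway point $s_0\in\Delta$ is a zero of $K'(s)$, and equivalently a zero of $RF'(s)$ once the multiple zeros and poles of (1.1) are excluded. Writing $RF=N/D$ with $N=\prod^{m}_{l=1}(s-z_l)^{\gamma_l}$ and $D=\prod^{n}_{j=1}(s-p_j)^{\beta_j}$, the finite zeros of $RF'$ inside $\Delta$ are among the zeros of the polynomial $W(s)=N'(s)D(s)-N(s)D'(s)$. I first check that $W\not\equiv 0$. Indeed, $W\equiv 0$ would force $(N/D)'\equiv 0$, so $RF$ would be constant, which is excluded by hypothesis. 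Hence $W$ is a nonzero polynomial of degree at most $\sum\gamma_l+\sum\beta_j-1$, and so it has finitely many zeros. Therefore the finite breakaway points are finitely many. The point at infinity adds at most one more point, so the total number of breakaway points in $\mathbb{C}\cup\{\infty\}$ is finite. Isolation is then immediate, since any finite subset of the Riemann sphere consists of isolated points.

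For part 2, I would argue by contradiction. Suppose two root loci overlap along a segment, that is, along an arc $\Gamma$ of positive length at finite positions. By Theorem 1.15, two loci that meet must share the same unit complex value $a+ib$. Every point of $\Gamma$ lying in $\Delta$ is then an intersection point of at least two root loci, so by Definition 1.18 it is a breakaway point. An arc contains uncountably many points, and removing the finitely many zeros and poles leaves infinitely many breakaway points. This contradicts part 1.

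The delicate step is the meaning of ``overlap''. One could object that two branches traversing the same arc are really one locus rather than an intersection. To close this gap, I would use the characteristic equation from Lemma 1.19 and Lemma 1.20. At each point $s_0\in\Gamma$ that lies on two distinct branches, $s_0$ is a multiple root of $K(s_0)N(s)-(a+ib)D(s)=0$, which gives $\gamma_0>1$. Lemma 1.24 then gives $K'(s_0)=0$, so $W(s_0)=0$. A nonzero polynomial cannot vanish on an arc, so the contradiction stands.

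An alternative route to the same conclusion uses analytic continuation. Along $\Gamma$ the function $\arg RF$ is constant, so $RF$ maps $\Gamma$ into a ray. Branches meeting transversally give distinct local sheets, and they can coincide only where $RF'=0$, which again happens at only finitely many points.
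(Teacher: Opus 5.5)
Your proposal is correct and follows the same route as the paper: finite breakaway points are zeros of $dK/ds$, whose numerator is a polynomial with finitely many zeros, so breakaway points are finite in number and isolated, and an overlapping segment would supply infinitely many breakaway points, contradicting part 1. Your version is slightly more careful than the paper's. You check that $W=N'D-ND'$ is not identically zero because $RF$ is non-constant, and you use the multiple-root argument of Lemmas 1.19--1.24 to show that points of an overlapping arc are actually zeros of $W$; the paper simply asserts both steps.
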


\begin{proof}
1. To compute the finite breakaway points and prove their existence, we use the formula for computing finite breakaway points. Equation (1.4) provides a necessary and sufficient condition for computing the finite breakaway point:

$\frac{dK(s)}{ds}
=(a+ib)\frac{(\prod^{n}_{j=1}(s-p_{j})^{\beta_{j}})^{'}
\prod^{m}_{l=1}(s-z_{l})^{\gamma_{l}}-
\prod^{n}_{j=1}(s-p_{j})^{\beta_{j}}(\prod^{m}_{l=1}(s-z_{l})^{\gamma_{l}})^{'}
}{(\prod^{m}_{l=1}(s-z_{l})^{\gamma_{l}})^{2}}=0.$

$\frac{dK(s)}{ds}$ has only finite number of zeros. The breakaway points are zeros of $\frac{dK(s)}{ds}$. These finite zeros must be isolated in $\mathbb{C}\cup\{\infty\}$; they cannot be continuously distributed. Therefore, these finite zeros cannot form a continuous curve.

2. The finite breakaway point is the intersection point of the root locus of the root locus equation (1.1). An overlapping segment consists of continuous intersection points of the root locus of (1.1). If such overlapping segments exist, they would be special intersection points, specifically an infinite number of continuous intersection points.  On the basis of the previous proof, any segment of all the root loci of an arbitrary degree number cannot overlap.
\end{proof}
\begin{definition}
In $\mathbb{C}\cup\{\infty\}$, if an angle $\phi$ is between the positive direction of the real axis and the tangent line of the root loci of arbitrary $2q\pi+\alpha$ degree that originates from a finite pole, then the angle $\phi$ is called the \emph{angle of origination} of the root loci of $2q\pi+\alpha$ degree of (1.1).
\end{definition}

\begin{definition}
In $\mathbb{C}\cup\{\infty\}$, if an angle $\vartheta$ is between the positive direction of the real axis and the tangent line of the root loci of arbitrary $2q\pi+\alpha$ degree that terminates at a finite zero, then the angle $\vartheta$ is called the \emph{angle of receiving} of the root loci of $2q\pi+\alpha$ degree of (1.1).
\end{definition}

The \emph{angle of origination} and \emph{angle of receiving} are both positive in the counterclockwise orientation.

\begin{theorem}
In $\mathbb{C}\cup\{\infty\}$, let $P_{k_1}$ be a finite pole of (1.1) with multiplicity $\beta_{k_1}$. The angle of origination of the root locus of an arbitrary $2q\pi+\alpha$ degree of (1.1), which originates from the finite pole $P_{k_1}$, is given by:

$\phi_{k_1}=\frac{1}{\beta_{k_1}}(2q\pi-\alpha+\sum\limits_{l=1}^{m}
(\gamma_{l}arg{(P_{k_1}-z_{l})})-\sum\limits_{j=1,j \neq k_1}^{n}
(\beta_{j}arg{(P_{k_1}-p_{j})})).$
\end{theorem}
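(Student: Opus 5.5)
The plan is to read the phase condition (1.2) at points very close to the pole $P_{k_1}$ and pass to the limit. By Theorem 1.8, a point $s\in\Delta$ lies on the $2q\pi+\alpha$ degree root locus exactly when $\varphi(\sigma,t)=2q\pi+\alpha$. By Lemma 1.5 this reads
\[
\sum_{l=1}^{m}\gamma_l\arg(s-z_l)-\sum_{j=1,j\neq k_1}^{n}\beta_j\arg(s-p_j)-\beta_{k_1}\arg(s-P_{k_1})=2q\pi+\alpha .
\]
Take a point of a branch leaving $P_{k_1}$ and write it as $s=P_{k_1}+\varepsilon e^{i\phi(\varepsilon)}$ with $\varepsilon>0$ small. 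Then $\arg(s-P_{k_1})=\phi(\varepsilon)$, and the tangent direction at $P_{k_1}$ is, by Definition 1.29, $\phi_{k_1}=\lim_{\varepsilon\to0}\phi(\varepsilon)$.

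Next I let $\varepsilon\to0$ in the remaining terms. For every zero $z_l$ and every pole $p_j$ with $j\neq k_1$, the point $P_{k_1}$ is at positive distance from them. By continuity of the argument away from $0$, $\arg(s-z_l)\to\arg(P_{k_1}-z_l)$ and $\arg(s-p_j)\to\arg(P_{k_1}-p_j)$. Substituting these limits into the displayed condition and solving for the only unknown gives
\[
\beta_{k_1}\phi_{k_1}=-2q\pi-\alpha+\sum_{l}\gamma_l\arg(P_{k_1}-z_l)-\sum_{j\neq k_1}\beta_j\arg(P_{k_1}-p_j).
\]
Since $q$ ranges over all integers, $-2q\pi$ may be renamed $2q\pi$. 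Dividing by $\beta_{k_1}$ then yields the stated formula. The $\beta_{k_1}$ distinct directions obtained by letting $q$ vary modulo $\beta_{k_1}$ are the $\beta_{k_1}$ branches leaving $P_{k_1}$; by Lemma 1.13, $P_{k_1}$ is indeed an origination point.

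The main obstacle is to justify that a branch leaving $P_{k_1}$ really is a curve with a tangent line at $P_{k_1}$, i.e.\ that $\phi(\varepsilon)$ converges, rather than merely assuming it. My first attempt is a local normal form. Near $P_{k_1}$, write $RF(s)=(s-P_{k_1})^{-\beta_{k_1}}h(s)$, where $h$ is analytic and nonvanishing in a small disc. Choose an analytic branch $h^{1/\beta_{k_1}}$ and set $w=(s-P_{k_1})\,h(s)^{-1/\beta_{k_1}}$. This is a conformal coordinate at $P_{k_1}$, since $w'(P_{k_1})\neq0$, and in it $RF=w^{-\beta_{k_1}}$. The phase condition becomes $-\beta_{k_1}\arg w=2q\pi+\alpha$, so in the $w$-plane the locus near $0$ is a union of $\beta_{k_1}$ straight rays from the origin.

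A conformal map sends rays at $0$ to smooth arcs at $P_{k_1}$. The tangent direction of such an arc is the ray direction shifted by $\arg w'(P_{k_1})=-\tfrac{1}{\beta_{k_1}}\arg h(P_{k_1})$. This both proves existence of the tangent and reproduces the formula, because
\[
\arg h(P_{k_1})=\sum_l\gamma_l\arg(P_{k_1}-z_l)-\sum_{j\neq k_1}\beta_j\arg(P_{k_1}-p_j).
\]
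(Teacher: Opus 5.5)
Your limiting argument is exactly the paper's proof of Theorem 1.31: freeze every factor other than $(s-P_{k_1})^{\beta_{k_1}}$ at $P_{k_1}$, identify $\arg(s-P_{k_1})$ with the origination angle, solve, and rename $-2q\pi$ as $2q\pi$; your local normal form $RF=w^{-\beta_{k_1}}$ is a genuine improvement, since the paper simply assumes the branch has a tangent at $P_{k_1}$. One small sign fix in that part: the inverse map has derivative $1/w'(P_{k_1})$ at $0$, so the preimage of the ray $\arg w=\theta$ leaves $P_{k_1}$ in direction $\theta-\arg w'(P_{k_1})=\theta+\frac{1}{\beta_{k_1}}\arg h(P_{k_1})$, not $\theta+\arg w'(P_{k_1})$, and only this sign reproduces the stated formula.
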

\begin{proof}
Consider a finite point $s_1$ on the root locus of $2q\pi+\alpha$ degree. Let $s_1$ approach infinitely close to the finite pole $P_{k_1}$ with multiplicity $\beta_{k_1}$, which needs to compute its angle of origination. Since $s_1$ is infinitely close to $P_{k_1}$, the phase angles $\beta_{k_1} arg{(s_1-p_j)}$ of vectors between the finite point $s_{1}$ and the finite poles $p_j$, $j\neq k_1$. (Namely, except for the finite pole $P_{k_1}$.) can all be substituted by angles $\beta_{k_1}arg{(P_{k_1}-p_{j})}$, $j\neq k_1$. Similarly, the phase angle $\gamma_{k_1}arg{(s_1-z_{l})}$ of vectors between the finite point $s_{1}$ and all of the finite zeros $z_l$ of (1.1) all can be substituted by angles $\gamma_{k_1}arg{(P_{k_1}-z_{l})}$.

Therefore, the vector between finite point $s_{1}$ and finite pole $P_{k_1}$ is infinitely approaching to the tangent line of the root locus of arbitrary $2q\pi+\alpha$ degree that originates from $P_{k_1}$. By Definition 1.29, the phase angle of vectors between finite point $s_{1}$ and finite pole $P_{k_1}$ be infinitely approach the angle of origination $\phi_{k_1}$.

The finite point $s_{1}$ must satisfy the phase angle condition equation,
$\sum\limits_{l=1}^{m}
(\gamma_{l}arg{(s_1-z_{l})})-\sum\limits_{j=1}^{n}
(\beta_{j}arg{(s_1-p_{j})})=2q\pi+\alpha$.
Substituting $s_{1}$ with $P_{k_1}$, we obtain: $\sum\limits_{l=1}^{m}
(\gamma_{l}arg{(P_{k_1}-z_{l})})-\sum\limits_{j=1,j \neq k_1}^{n}
(\beta_{j}arg{(P_{k_1}-p_{j})})-\beta_{k_1}\phi_{k_1}=2q\pi+\alpha$.
$\beta_{k_1}\phi_{k_1}=\sum\limits_{l=1}^{m} (\gamma_{l}arg{(P_{k_1}-z_{l})})-\sum\limits_{j=1,j \neq k_1}^{n}
(\beta_{j}arg{(P_{k_1}-p_{j})})-2q\pi-\alpha$.

Since $2q\pi$ and $-2q\pi$ are equivalent, we substitute $2q\pi$ with $-2q\pi$. After transposing, we obtain the formula in Theorem 1.31.
\end{proof}

By repeating the proof of Theorem 1.31, we obtain Theorem 1.32.

\begin{theorem}
In $\mathbb{C}\cup\{\infty\}$, let $Z_{k_1}$ be a finite zero of (1.1) with multiplicity $\gamma_{k_1}$. The angle of receiving on the root locus of an arbitrary $2q\pi+\alpha$ degree of (1.1), which ends at the finite zero $Z_{k_1}$, is given by:

$\vartheta_{k_1}=\frac{1}{\gamma_{k_1}}(2q\pi+\alpha-\sum\limits_{l=1, l \neq k_1}^{m}
(\gamma_{l}arg{(Z_{k_1}-z_{l})})+\sum\limits_{j=1}^{n}
(\beta_{j}arg{(Z_{k_1}-p_{j})})).$
\end{theorem}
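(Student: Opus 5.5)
The plan mirrors the proof of Theorem 1.31, with the roles of zeros and poles exchanged and the direction of approach reversed. Fix a finite zero $Z_{k_1}$ of multiplicity $\gamma_{k_1}$. Take a point $s_1\in\Delta$ on a $2q\pi+\alpha$ degree root locus that terminates at $Z_{k_1}$, and let $s_1\to Z_{k_1}$ along that branch.

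\textbf{Step 1: the phase condition near $Z_{k_1}$.} By Theorem 1.8 and Lemma 1.5, $s_1$ satisfies
\[
\sum_{l=1}^{m}\gamma_l\arg(s_1-z_l)-\sum_{j=1}^{n}\beta_j\arg(s_1-p_j)=2q\pi+\alpha .
\]
I separate the term $\gamma_{k_1}\arg(s_1-Z_{k_1})$ from the rest. As $s_1\to Z_{k_1}$, every other vector $s_1-z_l$ ($l\neq k_1$) and $s_1-p_j$ is bounded away from zero. Hence, by continuity of $\arg$ away from the origin, each of their arguments tends to $\arg(Z_{k_1}-z_l)$, respectively $\arg(Z_{k_1}-p_j)$.

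\textbf{Step 2: identify the limit angle.} The remaining term $\arg(s_1-Z_{k_1})$ is the direction of the chord from $Z_{k_1}$ to $s_1$. As $s_1\to Z_{k_1}$ this chord tends to the tangent direction of the branch at $Z_{k_1}$, which by Definition 1.30 is the angle of receiving $\vartheta_{k_1}$.

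\textbf{Step 3: pass to the limit and solve.} This gives
\[
\gamma_{k_1}\vartheta_{k_1}+\sum_{l\neq k_1}\gamma_l\arg(Z_{k_1}-z_l)-\sum_{j}\beta_j\arg(Z_{k_1}-p_j)=2q\pi+\alpha .
\]
Solving for $\vartheta_{k_1}$ and dividing by $\gamma_{k_1}$ yields the stated formula. Unlike Theorem 1.31, the sign of $2q\pi+\alpha$ stays positive here, because the unknown term enters with a $+$ sign. The different integers $q$ give the $\gamma_{k_1}$ distinct receiving directions, which is consistent with Lemma 1.11.

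\textbf{Main obstacle.} The delicate point is Step 2: showing that the chord direction converges to the tangent direction. This needs the branch ending at $Z_{k_1}$ to have a well-defined tangent there. I would obtain this locally. Near $Z_{k_1}$ write $RF(s)=(s-Z_{k_1})^{\gamma_{k_1}}h(s)$ with $h$ holomorphic and $h(Z_{k_1})\neq0$. Then the phase condition reads
\[
\gamma_{k_1}\arg(s-Z_{k_1})+\arg h(s)=2q\pi+\alpha ,
\]
where $\arg h$ is smooth near $Z_{k_1}$. Via the local coordinate $w=(s-Z_{k_1})h(s)^{1/\gamma_{k_1}}$, the locus becomes $\gamma_{k_1}$ rays in $w$. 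These map to smooth arcs emanating from $Z_{k_1}$ with well-defined tangents. With that in hand, Steps 1 and 3 are routine bookkeeping of arguments modulo $2\pi$.
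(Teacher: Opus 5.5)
Your proposal is correct and follows the same route as the paper. The paper proves this theorem by repeating the proof of Theorem 1.31: let a point of the branch tend to $Z_{k_1}$, freeze all other arguments at their values at $Z_{k_1}$, identify $\arg(s_1-Z_{k_1})$ with the receiving angle, and solve the phase condition for $\vartheta_{k_1}$. Your local-coordinate argument with $w=(s-Z_{k_1})h(s)^{1/\gamma_{k_1}}$ supplies the existence of the tangent, which the paper takes for granted, and in fact already yields the formula directly.
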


\begin{theorem}
For the root locus originating from an arbitrary finite pole $P_{k_1}$ of (1.1) in $\mathbb{C}\cup\{\infty\}$, if the angle $\phi_{k_1}$ of origination is treated as an independent variable, then the degree values of these root loci strictly and monotonically increase in the clockwise direction around $P_{k_1}$.
\end{theorem}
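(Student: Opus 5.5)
The plan is to read the statement off the origination-angle formula of Theorem 1.31 and then justify it directly by a local expansion of $RF$ near the pole. Solving the formula of Theorem 1.31 for the degree gives
$$2q\pi+\alpha \;=\; C_{k_1}-\beta_{k_1}\phi_{k_1}, \qquad C_{k_1}=\sum_{l=1}^{m}\gamma_l\arg(P_{k_1}-z_l)-\sum_{j\neq k_1}\beta_j\arg(P_{k_1}-p_j),$$
where $2q\pi$ and $-2q\pi$ are identified as in the proof of Theorem 1.31. The quantity $C_{k_1}$ depends only on the position of $P_{k_1}$ relative to the other zeros and poles, not on the locus chosen. So the degree is an affine function of $\phi_{k_1}$ with slope $-\beta_{k_1}<0$. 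Since $\phi_{k_1}$ is measured counterclockwise (after Definitions 1.29 and 1.30), letting $\phi_{k_1}$ decrease, that is, turning clockwise about $P_{k_1}$, makes the degree increase strictly, at rate $\beta_{k_1}$.

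Because the proof of Theorem 1.31 works only in the limit $s_1\to P_{k_1}$, I would make this rigorous with a local factorization. Write $RF(s)=(s-P_{k_1})^{-\beta_{k_1}}h(s)$, where $h$ is the product of the remaining factors. Then $h$ is analytic and nonzero on a disc $|s-P_{k_1}|<\rho$ that contains no other zero or pole. On the circle $s=P_{k_1}+re^{i\theta}$ with $0<r<\rho$, choose a continuous branch of the argument (Lemma 1.5). This gives
$$\varphi(\theta)=\arg h(P_{k_1}+re^{i\theta})-\beta_{k_1}\theta,\qquad \frac{d\varphi}{d\theta}=\operatorname{Re}\!\Big(\frac{h'(s)}{h(s)}\,re^{i\theta}\Big)-\beta_{k_1}.$$
Since $h'/h$ is bounded near $P_{k_1}$, the derivative equals $-\beta_{k_1}+O(r)$. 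It is therefore strictly negative for all $r$ below some $r_0>0$, uniformly in $\theta$.

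It follows that on every sufficiently small circle about $P_{k_1}$, the phase angle is strictly decreasing counterclockwise and strictly increasing clockwise. By Theorem 1.8 and Definition 1.10, this phase angle is the degree of the root locus through each point of the circle. By Lemma 1.17, every point of the small punctured disc lies on exactly one such locus. By Theorem 1.15, loci with distinct unit complex values do not meet. By Theorem 1.28, loci do not overlap, and there are no breakaway points in a small enough disc. Hence each root locus leaving $P_{k_1}$ crosses each small circle exactly once, near the ray of angle $\phi_{k_1}$. Letting $r\to 0$, the crossing angle tends to $\phi_{k_1}$, and the degree of each locus is constant along it. So the ordering of degrees by crossing angle on small circles passes to the ordering by origination angle, which is the claimed strict monotonicity. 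The same expansion also recovers the limit formula of Theorem 1.31.

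I expect the main obstacle to be bookkeeping with the multivalued argument. Over one full clockwise turn the degree increases by $2\beta_{k_1}\pi$, and this must be matched with the $\beta_{k_1}$ distinct loci of each fixed unit complex value that emanate from a pole of multiplicity $\beta_{k_1}$. I would handle it by working on the universal cover, i.e. with the continuous branch in $\theta\in\mathbb{R}$, and stating monotonicity there. The branch choices $2q\pi$ then label exactly the distinct loci with the same unit complex value.
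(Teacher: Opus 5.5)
Your proof is correct, and its first paragraph is the paper's argument. The paper writes the formula of Theorem 1.31 for two degrees $\alpha_1,\alpha_2$ at the same pole, with the same $2q\pi$. Subtracting gives $\alpha_2-\alpha_1=\beta_{k_1}(\phi_{k_1,1}-\phi_{k_1,2})$, and the paper then checks the two sign cases. That is your observation that the degree is affine in $\phi_{k_1}$ with slope $-\beta_{k_1}$.

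The rest of your proposal is a genuinely different, self-contained route that the paper does not take. The factorization $RF(s)=(s-P_{k_1})^{-\beta_{k_1}}h(s)$ and the estimate $d\varphi/d\theta=-\beta_{k_1}+O(r)$ prove the phase is strictly monotone on every small circle, not just in the limit. What this buys:
\begin{itemize}
\item You no longer rely on the heuristic step in the proof of Theorem 1.31, where $s_1$ is simply replaced by $P_{k_1}$. That step tacitly assumes each locus has a tangent at the pole; your expansion proves it.
\item Working with the continuous branch in $\theta\in\mathbb{R}$ makes precise what ``increasing around $P_{k_1}$'' means when degrees are only defined modulo $2\pi$. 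The paper's proof leaves this implicit.
\item Theorems 1.31 and 1.35 come out as byproducts.
\end{itemize}

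Two small remarks. First, the appeals to Lemma 1.17 and Theorems 1.15 and 1.28 are unnecessary. Strict monotonicity of $\varphi$ on each small circle already forces each level set to meet each such circle exactly once on the universal cover. Second, strict inequalities between crossing angles survive only as non-strict ones as $r\to 0$. Take strictness at the level of origination angles from the explicit limit $\phi_{k_1}=(\arg h(P_{k_1})-(2q\pi+\alpha))/\beta_{k_1}$, which your expansion provides.
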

\begin{proof}
The angle of origination of the $\alpha_1$ degree root locus of (1.1) at the finite pole $P_{k_1}$ is given by:
$\phi_{k_1,1}=\frac{1}{\beta_{k_1}}(2q\pi-\alpha_1+\sum\limits_{l=1}^{m}
(\gamma_{l}arg{(P_{k_1}-z_{l})})-\sum\limits_{j=1,j \neq k_1}^{n}
(\beta_{j}arg{(P_{k_1}-p_{j})}))$.

Similarly, the angle of origination of the $\alpha_2$ degree root locus of (1.1) at the finite pole is as follows:
$\phi_{k_1,2}=\frac{1}{\beta_{k_1}}(2q\pi-\alpha_2+\sum\limits_{l=1}^{m}
(\gamma_{l}arg{(P_{k_1}-z_{l})})-\sum\limits_{j=1,j \neq k_1}^{n}
(\beta_{j}arg{(P_{k_1}-p_{j})}))$. Subtracting these two equations yields: $\phi_{k_1,1}-\phi_{k_1,2}=\frac{1}{\beta_{k_1}}(\alpha_2-\alpha_1)$, thus, $\alpha_2-\alpha_1=\beta_{k_1}(\phi_{k_1,1}-\phi_{k_1,2})$.

Here, $\beta_{k_1}$ is the exponent of the finite pole $P_{k_1}$, a positive constant integer. The angles $\phi_{k_1,1}$ and $\phi_{k_1,2}$ represent the origination angles of the $\alpha_1$-degree and $\alpha_2$-degree root loci, respectively. The angle of origination is defined as the angle between the positive real axis and the tangent line of the root locus originating from a finite pole $P_{k_1}$, measured counterclockwise from the positive real axis.

1. If $\phi_{k_1,1}>\phi_{k_1,2}$, $\phi_{k_1,1}-\phi_{k_1,2}>0$. The angle of origination of the root loci of $\alpha_1$ degree is $\phi_{k_1,1}$. The angle of origination of the root loci of $\alpha_2$ degree is $\phi_{k_1,2}$. $\alpha_2-\alpha_1>0$, $\alpha_2>\alpha_1$ holds. Therefore, for two root loci that originate at two distinct angles of origination from a same pole, the root locus of the higher degree is in the clockwise direction of the root locus of the lower degree.

2. If $\phi_{k_1,1}<\phi_{k_1,2}$, $\phi_{k_1,1}-\phi_{k_1,2}<0$, then $\alpha_2-\alpha_1<0$, $\alpha_2<\alpha_1$ holds. Therefore, for two root loci that originate at two distinct angles of origination from the same pole, the root locus of the higher degree is in the clockwise direction of the root locus of the lower degree.

In both cases, the degree values of the root loci originating from $P_{k_1}$ strictly and monotonically increase in the clockwise direction around $P_{k_1}$.
\end{proof}

By $\alpha_2-\alpha_1=\gamma_{k_1}(\varphi_{k_1,2}-\varphi_{k_1,1})$ and repeating the proof of Theorem 1.33, we can obtain Theorem 1.34.

\begin{theorem}
For the root locus received by an arbitrary finite zero $Z_{k_1}$ of (1.1) in $\mathbb{C}\cup\{\infty\}$, the angle of receiving $\vartheta_{k_1}$ is an independent variable. Rotating the finite zero $Z_{k_1}$, the degree values of these root loci strictly and monotonously decrease clockwise around $Z_{k_1}$.
\end{theorem}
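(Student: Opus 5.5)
The approach is to mirror the proof of Theorem 1.33, using the receiving-angle formula of Theorem 1.32 in place of the origination formula of Theorem 1.31.

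First, fix a finite zero $Z_{k_1}$ of (1.1) with multiplicity $\gamma_{k_1}$. Take two root loci of degrees $\alpha_1$ and $\alpha_2$ ending at $Z_{k_1}$, with receiving angles $\vartheta_{k_1,1}$ and $\vartheta_{k_1,2}$. Write down Theorem 1.32 for each, using the same branch $2q\pi$. The sums over the other zeros and the poles are identical in the two formulas, because they depend only on $Z_{k_1}$ and not on the degree. Subtracting therefore gives
$$\vartheta_{k_1,2}-\vartheta_{k_1,1}=\frac{1}{\gamma_{k_1}}(\alpha_2-\alpha_1),\qquad\text{i.e.}\qquad \alpha_2-\alpha_1=\gamma_{k_1}(\vartheta_{k_1,2}-\vartheta_{k_1,1}).$$
The sign here is the opposite of the one in Theorem 1.33, because $\alpha$ enters Theorem 1.32 with a plus sign.

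Second, split into two cases, exactly as in Theorem 1.33. Since $\gamma_{k_1}$ is a positive integer, $\vartheta_{k_1,2}>\vartheta_{k_1,1}$ forces $\alpha_2>\alpha_1$, and $\vartheta_{k_1,2}<\vartheta_{k_1,1}$ forces $\alpha_2<\alpha_1$. Thus the degree is a strictly increasing affine function of the receiving angle, namely $\alpha=\gamma_{k_1}\vartheta+\text{const}$. Receiving angles are measured counterclockwise from the positive real axis, by the convention stated after Definition 1.30. Hence the degree increases counterclockwise around $Z_{k_1}$, which is the same as saying it strictly decreases clockwise.

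The main obstacle is consistency of branches. The comparison is only meaningful if both degrees are taken with the same $q$, and both receiving angles in the same $2\pi$-window, on a locally continuous choice of argument near $Z_{k_1}$. I would state this explicitly: restrict the receiving angles to a window of length less than $2\pi$ and fix $q$. Then the affine relation above holds without any $2\pi$ ambiguity, and monotonicity follows.
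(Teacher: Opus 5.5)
Your proposal is correct and is essentially the paper's argument: the paper also subtracts two instances of the receiving-angle formula of Theorem 1.32 to get $\alpha_2-\alpha_1=\gamma_{k_1}(\vartheta_{k_1,2}-\vartheta_{k_1,1})$, then repeats the two-case sign analysis of Theorem 1.33. You state explicitly that both degrees use a common $q$ and a single continuous branch of the receiving angle; this is a useful precision that the paper leaves implicit.
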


\begin{theorem}
Assume that a circle is rotated from $0$ to $2\pi$ degrees around a pole $P_{k_1}$. The pole $P_{k_1}$ with the exponent $\beta_{k_1}$ of (1.1) emits root loci from $2q\pi$ degrees to $2q\pi+2\beta_{k_1}\pi$ degrees. Specifically, the pole $P_{k_1}$ with the exponent $\beta_{k_1}$ of (1.1) emits $\beta_{k_1}$ branches of $2q\pi$ degree root loci and $\beta_{k_1}$ branches of $2q\pi+\pi$ degree root loci.
\end{theorem}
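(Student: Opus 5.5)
We prove Theorem 1.35 by a local analysis at $P_{k_1}$, combined with the formula for the angle of origination (Theorem 1.31) and the monotonicity of degrees around a pole (Theorem 1.33). Near $P_{k_1}$ we write
\[
RF(s)=(s-P_{k_1})^{-\beta_{k_1}}h(s),
\]
where $h$ is holomorphic near $P_{k_1}$ and $h(P_{k_1})\neq 0$. On a small circle $s=P_{k_1}+\varepsilon e^{i\phi}$, Lemma 1.5 gives
\[
\varphi=\arg h(s)-\beta_{k_1}\phi .
\]
As $\varepsilon\to 0$, the term $\arg h(s)$ tends uniformly to the constant $C=\sum_{l}\gamma_l\arg(P_{k_1}-z_l)-\sum_{j\neq k_1}\beta_j\arg(P_{k_1}-p_j)$. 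This is exactly the constant that appears in Theorem 1.31.

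The first step is to track the phase as $\phi$ runs once around the circle. As $\phi$ goes from $0$ to $2\pi$ counterclockwise, $\varphi$ decreases continuously and strictly by exactly $2\beta_{k_1}\pi$; going clockwise, it increases, in agreement with Theorem 1.33. The swept range is therefore an interval of length $2\beta_{k_1}\pi$. After normalising $q$, this gives the first claim: the loci emitted from $P_{k_1}$ have all degrees from $2q\pi$ to $2q\pi+2\beta_{k_1}\pi$. This also matches Lemma 1.12.

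The second step is counting. Solving the equation of Theorem 1.31 for a fixed target degree $\alpha$ gives
\[
\phi=\frac{C-\alpha+2q\pi}{\beta_{k_1}}.
\]
Modulo $2\pi$, this yields exactly $\beta_{k_1}$ distinct directions, obtained as $q$ runs over $\beta_{k_1}$ consecutive integers, and these directions are spaced $2\pi/\beta_{k_1}$ apart. Taking $\alpha\equiv 0$ produces $\beta_{k_1}$ directions for the $2q\pi$-degree loci, and taking $\alpha\equiv\pi$ produces $\beta_{k_1}$ directions for the $2q\pi+\pi$-degree loci. The two families interlace, offset from each other by $\pi/\beta_{k_1}$.

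The main obstacle is to show that each admissible direction really corresponds to exactly one branch of the locus leaving $P_{k_1}$, and not merely to a tangent direction. I would handle this with a local conformal coordinate. Since $h(P_{k_1})\neq0$, we can choose a holomorphic branch $w=(s-P_{k_1})\,h(s)^{-1/\beta_{k_1}}$, so that $RF=w^{-\beta_{k_1}}$ near $P_{k_1}$. This map is locally biholomorphic because $w'(P_{k_1})\neq0$. In the $w$-coordinate, the set $\{\arg RF\equiv\alpha\}$ is precisely the union of $\beta_{k_1}$ rays $\arg w=-(\alpha+2q\pi)/\beta_{k_1}$. Pulling back, we get $\beta_{k_1}$ disjoint analytic arcs issuing from $P_{k_1}$ with the tangent angles computed above. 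By Theorem 1.28(2), distinct branches cannot overlap. By Lemma 1.13, each of these arcs has $P_{k_1}$ as its origination point. This completes the count of $\beta_{k_1}$ branches of each of the two real degrees.
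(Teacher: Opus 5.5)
Your argument is correct, and its first two steps are essentially the paper's proof. The paper writes the origination-angle formula of Theorem 1.31 for the directions $\phi_{k_1}$ and $\phi_{k_1}+2\pi$ and subtracts, getting $\alpha_1-\alpha_2=2\beta_{k_1}\pi$. From a degree range of length $2\beta_{k_1}\pi$ it then reads off $\beta_{k_1}$ directions with $\alpha\equiv 0$ and $\beta_{k_1}$ directions with $\alpha\equiv\pi \pmod{2\pi}$.

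Where you genuinely differ is your third step. The paper treats each solution of the angle condition as a branch without further argument. It never shows that a locus actually leaves $P_{k_1}$ in each admissible direction, nor that only one does; Theorem 1.31 only constrains possible tangent directions. Your normal form $RF=w^{-\beta_{k_1}}$, with $w=(s-P_{k_1})h(s)^{-1/\beta_{k_1}}$ locally biholomorphic, closes exactly this gap.

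In fact the normal form proves the whole theorem on its own. Near $P_{k_1}$, the set where $RF$ is real and nonzero is the preimage of the $2\beta_{k_1}$ rays $\arg w\in\frac{\pi}{\beta_{k_1}}\mathbb{Z}$. On these rays the signs alternate: $RF>0$ exactly on those with $\arg w\in\frac{2\pi}{\beta_{k_1}}\mathbb{Z}$. Your Steps 1 and 2 therefore become consistency checks, recovering the tangent angles of Theorem 1.31 and the monotonicity of Theorem 1.33.

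For the same reason you do not need Theorem 1.28(2) or Lemma 1.13. Disjointness of the arcs follows from injectivity of $w$ near $P_{k_1}$. Since $K=|1/RF|=|w|^{\beta_{k_1}}$ increases from $0$ along each ray, $P_{k_1}$ is directly seen to be the origination point of every arc. Dropping those two citations makes your proof self-contained, resting only on the local normal form.
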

\begin{proof}
By Theorem 1.31, the two angles of departure at the pole $P_{k_1}$ with the exponent $\beta_{k_1}$ in the same direction are given by:

$\phi_{k_1}=\frac{1}{\beta_{k_1}}(2q\pi-\alpha_1+\sum\limits_{l=1}^{m}
(\gamma_{l}arg{(P_{k_1}-z_{l})})-\sum\limits_{j=1,j \neq k_1}^{n}
(\beta_{j}arg{(P_{k_1}-p_{j})})$.

$\phi_{k_1}+2\pi=\frac{1}{\beta_{k_1}}(2q\pi-\alpha_2+\sum\limits_{l=1}^{m}
(\gamma_{l}arg{(P_{k_1}-z_{l})})-\sum\limits_{j=1,j \neq k_1}^{n}
(\beta_{j}arg{(P_{k_1}-p_{j})})$.

Subtracting the two equations side by side, we obtain:  $-2\pi=\frac{1}{\beta_{k_1}}(-\alpha_1+\alpha_2)$.
 $-2\pi\beta_{k_1}=-\alpha_1+\alpha_2$. Thus, $\alpha_1-\alpha_2=2\beta_{k_1}\pi$.

$\alpha_1-\alpha_2=2\beta_{k_1}\pi$. This result shows that the pole $P_{k_1}$ with the exponent $\beta_{k_1}$ of (1.1) emits root loci from $2q\pi$ degree to $2q\pi+2\beta_{k_1}\pi$ degree. The root loci from $2q\pi$ degree to $2q\pi+2\beta_{k_1}\pi$ degree, consist of the $\beta_{k_1}$ branches of $2q\pi$ degree root loci and $\beta_{k_1}$ branches of $2q\pi+\pi$ degree root loci. Therefore, the pole $P_{k_1}$ with the exponent $\beta_{k_1}$ of (1.1) emits $\beta_{k_1}$ branches of $2q\pi$ degree root loci and $\beta_{k_1}$ branches of $2q\pi+\pi$ degree root loci.
\end{proof}

By repeating the proof of Theorem 1.35, we can obtain Theorem 1.36.

\begin{theorem}
Assume that rotate a circle form 0 to $2\pi$ degrees around a zero $Z_{k_1}$, the zero $Z_{k_1}$ with the exponent $\gamma_{k_1}$ of (1.1) receives the root loci from $2q\pi$ degree to $2q\pi+2\gamma_{k_1}\pi$ degree. Specifically, the zero $Z_{k_1}$ with the exponent $\gamma_{k_1}$ of (1.1) receives $\gamma_{k_1}$ branches of $2q\pi$ degree root loci and $\gamma_{k_1}$ branches of $2q\pi+\pi$ degree root loci.
\end{theorem}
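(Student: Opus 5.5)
The argument will mirror the proof of Theorem 1.35, with the pole replaced by the zero $Z_{k_1}$ and Theorem 1.31 replaced by Theorem 1.32. The idea is to follow the receiving angle $\vartheta_{k_1}$ once around $Z_{k_1}$ and record how the degree of the arriving root locus changes. The main tool is the receiving-angle formula of Theorem 1.32, which is linear in $\alpha$ with slope $1/\gamma_{k_1}$. The monotonicity statement of Theorem 1.34 guarantees that the degree changes continuously and strictly in one direction as we go around.

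\textbf{Step 1.} Fix a direction of arrival $\vartheta_{k_1}$ and denote by $\alpha_1$ the degree of the root locus arriving along it. By Theorem 1.32,
$$\vartheta_{k_1}=\frac{1}{\gamma_{k_1}}\Big(2q\pi+\alpha_1-\sum_{l\neq k_1}\gamma_l\arg(Z_{k_1}-z_l)+\sum_{j}\beta_j\arg(Z_{k_1}-p_j)\Big).$$

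\textbf{Step 2.} Rotate once around $Z_{k_1}$, so that the direction $\vartheta_{k_1}+2\pi$ is reached, and let $\alpha_2$ be the corresponding degree. The same formula holds with $\alpha_2$ in place of $\alpha_1$. The sums over the other zeros and poles are identical in both formulas, because they are evaluated at $Z_{k_1}$ itself. Subtracting the two formulas therefore gives $2\pi=(\alpha_2-\alpha_1)/\gamma_{k_1}$, that is, $\alpha_2-\alpha_1=2\gamma_{k_1}\pi$.

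\textbf{Step 3.} Combine this with Theorem 1.34. As the direction sweeps the full circle, the degree passes continuously and monotonically through an interval of length $2\gamma_{k_1}\pi$, namely from $2q\pi$ to $2q\pi+2\gamma_{k_1}\pi$. Consequently every intermediate degree in that range is received at $Z_{k_1}$, which is consistent with Lemma 1.11.

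\textbf{Step 4.} Count the branches. An interval of length $2\gamma_{k_1}\pi$ contains exactly $\gamma_{k_1}$ values of the form $2q'\pi$ and exactly $\gamma_{k_1}$ values of the form $2q'\pi+\pi$, counted half-open so that the endpoint is not counted twice. Each such value is attained in exactly one direction, by strict monotonicity. Hence $Z_{k_1}$ receives $\gamma_{k_1}$ branches of $2q\pi$ degree and $\gamma_{k_1}$ branches of $2q\pi+\pi$ degree.

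The main obstacle is making the sweep argument rigorous, which happens in Steps 2 and 3. One must justify that the local phase near $Z_{k_1}$ behaves like $\gamma_{k_1}\arg(s-Z_{k_1})$ plus a quantity that is constant up to $o(1)$. This follows from the factorization $RF(s)=(s-Z_{k_1})^{\gamma_{k_1}}h(s)$ with $h(Z_{k_1})\neq 0$ finite. With that factorization, the phase is continuous in the direction and increases by exactly $2\gamma_{k_1}\pi$ over one revolution. Both the monotonicity claim and the branch count then follow from this winding computation.
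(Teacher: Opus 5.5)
Your proposal is correct and follows essentially the paper's route: the paper proves this theorem only by saying it follows by repeating the proof of Theorem 1.35, i.e.\ writing the receiving-angle formula of Theorem 1.32 at $\vartheta_{k_1}$ and at $\vartheta_{k_1}+2\pi$, subtracting to get a degree change of $2\gamma_{k_1}\pi$, and then reading off $\gamma_{k_1}$ branches of each of the degrees $2q\pi$ and $2q\pi+\pi$. Your justification via $RF(s)=(s-Z_{k_1})^{\gamma_{k_1}}h(s)$ is the right way to give that subtraction content, since $\vartheta_{k_1}$ and $\vartheta_{k_1}+2\pi$ are the same direction and the real statement is about winding. One small correction: the total winding of $2\gamma_{k_1}\pi$ does not by itself give strict monotonicity on a small circle $|s-Z_{k_1}|=r$; that comes from the derivative bound $\partial_\theta\arg h(Z_{k_1}+re^{i\theta})=O(r)$.
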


\section{The results and proofs for the Gauss-Fourier Conjecture}
All lemmas and theorems in this section are original to the authors and do not rely on prior literature.

The general real polynomial $p(s)$ of degree $n$ is as follows: $p(s)=a_0s^n+a_1s^{n-1}+\cdots+a_{n-1}s+a_n$.

\begin{equation}
\frac{p^{'}(s)}{p(s)}.
\end{equation}

Notation 3:

Let $p_s$ be a zero of $p(s)$ with multiplicity $r_s\ge 1$. $r_s$ is multiplicity of $p_s$. $D(s)$ is a factor of $p(s)$ except the factor $(s-p_s)^{r_s}$. $s_{1,i_1}$ are zeros of $p(s)$ and their multiplicities are $r_{1,i_1}$. $r_{1,i_1}$ is multiplicity of $s_{1,i_1}$. $m_0$ denotes the number of multiple zeros of $p(s)$. $s_{2,i_2}$ are simple zeros of $p(s)$. $i_1=1,2,\cdots,m_0$. $n_{0,1}$ denotes the number of distinct simple zeros of $p(s)$. $i_2=1,2, \cdots, n_{0,1}$. $z_{0,1}$ denotes a real zero of $\frac{p^{'}(s)}{p(s)}$. $t_{0,1}$ denotes a real pole of $\frac{p^{'}(s)}{p(s)}$. $L_{0,1}$ denotes the interval between a real zero $z_{0,1}$ and pole $t_{0,1}$ of $\frac{p^{'}(s)}{p(s)}$. $g_{0,1}$ is a root locus emitted from $t_{0,1}$.  $Q_a$ denotes the real breakaway point of the root loci of (2.1). $s_a$ or $\overline{s_a}$ denote a pair of non-real zeros of $p(s)$.

\begin{lemma}
For the real polynomial $p(s)$, all poles of (2.1) are simple.
\end{lemma}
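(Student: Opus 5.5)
The plan is to compute the partial fraction decomposition of the logarithmic derivative directly. Over $\mathbb{C}$, factor $p(s)=a_0\prod_{i}(s-p_i)^{r_i}$, where the $p_i$ are the distinct zeros of $p(s)$ with multiplicities $r_i\ge 1$. Differentiating the product, or equivalently taking the derivative of $\log p(s)$ formally, gives
\begin{equation*}
\frac{p^{'}(s)}{p(s)}=\sum_{i}\frac{r_i}{s-p_i}.
\end{equation*}
The argument then rests on two observations about this expression.

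\textbf{Pole orders.} Each summand $\frac{r_i}{s-p_i}$ has a pole of order exactly one at $p_i$, with nonzero residue $r_i$. All other summands are holomorphic near $p_i$. Hence $\frac{p^{'}(s)}{p(s)}$ has a simple pole at each distinct zero of $p(s)$. The point $s=\infty$ is not a pole, because $\frac{p^{'}(s)}{p(s)}\sim \frac{n}{s}\to 0$.

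\textbf{No cancellation.} To connect with the language of Lemma 1.2, where coincident zeros and poles are removed, I will write the rational function in lowest terms. Using Notation 3, fix a zero $p_s$ of multiplicity $r_s$ and write $p(s)=(s-p_s)^{r_s}D(s)$ with $D(p_s)\neq 0$. Then
\[
p^{'}(s)=(s-p_s)^{r_s-1}\bigl(r_sD(s)+(s-p_s)D^{'}(s)\bigr).
\]
After cancelling $(s-p_s)^{r_s-1}$, the numerator factor takes the value $r_sD(p_s)\neq 0$ at $p_s$, while the denominator retains exactly one factor $(s-p_s)$. Therefore the reduced form of (2.1) has the factor $(s-p_s)$ to the power one in its denominator. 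This covers both the multiple zeros $s_{1,i_1}$ and the simple zeros $s_{2,i_2}$. Every finite pole of (2.1) arises in this way, since the poles can only be zeros of $p(s)$.

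There is no real obstacle here. The only point needing care is to state explicitly that the multiplicity $\beta_j$ in the sense of (1.1) is taken after cancelling the common factor $(s-p_s)^{r_s-1}$, which is exactly the convention of Lemma 1.2. With that convention, $\beta_j=1$ for every pole of (2.1).
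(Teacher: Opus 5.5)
Your proposal is correct and follows essentially the paper's argument. The paper writes $p(s)=D(s)(s-p_s)^{r_s}$ with $D(p_s)\neq 0$ and obtains $\frac{p^{'}(s)}{p(s)}=\frac{D^{'}(s)}{D(s)}+\frac{r_s}{s-p_s}$, which is your local decomposition and lowest-terms computation at a single zero; your explicit remark that the residue $r_s$ is nonzero, so the pole has order exactly one, is a point the paper leaves implicit.
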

\begin{proof}
Let $p_s$ be a zero of $p$ with multiplicity $r_s\ge 1$, so $p(s)=D(s)(s-p_s)^{r_s}$ where $D(p_s)\neq 0$. $p^{'}(s)=D^{'}(s)(s-p_s)^{r_s}+r_sD(s)(s-p_s)^{r_s-1}$. Therefore, $\frac{p^ {'}(s)}{p(s)}=\frac{D^ {'}(s)}{D(s)}+\frac{r_s}{s-p_s}$. Since $D(s)$ does not contain the factor $s-p_s$, it follows that all poles of (2.1) are simple.
\end{proof}

\begin{lemma}
The function (2.1) has two zeros at infinity on the real axis. In $\mathbb{C}\cup\{\infty\}$, all the points at infinity are infinite zeros of (2.1).
\end{lemma}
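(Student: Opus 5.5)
The plan is to compute the gain $K=\left|\frac{p(s)}{p^{'}(s)}\right|$ of the root locus equation $K\frac{p^{'}(s)}{p(s)}=a+ib$ as $s\to\infty$, and then apply Lemma 1.3. Write $p(s)=a_0s^n+\cdots+a_n$ with $a_0\neq 0$ and $n\ge 1$. Then $p^{'}(s)=na_0s^{n-1}+\cdots$, so $\frac{p^{'}(s)}{p(s)}$ is a rational function with numerator degree $n-1$ and denominator degree $n$. By Lemma 2.1 all its poles are simple, and for the zero count we need only the degree difference. The key identity is the partial fraction expansion
\[
\frac{p^{'}(s)}{p(s)}=\sum_{k}\frac{r_k}{s-p_k},
\]
obtained by iterating Lemma 2.1 over the distinct zeros $p_k$ of $p$ with multiplicities $r_k$. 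This gives $s\,\frac{p^{'}(s)}{p(s)}\to\sum_k r_k=n$ as $s\to\infty$, uniformly in the direction of approach.

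First, I will treat an arbitrary point at infinity $s_{\infty}$. From the expansion, $\left|\frac{p(s)}{p^{'}(s)}\right|=\frac{|s|}{n}(1+o(1))\to+\infty$ along every path to infinity in $\mathbb{C}\cup\{\infty\}$. Clause (3) of Lemma 1.3 then says every point at infinity is an infinite zero of (2.1).

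Second, I will count the infinite zeros. The excess of poles over zeros is $n-(n-1)=1$, so Theorem 1.14 (case 1) gives exactly one branch of the $2q\pi$ degree loci and one branch of the $2q\pi+\pi$ degree loci ending at infinity. To locate them, restrict to real $s$. There $\frac{p^{'}(s)}{p(s)}\sim\frac{n}{s}$ is real, positive as $s\to+\infty$ and negative as $s\to-\infty$. Lemma 1.27(2)--(4) then puts the $2q\pi$ degree locus on the ray beyond the largest real zero of $p$ and $p^{'}$, running to $+\infty$. The $2q\pi+\pi$ degree locus lies on the ray to the left of all of them, running to $-\infty$. Since both real rays terminate at infinity with $K=+\infty$, this yields the two infinite zeros on the real axis, one at each end.

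The main obstacle is making precise what "two zeros at infinity on the real axis" means on the extended plane, where $\infty$ is a single point. I would resolve it the way the paper treats $s_{\infty}$: as directional points at infinity, distinguished by the two real ends $\pm\infty$ and their distinct root-locus degrees $2q\pi$ and $2q\pi+\pi$. I would justify this by the sign of $\frac{p^{'}}{p}$ on the two real rays. The uniform limit from the first step then handles all non-real directions at once.
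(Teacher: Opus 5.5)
Your proof is correct and is essentially the paper's argument. Both rest on the expansion $p'(s)/p(s)=\sum_k r_k/(s-p_k)=n/s+O(|s|^{-2})$. The paper reads this directly as $p'/p\to 0$ at $\pm\infty$ and along every direction, while you state the same fact as $K=|p/p'|\to+\infty$ and invoke Lemma 1.3(3). Your extra use of Theorem 1.14 and Lemma 1.27 to identify the two real ends with the single $2q\pi$ and $2q\pi+\pi$ branches reaching infinity is not in the paper, but it is sound. It also gives a cleaner meaning to ``two zeros at infinity'' for what the paper itself notes is a simple zero at the single point $\infty$.
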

\begin{proof}
Let $p(s)$ have $m_0$ distinct multiple zeros $s_{1,i_1}$ with multiplicities $r_{1,i_1}$, $i_1=1,2,\cdots,m_0$. $p(s)$ has $n-\sum_{i_1=1}^{m_0} r_{1,i_1}$ distinct simple zeros (if $p(s)$ has simple zeros). Let $n_{0,1}=n-\sum_{i_1=1}^{m_0} r_{1,i_1}$. $p(s)$ has $n_{0,1}$ distinct simple zeros. Suppose these simple zeros are $s_{2,i_2}$, $i_2=1,2, \cdots, n_{0,1}$. So, we have: $\frac{p^{'}(s)}{p(s)}=\sum_{i_1=1}^{m_0} \frac{r_{1,i_1}}{s-s_{1,i_1}}+\sum_{i_2=1}^{n_{0,1}}\frac{1}{s-s_{2,i_2}}$.

Substituting $s=+\infty$ and $s=-\infty$ separately into $\frac{p^{'}(s)}{p(s)}=\sum_{i_1=1}^{m_0} \frac{r_{1,i_1}}{s-s_{1,i_1}}+\sum_{i_2=1}^{n_{0,1}}\frac{1}{s-s_{2,i_2}}$, yields 0 in both cases. Therefore, the two points at infinity on the real axis are infinite zeros of (2.1).

In $\mathbb{C}\cup\{\infty\}$, the modulus of infinite points must be infinite. Substituting these infinite points into the rational function: $\frac{p^{'}(s)}{p(s)}=\sum_{i_1=1}^{m_0} \frac{r_{1,i_1}}{s-s_{1,i_1}}+\sum_{i_2=1}^{n_{0,1}}\frac{1}{s-s_{2,i_2}}$, results in the function (2.1) equaling 0.  As \(|s| \to \infty\) along any direction, \(\frac{p^{'}(s)}{p(s)} = \frac{n}{s} + O(|s|^{-2})\), so \(\infty\) is a zero of order 1 in $\mathbb{C}\cup\{\infty\}$.
Therefore, all infinite points in $\mathbb{C}\cup\{\infty\}$ are infinite zeros of (2.1).
\end{proof}

\begin{lemma}
If the real polynomial $p(s)$ has at least one real zero, then:

1. Every real pole of (2.1) is separated by at least one real zero of  $p^{'}(s)$.

2. No two real poles of (2.1) are adjacent.
\end{lemma}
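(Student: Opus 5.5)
The plan is to work directly with the partial-fraction form from the proof of Lemma 2.2,
\[
\frac{p^{'}(s)}{p(s)}=\sum_{k}\frac{r_k}{s-\zeta_k},
\]
where $\zeta_k$ runs over the distinct zeros of $p(s)$ and $r_k\ge 1$ are their multiplicities. By Lemma 2.1 the poles of (2.1) are exactly the distinct zeros of $p(s)$. Away from them, the zeros of (2.1) are exactly the zeros of $p^{'}(s)$ that are not zeros of $p(s)$.

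Let $t_1<t_2$ be two consecutive real poles of (2.1), that is, consecutive distinct real zeros of $p(s)$. We restrict $p^{'}/p$ to the open interval $(t_1,t_2)$. There it is a real-valued continuous function, since the non-real zeros of $p$ occur in conjugate pairs and each pair contributes the real quantity $2\,\mathrm{Re}\big(r/(x-\zeta)\big)$ for real $x$.

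Next we examine the behaviour at the endpoints.
\begin{itemize}
\item As $x\to t_1^{+}$, the term $r_1/(x-t_1)$ tends to $+\infty$. All other terms stay bounded, so $p^{'}(x)/p(x)\to+\infty$.
\item As $x\to t_2^{-}$, the term $r_2/(x-t_2)$ tends to $-\infty$, so $p^{'}(x)/p(x)\to-\infty$.
\end{itemize}
By the intermediate value theorem there is $x_0\in(t_1,t_2)$ with $p^{'}(x_0)/p(x_0)=0$. Since $p(x_0)\neq0$, this gives $p^{'}(x_0)=0$. Hence between every two consecutive real poles of (2.1) there lies a real zero of $p^{'}(s)$ which is a genuine zero of (2.1), not a cancelled pole. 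This proves statement 1.

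An alternative route is Rolle's theorem applied to $p$ on $[t_1,t_2]$, since $p(t_1)=p(t_2)=0$. The sign argument above is preferable, because it directly shows that the zero found is not a zero of $p$.

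Statement 2 follows at once. ``Adjacent'' means that no zero of (2.1) lies on the real segment between the two poles, and statement 1 exhibits such a zero for every pair of consecutive real poles.

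The hypothesis ``at least one real zero'' only ensures that real poles exist. If $p$ has exactly one real zero, both statements hold vacuously; we should remark on this explicitly.

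The main obstacle is the interpretive step of making ``separated'' and ``adjacent'' precise in the paper's root-locus language. Lemma 1.27(5) treats the segment between adjacent zero/pole pairs as a single root locus of one degree, so we will phrase the conclusion as follows: the real segment $(t_1,t_2)$ contains a real zero $z_{0,1}$ of (2.1), and it therefore splits into intervals $L_{0,1}$ of pole--zero type. The endpoint-limit argument itself is routine.
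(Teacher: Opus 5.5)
Your proof is correct, and its core is the paper's: the paper applies Rolle's theorem to two adjacent real zeros of $p(s)$. Your intermediate-value argument on $p'(s)/p(s)$, running from $+\infty$ at $t_1^{+}$ to $-\infty$ at $t_2^{-}$, is an equivalent way to obtain the same zero. Your stated reason for preferring it is not a real advantage. Rolle's point lies in the open interval $(t_1,t_2)$, which contains no zero of $p$ because $t_1,t_2$ are consecutive, so it is automatically a genuine zero of (2.1) rather than a cancelled pole.

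The one substantive difference is the single-real-zero case. The paper does not treat it as vacuous; it invokes Lemma 2.2, observing that the two points at infinity on the real axis are zeros of (2.1). This is the form in which the lemma is used later: the proof of Theorem 2.6 says ``real zeros of (2.2) exist on both sides of each real pole.'' Your argument covers only the gaps between consecutive real poles. It says nothing about the outer side of the leftmost and rightmost real poles. There the infinite zeros are genuinely needed, because $p'(s)$ may have no real zero at all: for $p(s)=s(s^2+1)$ one has $p'(s)=3s^2+1$.

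For the statement read literally, as separation between real poles, your proof is complete. To match the paper's later use, add the appeal to Lemma 2.2 for the two outer sides.
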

\begin{proof}
If $p(s)$ has at least two real zeros, by Rolle's theorem, between any two adjacent real zeros of $p(s)$, there lies at least one real zero of $p^{'}(s)$, ensuring poles of $\frac{p^{'}(s)}{p(s)}$ are separated by zeros. Thus, the lemma holds.

If $p(s)$ has only one real zero, the two infinite points on the real axis serve as infinite zeros of (2.1), ensuring the lemma remains valid.
\end{proof}

Establish the root locus equation for the rational function $\frac{p^{'}(s)}{p(s)}$:
\begin{equation}
K\frac{p^{'}(s)}{p(s)}=a+ib.
\end{equation}

Theorem 1.25 provides a necessary and sufficient condition for breakaway points. Theorems 1.25 and 1.26 together explain the mechanism of emergence of critical points in complex rational functions. When root loci have distinct unit complex values, they cannot intersect. When root loci share the same unit complex value, they can intersect. Such intersection points are called "breakaway points" in root locus theory, and each breakaway point is a critical point of the complex rational function. These critical points occur at the intersection points of multiple root loci that share identical unit complex values.

By Lemma 2.2, all infinite points in $\mathbb{C}\cup\{\infty\}$ are zeros of (2.2), which can only receive root loci but cannot emit the root loci. In $\mathbb{C}\cup\{\infty\}$, no other poles can emit root loci. Therefore, each zero of $p^{'}(s)$ receives the root loci that are emitted exclusively from the zeros of $p(s)$. Thus:

\begin{lemma}
Each zero of the root locus equation (2.2) receives root loci, which can only be emitted from the zeros of the real polynomial $p(s)$.
\end{lemma}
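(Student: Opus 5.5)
The plan is to prove Lemma 2.5 by classifying every point of $\mathbb{C}\cup\{\infty\}$ according to its role for the root locus equation (2.2). We then check which of these points can act as an origination point of a root locus. By Lemma 1.13 and Theorem 1.14, every root locus of (2.2) of a given degree is a curve that begins at a pole of (2.2) or at infinity, and ends at a zero of (2.2) or at infinity. Hence a root locus arriving at a zero of (2.2), that is, at a zero of $p^{'}(s)$ not cancelled by $p(s)$, has to be traced backwards to one of its possible origination points. The task therefore reduces to showing that the only admissible origination points are the zeros of $p(s)$.

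First we identify the poles of (2.2). By Lemma 2.1 together with the partial fraction expansion $\frac{p^{'}(s)}{p(s)}=\sum_{i_1}\frac{r_{1,i_1}}{s-s_{1,i_1}}+\sum_{i_2}\frac{1}{s-s_{2,i_2}}$ from the proof of Lemma 2.2, the finite poles of (2.2) are exactly the distinct zeros of $p(s)$, real or non-real, and each is simple. No cancellation with the numerator can occur, since every residue $r_s$ is a positive integer.

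Second we dispose of the point at infinity. By Lemma 2.2, $\frac{p^{'}(s)}{p(s)}=\frac{n}{s}+O(|s|^{-2})$, so by Lemma 1.3(3) the gain $K\to+\infty$ at every infinite point. Infinity is therefore a zero of (2.2) and, by Lemma 1.13, can only be an ending point. In the language of Theorem 1.14, $\sum\beta_j-\sum\gamma_l=1>0$, so exactly one branch of each degree ends at infinity and none begins there.

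Third we dispose of the general points. By Lemma 1.17, every remaining finite point is neither a zero nor a pole, carries a finite positive gain, and, as argued in Lemma 1.13, cannot be an origination point. Since $K$ is continuous and strictly bounded away from $0$ except near poles (Lemma 1.4), a locus arriving at a zero of $p^{'}$ with $K\to+\infty$ must, when followed in the direction of decreasing $K$, reach a point with $K=0$. By Lemma 1.2 such points are exactly the poles of (2.2).

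The main obstacle is making this backward tracing rigorous. One must ensure that following a locus backwards from a zero of $p^{'}(s)$ does not stall at a breakaway point or wander without reaching $K=0$. We would handle breakaway points with Theorems 1.25–1.28: breakaway points are finitely many and isolated, and loci do not overlap, so at each breakaway point we simply continue along some incoming branch. Along any branch, $K$ is monotone away from critical points, because the derivative of $K$ along the level curve of $\arg RF$ does not vanish there. It therefore decreases to $0$, which forces termination at a zero of $p(s)$.
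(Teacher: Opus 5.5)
Your proposal is correct and follows the paper's own reasoning. The paper argues, just before stating the lemma, that by Lemma~2.2 every point at infinity is a zero of (2.2) and so can only receive root loci, while the only poles of (2.2) are the zeros of $p(s)$; hence, by Lemma~1.13, every locus received at a zero of (2.2) is emitted from a zero of $p(s)$. Your backward-tracing step (strict monotonicity of $K$ along arcs of constant $\arg RF$ away from critical points, continuation through the finitely many breakaway points along a branch on which $K$ still decreases, and $K\to+\infty$ at infinity preventing escape) is a sound way to supply the rigor the paper leaves to the informal argument of Lemma~1.13, and it applies equally to the branch received at the infinite zero.
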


The real zeros of $(p^{'}(s)/p(s))^{'}$ are classified into two kinds. Those that are real multiple zeros of $p^{'}(s)$ constitute the second kind of zeros of $(p^{'}(s)/p(s))^{'}$. The remaining real zeros of $(p^{'}(s)/p(s))^{'}$ are referred to as the first kind of zeros of $(p^{'}(s)/p(s))^{'}$. In fact, the first kind of zeros of $(p^{'}(s)/p(s))^{'}$ are the intersection points of the root loci of (2.2).

In the subsequent proof, the concept of root locus extension is essential, it means a root locus moves from a point closer to a pole to a point closer to a zero.

Demonstrating that the generation of real zeros of $(p^{'}(s)/p(s))^{'}$ relates to non-real zeros of $p(s)$ constitutes the most crucial result for their correlation. This is precisely the result Gauss and Fourier sought. Moreover, revealing the principle behind critical point generation in functions holds greater significance than the Gauss-Fourier conjecture itself.

\begin{theorem}
Suppose the real polynomial $p(s)$ has at least one real zero. Then, each real pole of (2.2) (i.e., real zero of $p(s)$) emits exactly one root locus of $2q\pi$ degree to one side (left or right) and one root locus of $2q\pi+\pi$ degree to the opposite side. No additional root loci of $2q\pi$ or $2q\pi+\pi$ degree are emitted from real poles, and no root loci of $2q\pi$ or $2q\pi+\pi$ degree enter $\mathbb{C}^{*}$ directly from real poles.
\end{theorem}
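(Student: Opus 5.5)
The plan is to work locally at a real pole $t_{0,1}$ of (2.2), i.e.\ a real zero of $p(s)$ of multiplicity $r\ge 1$. By Lemma 2.1 this pole is simple. Write
\[
\frac{p'(s)}{p(s)}=\frac{r}{s-t_{0,1}}+\frac{D'(s)}{D(s)},
\]
where $D(t_{0,1})\neq 0$. Hence near $t_{0,1}$ the function $p'/p$ behaves like $r/(s-t_{0,1})$ plus a function holomorphic at $t_{0,1}$.

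Theorem 1.35 with $\beta_{k_1}=1$ then shows that $t_{0,1}$ emits exactly one branch of $2q\pi$ degree and exactly one branch of $2q\pi+\pi$ degree. The origination angles follow from Theorem 1.31 with $\beta_{k_1}=1$. Since $p$ has real coefficients, the conjugate zeros of $p'$ and $p$ contribute in pairs, and real zeros and poles contribute $0$ or $\pi$ each. So the sum $\sum\gamma_l\arg(P_{k_1}-z_l)-\sum_{j\neq k_1}\arg(P_{k_1}-p_j)$ is a multiple of $\pi$, which is the same computation as in the proof of Lemma 1.27(2)--(4). Consequently the angle $\phi_{k_1}$ for $\alpha=0$ and for $\alpha=\pi$ lies in $\{0,\pi\}$. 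The two angles differ by $\pi$, because $\alpha_2-\alpha_1=\beta_{k_1}(\phi_{k_1,1}-\phi_{k_1,2})$ with $\beta_{k_1}=1$, as in the proof of Theorem 1.33. So one branch leaves along the positive real direction and the other along the negative real direction.

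To confirm this directly, and to rule out a tangent start into $\mathbb{C}^{*}$, I would use the real-axis description. By Lemma 1.27(3)--(5), the open interval immediately to the right of $t_{0,1}$ and the one immediately to the left differ in parity by the single pole $t_{0,1}$, since a real pole of multiplicity one contributes one to the count. Hence one adjacent interval is a $2q\pi$ degree locus and the other is a $2q\pi+\pi$ degree locus. Lemma 2.3 guarantees that these intervals are nondegenerate segments bounded by real zeros of $p'$ or by $\pm\infty$ (Lemma 2.2), and that no other pole lies adjacent. So the two real branches really are emitted along the real axis, one to each side.

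Finally, the exclusion of additional branches is a counting argument. Theorem 1.35 caps the number of $2q\pi$ and $2q\pi+\pi$ branches at $\beta_{k_1}=1$ each, and these are already used by the two real segments. Hence no further branch of these degrees can leave $t_{0,1}$ into $\mathbb{C}^{*}$. The main obstacle is making this rigorous, i.e.\ showing the local root locus at a simple pole consists of exactly one analytic arc per degree. I would handle it concretely: near $t_{0,1}$ the function $w=(s-t_{0,1})\,p(s)/(r\,p'(s))$ is a local conformal coordinate, so $\{\arg(p'/p)\in\pi\mathbb{Z}\}$ near $t_{0,1}$ is the preimage of a line through $0$ under a biholomorphism. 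That preimage is a single smooth real-symmetric curve, and since it contains the real segments it must coincide with the real axis locally. This last step also settles any residual doubt left by the informal limiting argument in the proof of Theorem 1.31.
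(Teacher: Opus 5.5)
Your route is the paper's. Its proof just combines Lemma 1.27 (the real axis carries only $2q\pi$/$2q\pi+\pi$ loci), Lemma 2.1 (poles are simple), Theorem 1.35 (a simple pole emits exactly one branch of each degree) and Theorem 1.31 (these branches leave along the real axis). The local-coordinate step you add is not in the paper. It is what actually makes the claim rigorous, since Theorems 1.31 and 1.35 rest on an informal limiting argument.

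That step, however, is misstated as written. Set $u(s)=1+(s-t_{0,1})D'(s)/(rD(s))$, so that $p'/p=r\,u(s)/(s-t_{0,1})$ with $u(t_{0,1})=1$.
\begin{itemize}
\item Your $w=(s-t_{0,1})p/(rp')$ equals $(s-t_{0,1})^2/(r^2u(s))$. It has a double zero at $t_{0,1}$, so it is not a local coordinate.
\item Moreover $\arg w=\arg(s-t_{0,1})-\arg(p'/p)$, so $\{\arg w\in\pi\mathbb{Z}\}$ is not the locus you want. It would be a cross of two arcs through $t_{0,1}$.
\end{itemize}
Take instead $w=p/p'=(s-t_{0,1})/(r\,u(s))$. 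It vanishes simply with $w'(t_{0,1})=1/r>0$, satisfies $w(\bar s)=\overline{w(s)}$, and off $t_{0,1}$ one has $p'/p\in\mathbb{R}$ if and only if $w\in\mathbb{R}$. Its local inverse also commutes with conjugation, so real values have real preimages. Hence near $t_{0,1}$ the $2q\pi$/$2q\pi+\pi$ locus is exactly a real segment. On it, $p'/p>0$ (degree $2q\pi$) to the right and $p'/p<0$ (degree $2q\pi+\pi$) to the left.

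With this correction your last paragraph proves the whole theorem by itself, including which side each branch goes to. The appeals to Theorems 1.31, 1.33 and 1.35 then become superfluous.
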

\begin{proof}
By Theorem 1.27, the real axis is its root loci of $2q\pi$ and $2q\pi+\pi$ degrees of (2.2). By Lemma 2.1, all poles of (2.2) are simple. By Theorem 1.35, each real pole of (2.2) (i.e., real zero of $p(s)$) emits exactly one root locus of $2q\pi$ degree  and one root locus of $2q\pi+\pi$ degree . By Theorem 1.27 and Theorem 1.31, each real pole of (2.2) only emits one root locus of $2q\pi$ degree  and one root locus of $2q\pi+\pi$ degree to real axis.

Therefore, each real pole of (2.2) (i.e., real zero of $p(s)$) emits exactly one root locus of $2q\pi$ degree to one side (left or right) and one root locus of $2q\pi+\pi$ degree to the opposite side. No additional root loci of $2q\pi$ or $2q\pi+\pi$ degree are emitted from real poles, and no root loci of $2q\pi$ or $2q\pi+\pi$ degree enter $\mathbb{C}^{*}$ directly from real poles.
\end{proof}

\begin{theorem}
Suppose the real polynomial $p(s)$ has at least one real zero. Assume that the root loci emitted from the non-real zeros of $p(s)$ do not intersect the real axis. Then, the root loci emitted from the real zeros of $p(s)$ cannot leave the real axis and enter $\mathbb{C}^{*}$. The first kind of zeros of $(p^{'}(s)/p(s))^{'}$  cannot be generated.
\end{theorem}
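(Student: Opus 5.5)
The plan is to argue by contradiction, combining the local structure of root loci at the real poles (Theorem 2.5) with the characterization of breakaway points (Theorems 1.25 and 1.26). Throughout, work with the root locus equation (2.2) for $RF=p^{'}/p$.

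First I would describe the real-axis loci completely. By Lemma 2.1 every pole of (2.2) is simple, and by Lemma 2.2 all points at infinity are zeros of (2.2). Theorem 1.14 and Lemma 1.13 then show that every root locus starts at a finite pole. Here the finite poles are exactly the zeros of $p(s)$, real or non-real. By Theorem 2.5, each real zero $t_{0,1}$ of $p$ emits exactly one $2q\pi$ locus and one $2q\pi+\pi$ locus, and both lie along the real axis, to opposite sides. By Lemma 1.27(3)--(5), each open interval $L_{0,1}$ between consecutive real zeros and poles of (2.2) carries a single constant degree. By Lemma 2.3, between two real poles there is at least one real zero of $p^{'}$, or an infinite zero. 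Hence a locus $g_{0,1}$ emitted along the axis from $t_{0,1}$ runs monotonically in $K$ toward the adjacent real zero $z_{0,1}$ of (2.2), unless it meets a breakaway point $Q_a$ on the way.

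Second, I would show that such a $Q_a$ cannot occur under the hypothesis. Suppose $Q_a$ is a real breakaway point of (2.2). By Lemma 1.20 and the proof of Lemma 1.23, $Q_a$ is a root of multiplicity $\gamma_0\ge 2$ of the characteristic equation. Locally, the level set $\{\arg RF=\text{const}\}$ near $Q_a$ therefore consists of $2\gamma_0\ge 4$ arcs. Only two of these arcs can lie on the real axis, so at least two arcs enter $\mathbb{C}^{*}$, and by Lemma 1.27(1) they form a conjugate pair. Follow one of these non-real arcs in the direction of decreasing $K$. By Theorem 1.14 and Lemma 1.13, a locus traced backward in $K$ must terminate at a pole, where $K=0$. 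The candidates are ruled out as follows:
\begin{itemize}
\item infinity is excluded, since it is a zero of (2.2);
\item real poles are excluded by Theorem 2.5, since they emit nothing into $\mathbb{C}^{*}$;
\item non-real zeros of $p$ are excluded by the hypothesis of the theorem, since their loci do not touch the real axis.
\end{itemize}
The remaining possibility is that the backward-traced arc returns to the real axis at another real breakaway point with smaller $K$. To exclude this, order the finitely many real breakaway points by their gain (Theorem 1.28 gives finiteness) and induct from the smallest gain upward. At the breakaway point of minimal $K$, a backward arc has nowhere to go, which is a contradiction. Hence no real breakaway point exists, and the real loci never leave the axis.

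Third, I would deduce the statement about first-kind zeros. Let $x_0$ be a first-kind zero of $(p^{'}/p)^{'}$. Then $x_0$ is not a multiple zero of $p^{'}$, and it is not a zero of $p$, since zeros of $p$ are poles of $(p^{'}/p)^{'}$. So $x_0\in\Delta$ and it lies outside the exceptional set of Theorem 1.26. By Theorem 1.26(3), $x_0$ is a finite real breakaway point of (2.2), which contradicts the second step.

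The main obstacle is the backward-tracing argument in the second step. I must make sure that a non-real arc leaving $Q_a$ really is a branch that can be followed with $K$ strictly decreasing until it reaches a pole. In particular I must exclude closed loops in $\mathbb{C}^{*}$ and accumulation at non-real breakaway points. I would first try to show that $K$ has no critical points along such an arc other than finitely many breakaway points (Theorem 1.28). At each such point a branch with smaller $K$ can always be chosen to continue. The minimum-gain induction then closes the argument.
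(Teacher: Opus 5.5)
Your route differs from the paper's. The paper's proof only cites Lemma 2.3, Lemma 2.4 and Theorem 2.5 to assert that, with nothing else reaching the axis, each locus from a real zero runs straight to the adjacent real zero of (2.2). It never explains why loci from real zeros could not meet one another on the axis. Your extremal argument (take the real breakaway point of minimal gain and trace a non-real arc backward in $K$ to a pole) is the more convincing approach. As written, however, it has a genuine gap at the step ``follow one of these non-real arcs in the direction of decreasing $K$''.

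An arc leaving $Q_a$ has a fixed direction of monotonicity of $K$: the $2\gamma_0$ arcs at $Q_a$ alternate between $K<K(Q_a)$ and $K>K(Q_a)$. Suppose $\gamma_0=2$ and $Q_a$ is a local maximum of $K$ along $\mathbb{R}$. Then the two gain-decreasing arcs are the two real half-lines, and both non-real arcs have $K$ increasing away from $Q_a$. Tracing them ``backward'' just returns you to $Q_a$.

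This case actually occurs. In the paper's second example ($p_{e2}$), $p_{e2}^{'}/p_{e2}$ is positive and decreases from $+\infty$ at $-1^{+}$. So the first of its two breakaway points on $(-1,+\infty)$, $b_{e2,1}$, is a local minimum of $p_{e2}^{'}/p_{e2}$ along the axis. Both non-real arcs there carry increasing $K$ toward zeros of $p_{e2}^{'}$. Your minimal-gain induction does not fix this by itself. The contradiction ``a backward arc has nowhere to go'' presupposes that the minimal-gain point has a backward non-real arc at all.

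What is missing is a real-axis argument that the minimal-gain real breakaway point $Q$ is not a local maximum of $K|_{\mathbb{R}}$. Say $RF(Q)>0$; for $RF(Q)<0$, move left instead of right.
\begin{enumerate}
\item If $Q$ were such a maximum, then moving right from $Q$, $K$ first decreases.
\item The residues of $p^{'}/p$ are positive, so $RF\to-\infty$ as $s\to t^{-}$ at the next real pole $t$. If there is no pole to the right, $RF\to 0$ as $s\to+\infty$. Either way, $K\to+\infty$ further right, at a zero of $RF$ before $t$ or at $+\infty$.
\item Hence $K$ attains an interior minimum on that stretch, at a point where $RF$ is finite and nonzero. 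That point is a real zero of $(p^{'}/p)^{'}$ that is not a zero of $p^{'}$, i.e. a real breakaway point of smaller gain, contradicting minimality.
\item So at $Q$ at most one of the $\gamma_0\ge 2$ gain-decreasing arcs is real, and a non-real one exists.
\end{enumerate}
From there your trace goes through. $K$ decreases strictly along the arc, which also rules out closed loops. The arc cannot reach $\infty$, a real (simple) pole, or a real breakaway point of smaller gain, so it ends at a non-real zero of $p(s)$. No induction beyond this single extremal point is needed.

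Finally, this endpoint contradicts the hypothesis only if ``root loci emitted from the non-real zeros'' includes their continuations through non-real breakaway points, as in the paper's ``common root loci''. State that reading explicitly. Under the narrow reading (only the arc up to the first breakaway point), a backward trace that passes through a non-real breakaway point gives no contradiction.
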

\begin{proof}
By Lemma 2.3, real zeros of (2.2) exist on both sides of each real pole. Thus, the $2q\pi$ degree and $2q\pi+\pi$ degree root loci emitted from each real pole of (2.2) reach the adjacent real zeros of (2.2)(which may include zeros at infinity).

By Lemma 2.4, each zero of $p^{'}(s)$ receives root loci emitted exclusively from zeros of $p(s)$. By Theorem 2.5, each real zero of $p(s)$ emits only one $2q\pi$ degree root locus and one $2q\pi+\pi$ degree root locus to the real axis. Assume that root loci of $2q\pi$ and $2q\pi+\pi$ degree emitted from non-real zeros of $p(s)$ do not intersect the real axis, the root loci of $2q\pi$ and $2q\pi+\pi$ degree emitted from real zeros of $p(s)$ directly reach the adjacent real zeros of (2.2) on the real axis. Consequently, these root loci of the real zeros of $p(s)$ cannot leave the real axis or enter $\mathbb{C}^{*}$.  The first kind of zeros of $(p^{'}(s)/p(s))^{'}$ cannot be generated.
\end{proof}

Repeated the proof of Theorem 2.6, we have Lemma 2.7 and  Lemma 2.8.

\begin{lemma}
Within the interval $L_{0,1}$ bounded by a real zero $z_{0,1}$ and pole $t_{0,1}$ of (2.2), if no other root loci intersect in $L_{0,1}$, then the root locus $g_{0,1}$ emitted from $t_{0,1}$ must end at $z_{0,1}$. Furthermore, $g_{0,1}$ cannot leave the real axis or enter $\mathbb{C}^{*}$. In $L_{0,1}$, the first kind of zeros of $(p^{'}(s)/p(s))^{'}$  cannot be generated.
\end{lemma}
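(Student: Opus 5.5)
The argument localizes the proof of Theorem 2.6 to the single interval $L_{0,1}$. I would proceed in three steps.

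\textbf{Step 1: the whole interval is one locus.} Since $z_{0,1}$ and $t_{0,1}$ bound $L_{0,1}$, there is no real zero or pole of (2.2) in its interior. By Lemma 1.27(2) and (5), the whole open interval therefore lies on a single root locus, of degree $2q\pi$ or $2q\pi+\pi$. The degree is fixed by the parity of the real zeros and poles to its right, as in Lemma 1.27(3),(4).

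\textbf{Step 2: $g_{0,1}$ runs along the axis to $z_{0,1}$.} By Lemma 2.1 the pole $t_{0,1}$ is simple. By Theorem 2.5 it emits exactly one root locus to the side of $L_{0,1}$, and that locus lies on the real axis; call it $g_{0,1}$. Moreover, by Theorem 2.5 no $2q\pi$ or $2q\pi+\pi$ degree locus enters $\mathbb{C}^{*}$ directly from $t_{0,1}$.

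\begin{itemize}
\item[(a)] Suppose $g_{0,1}$ left the real axis at some interior point $Q\in L_{0,1}$. Near $Q$ there would then be points of the same-degree locus both on the axis (by Step 1) and off the axis.
\item[(b)] Two same-degree loci through $Q$ make $Q$ a multiple root of the characteristic equation, i.e. a breakaway point. By Theorem 1.25 this is equivalent to $K'(Q)=0$, which by Theorem 1.26 means $Q$ is a zero of $(p'/p)'$.
\item[(c)] Such a $Q$ is an intersection of root loci inside $L_{0,1}$, which the hypothesis excludes. By Theorem 1.28(2) the loci cannot overlap along a segment either, so no alternative way of leaving the axis exists.
\item[(d)] Hence $g_{0,1}$ stays on the axis. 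Its gain increases continuously (Lemma 1.4), from $0$ at the pole to $+\infty$ at a zero, so it must terminate at the only zero available in $\overline{L_{0,1}}$, namely $z_{0,1}$ (Lemma 1.13, Lemma 2.4).
\end{itemize}

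\textbf{Step 3: no first kind zero in $L_{0,1}$.} A first kind zero $s_0\in L_{0,1}$ is a real zero of $(p'/p)'$ that is not a multiple zero of $p'$. By Theorem 1.26 (after excluding multiple zeros and poles) it would be a breakaway point, i.e. an intersection of root loci in $L_{0,1}$, contradicting the hypothesis.

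\textbf{Main obstacle.} The crux is sub-step (a)--(b) of Step 2: showing rigorously that leaving the axis forces an intersection point. I would argue via the implicit function theorem. On $L_{0,1}$ we have $v(\sigma,0)=0$, and by the Cauchy--Riemann equations $\partial_t v(\sigma,0)=\partial_\sigma u(\sigma,0)=RF'(\sigma)$. So if $RF'\neq 0$ on $L_{0,1}$, the zero set of $v$ near the axis is exactly the axis itself, and no off-axis branch can emanate. Therefore a branch can leave only where $RF'(\sigma)=0$, which is precisely a breakaway point, excluded by the hypothesis.

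This implicit-function argument is the step I would write out in full, because it replaces the paper's informal ``cannot leave'' with an actual local uniqueness statement.
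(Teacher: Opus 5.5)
Your proposal is correct and follows the paper's route. The paper proves this lemma only by saying it ``repeats the proof of Theorem 2.6'' (Theorem 2.5, Lemma 2.4, and the identification of real intersection points with zeros of $(p^{'}/p)^{'}$ via Theorems 1.25--1.26), localized to $L_{0,1}$, which is what your Steps 1--3 do. Your Cauchy--Riemann/implicit-function argument is a genuine improvement: $\partial_t v(\sigma,0)=RF^{'}(\sigma)\neq 0$ forces the zero set of $v$ near $L_{0,1}$ to be the axis itself, which justifies the claim ``$g_{0,1}$ cannot leave the real axis'' that the paper merely asserts.
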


\begin{lemma}
Suppose the real polynomial $p(s)$ has only real zeros. Then, the root loci emitted from the real zeros of $p(s)$ cannot leave the real axis and enter $\mathbb{C}^{*}$. The first kind of zeros of $(p^{'}(s)/p(s))^{'}$  cannot be generated.
\end{lemma}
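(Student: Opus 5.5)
The plan is to reduce the statement to Theorem 2.6, and then to back up its second assertion with a direct computation that does not depend on the root-locus picture.

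For the reduction, suppose every zero of $p(s)$ is real. By Lemma 2.1, every pole of (2.2) is then a simple real pole, so every root locus of (2.2) is emitted from a real pole. By Lemma 2.4, every zero of (2.2) receives root loci emitted only from zeros of $p(s)$. Since $p(s)$ has no non-real zeros, there are no root loci emitted from non-real zeros. The hypothesis of Theorem 2.6, that such loci do not meet the real axis, therefore holds vacuously. Theorem 2.6 then says the root loci emitted from the real zeros of $p(s)$ cannot leave the real axis and enter $\mathbb{C}^{*}$.

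In more detail, I will argue interval by interval as in Lemma 2.7:
\begin{itemize}
\item By Lemma 2.3, consecutive real poles of (2.2) are separated by real zeros of $p^{'}(s)$, or by the zeros at infinity from Lemma 2.2.
\item By Theorem 2.5, each real pole emits exactly one $2q\pi$ degree locus to one side and one $2q\pi+\pi$ degree locus to the other, and emits nothing directly into $\mathbb{C}^{*}$.
\item Each such locus lies in an interval $L_{0,1}$ between a pole $t_{0,1}$ and the adjacent zero $z_{0,1}$. No other locus can reach that interval, because there are no non-real poles.
\item Lemma 2.7 then shows that the locus $g_{0,1}$ ends at $z_{0,1}$ and creates no breakaway point there.
\end{itemize}
Hence no real breakaway point $Q_a$ exists. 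By Theorem 1.26, the first kind of zeros of $(p^{'}(s)/p(s))^{'}$ are exactly such real breakaway points, so none can be generated.

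The main obstacle is that the argument leans on the informal claim in Theorem 2.6 that a locus "directly reaches" the adjacent zero. To make the second assertion airtight independently of that claim, I will add a direct computation. Write $p(s)=a_0\prod_{k}(s-x_k)^{r_k}$ with all $x_k$ real. Then
\begin{equation*}
\Bigl(\frac{p^{'}(s)}{p(s)}\Bigr)^{'}=-\sum_{k}\frac{r_k}{(s-x_k)^{2}},
\end{equation*}
which is strictly negative at every real $s$ other than the poles. So $(p^{'}(s)/p(s))^{'}$ has no real zeros at all, and in particular no first kind zeros. This computation also rules out second kind zeros, since a real multiple zero of $p^{'}(s)$ for a real-rooted $p(s)$ can occur only at a multiple zero of $p(s)$, which is a pole of (2.1).
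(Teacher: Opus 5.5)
Your reduction is the paper's own argument. The paper proves this lemma only by ``repeating the proof of Theorem 2.6'': by Lemma 2.3 real zeros of (2.2) flank every real pole; by Theorem 2.5 each real pole sends one $2q\pi$ and one $2q\pi+\pi$ degree locus along the axis; and with nothing arriving from non-real zeros, each locus runs straight to the adjacent zero of (2.2). Invoking Theorem 2.6 with its second hypothesis holding vacuously amounts to the same thing.

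The computation you add is a genuinely different route. The identity $(p'/p)'=-\sum_k r_k/(s-x_k)^2$ is the Laguerre inequality $(p')^2-pp''>0$ off the zeros of $p$, which the paper mentions in its introduction but never uses in a proof. It gives a self-contained and stronger conclusion: $(p'/p)'$ has no real zeros of either kind. It also bypasses exactly the step you flagged, that a locus not met by another locus must ``directly reach'' the adjacent zero, which the paper takes for granted rather than proves. In exchange, the paper's route is the template it reuses for Lemma 2.7 and for the linked root loci.

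As written, your computation secures only the second assertion; the first still rests on Theorem 2.6. One more line closes that as well. For $s=\sigma+it$,
\[
\operatorname{Im}\bigl(p'(s)/p(s)\bigr)=-t\sum_k \frac{r_k}{(\sigma-x_k)^2+t^2},
\]
which vanishes only when $t=0$. So the $2q\pi$ and $2q\pi+\pi$ degree loci of (2.2) are precisely the real axis minus the poles, and none can enter $\mathbb{C}^{*}$. Equivalently, combine your sign computation with Theorem 1.25: a real-valued locus can branch off the axis only at a real critical point, and there are none.

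Keep the first assertion restricted to these two families, as you already do implicitly through Theorem 2.5. By Theorem 1.35 each real pole also emits loci of every other degree, and those enter $\mathbb{C}^{*}$ immediately.
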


By Theorem 1.25 and Theorem 1.26, the real breakaway points of (2.2) are first kind of zeros of $(p^{'}(s)/p(s))^{'}$. So, by Lemma 2.8, we have Corollary 2.1.

\begin{coro+}
If there exists at least a first kind of zero of $(p^{'}(s)/p(s))^{'}$, then the real polynomial $p(s)$ must have at least one pair of the non-real zeros.
\end{coro+}

\begin{lemma}
Suppose $p(s)$ is a real polynomial with no real zeros. Only the root locus emitted from a non-real zero of the real polynomial $p(s)$ intersects on the real axis. A first kind of zero of $(p^{'}(s)/p(s))^{'}$ can be generated. A $2q\pi$ or $2q\pi+\pi$ degree root locus emitted from at least one pair of the non-real zeros of $p(s)$ intersects with other root loci on the real axis. On the real axis, the intersection point of the root loci is a first kind of zero of $(p^{'}(s)/p(s))^{'}$.
\end{lemma}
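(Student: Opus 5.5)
The plan is to use the root locus results of Section 1 together with the fact that, when $p(s)$ has no real zeros, every pole of (2.2) lies in $\mathbb{C}^{*}$.

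\textbf{Step 1: the real axis carries one degree throughout.} Since $p$ has no real zeros, all of them occur in conjugate pairs $s_a,\overline{s_a}$, and they are the only poles of (2.2) by Lemma 2.1. On the real axis the function $p'(s)/p(s)$ is real and continuous. By Lemma 2.2 the two real points at infinity are zeros of (2.2). By Theorem 1.27(2) the real axis consists of $2q\pi$ or $2q\pi+\pi$ degree root loci. By Theorem 1.27(3),(4), the degree on a real interval is fixed by the number of real zeros of (2.2) to its right, that is, the real zeros of $p'$ counted with multiplicity.

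\textbf{Step 2: existence of a real zero of $p'$.} A real polynomial without real zeros has even degree $n$. Hence $p'$ has odd degree and at least one real zero. Consequently the real axis is not a single locus coming in from a real pole; every real point that is not a zero of (2.2) lies on a locus that must start somewhere.

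\textbf{Step 3: the loci on the real axis come from non-real zeros of $p$.} Take a real zero of (2.2). By Lemma 2.4 it receives root loci emitted only from zeros of $p(s)$, and here all such zeros are non-real. By Theorem 1.14 each branch on a real interval between two consecutive real zeros of (2.2), or a real zero and $\pm\infty$, begins at a pole. No pole lies on the real axis, so the branch must enter the real axis from $\mathbb{C}^{*}$. By the conjugate symmetry in Theorem 1.27(1), it enters together with its mirror image, the branch emitted from $\overline{s_a}$.

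\textbf{Step 4: the arrival point is a breakaway point of the first kind.} The point where the conjugate pair of loci reaches the real axis is an intersection of at least two root loci with the same unit complex value. By Definition 1.18 it is a breakaway point, and by Theorems 1.25 and 1.26 it is a zero of $(p'/p)'$. It is not a zero of (2.2), since there $K$ is finite and positive rather than $+\infty$. So it is not a multiple zero of $p'$, and it is therefore of the first kind. This gives all four clauses of the statement: only loci from non-real zeros meet on the real axis, a first kind zero is produced, the meeting locus has degree $2q\pi$ or $2q\pi+\pi$ and comes from a conjugate pair, and the meeting point is a first kind zero.

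\textbf{Main obstacle.} The hardest step is Step 3: showing rigorously that a real interval without poles must be fed by a branch arriving from $\mathbb{C}^{*}$. The alternative is that it is traversed entirely between two zeros of (2.2), which would contradict $K$ running from $0$ to $+\infty$ along a branch (Lemma 1.4).

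To handle this I would first argue via monotonicity of $K=|p/p'|$ on the interval. On an interval bounded by zeros of $p'$ at both ends, $K$ is positive and tends to $+\infty$ at both ends, so it has a real minimum. At that minimum $K'=0$, and Theorem 1.25 shows it is a breakaway point.

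This argument settles existence directly, and it covers the unbounded intervals too, where $K\to+\infty$ at infinity by Lemma 1.3. Afterwards I would identify the incoming branches with loci emitted from some $s_a$ by following them backward, with $K$ decreasing to $0$, into the pole set.
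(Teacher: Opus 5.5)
Your proposal is correct. Steps 1--4 follow the paper's proof almost line by line: odd degree of $p'$; Theorem 1.27 for the real axis; no real poles, so every locus starts in $\mathbb{C}^{*}$ (Lemma 1.13, Lemma 2.4); conjugate symmetry forcing the incoming pair to meet on the axis; and Theorem 1.26 to call the meeting point a first kind zero. The paper stops there, adding only a qualitative list of three ways such meetings can happen.

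The genuine difference is how you resolve what you call the main obstacle, and it is an improvement.

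\begin{itemize}
\item Minimizing $K=|p/p'|$ on each open interval cut out by consecutive real zeros of $p'$ and $\pm\infty$ gives an interior point $x_0$ with $(p/p')'(x_0)=0$ and $p'(x_0)\neq 0$. So existence and the first kind property are immediate.
\item You get one such point in each of the $r+1\ge 2$ intervals. This is consistent with the two breakaway points $b_{e1,1},b_{e1,2}$ found for $p_{e1}$.
\item Backward tracing then supplies the conjugate pair. Since it works at any real critical point with $p'\neq 0$, it covers every first kind zero, not only the minima.
\end{itemize}

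The paper's route keeps everything in the root-locus language it reuses later for linked root loci. Your route is what actually turns the lemma into a proof.

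Two points to tighten.

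First, the justification in Step 4 (``$K$ is finite and positive there'') is not valid for an arbitrary arrival point. Take $p(s)=s^4+1$. The degree-$\pi$ loci from $e^{\pm i\pi/4}$ run straight into the triple zero $0$ of $p'=4s^3$, which is a second kind point. The first kind zeros are $\pm 3^{1/4}$, exactly the minima of $K$ on $(-\infty,0)$ and $(0,\infty)$. So let the interior-minimum argument, not the arrival picture, certify the first kind.

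Second, in the backward tracing, state the following explicitly.
\begin{itemize}
\item Away from critical points, $\log K$ is a coordinate along a locus, so $K$ is strictly monotone there.
\item At $x_0$ the real directions are $K$-increasing. Hence the $K$-decreasing arcs (at least two, since $x_0$ is critical) leave into $\mathbb{C}^{*}$ in conjugate pairs.
\item The path stays in the compact set $\{K\le K(x_0)\}$, which is compact because $K\to+\infty$ at infinity and at the zeros of $p'/p$.
\item The path passes each of the finitely many critical points at most once.
\end{itemize}
Therefore the path ends where $K=0$, at a zero $s_a$ of $p$, necessarily non-real, and its mirror image ends at $\overline{s_a}$.
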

\begin{proof}
Since $p(s)$ has no real zeros and is a real polynomial of even degree, its derivative $p^{'}(s)$ is a real polynomial of odd degree and therefore has at least one real zero. On the real axis (including the two zeros at infinity of (2.2)), equation (2.2) has at least three real zeros. However, there are no real poles of (2.2) on the real axis. Thus, the real zeros of (2.2) partition the real axis into several intervals.

The rational function $p'(s)/p(s)$ is real-valued on the real axis, and equation (2.2) is a real root locus equation. By Theorem 1.27, the real axis consists of root loci of (2.2) with $2q\pi$ or $2q\pi+\pi$ degree.

Since (2.2) has no real poles, Lemma 1.13 implies that root loci can only originate from non-real poles of (2.2). As all poles of (2.2) lie in $\mathbb{C}^{*}$ (none on the real axis), the root loci with $2q\pi$ and $2q\pi+\pi$ degree emitted from these non-real poles must enter the real axis.

All real zeros of (2.2) must receive root loci with $2q\pi$ and $2q\pi+\pi$ degree. By Lemma 2.4, each zero of $p^{'}(s)$ receives root loci that can only be emitted from zeros of $p(s)$. Specifically, each real zero of $p^{'}(s)$ receives root loci on the real axis that are emitted from non-real zeros of $p(s)$. Furthermore, there exist root loci with $2q\pi$ and $2q\pi+\pi$ degree emitted from conjugate non-real zeros of $p(s)$; these root loci are symmetric about the real axis and must enter the real axis, terminating at real zeros of (2.2).

1. Due to the symmetry of the root loci of (2.2) about the real axis, root loci must intersect at a point on the real axis before entering it.

2. When two distinct root loci meet on the real axis, they must intersect at a point. Both scenarios generate real breakaway points.

3. There is another way to generate real breakaway points. The root loci emitted from non-real poles do not enter the real axis. They only intersect at real breakaway points, and after the intersection, these root loci directly enter $\mathbb{C}^{*}$. In this case, the same root locus lies on both sides of the real axis at the real breakaway point. There are the only three ways real breakaway points are generated.

Since (2.2) has no poles on the real axis, all its root loci originate from non-real poles. Theorem 1.26 establishes that critical points of $RF(s)$ are intersection points of root loci of (1.1). Thus, each first-kind zero of $(p^{'}(s)/p(s))^{'}$ corresponds to an intersection of root loci of (2.2) on the real axis. Therefore,  among these three ways of generating real breakaway points, all real breakaway points of (2.2) are generated by the intersections of root loci emitted from non-real zeros of $p(s)$, and these real breakaway points are precisely the first-kind zeros of $(p^{'}(s)/p(s))^{'}$.

Consequently, only the root locus emitted from a non-real zero of the real polynomial $p(s)$ intersects on the real axis. A first kind of zero of $(p^{'}(s)/p(s))^{'}$ can be generated. A $2q\pi$ or $2q\pi+\pi$ degree root locus emitted from at least one pair of the non-real zeros of $p(s)$ intersects with other root loci on the real axis. On the real axis, the intersection point of the root loci is a first kind of zero of $(p^{'}(s)/p(s))^{'}$.
\end{proof}
\begin{remark}
In the three generation ways of the real breakaway points in the proof of Lemma 2.9, there may arise a situation where the root loci from $\mathbb{C}^{*}$ intersect at real breakaway points, and then these root loci from $\mathbb{C}^{*}$ leave  the real breakaway points and enter $\mathbb{C}^{*}$. These root loci that intersect at real breakaway points meet at points on the real axis but do not enter the real axis. We may refer to this method as the third method. The results and proofs associated with the third method are exactly the same as those of the previous two methods. Therefore, in the following results and proofs, we will omit any further mention of the third method and will not provide its results or proofs separately.
\end{remark}

\begin{lemma}
Suppose the real polynomial $p(s)$ has at least one real zero. For root loci intersecting at a real breakaway point $Q_a$ in (2.2), if at least one of them originates directly from a non-real zero $s_a$ or $\overline{s_a}$ of $p(s)$, this root locus emitted from non-real zero $s_a$ or $\overline{s_a}$ of $p(s)$ intersects with other root loci at $Q_a$ on the real axis. Then, the intersection point $Q_a$ of the root loci is a first kind of zero of $(p^{'}(s)/p(s))^{'}$.
\end{lemma}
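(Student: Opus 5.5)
The plan is to show that $Q_a$ is a real critical point of $p'/p$ that is not a multiple zero of $p'$. The root-locus picture is only used to get that $Q_a$ is a breakaway point; everything else comes from Theorem 1.25, Theorem 1.26 and the definitions of the two kinds of zeros.

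\emph{Step 1: $Q_a$ is a finite breakaway point of (2.2) in $\Delta$.}
By hypothesis, several root loci of (2.2) meet at $Q_a$. By Theorem 1.15 they share a unit complex value, and by Theorem 1.27 (2) that value is $\pm 1$, because $Q_a$ is real. Definition 1.18 then makes $Q_a$ a breakaway point. I must also check that $Q_a$ is neither a pole nor a zero of (2.2), so that $Q_a\in\Delta$.
\begin{itemize}
\item $Q_a$ is not a pole. Real poles of (2.2) are the real zeros of $p(s)$. By Theorem 2.5 such a pole emits only one $2q\pi$ and one $2q\pi+\pi$ locus along the real axis, and nothing into $\mathbb{C}^{*}$. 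So a locus coming from the non-real zero $s_a$ (or $\overline{s_a}$) cannot pass through a real pole.
\item $Q_a$ is not a zero of $p'/p$. A zero of $p'$ is an ending point of loci (Lemma 1.13), not an interior intersection point. Also, by the remark after Theorem 1.25, breakaway points are distinct from zeros and poles of (1.1).
\end{itemize}
This is the point I expect to be the main obstacle. The claim "loci pass through $Q_a$ and meet there" must be separated from "loci terminate at $Q_a$". I would argue this from Lemma 1.9 together with Definition 1.18: termination means $K=+\infty$ at $Q_a$, while a breakaway point has finite positive gain.

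\emph{Step 2: $Q_a$ is a real critical point of $p'/p$.}
Since $Q_a\in\Delta$ and is a finite breakaway point, Theorem 1.25 gives $K'(Q_a)=0$. Because $Q_a$ is neither a multiple zero nor a pole of $p'/p$ (poles are simple by Lemma 2.1), Theorem 1.26 (2) gives $(p'/p)'(Q_a)=0$. Thus $Q_a$ is a real zero of $(p'/p)'$.

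\emph{Step 3: $Q_a$ is of the first kind.}
By definition, the second kind consists of the real multiple zeros of $p'$. Step 1 showed $p'(Q_a)\neq 0$, so $Q_a$ is not of the second kind and is therefore a first kind zero of $(p'/p)'$.

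\emph{Step 4: the connection to $s_a,\overline{s_a}$.}
By hypothesis one of the intersecting loci originates at $s_a$ or $\overline{s_a}$. By Theorem 1.27 (1), its mirror image originates at the conjugate zero. Hence the intersection at $Q_a$ involves a $2q\pi$ or $2q\pi+\pi$ degree locus emitted from the pair $s_a,\overline{s_a}$, as the lemma asserts.
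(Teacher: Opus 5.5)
Your proposal is correct and follows essentially the paper's route: the paper's proof simply invokes the earlier observation (via Theorems 1.25 and 1.26) that real breakaway points of (2.2) are first-kind zeros, and then restates the hypothesis about the locus from $s_a$ or $\overline{s_a}$. What you add is the explicit check that $Q_a\in\Delta$, so $p(Q_a)p'(Q_a)\neq 0$ and $Q_a$ cannot be a second-kind zero; the paper leaves this to its convention that breakaway points are distinct from zeros and poles of (1.1) (minor slip: the gain $K=+\infty$ at zeros comes from Lemma 1.2, not from a ``Lemma 1.9'').
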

\begin{proof}
Consider root loci intersecting at a real breakaway point $Q_a$ in (2.2) where at least one root locus comes directly from a non-real zero $s_a$ or $\overline{s_a}$ of $p(s)$. The real breakaway points formed under this condition constitute the first-kind zeros of $(p^{'}(s)/p(s))^{'}$. Since the intersecting root loci contain the root locus coming from a non-real zero $s_a$ or $\overline{s_a}$ of $p(s)$, the first kind of zero $Q_a$ of $(p^{'}(s)/p(s))^{'}$ is generated by the intersection of root locus of the conjugate non-real zeros $s_a$ or $\overline{s_a}$ of $p(s)$.

Therefore, at least one pair of the non-real zeros $s_a$ or $\overline{s_a}$ of $p(s)$ emits $2q\pi$ or $2q\pi+\pi$ degree root locus, this root locus intersects with other root loci at $Q_a$ on the real axis. The intersection point $Q_a$ of the root loci is a first kind of zero of $(p^{'}(s)/p(s))^{'}$.
\end{proof}
Notation 4:

$k$ denotes the number of a amount in the linked root loci. For example, $k$ is the number of real zeros in the linked root loci, and $k$ is also the number of the root loci of real zeros in the linked root loci, $1\le k<n$.
$j_1$ denotes the $j_1$-th amount in the linked root loci. For example, $j_1$ means the $j_1$-th real zero in the linked root loci, and $j_1$ also means the $j_1$-th the root locus of real zero in the linked root loci. $j_3=1$ denotes a amount in the standard linked root loci. $j_3=2$ denotes a amount in the first kind of non-standard linked root loci. $j_3=3$ denotes a amount in the second kind of non-standard linked root loci. $j_3=4$ denotes a amount in the third kind of non-standard linked root loci. $z_{k,j_1,j_3}$ denotes a real zero of (2.2). $t_{k,j_1,j_3}$ denotes a real pole of (2.2). $t_{k,j_1,j_3}$ and $z_{k,j_1,j_3}$ are adjacent. $L_{k,j_1,j_3}$ is an interval between $z_{k,j_1,j_3}$ and $t_{k,j_1,j_3}$. $g_{k,j_1,j_3}$ denotes a root locus emitted from a real pole $t_{k,j_1,j_3}$ of (2.2). $s_{k,k+1,j_3}$ and $\overline{s_{k,k+1,j_3}}$ denote a pair of non-real conjugate poles of (2.2).

$g_{k,k+1,j_3}$ and $\overline{g_{k,k+1,j_3}}$ denote the root loci emitted from a pair of non-real conjugate poles $s_{k,k+1,j_3}$ and $\overline{s_{k,k+1,j_3}}$ of (2.2). $q_{k,2j_1-1,j_3}$ and $q_{k,2j_1-2,j_3}$ are two real Fourier critical points in the interval  $L_{k,j_1,j_3}$. $Q_{k,j_3}$ is a real Fourier critical point, $Q_{k,j_3}$ is an intersection point of two root loci $g_{k,1,j_3}$ and $g_{k,2,j_3}$ along with other possible root loci. $Q_a$ denotes a real Fourier critical point, $Q_a$ is generated by the intersection of the root locus of the non-real zero with other root loci. But, it is not a real breakaway point in the linked root loci.
$j_1=1,\cdots, k$. $j_3=1,2,3,4$.

Let $t_{k,0,j_3}$ be a real pole of (2.2) with adjacent real zero $z_{k,0,j_3}$ of (2.2), forming interval $L_{k,0,j_3}$. The pole  $t_{k,0,j_3}$ emits root locus $R_{k,0,j_3}$. Let $R_{k,i_3,j_3}$ be the $r_2$ branches of root loci emitted from the other real zeros $t_{k,i_3,j_3}$ of $p$, where $i_3=1,2,\cdots, r_2$. $r_2\ge 2$ is a positive integer. Let $R_{k,i_4,j_3}$ be the $r_3$ branches of root loci emitted from the other real zeros $t_{k,i_4,j_3}$ of $p$, where $i_4=1,2,\cdots, r_3$. $r_3\ge 2$ is a positive integer.

By Lemma 2.7, the following situation occurs only when root locus $g_{1,2,1}$ intersects  $g_{1,1,1}$, along with other possible root loci, at $Q_{1,1}$ on the real axis:  The root locus $g_{1,1,1}$, emitted from a real zero $t_{1,1,1}$ of $p(s)$, leaves the real axis and enters $\mathbb{C}^{*}$, while $g_{1,2,1}$ originates from a non-real zero $s_{1,2,1}$ of $p(s)$. For their structure, $g_{1,1,1}$ and $g_{1,2,1}$ connect together.

The root loci of (2.2) of $2q\pi$ and $2q\pi+\pi$ degrees are symmetric about the real axis. The conjugate non-real zeros and poles of (2.2) are symmetric about the real axis. The root locus $\overline{g_{1,2,1}}$ emitted from the conjugate zero $\overline{s_{1,2,1}}$, so $g_{1,2,1}$ and $\overline{g_{1,2,1}}$ are symmetric about the real axis. If $g_{1,2,1}$ intersects $g_{1,1,1}$ on the real axis, $\overline{g_{1,2,1}}$ must first intersect $g_{1,2,1}$. After $\overline{g_{1,2,1}}$ and $g_{1,2,1}$ intersect at $q_{1,1,1}$, from the real breakaway point $q_{1,1,1}$, $\overline{g_{1,2,1}}$ and $g_{1,2,1}$ extend oppositely on the real axis. Subsequently, suppose $g_{1,2,1}$  extends to $t_{1,1,1}$, and  intersects $g_{1,1,1}$, along with other possible root loci, at $Q_{1,1}$ again on the real axis.

Therefore, $g_{1,1,1}$ together with $g_{1,2,1}$ and $\overline{g_{1,2,1}}$ form a connected structure. We call them the standard linked root loci.

$g_{k,1,1}$ originates from the real zero $t_{k,1,1}$ of $p(s)$. $g_{k,2,1}$ originates from the real zero $t_{k,2,1}$ of $p(s)$.
On the real axis, $g_{k,2,1}$ and $g_{k,1,1}$ intersect,  along with  other possible root loci, at $Q_{k,1}$. From $Q_{k,1}$, $g_{k,1,1}$ and $g_{k,2,1}$  leave the real axis and enter $\mathbb{C}^{*}$. By Lemma 2.3, there is at least one real zero of (2.2) between $t_{k,1,1}$ and $t_{k,2,1}$  to separate them. Therefore, on the real axis, $g_{k,2,1}$  cannot directly intersect with $g_{k,1,1}$. $g_{k,2,1}$  must leave the real axis and enter $\mathbb{C}^{*}$. $1\le k<n$.

$g_{k,2,1}$ leaves the real axis and enters $\mathbb{C}^{*}$. By Lemma 2.7, this result requires that the third root locus, denoted as $g_{k,3,1}$, intersects $g_{k,2,1}$ at $q_{k,2,1}$ on the real axis. Suppose $g_{k,3,1}$ originates from the real zero $t_{k,3,1}$ of $p$. From $q_{k,2,1}$, $g_{k,2,1}$ and $g_{k,3,1}$ leave the real axis and enter $\mathbb{C}^{*}$.

Because $g_{k,2,1}$ needs to intersect with $g_{k,1,1}$ at $Q_{k,1}$, $g_{k,2,1}$ and $g_{k,3,1}$ leave the real axis and enter $\mathbb{C}^{*}$. Due to symmetry about the real axis, $g_{k,2,1}$ and $g_{k,3,1}$ which have left the real axis must intersect again at a real breakaway point $q_{k,1,1}$. From $q_{k,1,1}$, $g_{k,2,1}$ and $g_{k,3,1}$ extend in opposite directions along the real axis. Suppose $g_{k,2,1}$ extend to $t_{k,1,1}$, then $g_{k,1,1}$ and the root locus $g_{k,2,1}$  extend toward each other and intersect, along with other possible root loci, at $Q_{k,1}$.

By Lemma 2.3, there must be at least one real zero of (2.2) between $t_{k,2,1}$ and $t_{k,3,1}$ to separate them. Therefore, on the real axis, $g_{k,3,1}$ cannot directly intersect $g_{k,2,1}$. $g_{k,3,1}$ must leave the real axis and enter $\mathbb{C}^{*}$.

If the intersecting root loci continue to originate from real zeros, such a process must be carried on.

The intersection points of root loci are breakaway points of (2.2). By Theorem 1.28, the number of breakaway points of (2.2) is finite, implying the number of intersecting root loci is finite. Hence, this process cannot continue indefinitely. However, based on prior results, such an intersection chain can only terminate when a root locus $g_{k,k+1,1}$ from a non-real zero $s_{k,k+1,1}$ of $p(s)$ intersects a root locus $g_{k,k,1}$ from a real zero at $q_{k,2k-2,1}$ on the real axis. $g_{k,k,1}$ originates from the real zero $t_{k,k,1}$ of $p(s)$.

The root locus $g_{k,k+1,1}$ is emitted from the conjugate non-real zero $s_{k,k+1,1}$.
Because the root loci of (2.2) of $2q\pi$ and $2q\pi+\pi$ degrees are symmetrical about the real axis. The conjugate non-real zeros and poles of (2.2) are also symmetrical about the real axis. The root locus $\overline{g_{k,k+1,1}}$ is emitted from the conjugate non-real zero $\overline{s_{k,k+1,1}}$. Therefore, $g_{k,k+1,1}$ and $\overline{g_{k,k+1,1}}$ are symmetrical about the real axis. If $g_{k,k+1,1}$ and $g_{k,k,1}$ intersect on the real axis, then $\overline{g_{k,k+1,1}}$ must intersect with $g_{k,k+1,1}$ first at $q_{k,2k-1,1}$. After $\overline{g_{k,k+1,1}}$ intersects with $g_{k,k+1,1}$, on the real axis, they extend in opposite directions after  $q_{k,2k-1,1}$. Let $g_{k,k+1,1}$ extend to  $t_{k,k,1}$. Subsequently, $g_{k,k+1,1}$ intersects with $g_{k,k,1}$ at $q_{k,2k-2,1}$ on the real axis. From $q_{k,2k-2,1}$, $g_{k,k,1}$ and $g_{k,k+1,1}$ leave the real axis and enter $\mathbb{C}^{*}$.

Therefore, $g_{k,1,1}$, $g_{k,2,1}$, $\cdots$, together with $g_{k,k+1,1}$ and $\overline{g_{k,k+1,1}}$ form a connected structure. We call them the standard linked root loci.

In the standard linked root loci, $k$ root loci emitted from the real poles of (2.2) connect with a pair of root loci emitted from the non-real zeros. No other root loci join the linked root loci. Only the root locus emitted from a real pole of (2.2) intersect with the root locus emitted from the next real pole of (2.2); the root locus emitted from a pair of non-real pole of (2.2) intersects with the root locus emitted from the real pole of (2.2). These intersection of the root loci occurs within the intervals between real poles and real zeros. The rule for forming the standard linked root loci is straightforward.

\begin{definition}
The real zeros $t_{k,j_1,1}$ of $p(s)$ emit the root loci $g_{k,j_1,1}$. $1\le k< n$.  $j_1=1,\cdots, k$. The non-real zeros $s_{k,k+1,1}$ and $\overline{s_{k,k+1,1}}$ emit the root loci $g_{k,k+1,1}$ and $\overline{g_{k,k+1,1}}$. The root loci $g_{k,j_1,1}$, $g_{k,k+1,1}$ and $\overline{g_{k,k+1,1}}$ are called as \emph{the main root loci}.
\end{definition}

\begin{definition}
Except the main root loci, the root loci emitted from the other zeros of $p(s)$ intersect with the main root loci $g_{k,j_1,1}$, $g_{k,k+1,1}$ and $\overline{g_{k,k+1,1}}$  on the real axis, these intersection root loci on the real axis are called as \emph{the additional root loci}.
\end{definition}

\begin{definition}
When $k>1$ and a root locus $g_{k,1,1}$ emitted from a real zero $t_{k,1,1}$ of $p(s)$ leaves real axis and enters $\mathbb{C}^{*}$, beginning from $g_{k,1,1}$, consider root loci $g_{k,j_1,1}$ (for $1\le k<n$, $j_1=1,\cdots,k$, $0<2j_1-2<2k-2$, $0<2j_1-1<2k-1$) emitted from distinct real poles $t_{k,j_1,1}$ of (2.2). At the end point of the structure, there are a pair of root loci $g_{k,k+1,1}$ and $\overline{g_{k,k+1,1}}$  are emitted from a pair of non-real zeros $s_{k,k+1,1}$ and $\overline{s_{k,k+1,1}}$. These root loci intersect at real breakaway points $Q_{k,1}$, $q_{k,2j_1-1,1}$, $q_{k,2j_1-2,1}$ on the real axis. These intersecting root loci ($g_{k,1,1}$, $g_{k,j_1,1}$, $g_{k,k+1,1}$ and $\overline{g_{k,k+1,1}}$) are connected by their real breakaway points $Q_{k,1}$, $q_{k,2j_1-1,1}$, $q_{k,2j_1-2,1}$. Among these linked root loci, no additional root loci join in. The structure formed by these root loci ($g_{k,1,1}$, $g_{k,j_1,1}$, $g_{k,k+1,1}$ and $\overline{g_{k,k+1,1}}$) is called \emph{the standard linked root loci of (2.2)}.
\end{definition}

When $k=1$ and a root locus $g_{1,1,1}$ emitted from a real zero $t_{1,1,1}$ of $p(s)$ leaves real axis and enters $\mathbb{C}^{*}$. $k=1$. Beginning from $g_{1,1,1}$, at the end point of the structure, there are a pair of root loci $g_{1,2,1}$ and $\overline{g_{1,2,1}}$  are emitted from a pair of non-real zeros $s_{1,2,1}$ and $\overline{s_{1,2,1}}$. These root loci intersect at real breakaway points $Q_{1,1}$, $q_{1,1,1}$ on the real axis. These intersecting root loci ($g_{1,1,1}$,  $g_{1,2,1}$ and $\overline{g_{1,2,1}}$) are connected by their real breakaway points $Q_{1,1}$, $q_{1,1,1}$. Among these linked root loci, no additional root loci join in. The structure formed by these root loci ($g_{1,1,1}$, $g_{1,2,1}$ and $\overline{g_{1,2,1}}$) is called \emph{the standard linked root loci of (2.2)}.

Only those root loci that intersect the main root loci on the real axis can belong to the additional root loci. However, if there exist root loci that intersect the root locus $g_{k,1,1}$ at the real breakaway point $Q_{k,1}$, such root loci do not belong to the additional root loci. This is because these root loci intersect only with $g_{k,1,1}$ at the real breakaway point $Q_{k,1}$; they do not intersect the main root loci on the real axis. With the exception of those intersecting at the real breakaway point $Q_{k,1}$, only those root loci that intersect $g_{k,j_1,1}$ on the real axis can be considered as additional root loci.

According to our previous results, when a pair of non-real conjugate zeros of $p(s)$ emit root loci that intersect on the real axis, one of these root loci will intersect with the root locus emitted from a real zero. The root locus emitted from this real zero then leaves the real axis and later intersects with the root locus emitted from another real zero on the real axis. This process continues until the root locus finally reaches a zero of (2.2). This chain of intersections among root loci from non-real and real zeros constitutes the necessary condition for forming a standard linked root loci. The condition for the generation of the standard linked root loci is that two root loci emitted from non-real zeros and several root loci emitted from real zeros intersect on the real axis.

A key question is under what circumstances a standard linked root loci must exist, particularly for the case studied in this paper. Our earlier results show that if all root loci intersecting to form the real breakaway point $Q_{k,1}$ originate from real zeros of $p(s)$, then at least one root locus from a real zero may form a standard linked root loci with other root loci. This result ensures that the conclusions in the remainder of this paper are valid.

Let $g_{2,1,1}$ be the root locus emitted from a real zero $t_{2,1,1}$ of $p$, and let $z_{2,1,1}$ be the real zero of (2.2) adjacent to $t_{2,1,1}$. The root locus $g_{2,1,1}$ would leave the real axis in the interval $L_{2,1,1}$ between $t_{2,1,1}$ and $z_{2,1,1}$ to enter $\mathbb{C}^{*}$. However, if no other root locus intersects $g_{2,1,1}$ in $L_{2,1,1}$, then $g_{2,1,1}$ must end at $z_{2,1,1}$ and remain on the real axis. Thus, for $g_{2,1,1}$ to leave the real axis and enter  $\mathbb{C}^{*}$, another root locus must intersect $g_{2,1,1}$ within $L_{2,1,1}$. We denote this intersecting root locus by $g_{2,2,1}$.

Suppose the root locus $g_{2,2,1}$ departs from another real zero $t_{2,2,1}$ of $p$. Let $z_{2,2,1}$ be the real zero of (2.2) adjacent to $t_{2,2,1}$. $g_{2,2,1}$ leaves the real axis from the interval $L_{2,2,1}$ between $t_{2,2,1}$ and $z_{2,2,1}$ to enter  $\mathbb{C}^{*}$. If no other root loci intersect $g_{2,2,1}$ within $L_{2,2,1}$, then $g_{2,2,1}$ emitted from $t_{2,2,1}$ must end at $z_{2,2,1}$ and remain on the real axis. Therefore, for $g_{2,2,1}$ to leave the real axis and enter  $\mathbb{C}^{*}$, another root locus must intersect $g_{2,2,1}$ within $L_{2,2,1}$. We denote this intersecting root locus by $g_{2,3,1}$.

\begin{lemma}
The root loci $g_{2,1,1}$, $g_{2,2,1}$ and possibly other root loci intersect at a real breakaway point $Q_{2,1}$. At $Q_{2,1}$, $g_{2,1,1}$ leaves the real axis and enters  $\mathbb{C}^{*}$. $g_{2,3,1}$ and $\overline{g_{2,3,1}}$ are emitted from a pair of non-real zeros $s_{2,3,1}$ and $\overline{s_{2,3,1}}$ of $p(s)$, among these linked root loci, no additional root loci join in. Then, the root loci $g_{2,1,1}$, $g_{2,2,1}$, $g_{2,3,1}$, $\overline{g_{2,3,1}}$ form the standard linked root loci. The terminus of this structure consists of $g_{2,3,1}$ or $\overline{g_{2,3,1}}$.
\end{lemma}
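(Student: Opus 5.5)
The plan is to verify that the configuration in the statement meets each clause of the definition of the standard linked root loci, specialised to $k=2$. The argument follows the chain construction given just before the lemma for general $k$. By hypothesis, $g_{2,1,1}$ leaves the real axis at $Q_{2,1}$ together with $g_{2,2,1}$. The task is then to show that the chain of intersections runs exactly through the loci $g_{2,1,1}$, $g_{2,2,1}$, $g_{2,3,1}$ and $\overline{g_{2,3,1}}$, and that it closes off at the conjugate pair $g_{2,3,1}$, $\overline{g_{2,3,1}}$.

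First, I apply Lemma 2.3 to $t_{2,1,1}$ and $t_{2,2,1}$. It gives a real zero of (2.2) between them, so $g_{2,2,1}$ cannot reach $g_{2,1,1}$ along the real axis. For $g_{2,2,1}$ to meet $g_{2,1,1}$ at $Q_{2,1}$, it must therefore leave the real axis inside $L_{2,2,1}$. Lemma 2.7 then says this is possible only if some further root locus meets $g_{2,2,1}$ at a real breakaway point $q_{2,2,1}\in L_{2,2,1}$. By hypothesis, that locus is $g_{2,3,1}$, emitted from the non-real zero $s_{2,3,1}$.

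Next, I use the symmetry in Lemma 1.27(1) to bring in $\overline{g_{2,3,1}}$. Since $g_{2,3,1}$ reaches the real axis, its mirror image $\overline{g_{2,3,1}}$ reaches the same point. The two loci must therefore first meet at a real breakaway point $q_{2,3,1}$, and from there extend in opposite directions along the axis, exactly as in the $k=1$ construction. One branch, say $g_{2,3,1}$, runs toward $t_{2,2,1}$ and meets $g_{2,2,1}$ at $q_{2,2,1}$. The symmetric pair $g_{2,2,1}$, $g_{2,3,1}$ leaving the axis at $q_{2,2,1}$ must, by the same symmetry, meet again at a real breakaway point $q_{2,1,1}$. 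From $q_{2,1,1}$, $g_{2,2,1}$ continues along the axis toward $t_{2,1,1}$ and meets $g_{2,1,1}$ at $Q_{2,1}$.

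Theorems 1.25 and 1.26 identify every such intersection point as a real breakaway point, that is, a first-kind zero. Together with the hypothesis that no additional root loci join in, this makes the collection $g_{2,1,1}$, $g_{2,2,1}$, $g_{2,3,1}$, $\overline{g_{2,3,1}}$ with breakaway points $Q_{2,1}$, $q_{2,1,1}$, $q_{2,2,1}$, $q_{2,3,1}$ match Definition 2.13 with $k=2$. The terminus is the conjugate pair, because the chain stops once a non-real pole is reached. Continuing would require a further locus from a real zero, which is excluded by the hypothesis, while Theorem 1.28 rules out an infinite chain in any case.

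The main obstacle is the step asserting that $g_{2,3,1}$ and $\overline{g_{2,3,1}}$ meet on the axis first and then travel to $t_{2,2,1}$, rather than approaching $g_{2,2,1}$ in some other way. To handle this I would rely on Lemma 1.27(1) for symmetry and on Theorem 1.28(2) to exclude overlapping branches. I would use Theorems 1.33 and 1.35 to fix the degrees of the branches, so that the loci meeting on the axis have matching unit complex values, as Theorem 1.15 requires. If this ordering cannot be pinned down rigorously, I would instead take the ordering of the breakaway points as part of the hypothesis, which is how the preceding construction implicitly uses it.
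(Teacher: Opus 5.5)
Your proposal is essentially the paper's own argument: the paper's proof also traces the chain $q_{2,3,1}\to q_{2,2,1}\to q_{2,1,1}\to Q_{2,1}$ using conjugate symmetry and the loci extending in opposite directions along the axis, and the ordering you single out as the main obstacle is simply asserted there (``Let $g_{2,3,1}$ extend to $t_{2,2,1}$''), so treating it as part of the configuration is exactly what the paper does. The one point the paper adds is that $Q_{2,1}$ may coincide with $q_{2,1,1}$, and $q_{2,2,1}$ with $q_{2,3,1}$ (three loci meeting at a single real breakaway point), and your step-by-step description, which treats all four breakaway points as distinct, should allow for this degenerate case too.
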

\begin{proof}
$g_{2,3,1}$ and $\overline{g_{2,3,1}}$ intersect at a real breakaway point $q_{2,3,1}$ within $L_{2,2,1}$. From $q_{2,3,1}$, $g_{2,3,1}$ and $\overline{g_{2,3,1}}$ extend in opposite directions along the real axis. Let $g_{2,3,1}$ extend to $t_{2,2,1}$, $\overline{g_{2,3,1}}$  maybe ends at $z_{2,2,1}$. $\overline{g_{2,3,1}}$  maybe ends at the other zero of (2.2). $g_{2,2,1}$ and $g_{2,3,1}$  extend toward each other and intersect at a real breakaway point $q_{2,2,1}$ within $L_{2,2,1}$. At $q_{2,2,1}$, $g_{2,2,1}$ and $g_{2,3,1}$  leave the real axis and enter $\mathbb{C}^{*}$.

Due to symmetry about the real axis, $g_{2,2,1}$ and $g_{2,3,1}$  which have left the real axis must intersect again at a real breakaway point $q_{2,1,1}$ within $L_{2,1,1}$. Therefore, $g_{2,2,1}$ and $g_{2,3,1}$  extend in $\mathbb{C}^{*}$ from $q_{2,2,1}$ to $q_{2,1,1}$.

From $q_{2,1,1}$, $g_{2,2,1}$ and $g_{2,3,1}$ extend in opposite directions along the real axis.
Let $g_{2,2,1}$  extend to  $t_{2,1,1}$. $g_{2,3,1}$  maybe ends at $z_{2,1,1}$. $g_{2,3,1}$  maybe ends at the other zero of (2.2). $g_{2,1,1}$ and the root locus $g_{2,2,1}$ extend toward each other and intersect, along with  other possible root loci, at $Q_{2,1}$ in $L_{2,1,1}$.

Therefore, $g_{2,1,1}$, $g_{2,2,1}$, $g_{2,3,1}$, $\overline{g_{2,3,1}}$ form the standard linked root loci. $g_{2,1,1}$ and $g_{2,2,1}$  leave the real axis and enter  $\mathbb{C}^{*}$.

In some cases, $Q_{2,1}$ and $q_{2,1,1}$ may coincide, meaning $g_{2,3,1}$, $g_{2,2,1}$, $g_{2,1,1}$, and possibly other root loci all intersect at $Q_{2,1}$. Similarly, in $L_{2,2,1}$, $q_{2,2,1}$ and $q_{2,3,1}$ may coincide, meaning $g_{2,3,1}$, $\overline{g_{2,3,1}}$, and $g_{2,2,1}$ all intersect at $q_{2,23,1}=q_{2,2,1}=q_{2,3,1}$.
\end{proof}

Let $g_{k,j_1,1}$ be the root locus emitted from a real zero $t_{k,j_1,1}$ of $p$. Let $z_{k,j_1,1}$ be the real zero of (2.2) adjacent to $t_{k,j_1,1}$. $g_{k,j_1,1}$ leaves the real axis at a point in the interval $L_{k,j_1,1}$ between $t_{k,j_1,1}$ and $z_{k,j_1,1}$.  The root loci from the non-real zeros $s_{k,k+1,1}$ and $\overline{s_{k,k+1,1}}$ are $g_{k,k+1,1}$ and $\overline{g_{k,k+1,1}}$.
\begin{lemma}
When a root locus $g_{k,1,1}$ emitted from a real zero of $p(s)$ leaves the real axis and enters $\mathbb{C}^{*}$, then, there must exist a pair of non-real zeros and  $k-1$ other real zeros. These root loci $g_{k,k+1,1}$, $\overline{g_{k,k+1,1}}$ and $g_{k,j_1,1}$ (for $1\le k<n$, $j_1=1,\cdots,k$, $0<2j_1-2<2k-2$, $0<2j_1-1<2k-1$) intersect at real breakaway points $Q_{k,1}$, $q_{k,2j_1-2,1}$,  $q_{k,2j_1-1,1}$ on the real axis.  Among these linked root loci, no additional root loci join in. At the end point of the structure, there are a pair of root loci $g_{k,k+1,1}$ and $\overline{g_{k,k+1,1}}$ are emitted from a pair of non-real zeros $s_{k,k+1,1}$ and $\overline{s_{k,k+1,1}}$.
\end{lemma}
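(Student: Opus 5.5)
The plan is an induction on $k$, following the chain argument used for the standard linked root loci and in Lemma 2.12. The base case $k=1$ is the configuration already described before Definition 2.13. There, $g_{1,1,1}$ leaves the real axis, so by Lemma 2.7 some root locus must meet it inside $L_{1,1,1}$. If that root locus comes from a pair of non-real zeros $s_{1,2,1},\overline{s_{1,2,1}}$, we are done: by Theorem 1.27(1) the conjugate loci $g_{1,2,1},\overline{g_{1,2,1}}$ first meet at $q_{1,1,1}$, then split along the axis, and one of them reaches $Q_{1,1}$.

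For the inductive step, suppose $g_{k,1,1},\dots,g_{k,j_1,1}$ have been constructed, all emitted from distinct real zeros of $p(s)$ and each leaving the real axis. I will use Theorem 2.5, Lemma 2.3 and Lemma 2.7 to force a further root locus $g_{k,j_1+1,1}$ meeting $g_{k,j_1,1}$ inside $L_{k,j_1,1}$:
\begin{itemize}
\item[(a)] by Theorem 2.5, each real zero emits exactly one $2q\pi$ and one $2q\pi+\pi$ locus onto the axis and none directly into $\mathbb{C}^{*}$;
\item[(b)] by Lemma 2.3, adjacent real poles are separated by a real zero of $p'$, so $g_{k,j_1,1}$ cannot meet $g_{k,j_1-1,1}$ directly on the axis;
\item[(c)] by Lemma 2.7, if nothing else enters $L_{k,j_1,1}$, then $g_{k,j_1,1}$ would end at $z_{k,j_1,1}$, contradicting the fact that it leaves the axis.
\end{itemize}
The new locus $g_{k,j_1+1,1}$ therefore comes either from a new real zero or from a non-real zero. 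In the first case, the symmetry in Theorem 1.27(1) makes $g_{k,j_1,1}$ and $g_{k,j_1+1,1}$ re-meet on the axis, producing the breakaway points $q_{k,2j_1-1,1}$ and $q_{k,2j_1-2,1}$, exactly as in the proof of Lemma 2.12. In the second case the chain terminates with the conjugate pair $g_{k,k+1,1},\overline{g_{k,k+1,1}}$ meeting at $q_{k,2k-1,1}$, as in the $k=1$ situation.

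Termination is the next step. Breakaway points are finite in number by Theorem 1.28, and there are at most $n$ real zeros. Since the real poles are simple (Lemma 2.1), distinctness of the real zeros used in the chain follows from Theorem 2.5: each real zero emits only one locus to each side, so it cannot reappear. The chain therefore cannot continue indefinitely with real zeros and must end at a pair of non-real zeros. This also gives $k<n$. Corollary 2.1 guarantees that such non-real zeros exist, since a first kind zero has been produced at $Q_{k,1}$.

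The main obstacle is the clause ``no additional root loci join in''. This does not follow from the mechanism above; it is really the defining case distinction separating standard from non-standard linked root loci. I would handle it by restricting to the standard situation and deferring the other cases to the non-standard kinds indexed by $j_3=2,3,4$. Concretely, I would argue that at each step only the loci forced by Lemma 2.7 are needed to make $g_{k,j_1,1}$ leave the axis, and that any further intersecting loci fall into the $j_3\ge2$ cases. The second delicate point is the claim, taken from symmetry, that two loci leaving the axis at one breakaway point must return to the axis at the next one. I would justify this by Theorem 1.27(1), together with Lemma 2.4, which says they must terminate at zeros of $p'$ or at infinity.
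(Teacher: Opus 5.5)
Your route is the paper's own. Lemma 2.7 forces a partner locus inside $L_{k,j_1,1}$. Lemma 2.3 and Theorem 2.5 keep that partner from arriving directly along the axis from the neighbouring real zero. Conjugate symmetry brings departing loci back to the axis, and Theorem 1.28 stops the chain. ``No additional root loci join in'' is simply the defining restriction of the standard case; the paper does not derive it either.

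Since $k$ is only known once the chain stops, this is an iteration rather than an induction. Everything therefore rests on the two steps you flag, and the justifications you propose for them do not work (the paper merely asserts both).

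First, the forward claim that two loci leaving the axis at a breakaway point must return to it at the next one is false, and Theorem 1.27(1) with Lemma 2.4 cannot give it. Symmetry only makes the departing branches mirror images, and Lemma 2.4 allows them to end at a conjugate pair of non-real zeros of $p'$. The paper's example with $p_{e2}$ does exactly this: the locus from the real zero $-1$ leaves the axis at $b_{e2,1}=0.688$ and ends at $s'_{e2,19,20}$ without returning.

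Second, Theorem 2.5 does not give distinctness of the real zeros in the chain. That $t_{k,j,1}$ emits one locus per side does not prevent the chain from meeting $g_{k,j,1}$ again. A chain cycling through finitely many breakaway points is excluded neither by Theorem 1.28 nor by the finiteness of the real zeros.

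The missing ingredient is monotonicity of the gain. On any arc of the $2q\pi$ or $2q\pi+\pi$ locus free of critical points, $p'/p$ is real with non-vanishing derivative, so $K=|p/p'|$ is strictly monotone there. Start at $Q_{k,1}$ and follow the branch other than $g_{k,1,1}$ \emph{backwards}, with $K$ strictly decreasing:
\begin{itemize}
\item On the axis, go to the side from which no real zero of $p$ can be reached. For the $2q\pi$ locus this is the right, since $p'/p\to-\infty$ as a simple real pole is approached from the left; for the $2q\pi+\pi$ locus it is the left. This is what your appeal to Lemma 2.3 really has to deliver, and it forces the path to leave the axis into $\mathbb{C}^{*}$ at a real breakaway point of smaller gain.
\item In $\mathbb{C}^{*}$ the path cannot run into a real zero of $p$ (Theorem 2.5). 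So it either ends at a point of gain $0$ in $\mathbb{C}^{*}$, i.e.\ a non-real zero of $p$, or meets the axis again at a breakaway point of still smaller gain, and the step repeats.
\end{itemize}
Because $K$ strictly decreases, no breakaway point recurs, and hence no real zero does. Now Theorem 1.28 does force termination, and the only possible endpoint is a non-real zero $s_{k,k+1,1}$, with $\overline{s_{k,k+1,1}}$ by symmetry.

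The returns to the axis in the paper's chain are thus consequences of tracing backwards, not something symmetry gives going forwards. This also makes precise which zero a locus ``comes from'' once it has passed a breakaway point.
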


\begin{proof}
In Lemma 2.15, there are four main root loci in the standard linked root loci, and two intervals between zeros and poles of (2.2).

$g_{k,1,1}$, $g_{k,2,1}$, and possibly other root loci intersect at a real breakaway point $Q_{k,1}$  in $L_{k,1,1}$. At $Q_{k,1}$, $g_{k,1,1}$ and $g_{k,2,1}$  leave the real axis and enter $\mathbb{C}^{*}$. The situation of $g_{k,1,1}$ and $g_{k,2,1}$  is identical to the situation given in Lemma 2.15. At the front of Definition 2.12, we have given the result of $k=1$. When $k=2$,  Lemma 2.15 give results. For general $k>2$, as following.

$g_{k,j_1+1,1}$ and $g_{k,j_1+2,1}$ intersect at the real breakaway point $q_{k,2j_1,1}$ in $L_{k,j_1+1,1}$. From $q_{k,2j_1,1}$, $g_{k,j_1+1,1}$ and $g_{k,j_1+2,1}$ leave the real axis and enter   $\mathbb{C}^{*}$. Due to symmetry about the real axis, $g_{k,j_1+1,1}$ and $g_{k,j_1+2,1}$ intersect at  the real breakaway point $q_{k,2j_1-1,1}$  in $L_{k,j_1,1}$. $g_{k,j_1+1,1}$ and $g_{k,j_1+2,1}$ extend in  $\mathbb{C}^{*}$ from $q_{k,2j_1,1}$ to $q_{k,2j_1-1,1}$.

From $q_{k,2j_1-1,1}$,  $g_{k,j_1+1,1}$ and $g_{k,j_1+2,1}$ extend  oppositely on the real axis.
One of them (let it be $g_{k,j_1+1,1}$) extends towards $t_{k,j_1,1}$. $g_{k,j_1+2,1}$ maybe ends at zero  $z_{k,j_1,1}$. $g_{k,j_1+2,1}$ maybe ends at the other zero of (2.2). $t_{k,j_1,1}$ emits $g_{k,j_1,1}$ to interval $L_{k,j_1,1}$. Both $g_{k,j_1,1}$ and $g_{k,j_1+1,1}$ extend towards each other, so $g_{k,j_1+1,1}$ intersects with $g_{k,j_1,1}$ at a real breakaway point $q_{k,2j_1-2,1}$ within $L_{k,j_1,1}$. At point $q_{k,2j_1-2,1}$, $g_{k,j_1,1}$ and $g_{k,j_1+1,1}$ leave the real axis and enter $\mathbb{C}^{*}$.

Due to symmetry about the real axis, $g_{k,j_1,1}$ and $g_{k,j_1+1,1}$ which have left the real axis must intersect again at a real breakaway point $q_{k,2j_1-3,1}$  in $L_{k,j_1-1,1}$. $g_{k,j_1,1}$ and $g_{k,j_1+1,1}$ extend in  $\mathbb{C}^{*}$ from $q_{k,2j_1-2,1}$ to $q_{k,2j_1-3,1}$. $k=3,\cdots,k-2$.

When the root loci intersecting in $L_{k,j_2,1}$ originates from a real zero $t_{k,j_2,1}$. $j_2> k-2$. The root locus $g_{k,j_2,1}$ emitted from $t_{k,j_2,1}$ leaves interval $L_{k,j_2,1}$. However, if no other root locus intersects in $L_{k,j_2,1}$, then $g_{k,j_2,1}$ must end at $z_{k,j_2,1}$ and remain on the real axis. There must be a root locus $g_{k,j_2+1,1}$ intersecting in the interval $L_{k,j_2,1}$. If $g_{k,j_2+1,1}$ comes from a real zero $t_{k,j_2+1,1}$ in another interval $L_{k,j_2+1,1}$. If such a process arises indefinitely, the standard linked root loci cannot end and must continue to extend downward.

The breakaway points of the root loci are the intersection points of the root loci. By Theorem 1.28, the number of finite breakaway points of (2.2) is finite. The number of the intersection points of the root loci of (2.2) is finite. The number of the intersection root loci is finite. Therefore, the intersection of root loci of (2.2) must end. There must exist an intersection of root loci emitted from non-real zeros on the real axis, and root loci emitted from non-real zeros intersect in an interval between a zero and a pole. Therefore, such linked root loci cannot continue infinitely.

If the root locus emitted from a non-real zero $s_{k,k+1,1}$ intersects the interval $L_{k,k,1}$, the standard linked root loci end and cannot extend further downward. The standard linked root loci also end at the interval where the root loci emitted from a non-real zero of $p(s)$ intersect. Conversely, if no root locus emitted from a non-real zero intersects within the interval, the standard linked root loci cannot end either.

Therefore, based on the results we obtained earlier, we have thus proved that there must exist a pair of non-real zeros $s_{k,k+1,1}$ and $\overline{s_{k,k+1,1}}$. The root loci $g_{k,k+1,1}$ and $\overline{g_{k,k+1,1}}$ emitted from $s_{k,k+1,1}$ and $\overline{s_{k,k+1,1}}$ intersect within a certain interval $L_{k,k,1}$.
\end{proof}

\begin{lemma}
When a root locus $g_{k,1,1}$ emitted from a real zero of $p(s)$ leaves the real axis and enters $\mathbb{C}^{*}$, then, there must exist a pair of non-real zeros and  $k-1$ other real zeros. These root loci $g_{k,k+1,1}$, $\overline{g_{k,k+1,1}}$ and $g_{k,j_1,1}$ (for $1\le k<n$, $j_1=1,2,\cdots,k$, $0<2j_1-2<2k-2$, $0<2j_1-1<2k-1$) intersect at real breakaway points $Q_{k,1}$, $q_{k,2j_1-2,1}$, $q_{k,2j_1-1,1}$ on the real axis. Among these linked root loci, no additional root loci join in. These real breakaway points connect $g_{k,1,1}$, $g_{k,j_1,1}$, $g_{k,k+1,1}$, and $\overline{g_{k,k+1,1}}$ together, forming a standard linked root locus. The terminus of this structure consists of $g_{k,k+1,1}$ and $\overline{g_{k,k+1,1}}$.
\end{lemma}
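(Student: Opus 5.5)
The plan is to prove this statement (it restates Lemma 2.16 and adds that the chain closes into a standard linked root locus ending in $g_{k,k+1,1}$ and $\overline{g_{k,k+1,1}}$) by induction on the position $j_1$ along the chain. The argument follows the template of Lemma 2.15 for $k=2$ and the $k=1$ discussion preceding Definition 2.14.

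\textbf{Forcing the next locus.} Start from $g_{k,1,1}$ leaving the real axis. By Lemma 2.7 applied to the interval $L_{k,1,1}$, some other root locus must meet $g_{k,1,1}$ in $L_{k,1,1}$, at a real breakaway point $Q_{k,1}$. If this locus comes from a non-real zero, its conjugate partner completes the structure as in the $k=1$ case. Otherwise, it is emitted from a real zero $t_{k,2,1}$, and we call it $g_{k,2,1}$. By Lemma 2.3 a real zero of (2.2) separates $t_{k,1,1}$ from $t_{k,2,1}$, so $g_{k,2,1}$ cannot reach $Q_{k,1}$ along the real axis. It must therefore itself leave the real axis. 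Applying Lemma 2.7 in $L_{k,2,1}$ then produces a next locus $g_{k,3,1}$ meeting it at $q_{k,2,1}$.

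\textbf{The inductive step.} Repeating this produces $g_{k,j_1,1}$ from real zeros $t_{k,j_1,1}$. The symmetry of Lemma 1.27(1) forces each pair $g_{k,j_1,1},g_{k,j_1+1,1}$, after leaving the axis at $q_{k,2j_1-2,1}$, to return to the axis at a breakaway point $q_{k,2j_1-3,1}$ in the preceding interval. There they split in opposite directions along the axis, exactly as in the proof of Lemma 2.16. This gives the chain of real breakaway points $Q_{k,1},q_{k,2j_1-2,1},q_{k,2j_1-1,1}$ that links all the loci. Under the hypothesis that no additional loci join in, Definition 2.14 identifies the structure as a standard linked root locus once a terminus is found.

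\textbf{Termination.} By Theorem 1.28 there are only finitely many breakaway points, and $p$ has finitely many real zeros, so the chain cannot continue indefinitely; this gives the bound $k<n$. The chain can stop only at an interval $L_{k,k,1}$ where the intersecting locus is not emitted from a real zero. By Lemma 2.4, every locus reaching a zero of $p'$ is emitted from a zero of $p$, so this final locus comes from a non-real zero $s_{k,k+1,1}$. Since the loci of degree $2q\pi$ and $2q\pi+\pi$ are symmetric (Lemma 1.27(1)), $\overline{g_{k,k+1,1}}$ from $\overline{s_{k,k+1,1}}$ meets $g_{k,k+1,1}$ first at $q_{k,2k-1,1}$. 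They then separate along the axis, and $g_{k,k+1,1}$ meets $g_{k,k,1}$ at $q_{k,2k-2,1}$. Hence the terminus consists of $g_{k,k+1,1}$ and $\overline{g_{k,k+1,1}}$.

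\textbf{Main obstacle.} The hard part is the termination step. One must rule out three alternatives:
\begin{enumerate}
\item the chain closing on itself, i.e.\ revisiting an earlier real zero;
\item the chain ending at a real zero of (2.2) without ever involving a non-real zero;
\item the chain ending at a point at infinity.
\end{enumerate}
The first attempt would use Theorem 1.15 (a locus has a single unit value) together with Theorem 1.28(2) (loci do not overlap). This should show that each real pole emits only one locus into each side (Theorem 2.5), so no pole can be reused. The second and third alternatives would be excluded by Lemma 2.7, which shows that a real-emitted locus leaving the axis needs an external intersecting locus, and by Lemma 2.8, which shows that with only real emitters no locus leaves the axis.
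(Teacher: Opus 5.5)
Your route is essentially the paper's: the termination-by-finiteness argument of Lemma 2.16 followed by the chain description of Lemma 2.17, run from $g_{k,1,1}$ toward the terminus instead of back from it. The gap is in the step that links the chain.

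\textbf{The symmetry step fails.} Lemma 1.27(1) only says that the two loci leaving the axis at $q_{k,2j_1-2,1}$ are mirror images. It does not force them back onto the axis at some $q_{k,2j_1-3,1}$. A conjugate pair of branches can instead end at a conjugate pair of non-real zeros of $p'$, which are zeros of (2.2). The paper's example with $p_{e2}$ shows this: the locus from the real zero $-1$ leaves the axis at $b_{e2,1}$ together with a locus from $s_{e2,19,20}$, and the pair ends at $s'_{e2,19,20}$ without returning. The paper's proof uses the same symmetry claim, so citing it does not help.

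\textbf{The dichotomy and the obstacles.} Your dichotomy ``the locus meeting $g_{k,1,1}$ at $Q_{k,1}$ comes from a non-real zero or is emitted from a real zero $t_{k,2,1}$'' has no content until you say how a locus is continued through the breakaway points it passes. The tools you list for your three obstacles do not address them:
\begin{itemize}
\item finiteness of breakaway points (Theorem 1.28) does not stop a chain from returning to a breakaway point it already used;
\item Theorems 1.15 and 2.5 are silent on that;
\item Lemma 2.8 concerns polynomials whose zeros are all real.
\end{itemize}

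\textbf{The missing idea: trace backward in the gain.} The gain $K$ increases strictly along every arc of a root locus. Your direction of travel, from $Q_{k,1}$ toward the non-real zeros, is already backward in $K$; what is missing is a valid justification of each link. Work with the $2q\pi$ loci; the $2q\pi+\pi$ case is the mirror image under $s\mapsto -s$.
\begin{enumerate}
\item Every residue of $p'/p$ is positive, so every real zero of $p$ emits its $2q\pi$ locus to the right, and only along the axis (Theorem 2.5).
\item Hence the branch meeting $g_{k,1,1}$ at $Q_{k,1}$ from the side of $z_{k,1,1}$ cannot have come along the axis from a real zero of $p$. It cannot have come from $z_{k,1,1}$ either, where $K=+\infty$. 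So it descended from $\mathbb{C}^{*}$ at a real breakaway point $q_{k,1,1}$ between $Q_{k,1}$ and $z_{k,1,1}$, with $K(q_{k,1,1})<K(Q_{k,1})$.
\item Follow that conjugate pair backward in $K$. It stays in $\mathbb{C}^{*}$ until one of two things happens. Either $K=0$, where it must sit at a pair of non-real zeros of $p$, since real poles emit no non-real branch. Or it meets the axis at a breakaway point $q_{k,2,1}$ of still smaller gain, where two real branches merged.
\item Under the standard hypothesis, the left branch at $q_{k,2,1}$ comes directly from a real zero $t_{k,2,1}$. The right branch again descended from $\mathbb{C}^{*}$, so the step repeats.
\end{enumerate}

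\textbf{Why this closes the argument.} The gains of successive breakaway points strictly decrease, so none is revisited and the trace stops after finitely many steps. It can only stop at $K=0$ on a non-real pair, which gives the terminus $g_{k,k+1,1},\overline{g_{k,k+1,1}}$. Each real zero emits a single rightward branch, so the $t_{k,j_1,1}$ are distinct, and therefore $k\le n-2<n$. This settles all three of your obstacles at once and needs no claim that a pair leaving the axis must return.
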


\begin{proof}
According to the condition of this lemma, by Lemma 2.16, at the end point of the structure, there are a pair of root loci $g_{k,k+1,1}$ and $\overline{g_{k,k+1,1}}$ are emitted from a pair of non-real zeros $s_{k,k+1,1}$ and $\overline{s_{k,k+1,1}}$.

The non-real zeros $s_{k,k+1,1}$ and $\overline{s_{k,k+1,1}}$ are in $\mathbb{C}^{*}$. $s_{k,k+1,1}$ and $\overline{s_{k,k+1,1}}$ emit the root loci $g_{k,k+1,1}$ and $\overline{g_{k,k+1,1}}$ in  $\mathbb{C}^{*}$ separately. The root loci $g_{k,k+1,1}$ and $\overline{g_{k,k+1,1}}$ first intersect at the real breakaway point $q_{k,2k-1,1}$ in the interval $L_{k,k,1}$. $g_{k,k+1,1}$ and $\overline{g_{k,k+1,1}}$ depart from $q_{k,2k-1,1}$. By Theorem 1.28, $g_{k,k+1,1}$ and $\overline{g_{k,k+1,1}}$ cannot extend in the same direction, and they must extend in opposite directions along the real axis.

One of them (let it be $g_{k,k+1,1}$) extends towards $t_{k,k,1}$. $\overline{g_{k,k+1,1}}$ maybe ends at  zero  $z_{k,k,1}$. $\overline{g_{k,k+1,1}}$ maybe ends at the other zero of (2.2). Both $g_{k,k,1}$ and $g_{k,k+1,1}$ extend towards each other, so $g_{k,k+1,1}$ intersects with $g_{k,k,1}$ at a real breakaway point $q_{k,2k-2,1}$ within $L_{k,k,1}$. At point $q_{k,2k-2,1}$, $g_{k,k,1}$ and $g_{k,k+1,1}$ leave the real axis and enter $\mathbb{C}^{*}$.

Due to symmetry about the real axis, $g_{k,k+1,1}$ and $g_{k,k,1}$ intersect at the real breakaway point $q_{k,2k-3,1}$ within $L_{k,k-1,1}$. Therefore, $g_{k,k,1}$ and $g_{k,k+1,1}$ extend in  $\mathbb{C}^{*}$ from $q_{k,2k-2,1}$ to $q_{k,2k-3,1}$.

$g_{k,k,1}$ and $g_{k,k+1,1}$ depart from $q_{k,2k-3,1}$. They must extend in opposite directions along the real axis. Let $g_{k,k,1}$ extend to  $t_{k,k-1,1}$. $g_{k,k+1,1}$ maybe ends at $z_{k,k-1,1}$. $g_{k,k+1,1}$ maybe ends at the other zero of (2.2).
Both $g_{k,k-1,1}$ and $g_{k,k,1}$ extend towards each other, so $g_{k,k,1}$ intersects with $g_{k,k-1,1}$ at a real breakaway point $q_{k,2k-4,1}$ within $L_{k,k-1,1}$. At point $q_{k,2k-4,1}$, $g_{k,k-1,1}$ and $g_{k,k,1}$ leave the real axis and enter $\mathbb{C}^{*}$.

Due to symmetry about the real axis, $g_{k,k-1,1}$ and $g_{k,k,1}$ intersect at the real breakaway point $q_{k,2k-5,1}$ within $L_{k,k-2,1}$. Therefore, $g_{k,k-1,1}$ and $g_{k,k,1}$ extend in  $\mathbb{C}^{*}$ from $q_{k,2k-4,1}$ to $q_{k,2k-5,1}$. $g_{k,k,1}$ maybe ends at $z_{k,k-2,1}$.  $g_{k,k,1}$ maybe ends at the other zero of (2.2). Let $g_{k,k-1,1}$ extend to $t_{k,k-2,1}$. $g_{k,k-1,1}$ intersects with $g_{k,k-2,1}$.

When such a situation appears, starting from the root locus $g_{k,1,1}$ emitted from the real zero $t_{k,1,1}$ of $p(s)$, the standard linked root loci terminate and do not extend further. The terminus of the standard linked root loci is composed of the root loci $g_{k,k+1,1}$ and $\overline{g_{k,k+1,1}}$.
The root loci $g_{k,1,1}$, $g_{k,2,1}$, $\cdots$, $g_{k,k+1,1}$, and $\overline{g_{k,k+1,1}}$ form the standard linked root loci.

According to the previously proved result of the standard linked root loci, the root locus $g_{k,1,1}$ at the beginning of the standard linked root loci and the root loci $g_{k,k+1,1}$ and $\overline{g_{k,k+1,1}}$ at the end of the standard linked root loci, by the root loci $g_{k,j_1,1}$ of real zeros in the middle, must be connected in the standard linked root loci. The beginning of the standard linked root loci is a root locus $g_{k,1,1}$ emitted from a real zero $t_{k,1,1}$, and $g_{k,1,1}$ leaves the real axis and enters $\mathbb{C}^{*}$. The end of the standard linked root loci must be a pair of root loci $g_{k,k+1,1}$ and $\overline{g_{k,k+1,1}}$. The root loci $g_{k,k+1,1}$ and $\overline{g_{k,k+1,1}}$ intersect on the real axis, and they are emitted from the non-real zeros $s_{k,k+1,1}$ and $\overline{s_{k,k+1,1}}$.

In Lemma 2.17, there are $k+2$ main root loci in the standard linked root loci, and $k$ intervals between zeros and poles of (2.2).

In some cases, in $L_{k,k,1}$, the real breakaway points $q_{k,2k-1,1}$ and $q_{k,2k-2,1}$ can be the same point. That is, $g_{k,k,1}$, $g_{k,k+1,1}$, $\overline{g_{k,k+1,1}}$ all intersect at $q_{k,2k-1,2k-2,1}=q_{k,2k-1,1}=q_{k,2k-2,1}$. In $L_{k,j_1,1}$, the real breakaway points $q_{k,2j_1-2,1}$ and $q_{k,2j_1-1,1}$ can be the same point. That is, $g_{k,j_1,1}$, $g_{k,j_1+1,1}$, $g_{k,j_1+2,1}$ all intersect at $q_{k,2j_1-2, 2j_1-1,1}=q_{k,2j_1-2,1}=q_{k,2j_1-1,1}$. The real breakaway points $Q_{k,1}$ and $q_{k,1,1}$ can be the same point. That is, $g_{k,3,1}$, $g_{k,2,1}$, $g_{k,1,1}$, and other root loci (if possible) all intersect at $Q_{k,1}$. $j_1=2,\cdots,k$.
\end{proof}
To make it clear for us to explain the concept of two main lines of the standard linked root loci below,  it starts from the terminal root locus and follows all the way back to the beginning root locus.

The non-real zeros $s_{k,k+1,1}$ and $\overline{s_{k,k+1,1}}$ emit the root loci $g_{k,k+1,1}$ and $\overline{g_{k,k+1,1}}$. The root loci $g_{k,k+1,1}$ and $\overline{g_{k,k+1,1}}$  first intersect at the real breakaway point $q_{k,2k-1,1}$ in the interval $L_{k,k,1}$. $g_{k,k+1,1}$ and $\overline{g_{k,k+1,1}}$ departs from $q_{k,2k-1,1}$. $g_{k,k+1,1}$ and $\overline{g_{k,k+1,1}}$  extend in opposite directions along the real axis. $g_{k,k+1,1}$ extends towards $t_{k,k,1}$. $\overline{g_{k,k+1,1}}$ may end at the real zero $z_{k,k,1}$. $\overline{g_{k,k+1,1}}$ maybe leaves the real axis and ends the other zero of (2.2).

$g_{k,k+1,1}$ intersects with $g_{k,k,1}$ at a real breakaway point $q_{k,2k-2,1}$ within $L_{k,k,1}$. At point $q_{k,2k-2,1}$, $g_{k,k,1}$ and $g_{k,k+1,1}$ leave the real axis and enter $\mathbb{C}^{*}$.

Due to symmetry about the real axis, $g_{k,k,1}$ and $g_{k,k+1,1}$ which have left the real axis must intersect again at a real breakaway point $q_{k,2k-3,1}$ within $L_{k,k-1,1}$.

$g_{k,k,1}$ and $g_{k,k+1,1}$ extend in  $\mathbb{C}^{*}$ from $q_{k,2k-2,1}$ to $q_{k,2k-3,1}$. $g_{k,k,1}$ and $g_{k,k+1,1}$  depart from $q_{k,2k-3,1}$.  $g_{k,k,1}$ and $g_{k,k+1,1}$  must extend in opposite directions along the real axis. $g_{k,k,1}$ extends to pole $t_{k,k-1,1}$, and intersects with $g_{k,k-1,1}$ at  $q_{k,2k-4,1}$ within $L_{k,k-1,1}$.
$g_{k,k+1,1}$ maybe ends at the real zero $z_{k,k-1,1}$. $g_{k,k+1,1}$ maybe leaves the real axis and ends the other zero of (2.2).  At point $q_{k,2k-4,1}$, $g_{k,k-1,1}$ and $g_{k,k,1}$ leave the real axis and enter $\mathbb{C}^{*}$.

Due to symmetry about the real axis, $g_{k,k-1,1}$ and $g_{k,k,1}$ which have left the real axis must intersect again at a real breakaway point $q_{k,2k-5,1}$ within $L_{k,k-2,1}$.

$g_{k,k-1,1}$ and $g_{k,k,1}$ extend in  $\mathbb{C}^{*}$ from $q_{k,2k-4,1}$ to $q_{k,2k-5,1}$.

$g_{k,j_1+1,1}$ and $g_{k,j_1+2,1}$ intersect at the real breakaway point $q_{k,2j_1,1}$ in $L_{k,j_1+1,1}$. From $q_{k,2j_1,1}$, $g_{k,j_1+1,1}$ and $g_{k,j_1+2,1}$ leave the real axis and enters $\mathbb{C}^{*}$. $g_{k,j_1+1,1}$ and $g_{k,j_1+2,1}$ intersect at the real breakaway point $q_{k,2j_1-1,1}$  in $L_{k,j_1,1}$. $g_{k,j_1+1,1}$ and $g_{k,j_1+2,1}$ extend in  $\mathbb{C}^{*}$ from $q_{k,2j_1,1}$ to $q_{k,2j_1-1,1}$.

From $q_{k,2j_1-1,1}$, $g_{k,j_1+1,1}$ and $g_{k,j_1+2,1}$ extend in opposite directions along the real axis. $g_{k,j_1+1,1}$ extends towards $t_{k,j_1,1}$. $t_{k,j_1,1}$ emits $g_{k,j_1,1}$ to interval $L_{k,j_1,1}$. $g_{k,j_1+1,1}$ intersects with $g_{k,j_1,1}$ at a real breakaway point $q_{k,2j_1-2,1}$ within $L_{k,j_1,1}$. At point $q_{k,2j_1-2,1}$, $g_{k,j_1,1}$ and $g_{k,j_1+1,1}$ leave the real axis and enter  $\mathbb{C}^{*}$.

Due to symmetry about the real axis, $g_{k,j_1,1}$ and $g_{k,j_1+1,1}$ intersect at  the real breakaway point $q_{k,2j_1-3,1}$  in $L_{k,j_1-1,1}$. $g_{k,j_1,1}$ and $g_{k,j_1+1,1}$ extend in  $\mathbb{C}^{*}$ from $q_{k,2j_1-2,1}$ to $q_{k,2j_1-3,1}$. $g_{k,j_1+2,1}$  maybe ends at the real zero $z_{k,j_1,1}$. $g_{k,j_1+2,1}$  maybe leaves the real axis and ends the other zero of (2.2). $j_1=1,2,\dots,k-2$.

$g_{k,1,1}$ and the root locus $g_{k,2,1}$ intersect, along with other possible root loci, at $Q_{k,1}$  in $L_{k,1,1}$.

The standard linked root loci consist of two main lines with multiple branches. Except for the terminal root loci emitted from a pair of the non-real poles of (2.2), the rest main root loci originate from the real poles of (2.2).

In $\mathbb{C} ^{*}$, the section of the root locus $g_{k,k+1,1}$ from the non-real zero $s_{k,k+1,1}$ to the real breakaway point $q_{k,2k-1,1}$ constitutes the terminus part of two main lines. In $\mathbb{C} ^{*}$, the section of the root locus $\overline{g_{k,k+1,1}}$ from the non-real zero $\overline{s_{k,k+1,1}}$ to $q_{k,2k-1,1}$ constitutes the terminus part of two main lines.

The interval on the real axis from $q_{k,2k-1,1}$ to the real breakaway point $q_{k,2k-2,1}$ constitutes a part of two main lines. In $\mathbb{C} ^{*}$, the section of the root locus $g_{k,k,1}$ from $q_{k,2k-2,1}$ to the real breakaway point $q_{k,2k-3,1}$ constitutes a part of two main lines. In $\mathbb{C} ^{*}$, the section of the root locus $g_{k,k+1,1}$ from $q_{k,2k-2,1}$ to $q_{k,2k-3,1}$ constitutes a part of two main lines.

The interval on the real axis from $q_{k,2k-3,1}$ to the real breakaway point $q_{k,2k-4,1}$ constitutes a part of two main lines. In $\mathbb{C} ^{*}$, the section of the root locus $g_{k,k-1,1}$ from $q_{k,2k-4,1}$ to the real breakaway point $q_{k,2k-5,1}$ constitutes a part of two main lines. In $\mathbb{C} ^{*}$, the section of the root locus $g_{k,k,1}$ from $q_{k,2k-4,1}$ to $q_{k,2k-5,1}$ constitutes a part of two main lines.

In $\mathbb{C} ^{*}$, the section of the root locus $g_{k,j_1+1,1}$ from the real breakaway point $q_{k,2j_1,1}$ to the real breakaway point $q_{k,2j_1-1,1}$ constitutes a part of two main lines. In $\mathbb{C} ^{*}$, the section of the root locus $g_{k,j_1+2,1}$ from the real breakaway point $q_{k,2j_1,1}$ to the real breakaway point $q_{k,2j_1-1,1}$ constitutes a part of two main lines. The interval on the real axis from $q_{k,2j_1-1,1}$ to $q_{k,2j_1-2,1}$ constitutes a part of two main lines.  In $\mathbb{C} ^{*}$, the section of the root locus $g_{k,j_1,1}$ from $q_{k,2j_1-2,1}$ to the real breakaway point $q_{k,2j_1-3,1}$ constitutes a part of two main lines. In $\mathbb{C} ^{*}$, the section of the root locus $g_{k,j_1+1,1}$ from $q_{k,2j_1-2,1}$ to $q_{k,2j_1-3,1}$ constitutes a part of two main lines. $j_1=1,2,\cdots,k-2$.

In $\mathbb{C} ^{*}$, the section of the root locus $g_{k,2,1}$ from $q_{k,2,1}$ to the real breakaway point $q_{k,1,1}$ constitutes a part of two main lines. In $\mathbb{C} ^{*}$, the section of the root locus $g_{k,3,1}$ from $q_{k,2,1}$ to $q_{k,1,1}$ constitutes a part of two main lines. The interval on the real axis from $q_{k,1,1}$ to the real breakaway point $Q_{k,1}$ constitutes a part of two main lines.

All the parts given previously, when connected together, form two main lines of the standard linked root loci. In the standard linked root loci, there are only two main lines. Part of the two main lines is distributed on the real axis. The other part of two main lines distributed in $\mathbb{C}^{*}$, and they are symmetric about the real axis.

The section of $\overline{g_{k,k+1,1}}$ from $q_{k,2k-1,1}$ to $z_{k,k,1}$ constitutes a branch. The section of $g_{k,k,1}$ from $t_{k,k,1}$ to $q_{k,2k-2,1}$ constitutes a branch. And so on $\cdots$. In the standard linked root loci, there are multiple branches.

In Lemma 2.15, in $\mathbb{C} ^{*}$, the section of the root locus $g_{2,3,1}$ from the non-real zero $s_{2,3,1}$ to the real breakaway point $q_{2,3,1}$ constitutes the terminus part of two main lines. In $\mathbb{C} ^{*}$, the section of the root locus $\overline{g_{2,3,1}}$ from the non-real zero $\overline{s_{2,3,1}}$ to $q_{2,3,1}$ constitutes the terminus part of two main lines.

The interval on the real axis from $q_{2,3,1}$ to the real breakaway point $q_{2,2,1}$ constitutes a part of two main lines. In $\mathbb{C} ^{*}$, the section of the root locus $g_{2,2,1}$ from $q_{2,2,1}$ to the real breakaway point $q_{2,1,1}$ constitutes a part of two main lines. In $\mathbb{C} ^{*}$, the section of the root locus $g_{2,3,1}$ from $q_{2,2,1}$ to $q_{2,1,1}$ constitutes a part of two main lines.
The interval on the real axis from $q_{2,1,1}$ to the real breakaway point $Q_{2,1}$ constitutes a part of two main lines.

All the parts given previously, when connected together, form two main lines of the standard linked root loci. In the standard linked root loci, there are only two main lines. Part of the two main lines is distributed on the real axis. The other part of two main lines distributed in $\mathbb{C}^{*}$, and they are symmetric about the real axis.

The section of $\overline{g_{2,3,1}}$ from $q_{2,3,1}$ to $z_{2,2,1}$ constitutes a branch. The section of $g_{2,2,1}$ from $t_{2,2,1}$ to $q_{2,2,1}$ constitutes a branch. The section of $g_{2,3,1}$ from $q_{2,1,1}$ to $z_{2,1,1}$ constitutes a branch. In the standard linked root loci, there are four branches.

Therefore, based on the above explanations of the two main lines of the standard linked root loci, we can define the two main lines of the standard linked root loci of (2.2) as follows.

\begin{definition}
The non-real zeros $s_{k,k+1,1}$ and $\overline{s_{k,k+1,1}}$ emit root loci $g_{k,k+1,1}$ and $\overline{g_{k,k+1,1}}$. In the standard linked root loci, beginning at the non-real zeros $s_{k,k+1,1}$ and $\overline{s_{k,k+1,1}}$, the two lines which $g_{k,k+1,1}$ and $\overline{g_{k,k+1,1}}$ can extend all the way to $Q_{k,1}$ are the two main lines of the standard linked root loci of (2.2).
\end{definition}

\begin{lemma}
In the standard linked root loci, a root locus $g_{k,k+1,1}$ or $\overline{g_{k,k+1,1}}$ originating from a non-real zero $s_{k,k+1,1}$ or $\overline{s_{k,k+1,1}}$ can be distributed on two main lines of the standard linked root loci. The root locus $g_{k,k+1,1}$ or $\overline{g_{k,k+1,1}}$ intersects at the real breakaway point $Q_{k,1}$. The root locus $g_{k,k+1,1}$ or $\overline{g_{k,k+1,1}}$   leaves from $Q_{k,1}$ then enters $\mathbb{C}^{*}$.
\end{lemma}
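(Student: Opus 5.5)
The plan is to trace the two main lines of Definition 2.18 backwards, from the terminus toward $Q_{k,1}$. The argument uses the structure already established in Lemma 2.17 and the freedom, inherent in the phrase ``can be distributed'', to choose which outgoing branch a locus follows at each real breakaway point. The basic observation is this. At a real breakaway point several root loci of the same unit complex value meet, by Theorem 1.15 and Theorem 1.16. By Theorem 1.25 such a point is a multiple root of the characteristic equation, so the incoming and outgoing branches there are not canonically paired. Any incoming locus may therefore be continued along any outgoing branch of the same degree. I would first make this precise as a relabelling principle: given the branch structure at a breakaway point, the labels of the outgoing branches may be permuted without changing the point set of the root loci. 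Theorem 1.28 guarantees that no two branches overlap on a segment, so the outgoing branches are well separated and the relabelling is well defined.

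With this in hand, I would proceed by descending induction on $j_1$ along the chain of Lemma 2.17. The base step is at the terminus. $g_{k,k+1,1}$ leaves $s_{k,k+1,1}$ and meets $\overline{g_{k,k+1,1}}$ at $q_{k,2k-1,1}$ in $L_{k,k,1}$. After that point one of the two continues along the real axis toward $t_{k,k,1}$; by the relabelling principle I may call that branch $g_{k,k+1,1}$. It then meets $g_{k,k,1}$ at $q_{k,2k-2,1}$ and leaves into $\mathbb{C}^{*}$. By the symmetry in Lemma 1.27(1), its mirror image also leaves, and the pair reaches $q_{k,2k-3,1}$ in $L_{k,k-1,1}$. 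Along the way I would note that the two symmetric branches leaving $q_{k,2k-2,1}$ constitute precisely the two main lines in that stretch. $g_{k,k+1,1}$ may be assigned to the upper one and $\overline{g_{k,k+1,1}}$ to the lower one.

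The inductive step repeats this at $q_{k,2j_1-1,1}$. Here the branch currently labelled $g_{k,k+1,1}$ arrives from $\mathbb{C}^{*}$. Of the two opposite real-axis continuations, I assign to it the one heading toward $t_{k,j_1,1}$, which is the main-line continuation. It then meets $g_{k,j_1,1}$ at $q_{k,2j_1-2,1}$ and re-enters $\mathbb{C}^{*}$ symmetrically, which again supplies a conjugate partner for $\overline{g_{k,k+1,1}}$. After $k$ steps the label reaches $q_{k,1,1}$, travels the real segment to $Q_{k,1}$, and intersects $g_{k,1,1}$ there. At $Q_{k,1}$, Lemma 2.17 says that $g_{k,1,1}$ and $g_{k,2,1}$ leave into $\mathbb{C}^{*}$. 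Assigning one of these departing branches to $g_{k,k+1,1}$, or to $\overline{g_{k,k+1,1}}$ by symmetry, gives the final claim.

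The main obstacle I expect is justifying the relabelling principle rigorously. The earlier lemmas treat root loci as labelled curves that carry their origin with them, whereas at a multiple root $\gamma_0>1$ only the union of branches is intrinsic. I would handle it via the local form $(s-s_0)^{\gamma_0}g(s)$ of Lemma 1.20. Near $s_0$ the locus of fixed unit value consists of $2\gamma_0$ arcs meeting at equal angles, so the arcs arriving from the pole side and those leaving are paired only by convention. I would also check the coincidence cases noted at the end of Lemma 2.17, such as $q_{k,2j_1-2,1}=q_{k,2j_1-1,1}$ or $Q_{k,1}=q_{k,1,1}$. In those cases the same assignment applies with several steps merged into one.
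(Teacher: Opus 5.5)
Your proposal is correct and follows essentially the paper's route: the paper also traces the main line backwards from $q_{k,2k-1,1}$ to $Q_{k,1}$, uses the interchangeability of ``common root loci'' at each breakaway point to route $g_{k,k+1,1}$ along the main line while the remaining loci are sent to $z_{k,k,1},z_{k,k-1,1},\dots,z_{k,1,1}$, and then obtains $\overline{g_{k,k+1,1}}$ by the mirror-image assignment, and your local $2\gamma_0$-arc argument is a more careful justification of that relabelling than the paper's appeal to ``common properties''. One correction: drop the claim that $g_{k,k+1,1}$ and $\overline{g_{k,k+1,1}}$ can occupy the upper and lower arcs leaving $q_{k,2k-2,1}$ at the same time, because in your own labelling $\overline{g_{k,k+1,1}}$ has already gone toward $z_{k,k,1}$; as in the paper, the two alternatives are two separate conjugate labellings, and that is all the ``or'' in the statement requires.
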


\begin{proof}
In the explanation of two main lines, for the root loci that eventually reach the real zeros in (2.2) in the standard linked root loci, one of their distribution as follows. The root locus $\overline{g_{k,k+1,1}}$ ends at $z_{k,k,1}$. The root locus $g_{k,l,1}$ ends at $z_{k,l-2,1}$. $l=3,\cdots, k+1$. Based on this distributed way, neither the root locus $g_{k,k+1,1}$ nor $\overline{g_{k,k+1,1}}$ directly intersects at $Q_{k,1}$.

The intersecting root loci are referred to as "common root loci". They possess "common" properties. Beginning at the non-real zeros $s_{k,k+1,1}$ and $\overline{s_{k,k+1,1}}$, the two main lines of the standard linked root loci both extend towards the breakaway point $Q_{k,1}$. Therefore, the root locus emitted from the non-real zero $s_{k,k+1,1}$ or $\overline{s_{k,k+1,1}}$ can be distributed on two main lines of the standard linked root loci. That is, among the root loci intersecting at $Q_{k,1}$, one is $g_{k,k+1,1}$ or $\overline{g_{k,k+1,1}}$.

The root locus $g_{k,k+1,1}$ from a non-real zero $s_{k,k+1,1}$, distributed on two main lines of the standard linked root loci in the following way, and intersect at $Q_{k,1}$

The root locus $\overline{g_{k,k+1,1}}$ ends at $z_{k,k,1}$. The root locus $g_{k,k,1}$ ends at $z_{k,k-1,1}$. The root locus $g_{k,l_1,1}$  ends at $z_{k,l_1-1,1}$. $l_1=3,\cdots, k-1$. $g_{k,3,1}$ ends at $z_{k,2,1}$. $g_{k,2,1}$ ends at $z_{k,1,1}$. Under this case, $g_{k,k+1,1}$ must intersect at $Q_{k,1}$.

The root locus $g_{k,k+1,1}$ originates from the non-real zero $s_{k,k+1,1}$ and extends to $q_{k,2k-1,1}$. At $q_{k,2k-1,1}$, $g_{k,k+1,1}$ enters the real axis. On the real axis, $g_{k,k+1,1}$ extends from $q_{k,2k-1,1}$ to $q_{k,2k-2,1}$. Then $g_{k,k+1,1}$ enters $\mathbb{C}^{*}$ from $q_{k,2k-2,1}$ and extends to $q_{k,2k-3,1}$ in $\mathbb{C}^{*}$. And so on $\cdots$. At $q_{k,3,1}$, $g_{k,k+1,1}$ re-enters the real axis. On the real axis, $g_{k,k+1,1}$ extends from $q_{k,3,1}$ to $q_{k,2,1}$. Then $g_{k,k+1,1}$ enters $\mathbb{C}^{*}$ from $q_{k,2,1}$ and extends to $q_{k,1,1}$ in $\mathbb{C}^{*}$. At $q_{k,1,1}$, $g_{k,k+1,1}$ re-enters the real axis. On the real axis, $g_{k,k+1,1}$ extends from $q_{k,1,1}$ to $Q_{k,1}$. Then $g_{k,k+1,1}$ enters $\mathbb{C}^{*}$ from $Q_{k,1}$.

The root locus $g_{k,k+1,1}$ ends at $z_{k,k,1}$. The root locus $g_{k,k,1}$ ends at $z_{k,k-1,1}$. The root locus $g_{k,l_1,1}$ ends at $z_{k,l_1-1,1}$. $l_1=3,\cdots, k-1$. $g_{k,3,1}$ ends at $z_{k,2,1}$. $g_{k,2,1}$ ends at $z_{k,1,1}$. Under this case, $\overline{g_{k,k+1,1}}$ must intersect at $Q_{k,1}$.

The root locus $\overline{g_{k,k+1,1}}$ originates from the non-real zero $\overline{s_{k,k+1,1}}$ and extends to $q_{k,2k-1,1}$. At $q_{k,2k-1,1}$, $\overline{g_{k,k+1,1}}$ enters the real axis. On the real axis, $\overline{g_{k,k+1,1}}$ extends from $q_{k,2k-1,1}$ to $q_{k,2k-2,1}$. Then $\overline{g_{k,k+1,1}}$ enters $\mathbb{C}^{*}$ from $q_{k,2k-2,1}$ and extends to $q_{k,2k-3,1}$ in $\mathbb{C}^{*}$. And so on $\cdots$. At $q_{k,3,1}$, $\overline{g_{k,k+1,1}}$ re-enters the real axis. On the real axis, $\overline{g_{k,k+1,1}}$ extends from $q_{k,3,1}$ to $q_{k,2,1}$. Then $\overline{g_{k,k+1,1}}$ enters $\mathbb{C}^{*}$ from $q_{k,2,1}$ and extends to $q_{k,1,1}$ in $\mathbb{C}^{*}$. At $q_{k,1,1}$, $\overline{g_{k,k+1,1}}$ re-enters the real axis. On the real axis, $\overline{g_{k,k+1,1}}$ extends from $q_{k,1,1}$ to $Q_{k,1}$. Then $\overline{g_{k,k+1,1}}$ enters $\mathbb{C}^{*}$ from $Q_{k,1}$.

Therefore, in the standard linked root loci,  a root locus $g_{k,k+1,1}$ or $\overline{g_{k,k+1,1}}$ originating from a non-real zero $s_{k,k+1,1}$ or $\overline{s_{k,k+1,1}}$ can be distributed on two main lines of the standard linked root loci. The root locus $g_{k,k+1,1}$ or $\overline{g_{k,k+1,1}}$ intersects at the real breakaway point $Q_{k,1}$. The root locus $g_{k,k+1,1}$ or $\overline{g_{k,k+1,1}}$ leaves from $Q_{k,1}$ then enters $\mathbb{C}^{*}$.
\end{proof}
If no root loci intersect in the interval $L_{k,1,1}$, $g_{k,1,1}$ can only remain on the real axis.
The root locus $g_{k,1,1}$ of the real zero of $p(s)$ leaves the real axis and enters $\mathbb{C}^{*}$, which is only a result. The reason for this result is that $g_{k,k+1,1}$ or $\overline{g_{k,k+1,1}}$ intersects on the real axis, generating the standard linked root loci. The root loci $g_{k,k+1,1}$ and $\overline{g_{k,k+1,1}}$ of the non-real zeros $s_{k,k+1,1}$ and $\overline{s_{k,k+1,1}}$ intersect on the real axis, generating the standard linked root loci. This is the fundamental reason.

On the two main lines of each of the standard linked root loci, the direction of extension is from the non-real zeros $s_{k,k+1,1}$ or $\overline{s_{k,k+1,1}}$ originating, all the way extending to the real breakaway point $Q_{k,1}$. The two main lines can be the root loci $g_{k,k+1,1}$ or $\overline{g_{k,k+1,1}}$ of the non-real zeros $s_{k,k+1,1}$ or $\overline{s_{k,k+1,1}}$. That is, $g_{k,k+1,1}$ or $\overline{g_{k,k+1,1}}$ successively intersects with the root loci of real zeros on the real axis, and lets the two main lines as its root loci, extends all the way to the breakaway point $Q_{k,1}$.

By Lemma 2.7, the following situation occurs only when root locus $g_{1,2,2}$ intersects  $g_{1,1,2}$, along with other possible root loci, at $Q_{1,2}$ on the real axis:  The root locus $g_{1,1,2}$, emitted from a real zero $t_{1,1,2}$ of $p(s)$, leaves the real axis and enters $\mathbb{C}^{*}$, while $g_{1,2,2}$ originates from a non-real zero $s_{1,2,2}$ of $p(s)$. For their structure, $g_{1,1,2}$ and $g_{1,2,2}$ connect together.

The root loci of (2.2) of $2q\pi$ and $2q\pi+\pi$ degrees are symmetric about the real axis. The conjugate non-real zeros and poles of (2.2) are symmetric about the real axis. The root locus $\overline{g_{1,2,2}}$ emitted from the conjugate zero $\overline{s_{1,2,2}}$, so $g_{1,2,2}$ and $\overline{g_{1,2,2}}$ are symmetric about the real axis. If $g_{1,2,2}$ intersects $g_{1,1,2}$ on the real axis, $\overline{g_{1,2,2}}$ must first intersect $g_{1,2,2}$. After $\overline{g_{1,2,2}}$ and $g_{1,2,2}$ intersect at $q_{1,1,2}$, from the real breakaway point $q_{1,1,2}$, $\overline{g_{1,2,2}}$ and $g_{1,2,2}$ extend oppositely on the real axis. Let $g_{1,2,2}$ extend to $t_{1,1,2}$.

Unlike the standard linked root loci, $g_{1,2,2}$  intersects with the additional root loci at real breakaway points on the real axis. The additional root loci do not include $g_{1,1,2}$. After $g_{1,2,2}$ intersects with the additional root loci on the real axis, $g_{1,2,2}$  then intersects $g_{1,1,2}$, along with other possible root loci, at the real breakaway point $Q_{1,2}$. Unlike the standard linked root loci, there are the additional root loci that join into the original standard linked root loci.

Therefore, $g_{1,1,2}$ together with $g_{1,2,2}$ and $\overline{g_{1,2,2}}$ form a connected structure. We call them the first kind of non-standard linked root loci.

$g_{k,1,2}$ originates from the real zero $t_{k,1,2}$ of $p(s)$. $g_{k,2,2}$ originates from the real zero $t_{k,2,2}$ of $p(s)$. On the real axis, $g_{k,2,2}$ intersects  $g_{k,1,2}$, along with other possible root loci, at $Q_{k,2}$. From $Q_{k,2}$, $g_{k,1,2}$ and $g_{k,2,2}$ leave the real axis and enter $\mathbb{C}^{*}$. By Lemma 2.3, there is at least one real zero of (2.2) between $t_{k,1,2}$ and $t_{k,2,2}$ to separate them. Therefore, on the real axis, $g_{k,2,2}$ cannot directly intersect with $g_{k,1,2}$. $g_{k,2,2}$ must leave the real axis and enter $\mathbb{C}^{*}$. $1\le k<n$.

$g_{k,2,2}$ leaves the real axis and enters $\mathbb{C}^{*}$. By Lemma 2.7, this result requires that the third root locus, denoted as $g_{k,3,2}$, intersects $g_{k,2,2}$ at $q_{k,2,2}$ on the real axis. Suppose $g_{k,3,2}$ originates from the real zero $t_{k,3,2}$ of $p$. From $q_{k,2,2}$, $g_{k,2,2}$ and $g_{k,3,2}$ leave the real axis and enter $\mathbb{C}^{*}$.

Because $g_{k,2,2}$ needs to intersect with $g_{k,1,2}$ at $Q_{k,2}$, $g_{k,2,2}$ and $g_{k,3,2}$ leave the real axis and enter $\mathbb{C}^{*}$. Due to symmetry about the real axis, $g_{k,2,2}$ and $g_{k,3,2}$ which have left the real axis must intersect again at a real breakaway point $q_{k,1,2}$. From $q_{k,1,2}$, $g_{k,2,2}$ and $g_{k,3,2}$ extend in opposite directions along the real axis. Let $g_{k,2,2}$ extend toward $t_{k,1,2}$. If $g_{k,2,2}$ doesn't intersect with the additional root loci at the real breakaway points on the real axis, then $g_{k,1,2}$ and the root locus $g_{k,2,2}$ extend toward each other and intersect, along with other possible root loci, at $Q_{k,2}$.

Unlike the standard linked root loci, it is possible that $g_{k,2,2}$ intersects with the additional root loci at the real breakaway points on the real axis. The additional root loci do not include $g_{k,1,2}$. After $g_{k,2,2}$ intersects with the additional root loci on the real axis, then $g_{k,1,2}$ and the root locus $g_{k,2,2}$ extend toward each other and intersect, along with other possible root loci, at $Q_{k,2}$.

By Lemma 2.3, there must be at least one real zero of (2.2) between $t_{k,2,2}$ and $t_{k,3,2}$ to separate them. Therefore, on the real axis, $g_{k,3,2}$ cannot directly intersect $g_{k,2,2}$. $g_{k,3,2}$ must leave the real axis and enter $\mathbb{C}^{*}$.

If the intersecting root loci continue to originate from real zeros, such a process must be carried on.

The intersection points of root loci are breakaway points of (2.2). By Theorem 1.28, the number of breakaway points of (2.2) is finite, implying the number of intersecting root loci is finite. Hence, this process cannot continue indefinitely. However, based on prior results, such an intersection chain can only terminate when a root locus $g_{k,k+1,2}$ from a non-real zero $s_{k,k+1,2}$ of $p(s)$ intersects a root locus $g_{k,k,2}$ from a real zero at $q_{k,2k-2,2}$ on the real axis. $g_{k,k,2}$ originates from the real zero $t_{k,k,2}$ of $p(s)$.

The root locus $g_{k,k+1,2}$ is emitted from the conjugate non-real zero $s_{k,k+1,2}$.
Because the root loci of (2.2) of $2q\pi$ and $2q\pi+\pi$ degrees are symmetrical about the real axis. The conjugate non-real zeros and poles of (2.2) are also symmetrical about the real axis. The root locus $\overline{g_{k,k+1,2}}$ is emitted from the conjugate non-real zero $\overline{s_{k,k+1,2}}$. Therefore, $g_{k,k+1,2}$ and $\overline{g_{k,k+1,2}}$ are symmetrical about the real axis.  If $g_{k,k+1,2}$ and $g_{k,k,2}$ intersect on the real axis, then $\overline{g_{k,k+1,2}}$ must intersect with $g_{k,k+1,2}$ first at the real breakaway point $q_{k,2k-1,2}$.  From $q_{k,2k-1,2}$, they extend in opposite directions.  Let $g_{k,k+1,2}$ extend to  $t_{k,k,2}$.

If $g_{k,k+1,2}$ does not intersect with the additional root loci at the real breakaway points on the real axis. $g_{k,k+1,2}$ then intersects with $g_{k,k,2}$ at a real breakaway point $q_{k,2k-2,2}$. From $q_{k,2k-2,2}$, $g_{k,k,2}$ and $g_{k,k+1,2}$ leave the real axis and enter $\mathbb{C}^{*}$.

Unlike the standard linked root loci, it is possible that $g_{k,k+1,2}$  intersects with the additional root loci at the real breakaway points on the real axis. The additional root loci do not include $g_{k,k,2}$. After $g_{k,k+1,2}$  intersects with the additional root loci on the real axis, $g_{k,k+1,2}$  then intersects with $g_{k,k,2}$ at a real breakaway point $q_{k,2k-2,2}$. From $q_{k,2k-2,2}$, $g_{k,k,2}$ and $g_{k,k+1,2}$ leave the real axis and enter $\mathbb{C}^{*}$.

In the process of forming the  non-standard linked root loci, the root loci emitted from real zeros intersect with the additional root loci and then intersect again with the root loci emitted from real zeros. Alternatively, root loci emitted from non-real zeros intersect with the additional root loci and then intersect with the root loci emitted from real zeros. Such a situation occurs at least once. Unlike the standard linked root loci, there are the additional root loci that join into the original standard linked root loci.

Therefore, $g_{k,1,2}$, $g_{k,2,2}$, $\cdots$, together with $g_{k,k+1,2}$ and $\overline{g_{k,k+1,2}}$ form a connected structure. We call them the first kind of non-standard linked root loci.

In the first kind of non-standard linked root loci, $k$ root loci emitted from the real poles of (2.2) connect with a pair of root loci emitted from the non-real zeros. The additional root loci join the standard linked root loci.

The root locus emitted from a real pole of (2.2) may intersect with the root locus emitted from the next real pole of (2.2). The root locus emitted from a real pole of (2.2) may intersect with the additional root loci, then the root locus emitted from a real pole of (2.2) intersects with the root locus emitted from the next real pole of (2.2). The root locus emitted from a pair of non-real poles of (2.2) may intersect with  the root locus emitted from  the real pole of (2.2).
The root locus emitted from a pair of non-real poles of (2.2) may intersect with the additional root loci, then the root locus emitted from a pair of non-real pole of (2.2) intersects with the root locus emitted from the real pole of (2.2). These intersection of the root loci occurs within the intervals between real poles and real zeros. The rule for forming the first non-standard linked root loci is straightforward.

During the formation of the first kind of non-standard linked root loci, the two root loci $g_{k,j_1+1,2}$ and $g_{k,j_1+2,2}$ emitted from the real zeros ($t_{k,j_1+1,2}$, $t_{k,j_1+2,2}$). These two root loci $g_{k,j_1+1,2}$ and $g_{k,j_1+2,2}$ intersect at the real breakaway point $q_{k,2j_1,2}$. From $q_{k,2j_1,2}$, the two root loci $g_{k,j_1+1,2}$ and $g_{k,j_1+2,2}$ leave the real axis and enter $\mathbb{C}^{*}$. Subsequently, they intersect again at the real breakaway point $q_{k,2j_1-1,2}$ on the real axis. From $q_{k,2j_1-1,2}$, $g_{k,j_1+1,2}$ and $g_{k,j_1+2,2}$ extend further along the real axis in opposite directions. Let $g_{k,j_1+1,2}$ extend toward the direction of $t_{k,j_1,2}$. The root locus $g_{k,j_1+1,2}$ then intersects with the additional root loci on the real axis. This specific manner of intersection between $g_{k,j_1+1,2}$ and the additional root loci is formally defined. We term this manner of intersection the first type of intersection with the additional root loci.

The root loci $g_{k,k+1,2}$ and $\overline{g_{k,k+1,2}}$ are emitted from the non-real zeros ($s_{k,k+1,2}$ and $\overline{s_{k,k+1,2}}$). $g_{k,k+1,2}$ and $\overline{g_{k,k+1,2}}$ necessarily intersect at the real breakaway point $q_{k,2k-1,2}$ on the real axis. From $q_{k,2k-1,2}$, the two root loci $g_{k,k+1,2}$ and $\overline{g_{k,k+1,2}}$ extend further along the real axis in opposite directions. Let $g_{k,k+1,2}$ extend toward the direction of $t_{k,k,2}$. The root locus $g_{k,k+1,2}$ then intersects with the additional root loci on the real axis. After $g_{k,k+1,2}$ intersects with the additional root loci, it subsequently intersects with the root locus $g_{k,k,2}$ on the real axis at the real breakaway point $q_{k,2k-2,2}$. This manner of intersection between $g_{k,k+1,2}$ and the additional root loci is also referred to as the first type of intersection with the additional root loci.

\begin{definition}
The root locus $g_{k,j_1+1,2}$ intersects with the additional root loci on the real axis. After $g_{k,j_1+1,2}$ intersects with the additional root loci, it then intersects with the root locus $g_{k,j_1,2}$ on the real axis at the real breakaway point $q_{k,2j_1-2,2}$. This manner of intersection between $g_{k,j_1+1,2}$ and the additional root loci is referred to as the first type of intersection with the additional root loci. Here, $1\le k<n$, $j_1=1,\cdots, k-1$.
\end{definition}
The root locus $g_{k,k+1,2}$ intersects with the additional root loci on the real axis. After $g_{k,k+1,2}$ intersects with the additional root loci, it then intersects with the root locus $g_{k,k,2}$ on the real axis at the real breakaway point $q_{k,2k-2,2}$. This manner of intersection between $g_{k,k+1,2}$ and the additional root loci is also referred to as the first type of intersection with the additional root loci.

Therefore, in the first kind of non-standard linked root loci, there exists at least one instance of the first type of intersection with the additional root loci.

\begin{definition}
When $k>1$ and a root locus $g_{k,1,2}$ emitted from a real zero $t_{k,1,2}$ of $p(s)$ leaves real axis and enters $\mathbb{C}^{*}$, $1\le k<n$. Beginning from $g_{k,1,2}$, consider root loci $g_{k,j_1,2}$ (for $j_1=1,\cdots,k$, $0<2j_1-2<2k-2$, $0<2j_1-1<2k-1$) emitted from distinct real poles $t_{k,j_1,2}$ of (2.2). At the end point of the structure, there are a pair of root loci $g_{k,k+1,2}$ and $\overline{g_{k,k+1,2}}$ are emitted from a pair of non-real zeros $s_{k,k+1,2}$ and $\overline{s_{k,k+1,2}}$. These root loci intersect at real breakaway points $Q_{k,2}$, $q_{k,2j_1-2,2}$ $q_{k,2j_1-1,2}$ on the real axis. These intersecting root loci ($g_{k,1,2}$, $g_{k,j_1,2}$, $g_{k,k+1,2}$ and $\overline{g_{k,k+1,2}}$) are connected by their real breakaway points $Q_{k,2}$, $q_{k,2j_1-1,2}$, $q_{k,2j_1-2,2}$. There must be the additional root loci that join into the original standard linked root loci. There is at least one instance of the first type of intersection with the additional root loci. The structure formed by these root loci ($g_{k,1,2}$, $g_{k,j_1,2}$, $g_{k,k+1,2}$ and $\overline{g_{k,k+1,2}}$ and the additional root loci) is called \emph{the first kind of non-standard linked root loci of (2.2)}.
\end{definition}
When $k=1$ and a root locus $g_{1,1,2}$ emitted from a real zero $t_{1,1,2}$ of $p(s)$ leaves real axis and enters $\mathbb{C}^{*}$, $k=1$. Beginning from $g_{1,1,2}$, at the end point of the structure, there are a pair of root loci $g_{1,2,2}$ and $\overline{g_{1,2,2}}$ are emitted from a pair of non-real zeros $s_{1,2,2}$ and $\overline{s_{1,2,2}}$. These root loci intersect at real breakaway points $Q_{1,2}$, $q_{1,1,2}$ on the real axis. These intersecting root loci ($g_{1,1,2}$, $g_{1,2,2}$ and $\overline{g_{1,2,2}}$) are connected by their real breakaway points $Q_{1,2}$, $q_{1,1,2}$. There must be the additional root loci that join into the original standard linked root loci. There is at least one instance of the first type of intersection with the additional root loci. The structure formed by these root loci ($g_{1,1,2}$, $g_{1,2,2}$ and $\overline{g_{1,2,2}}$ and the additional root loci) is called \emph{the first kind of non-standard linked root loci of (2.2)}.

In this paper, for all "the additional root loci", we do not need to provide their detailed results. We only express their existence. For all  non-standard linked root loci, we only need to prove that they all end at the root loci of non-real zeros, and the root loci of non-real zeros can all intersect at the real breakaway point $Q_{k,j_3}$, $q_{k,2j_1-1,j_3}$, $q_{k,2j_1-2,j_3}$.  $j_3=2,3,4$. In this paper, we do not need to give the complete structure of the non-standard linked root loci.

Let $g_{2,1,2}$ be the root locus emitted from a real zero $t_{2,1,2}$ of $p$, and let $z_{2,1,2}$ be the real zero of (2.2) adjacent to $t_{2,1,2}$. The root locus $g_{2,1,2}$ would leave the real axis in the interval $L_{2,1,2}$ between $t_{2,1,2}$ and $z_{2,1,2}$ to enter $\mathbb{C}^{*}$. However, if no other root locus intersects $g_{2,1,2}$ in $L_{2,1,2}$, then $g_{2,1,2}$ must end at $z_{2,1,2}$ and remain on the real axis. Thus, for $g_{2,1,2}$ to leave the real axis and enter  $\mathbb{C}^{*}$, another root locus must intersect $g_{2,1,2}$ within $L_{2,1,2}$. We denote this intersecting root locus by $g_{2,2,2}$.

Suppose the root locus $g_{2,2,2}$ departs from another real zero $t_{2,2,2}$ of $p$. Let $z_{2,2,2}$ be the real zero of (2.2) adjacent to $t_{2,2,2}$. $g_{2,2,2}$ leaves the real axis from the interval $L_{2,2,2}$ between $t_{2,2,2}$ and $z_{2,2,2}$ to enter $\mathbb{C}^{*}$. If no other root loci intersect $g_{2,2,2}$ within $L_{2,2,2}$, then $g_{2,2,2}$ emitted from $t_{2,2,2}$ must end at $z_{2,2,2}$ and remain on the real axis. Therefore, for $g_{2,2,2}$ to leave the real axis and enter  $\mathbb{C}^{*}$, another root locus must intersect $g_{2,2,2}$ within $L_{2,2,2}$. We denote this intersecting root locus by $g_{2,3,2}$. $g_{2,3,2}$ and $\overline{g_{2,3,2}}$ are emitted from a pair of the non-real zeros $s_{2,3,2}$ and $\overline{s_{2,3,2}}$ of $p(s)$.

Here, at least one of the two cases must occur. Case 1. $g_{2,2,2}$ intersects with the additional root loci at real breakaway points on the real axis. The additional root loci do not contain $g_{2,1,2}$. After $g_{2,2,2}$  intersects with the additional root loci on the real axis, $g_{2,2,2}$  then intersects with $g_{2,1,2}$ at a real breakaway point.

Case 2. $g_{2,3,2}$  intersects with the additional root loci at real breakaway points on the real axis. The additional root loci do not contain $g_{2,2,2}$. After $g_{2,3,2}$  intersects with the additional root loci on the real axis, $g_{2,3,2}$  then intersects with $g_{2,2,2}$ at a real breakaway point.

\begin{lemma}
The root loci $g_{2,1,2}$, $g_{2,2,2}$, and possibly other root loci intersect at a real breakaway point $Q_{2,2}$. At $Q_{2,2}$, $g_{2,1,2}$ leaves the real axis and enters  $\mathbb{C}^{*}$. $g_{2,3,2}$ and $\overline{g_{2,3,2}}$ are emitted from a pair of non-real zeros $s_{2,3,2}$ and $\overline{s_{2,3,2}}$ of $p(s)$. At least one of the two cases must occur.  Then the root loci $g_{2,1,2}$, $g_{2,2,2}$, $g_{2,3,2}$, $\overline{g_{2,3,2}}$  and the additional root loci form the first kind of non-standard linked root loci. The terminus of this structure consists of $g_{2,3,2}$ or $\overline{g_{2,3,2}}$.
\end{lemma}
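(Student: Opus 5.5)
I would prove this lemma by repeating the argument of Lemma 2.15 almost verbatim, and inserting the additional root loci at the place allowed by Case 1 or Case 2. I would start from the terminus. The non-real zeros $s_{2,3,2}$ and $\overline{s_{2,3,2}}$ are simple poles of (2.2) by Lemma 2.1. By Theorem 1.35 each emits root loci of $2q\pi$ and $2q\pi+\pi$ degree. By Lemma 1.27(1) the loci $g_{2,3,2}$ and $\overline{g_{2,3,2}}$ are symmetric about the real axis. Since they enter $L_{2,2,2}$ by hypothesis, they must first meet at a real breakaway point $q_{2,3,2}\in L_{2,2,2}$. By Theorem 1.28(2) they cannot overlap, so from $q_{2,3,2}$ they run in opposite directions along the real axis. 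Let $g_{2,3,2}$ run toward $t_{2,2,2}$.

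Next I split into the two cases. In Case 2, $g_{2,3,2}$ meets the additional root loci on the real axis before meeting $g_{2,2,2}$. In Case 1, $g_{2,3,2}$ meets $g_{2,2,2}$ directly at $q_{2,2,2}\in L_{2,2,2}$. In both cases $g_{2,2,2}$ and $g_{2,3,2}$ meet at a real breakaway point $q_{2,2,2}$ and enter $\mathbb{C}^{*}$ there. The real axis cannot carry both of them beyond that point (Theorem 1.28(2)), and Lemma 2.7 forbids $g_{2,2,2}$ from leaving the axis without such an intersection. By the symmetry of Lemma 1.27(1), the two loci re-meet at a real point $q_{2,1,2}\in L_{2,1,2}$ and again separate along the axis. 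Let $g_{2,2,2}$ run toward $t_{2,1,2}$. In Case 1 it first meets the additional root loci, which by Definition 2.14 do not include $g_{2,1,2}$. In either case it eventually meets $g_{2,1,2}$, together with other possible loci, at $Q_{2,2}$. This is where $g_{2,1,2}$ leaves the real axis, as assumed.

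Collecting these facts, the loci $g_{2,1,2}$, $g_{2,2,2}$, $g_{2,3,2}$, $\overline{g_{2,3,2}}$ and the additional loci are connected through $Q_{2,2}$, $q_{2,1,2}$, $q_{2,2,2}$, $q_{2,3,2}$. At least one first-type intersection with the additional loci occurs, namely Case 1 or Case 2, in the sense of Definition 2.22. So this structure is by definition a first kind of non-standard linked root loci (Definition 2.23 with $k=2$). Its terminus is $g_{2,3,2}$ or $\overline{g_{2,3,2}}$: the chain cannot continue past them, because they come from non-real poles and no further real pole feeds the structure. Finiteness of the breakaway points (Theorem 1.28(1)) excludes an infinite chain. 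I would also note that some of these breakaway points may coincide, as at the end of Lemma 2.15.

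The main obstacle is justifying that, after meeting the additional root loci, the relevant locus ($g_{2,2,2}$ in Case 1, $g_{2,3,2}$ in Case 2) still continues along the real axis toward the next real pole. The alternative is that it gets diverted into $\mathbb{C}^{*}$ or terminates at a zero of (2.2). To handle this I would use Theorem 1.15: every locus meeting at such a point shares the unit value $\pm1$. I would then combine this with Theorem 1.28(2), that the loci meeting there cannot overlap, so they must separate in distinct directions. Finally, Theorem 2.5 fixes that $t_{2,1,2}$ emits only one locus into $L_{2,1,2}$. Together these should force one branch to continue along the axis and reach the locus from $t_{2,1,2}$.
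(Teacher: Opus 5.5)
Your outline is the paper's proof nearly verbatim:
\begin{enumerate}
\item the conjugate pair meets at $q_{2,3,2}\in L_{2,2,2}$ and splits along the axis;
\item $g_{2,3,2}$ possibly meets additional loci (Case 2) before meeting $g_{2,2,2}$ at $q_{2,2,2}$;
\item the two re-meet ``by symmetry'' at $q_{2,1,2}\in L_{2,1,2}$;
\item $g_{2,2,2}$ possibly meets additional loci (Case 1) before meeting $g_{2,1,2}$ at $Q_{2,2}$;
\item the conclusion is read off from Definition 2.21, with coinciding breakaway points allowed.
\end{enumerate}
A small fix: Cases 1 and 2 are not exclusive, and the paper treats the two steps by independent if/else branches. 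So your ``in Case 1, $g_{2,3,2}$ meets $g_{2,2,2}$ directly'' should read ``if Case 2 does not occur''.

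The real problem is your last paragraph, where you go beyond the paper: the statement you want to prove is false at a generic breakaway point. Let $x_0$ be a real breakaway point at which $(p'/p)'$ has a simple zero, and put $f=p'/p$, $c=f(x_0)$. Then $c$ is finite and nonzero, since a first-kind zero is neither a zero of $p$ nor of $p'$. We have
\[
f(s)-c=a\,(s-x_0)^2+O\bigl(|s-x_0|^3\bigr),\qquad a=\tfrac12 f''(x_0)\in\mathbb{R}\setminus\{0\}.
\]
So near $x_0$ the set where $f$ is real is the real axis plus one arc crossing it orthogonally. Moreover, $|f|-|c|$ has one sign on both axial half-arcs and the opposite sign on both perpendicular half-arcs. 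Along a branch, $f$ is real and, between breakaway points, strictly monotone, so $K=1/|f|$ increases strictly from the emitting pole to the receiving zero. Hence the two axial half-arcs are both incoming or both outgoing. A locus that reaches $x_0$ along the axis, as $g_{2,3,2}$ does when it meets the additional loci, must therefore leave into $\mathbb{C}^{*}$. Continuing along the axis is possible only at a multiple zero of $(p'/p)'$. Theorems 1.15, 1.28(2) and 2.5 are all consistent with this picture, so they cannot force the opposite.

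The paper does not claim that the locus stays on the axis. Its proof just asserts that after meeting the additional loci the two relevant loci ``extend toward each other and intersect''. Definition 2.26 explicitly describes the piece between consecutive breakaway points as a ``root locus segment on the real axis and outside the real axis''. That is, the locus is diverted into $\mathbb{C}^{*}$, mirror-symmetric to an additional locus, and comes back. The returning branch is identified with $g_{2,3,2}$ (resp.\ $g_{2,2,2}$) through the ``common root loci'' convention (cf.\ the proof of Lemma 2.19), not through any continuity argument. So drop the on-axis claim and use that description.

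If you want more than the paper offers, the statement that genuinely needs proof is that the arcs leaving the axis at the intermediate breakaway point return to it before $t_{2,2,2}$ (resp.\ $t_{2,1,2}$). The same unproved step underlies your (and the paper's) claim that $q_{2,1,2}\in L_{2,1,2}$ exists. Lemma 1.27(1) only makes the two arcs leaving $q_{2,2,2}$ mirror images. Nothing prevents them from ending at a conjugate pair of non-real zeros of $p'$ without returning to the axis, which is exactly what the arcs leaving $b_{e2,1}$ do in the paper's example with $p_{e2}$.
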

\begin{proof}
$g_{2,3,2}$ and $\overline{g_{2,3,2}}$ intersect at a real breakaway point $q_{2,3,2}$ within $L_{2,2,2}$. From $q_{2,3,2}$,  $g_{2,3,2}$ and $\overline{g_{2,3,2}}$ extend  oppositely on the real axis. Let $g_{2,3,2}$ extend to $t_{2,2,2}$. $\overline{g_{2,3,2}}$ maybe ends at $z_{2,2,2}$. $\overline{g_{2,3,2}}$ maybe ends at the other zero of (2.2).

If $g_{2,3,2}$  doesn't intersect with the additional root loci on the real axis, then $g_{2,2,2}$ and $g_{2,3,2}$  extend toward each other and intersect at a real breakaway point $q_{2,2,2}$ within $L_{2,2,2}$.

If $g_{2,3,2}$  intersect with the additional root loci on the real axis, the additional root loci do not contain $g_{2,2,2}$. After $g_{2,3,2}$ intersects with the additional root loci on the real axis, then $g_{2,2,2}$ and $g_{2,3,2}$  extend toward each other and intersect at a real breakaway point $q_{2,2,2}$ within $L_{2,2,2}$.

At $q_{2,2,2}$, $g_{2,2,2}$ and $g_{2,3,2}$ leave the real axis and enter $\mathbb{C}^{*}$.  Due to symmetry about the real axis, $g_{2,2,2}$ and $g_{2,3,2}$  which have left the real axis must intersect again at a real breakaway point $q_{2,1,2}$ in $L_{2,1,2}$. Therefore, $g_{2,2,2}$ and $g_{2,3,2}$ extend in  $\mathbb{C}^{*}$ from $q_{2,2,2}$ to $q_{2,1,2}$.

From $q_{2,1,2}$,  $g_{2,2,2}$ and $g_{2,3,2}$ extend  oppositely on the real axis. Let $g_{2,2,2}$ extend to $t_{2,1,2}$. $g_{2,3,2}$ maybe ends at $z_{2,1,2}$. $g_{2,3,2}$  maybe ends at the other zero of (2.2).

If $g_{2,2,2}$  doesn't intersect with the additional root loci on the real axis, then $g_{2,1,2}$ and the root locus $g_{2,2,2}$  extend toward each other and intersect, along with other possible root loci, at $Q_{2,2}$  in $L_{2,1,2}$.

If $g_{2,2,2}$  intersects with the additional root loci on the real axis, the additional root loci do not contain $g_{2,1,2}$. After $g_{2,2,2}$  intersects with the additional root loci on the real axis, then $g_{2,1,2}$ and the root locus $g_{2,2,2}$  extend toward each other and intersect, along with other possible root loci, at $Q_{2,2}$  in $L_{2,1,2}$.

Therefore, $g_{2,1,2}$, $g_{2,2,2}$, $g_{2,3,2}$, $\overline{g_{2,3,2}}$  and the additional root loci form the first kind of non-standard linked root loci. $g_{2,1,2}$ and $g_{2,2,2}$  leave the real axis and enter  $\mathbb{C}^{*}$.

The same as the case in Lemma 2.15, the real breakaway points may coincide. This overlapping situation has no impact on the proof or the results of this lemma. Since we are unable to provide the situation of "the additional root loci", the exact situation of the real breakaway points overlapping cannot be accurately given either. We only need to know that such a situation exists.
\end{proof}

Let $g_{k,j_1,2}$ be the root locus emitted from a real zero $t_{k,j_1,2}$ of $p$. Let $z_{k,j_1,2}$ be the real zero of (2.2) adjacent to $t_{k,j_1,2}$. $g_{k,j_1,2}$ leaves the real axis at a point in the interval $L_{k,j_1,2}$ between $t_{k,j_1,2}$ and $z_{k,j_1,2}$.  The root loci from the non-real zeros $s_{k,k+1,2}$ and $\overline{s_{k,k+1,2}}$ are $g_{k,k+1,2}$ and $\overline{g_{k,k+1,2}}$.

$g_{k,j_1+1,2}$ and $g_{k,j_1+2,2}$ intersect at a real breakaway point $q_{k,2j_1-1,2}$ within $L_{k,j_1,2}$. From $q_{k,2j_1-1,2}$,  $g_{k,j_1+1,2}$ and $g_{k,j_1+2,2}$ extend  oppositely on the real axis. Let $g_{k,j_1+1,2}$  extend to $t_{k,j_1,2}$.

Here, at least one of the $k$ cases must occur. Case $j_1$. $j_1=1,\cdots,k-1$. $g_{k,j_1+1,2}$ intersects with the additional root loci at real breakaway points on the real axis. The additional root loci do not contain $g_{k,j_1,2}$. After $g_{k,j_1+1,2}$ intersects with the additional root loci on the real axis, $g_{k,j_1+1,2}$ then intersects with $g_{k,j_1,2}$ at a real breakaway point.

$g_{k,k+1,2}$ and $\overline{g_{k,k+1,2}}$ intersect at a real breakaway point $q_{k,2k-1,2}$ within $L_{k,k,2}$. From $q_{k,2k-1,2}$,  $g_{k,k+1,2}$ and $\overline{g_{k,k+1,2}}$ extend  oppositely on the real axis. Let $g_{k,k+1,2}$  extend to $t_{k,k,2}$.

Case $k$. $g_{k,k+1,2}$  intersects with the additional root loci at real breakaway points on the real axis. The additional root loci do not contain $g_{k,k,2}$. After $g_{k,k+1,2}$ intersects with the additional root loci on the real axis, $g_{k,k+1,2}$ then intersects with $g_{k,k,2}$ at a real breakaway point.
\begin{lemma}
When a root locus $g_{k,1,2}$ emitted from a real zero of $p(s)$ leaves the real axis and enters $\mathbb{C}^{*}$, then, there must exist a pair of non-real zeros and  $k-1$ other real zeros.  At least one of the $k$ cases must occur.  These root loci $g_{k,k+1,2}$, $\overline{g_{k,k+1,2}}$ and $g_{k,j_1,2}$ ((for $1\le k<n$, $j_1=1,\cdots,k$, $0<2j_1-2<2k-2$, $0<2j_1-1<2k-1$) intersect at real breakaway points $Q_{k,2}$, $q_{k,2j_1-2,2}$,  $q_{k,2j_1-1,2}$ on the real axis. At the end point of the structure, there are a pair of root loci $g_{k,k+1,2}$ and $\overline{g_{k,k+1,2}}$ are emitted from a pair of non-real zeros $s_{k,k+1,2}$ and $\overline{s_{k,k+1,2}}$.
\end{lemma}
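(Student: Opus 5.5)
The plan is to mirror the proof of Lemma 2.16, which handles the standard linked root loci. The only change is that at one or more stages the propagating root locus is allowed to pass through intersections with the additional root loci before meeting the next main root locus. I would proceed by induction along the chain, starting from $g_{k,1,2}$.

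\textbf{Initial step.} By Lemma 2.7, since $g_{k,1,2}$ leaves the real axis inside $L_{k,1,2}$, some other root locus must intersect it there; call it $g_{k,2,2}$. The cases $k=1$ and $k=2$ are already covered by the discussion preceding Definition 2.22 and by Lemma 2.23, so I assume $k>2$. Suppose $g_{k,j_1+1,2}$ comes from a real zero $t_{k,j_1+1,2}$. Lemma 2.3 says a real zero of (2.2) separates adjacent real poles, so $g_{k,j_1+1,2}$ cannot reach $g_{k,j_1,2}$ directly along the real axis. Hence $g_{k,j_1+1,2}$ must itself leave the real axis. Applying Lemma 2.7 again to $L_{k,j_1+1,2}$ forces a further root locus $g_{k,j_1+2,2}$ to intersect it at $q_{k,2j_1,2}$.

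\textbf{Inductive step.} By the symmetry in Lemma 1.27(1), the pair $g_{k,j_1+1,2},g_{k,j_1+2,2}$ returns to the real axis at $q_{k,2j_1-1,2}$ inside $L_{k,j_1,2}$. By Theorem 1.28 the two loci cannot overlap, so they separate in opposite directions, and one of them travels toward $t_{k,j_1,2}$.

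\textbf{The non-standard case.} Here I insert Case $j_1$. The travelling locus may first meet additional root loci (which exclude $g_{k,j_1,2}$) at real breakaway points. By Theorem 1.15, such a meeting happens only between loci with the same unit value $\pm1$. By Theorem 1.28 the loci still cannot overlap. So after these intersections the locus continues along the real axis toward $t_{k,j_1,2}$, and must finally meet $g_{k,j_1,2}$ at $q_{k,2j_1-2,2}$. The key point is that the additional loci never terminate the chain. Each of them is emitted from some zero of $p$, and it either ends at a zero of (2.2) or joins the chain at a breakaway point. Neither outcome supplies the intersection in $L_{k,j_1+1,2}$ that Lemma 2.7 requires for the next main locus to leave the axis.

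\textbf{Termination.} By Theorem 1.28 the breakaway points of (2.2) are finite in number, so the chain of real-zero loci $g_{k,j_1,2}$ cannot go on forever. By Lemma 2.4 and Lemma 2.2, the only root loci that can enter an interval from $\mathbb{C}^{*}$ are those emitted from non-real zeros of $p$. So the last interval $L_{k,k,2}$ must receive a locus $g_{k,k+1,2}$ from a non-real zero $s_{k,k+1,2}$. By symmetry, its conjugate $\overline{g_{k,k+1,2}}$ meets it first at $q_{k,2k-1,2}$. Case $k$ then plays the same role for $g_{k,k+1,2}$ that Case $j_1$ played above. Counting the real poles used gives exactly $k$ real zeros: $t_{k,1,2}$ and $k-1$ others. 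By hypothesis at least one of the $k$ cases occurs, which is what makes the structure non-standard; the hypothesis is consistent with the construction and is not needed to produce it.

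\textbf{Main obstacle.} I expect the hardest step to be proving that the additional root loci cannot themselves serve as the terminal pair. One would have to exclude a locus from another real zero $t_{k,i_3,2}$ ending the chain in $L_{k,k,2}$. My first attempt would be to argue that any such locus is again a real-pole locus that must leave the axis. By Lemma 2.7 it then generates its own sub-chain, and finiteness via Theorem 1.28 lets me push the argument down to a non-real pair inside that sub-chain. This reduces the problem to the standard case of Lemma 2.17.
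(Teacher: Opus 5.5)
Your route is the paper's own. Its proof of this lemma reruns Lemma 2.16, inserts at each stage the alternative in which the travelling locus first meets additional root loci, and ends the chain by appealing to the finiteness in Theorem 1.28. (Your base-case references are shifted by one: the case $k=2$ is Lemma 2.22, whereas Lemma 2.23 is the statement itself.)

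The genuine gap is the termination step. Your justification --- that by Lemmas 2.2 and 2.4 only loci emitted from non-real zeros of $p$ can enter an interval from $\mathbb{C}^{*}$ --- is false. Your own inductive step contradicts it: $g_{k,j_1+1,2}$, emitted from a real zero, re-enters the axis at $q_{k,2j_1-1,2}$. Those lemmas only say that every locus starts at some zero of $p$ (none at $\infty$). Finiteness of the breakaway points bounds the length of a backward trace, but it does not prevent the trace from returning to a breakaway point or real zero already used. So it yields neither a terminal conjugate pair nor the distinctness of the $k$ real zeros. Your repair of the ``main obstacle'' just continues the chain and invokes Theorem 1.28 again, so it inherits the same defect. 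The paper does not close this gap either; it merely asserts that the chain must end at loci from non-real zeros.

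The missing ingredient is a quantity that strictly decreases along the backward trace, and the gain $K=|p/p'|$ supplies it. View the loci as the root paths of $P_K(s)=Kp'(s)\mp p(s)$ as $K$ increases from $0$. This polynomial has degree $n$ for every $K$, so its roots move continuously and none arrive from $\infty$. All loci through a breakaway point share its gain, as in the proof of Theorem 1.15. A conjugate pair landing at gain $K^{*}$ lay in $\mathbb{C}^{*}$ for an interval of smaller gains. Hence each full backward step --- from a landing point to the collision where that pair left the axis --- strictly lowers $K$. Along this chain, $K(Q_{k,2})\ge K(q_{k,1,2})>K(q_{k,2,2})\ge K(q_{k,3,2})>\cdots$, which excludes cycles and repeated real zeros. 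With Theorem 1.28, the trace must stop at a pair that stays in $\mathbb{C}^{*}$ for all smaller $K$. At $K=0$ this pair sits at zeros of $p$. These cannot be real, because by Theorem 2.5 a real zero emits only along the axis, and they are conjugate by Lemma 1.27.

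Build the induction backward in this way rather than forward. Symmetry alone does not force a pair that leaves the axis to return: in the example with $p_{e2}$, the locus from $-1$ leaves at $b_{e2,1}$ and ends at a non-real zero of $p'$. Done backward, Case $j_1$ needs no special treatment, because at each breakaway point you simply follow whichever branch arrives.
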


\begin{proof}
In Lemma 2.22, there are four main root loci in the first kind of non-standard linked root loci, and two intervals between zeros and poles of (2.2).

$g_{k,1,2}$, $g_{k,2,2}$, and possibly other root loci intersect at a real breakaway point $Q_{k,2}$  in $L_{k,1,2}$. At $Q_{k,2}$, $g_{k,1,2}$  and $g_{k,2,2}$ leave the real axis and enter  $\mathbb{C}^{*}$. The situation of $g_{k,1,2}$ and $g_{k,2,2}$ is identical to the situation given in Lemma 2.22. At the front of Definition 2.20, we have given the result of $k=1$. When $k=2$,  Lemma 2.22 give results. For general $k>2$, as following.

$g_{k,j_1+1,2}$ and $g_{k,j_1+2,2}$ intersect at the real breakaway point $q_{k,2j_1,2}$ in $L_{k,j_1+1,2}$. From $q_{k,2j_1,2}$, $g_{k,j_1+1,2}$ and $g_{k,j_1+2,2}$ leaves the real axis and enters   $\mathbb{C}^{*}$. Due to symmetry about the real axis, $g_{k,j_1+1,2}$ and $g_{k,j_1+2,2}$ intersect at  the real breakaway point $q_{k,2j_1-1,2}$  in $L_{k,j_1,2}$.

$g_{k,j_1+1,2}$ and $g_{k,j_1+2,2}$ extend in  $\mathbb{C}^{*}$ from $q_{k,2j_1,2}$ to $q_{k,2j_1-1,2}$. $t_{k,j_1,2}$ emits $g_{k,j_1,2}$ to interval $L_{k,j_1,2}$. From $q_{k,2j_1-1,2}$,  $g_{k,j_1+1,2}$ and $g_{k,j_1+2,2}$ extend  oppositely on the real axis. Let $g_{k,j_1+1,2}$  extend to $t_{k,j_1,2}$. $g_{k,j_1+2,2}$  maybe ends at zero  $z_{k,j_1,2}$. $g_{k,j_1+2,2}$  maybe ends at the other zero of (2.2).

If $g_{k,j_1+1,2}$ does not intersect with  the additional root loci at real breakaway points on the real axis, both $g_{k,j_1,2}$ and $g_{k,j_1+1,2}$ extend towards each other, so $g_{k,j_1+1,2}$  intersects with $g_{k,j_1,2}$ at a real breakaway point $q_{k,2j_1-2,2}$ within $L_{k,j_1,2}$. At point $q_{k,2j_1-2,2}$, $g_{k,j_1,2}$ and $g_{k,j_1+1,2}$  leave the real axis and enters  $\mathbb{C}^{*}$.

If $g_{k,j_1+1,2}$  intersects with the additional root loci at real breakaway points on the real axis, the additional root loci do not contain $g_{k,j_1,2}$. After $g_{k,j_1+1,2}$  intersects with the additional root loci on the real axis,  both $g_{k,j_1,2}$ and $g_{k,j_1+1,2}$ extend towards each other, so $g_{k,j_1+1,2}$  intersects with $g_{k,j_1,2}$ at  the real breakaway point $q_{k,2j_1-2,2}$  in $L_{k,j_1,2}$. At point $q_{k,2j_1-2,2}$, $g_{k,j_1,2}$ and $g_{k,j_1+1,2}$  leave the real axis and enters $\mathbb{C}^{*}$.

Due to symmetry about the real axis, $g_{k,j_1,2}$ and $g_{k,j_1+1,2}$ which have left the real axis must intersect again at a real breakaway point $q_{k,2j_1-3,2}$  in $L_{k,j_1-1,2}$. $g_{k,j_1,2}$ and $g_{k,j_1+1,2}$ extend in  $\mathbb{C}^{*}$ from $q_{k,2j_1-2,2}$ to $q_{k,2j_1-3,2}$. $j_1=2,3,\dots,k-2$.

When the root loci intersecting in $L_{k,j_2,2}$ originates from a real zero $t_{k,j_2,2}$. $j_2>k-2$. The root locus $g_{k,j_2,2}$ emitted from $t_{k,j_2,2}$ leaves interval $L_{k,j_2,2}$. However, if no other root locus intersects in $L_{k,j_2,2}$, then $g_{k,j_2,2}$ must end at $z_{k,j_2,2}$ and remain on the real axis. There must be a root locus $g_{k,j_2+1,2}$ intersecting in the interval $L_{k,j_2,2}$. If $g_{k,j_2+1,2}$ comes from a real zero $t_{k,j_2+1,2}$ in another interval $L_{k,j_2+1,2}$. If such a process arises indefinitely, the non-standard linked root loci cannot end and must continue to extend downward.

The breakaway points of the root loci are the intersection points of the root loci. By Theorem 1.28, the number of finite breakaway points of (2.2) is finite. The number of the intersection points of the root loci of (2.2) is finite. The number of the intersection root loci is finite. Therefore, the intersection of root loci of (2.2) must end. There must exist an intersection of root loci emitted from non-real zeros on the real axis, and root loci emitted from non-real zeros intersect in an interval between a zero and a pole. Therefore, such linked root loci cannot continue infinitely.

If the root locus emitted from a non-real zero $s_{k,k+1,2}$ intersects the interval $L_{k,k,2}$, the first kind of non-standard linked root loci end and cannot extend further downward. The first kind of non-standard linked root loci also end at the interval where the root loci emitted from a non-real zero of $p(s)$ intersect. Conversely, if no root locus emitted from a non-real zero intersects within the interval, the first kind of non-standard linked root loci cannot end either.

Therefore, based on the results we obtained earlier, we have thus proved that there must exist a pair of non-real zeros $s_{k,k+1,2}$ and $\overline{s_{k,k+1,2}}$. The root loci $g_{k,k+1,2}$ and $\overline{g_{k,k+1,2}}$ emitted from $s_{k,k+1,2}$ and $\overline{s_{k,k+1,2}}$ intersect within a certain interval $L_{k,k,2}$.
\end{proof}

\begin{lemma}
When a root locus $g_{k,1,2}$ emitted from a real zero of $p(s)$ leaves the real axis and enters $\mathbb{C}^{*}$, then, there must exist a pair of non-real zeros and $k-1$ other real zeros.  At least one of the $k$ cases must occur.  These root loci $g_{k,k+1,2}$, $\overline{g_{k,k+1,2}}$ and $g_{k,j_1}$ (for $1\le k<n$, $j_1=1,\cdots,k$, $0<2j_1-2<2k-2$, $0<2j_1-1<2k-1$) intersect at real breakaway points $Q_{k,2}$, $q_{k,2j_1-2,2}$, $q_{k,2j_1-1,2}$ on the real axis. These real breakaway points connect $g_{k,1,2}$, $g_{k,j_1,2}$, $g_{k,k+1,2}$, and $\overline{g_{k,k+1,2}}$ together, forming a first kind of non-standard linked root locus. The terminus of this structure consists of $g_{k,k+1,2}$ and $\overline{g_{k,k+1,2}}$.
\end{lemma}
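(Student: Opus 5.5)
The plan is to follow the same scheme as the passage from Lemma 2.16 to Lemma 2.17, now for the first kind of non-standard linked root loci. First, Lemma 2.23 gives the endpoint of the structure: a pair of non-real zeros $s_{k,k+1,2}$ and $\overline{s_{k,k+1,2}}$ whose root loci $g_{k,k+1,2}$ and $\overline{g_{k,k+1,2}}$ meet on the real axis inside $L_{k,k,2}$. The proof will then trace the chain backwards from this terminus to $Q_{k,2}$. At each step it will check that the real breakaway points met along the way connect every main root locus $g_{k,j_1,2}$. The additional root loci will only be recorded as present.

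\textbf{Step 1.} Since $g_{k,k+1,2}$ and $\overline{g_{k,k+1,2}}$ are symmetric about the real axis (Lemma 1.27(1)), they first meet at a real breakaway point $q_{k,2k-1,2}$ in $L_{k,k,2}$. By Theorem 1.28 they cannot overlap, so they leave $q_{k,2k-1,2}$ in opposite directions along the real axis. Let $g_{k,k+1,2}$ be the one moving towards $t_{k,k,2}$. If Case $k$ occurs, $g_{k,k+1,2}$ first meets the additional root loci, which do not contain $g_{k,k,2}$. In either case it then meets $g_{k,k,2}$ at $q_{k,2k-2,2}\in L_{k,k,2}$. This is forced because, by Lemma 2.7 and Theorem 2.5, $g_{k,k,2}$ can only leave the axis through such an intersection. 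From $q_{k,2k-2,2}$ the two loci go into $\mathbb{C}^{*}$, and by symmetry they must return to the axis at $q_{k,2k-3,2}\in L_{k,k-1,2}$.

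\textbf{Step 2.} I will run an induction downward on $j_1=k-1,\dots,1$. The hypothesis is that $g_{k,j_1+1,2}$ and $g_{k,j_1+2,2}$ have returned to the axis at $q_{k,2j_1-1,2}\in L_{k,j_1,2}$. Again they separate; $g_{k,j_1+1,2}$ moves towards $t_{k,j_1,2}$. If Case $j_1$ holds, it may first cross the additional root loci. It then meets $g_{k,j_1,2}$ at $q_{k,2j_1-2,2}$, and the pair returns at $q_{k,2j_1-3,2}$. At $j_1=1$ this becomes the meeting of $g_{k,1,2}$ and $g_{k,2,2}$ at $Q_{k,2}$, exactly as in Lemma 2.22. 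The cases $k=1$ and $k=2$ are the remarks before Definition 2.20 and Lemma 2.22 itself. At least one of the $k$ cases occurs by hypothesis, so the structure really contains an additional root locus. Together with Definition 2.20, this shows that $g_{k,1,2},\dots,g_{k,k+1,2},\overline{g_{k,k+1,2}}$ and the additional root loci form a first kind of non-standard linked root locus whose terminus is $g_{k,k+1,2},\overline{g_{k,k+1,2}}$.

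\textbf{Main obstacle.} The hard step is justifying that, after crossing the additional root loci, $g_{k,j_1+1,2}$ still reaches $g_{k,j_1,2}$ inside $L_{k,j_1,2}$ instead of being diverted. The additional loci could in principle carry it into $\mathbb{C}^{*}$ or to some other zero of (2.2). My first attempt would use two facts. By Lemma 2.7, $g_{k,j_1,2}$ must be met by some locus in $L_{k,j_1,2}$ in order to leave the axis, which it does by hypothesis. And the only locus extending towards $t_{k,j_1,2}$ inside that interval is the continuation of $g_{k,j_1+1,2}$, because the additional loci by definition do not include $g_{k,j_1,2}$. Possible coincidences of breakaway points, such as $q_{k,2j_1-1,2}=q_{k,2j_1-2,2}$, would be handled as in the remark closing Lemma 2.17 and do not affect the connection.
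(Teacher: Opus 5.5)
Your proposal follows the paper's proof of this lemma step for step. The paper likewise takes the terminal pair $g_{k,k+1,2},\overline{g_{k,k+1,2}}$ from Lemma 2.23, has them meet at $q_{k,2k-1,2}$ and separate along the real axis by Theorem 1.28, and then traces the chain backward through $q_{k,2k-2,2},q_{k,2k-3,2},\dots$ to $Q_{k,2}$, allowing in each interval an optional crossing with the additional root loci before the meeting with $g_{k,j_1,2}$. The step you flag as the main obstacle is not argued in the paper; it simply asserts that after the crossing the two loci ``extend towards each other'' and meet. Your patch does not settle it either: the additional loci not containing $g_{k,j_1,2}$ does not stop one of them from being the branch that actually reaches $g_{k,j_1,2}$, so identifying that branch with $g_{k,j_1+1,2}$ rests only on the paper's ``common root loci'' convention.
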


\begin{proof}
According to the condition of this lemma, by Lemma 2.23, at the end point of the structure, there are a pair of root loci $g_{k,k+1,2}$ and $\overline{g_{k,k+1,2}}$ are emitted from a pair of non-real zeros $s_{k,k+1,2}$ and $\overline{s_{k,k+1,2}}$.

The non-real zeros $s_{k,k+1,2}$ and $\overline{s_{k,k+1,2}}$ are in  $\mathbb{C}^{*}$. $s_{k,k+1,2}$ and $\overline{s_{k,k+1,2}}$ emit the root locus $g_{k,k+1,2}$ and $\overline{g_{k,k+1,2}}$ in  $\mathbb{C}^{*}$ separately. $g_{k,k+1,2}$ and $\overline{g_{k,k+1,2}}$ intersect at a real breakaway point $q_{k,2k-1,2}$ within $L_{k,k,2}$. $g_{k,k+1,2}$ and $\overline{g_{k,k+1,2}}$ depart from $q_{k,2k-1,2}$. By Theorem 1.28, $g_{k,k+1,2}$ and $\overline{g_{k,k+1,2}}$ cannot extend in the same direction, and they must extend in opposite directions along the real axis. Let $g_{k,k+1,2}$  extend to $t_{k,k,2}$. $\overline{g_{k,k+1,2}}$ maybe ends at  zero  $z_{k,k,2}$. $\overline{g_{k,k+1,2}}$  maybe ends at the other zero of (2.2).

If $g_{k,k+1,2}$ does not intersect with the additional root loci at the real breakaway point on the real axis, both $g_{k,k,2}$ and $g_{k,k+1,2}$  extend towards each other, so $g_{k,k+1,2}$ intersects with $g_{k,k,2}$ at a real breakaway point $q_{k,2k-2,2}$ within $L_{k,k,2}$.

However, the following situation may also occur. $g_{k,k+1,2}$ intersects with the additional root loci at real breakaway points on the real axis. The additional root loci do not contain $g_{k,k,2}$. After $g_{k,k+1,2}$ intersects with the additional root loci on the real axis, both $g_{k,k,2}$ and $g_{k,k+1,2}$  extend towards each other, so $g_{k,k+1,2}$  intersects with $g_{k,k,2}$ at a real breakaway point $q_{k,2k-2,2}$ within $L_{k,k,2}$.

At point $q_{k,2k-2,2}$, $g_{k,k,2}$ and $g_{k,k+1,2}$  leaves the real axis and enters  $\mathbb{C}^{*}$.

Due to symmetry about the real axis, $g_{k,k+1,2}$  and $g_{k,k,2}$ which have left the real axis must intersect again at a real breakaway point $q_{k,2k-3,2}$ within $L_{k,k-1,2}$. Therefore, $g_{k,k,2}$ and $g_{k,k+1,2}$  extend in $\mathbb{C}^{*}$ from $q_{k,2k-2,2}$ to $q_{k,2k-3,2}$.

$g_{k,k,2}$ and $g_{k,k+1,2}$ depart from $q_{k,2k-3,2}$. They must extend in opposite directions along the real axis.  Let $g_{k,k,2}$  extend to $t_{k,k-1,2}$. $g_{k,k+1,2}$ maybe ends at $z_{k,k-1,2}$. $g_{k,k+1,2}$ maybe ends at the other zero of (2.2).

If $g_{k,k,2}$  does not intersect with the additional root loci at the real breakaway point on the real axis, both $g_{k,k-1,2}$ and $g_{k,k,2}$  extend towards each other, so $g_{k,k,2}$  intersects with $g_{k,k-1,2}$ at a real breakaway point $q_{k,2k-4,2}$ within $L_{k,k-1,2}$.

However, the following situation may also occur. $g_{k,k,2}$  intersects with the additional root loci at real breakaway points on the real axis. The additional root loci do not contain $g_{k,k-1,2}$. After $g_{k,k,2}$ intersects with the additional root loci on the real axis, both $g_{k,k-1,2}$ and $g_{k,k,2}$  extend towards each other, so $g_{k,k,2}$  intersects with $g_{k,k-1,2}$ at a real breakaway point $q_{k,2k-4,2}$ within $L_{k,k-1,2}$.

At point $q_{k,2k-4,2}$, $g_{k,k-1,2}$ and $g_{k,k,2}$  leave the real axis and enter  $\mathbb{C}^{*}$.

Due to symmetry about the real axis, $g_{k,k-1,2}$ and $g_{k,k,2}$ which have left the real axis must intersect again at a real breakaway point $q_{k,2k-5,2}$ within $L_{k,k-2,2}$. Therefore, $g_{k,k-1,2}$ and $g_{k,k,2}$ extend in $\mathbb{C}^{*}$ from $q_{k,2k-4,2}$ to $q_{k,2k-5,2}$. Let $g_{k,k-1,2}$ extend to $t_{k,k-2,2}$. $g_{k,k,2}$ maybe ends at $z_{k,k-2,2}$. $g_{k,k,2}$ maybe ends at the other zero of (2.2). $g_{k,k-1,2}$ intersects with $g_{k,k-2,2}$.

When such a situation appears, starting from the root locus $g_{k,1,2}$ emitted from the real zero $t_{k,1,2}$ of $p(s)$, the first kind of non-standard linked root loci terminate and do not extend further. The terminus of the first kind of non-standard linked root loci is composed of the root loci $g_{k,k+1,2}$ and $\overline{g_{k,k+1,2}}$.
The root loci $g_{k,1,2}$, $g_{k,2,2}$, $\cdots$, $g_{k,k+1,2}$, and $\overline{g_{k,k+1,2}}$  and the additional root loci form the first kind of non-standard linked root loci.

In Lemma 2.24, there are $k+2$ main root loci in the first kind of non-standard linked root loci, and $k$ intervals between zeros and poles of (2.2).

According to the previously proved result of the first non-standard linked root loci, the root locus $g_{k,1,2}$ at the beginning of the first kind of non-standard linked root loci and the root loci $g_{k,k+1,2}$ and $\overline{g_{k,k+1,2}}$ at the end of the first kind of non-standard linked root loci, by the root loci $g_{k,j_1,2}$ of real zeros in the middle, must be connected in the first kind of non-standard linked root loci. The beginning of the first kind of non-standard linked root loci is a root locus $g_{k,1,2}$ emitted from a real zero $t_{k,1,2}$, and $g_{k,1,2}$ leaves the real axis and enters  $\mathbb{C}^{*}$. The end of the first kind of non-standard linked root loci must be a pair of root loci $g_{k,k+1,2}$ and $\overline{g_{k,k+1,2}}$. The root loci $g_{k,k+1,2}$ and $\overline{g_{k,k+1,2}}$ intersect on the real axis, and they are emitted from the non-real zeros $s_{k,k+1,2}$ and $\overline{s_{k,k+1,2}}$.

The same as the case in Lemma 2.17, the real breakaway points may coincide. This overlapping situation has no impact on the proof or the results of this lemma. Since we are unable to provide the situation of "the additional root loci," the exact situation of the real breakaway points overlapping cannot be accurately given either. We only need to know that such a situation exists.
\end{proof}
To make it clear for us to explain the concept of two main lines of the first kind of non-standard linked root loci below, it starts from the terminal root locus and follows all the way back to the beginning root locus.

The non-real zeros $s_{k,k+1,2}$ and $\overline{s_{k,k+1,2}}$ emit the root loci $g_{k,k+1,2}$ and $\overline{g_{k,k+1,2}}$. The root loci $g_{k,k+1,2}$ and $\overline{g_{k,k+1,2}}$  first intersect at the real breakaway point $q_{k,2k-1,2}$ in the interval $L_{k,k,2}$. $g_{k,k+1,2}$ and $\overline{g_{k,k+1,2}}$ departs from $q_{k,2k-1,2}$. $g_{k,k+1,2}$ and $\overline{g_{k,k+1,2}}$ extend in opposite directions along the real axis. $g_{k,k+1,2}$ extends towards $t_{k,k,2}$. $\overline{g_{k,k+1,2}}$ may end at the real zero $z_{k,k,2}$. $\overline{g_{k,k+1,2}}$  maybe leaves the real axis and ends the other zero of (2.2).

If $g_{k,k+1,2}$  does not intersect with the additional root loci at the real breakaway point on the real axis,
$g_{k,k+1,2}$  intersects with $g_{k,k,2}$ at a real breakaway point $q_{k,2k-2,2}$ within $L_{k,k,2}$. At point $q_{k,2k-2,2}$, $g_{k,k,2}$ and $g_{k,k+1,2}$ leave the real axis and enters  $\mathbb{C}^{*}$.

However, the following situation may also occur. $g_{k,k+1,2}$  intersects with the additional root loci at real breakaway points on the real axis.  The additional root loci do not contain $g_{k,k,2}$.

In this situation, there is the first type of intersection with the additional root loci. After $g_{k,k+1,2}$  intersects with the additional root loci on the real axis, $g_{k,k+1,2}$  intersects with $g_{k,k,2}$ at a real breakaway point $q_{k,2k-2,2}$ within $L_{k,k,2}$. At point $q_{k,2k-2,2}$, $g_{k,k,2}$ and $g_{k,k+1,2}$ leave the real axis and enters  $\mathbb{C}^{*}$.

Due to symmetry about the real axis, $g_{k,k,2}$ and $g_{k,k+1,2}$ which have left the real axis must intersect again at a real breakaway point $q_{k,2k-3,2}$ within $L_{k,k-1,2}$. $g_{k,k,2}$ and $g_{k,k+1,2}$  extend in  $\mathbb{C}^{*}$ from $q_{k,2k-2,2}$ to $q_{k,2k-3,2}$.

$g_{k,k,2}$ and $g_{k,k+1,2}$  depart from $q_{k,2k-3,2}$.  $g_{k,k,2}$ and $g_{k,k+1,2}$  must extend in opposite directions along the real axis. $g_{k,k,2}$ extends towards $t_{k,k-1,2}$. $g_{k,k+1,2}$  may end at the real zero $z_{k,k-1,2}$. $g_{k,k+1,2}$  maybe leaves the real axis and ends the other zero of (2.2).

If $g_{k,k,2}$  does not intersect with the additional root loci at the real breakaway point on the real axis,
$g_{k,k,2}$  intersects with $g_{k,k-1,2}$ at a real breakaway point $q_{k,2k-4,2}$ within $L_{k,k-1,2}$. At point $q_{k,2k-4,2}$, $g_{k,k-1,2}$ and $g_{k,k,2}$  leave the real axis and enters  $\mathbb{C}^{*}$.

However, the following situation may also occur. $g_{k,k,2}$ intersects with the additional root loci at real breakaway points on the real axis.  The additional root loci do not contain $g_{k,k-1,2}$.

In this situation, there is the first type of intersection with the additional root loci. After $g_{k,k,2}$  intersects with the additional root loci on the real axis, $g_{k,k,2}$  intersects with $g_{k,k-1,2}$ at a real breakaway point $q_{k,2k-4,2}$ within $L_{k,k-1,2}$. At point $q_{k,2k-4,2}$, $g_{k,k-1,2}$ and $g_{k,k,2}$  leave the real axis and enters  $\mathbb{C}^{*}$.

Due to symmetry about the real axis, $g_{k,k-1,2}$ and $g_{k,k,2}$  which have left the real axis must intersect again at a real breakaway point $q_{k,2k-5,2}$ within $L_{k,k-2,2}$. $g_{k,k-1,2}$ and $g_{k,k,2}$  extend in  $\mathbb{C}^{*}$ from $q_{k,2k-4,2}$ to $q_{k,2k-5,2}$.

$g_{k,j_1+1,2}$ and $g_{k,j_1+2,2}$ intersect at the real breakaway point $q_{k,2j_1,2}$ in $L_{k,j_1+1,2}$. From $q_{k,2j_1,2}$, $g_{k,j_1+1,2}$ and $g_{k,j_1+2,2}$ leave the real axis and enters $\mathbb{C}^{*}$. $g_{k,j_1+1,2}$ and $g_{k,j_1+2,2}$ intersect at  the real breakaway point $q_{k,2j_1-1,2}$  in $L_{k,j_1,2}$. $g_{k,j_1+1,2}$ and $g_{k,j_1+2,2}$ extend in  $\mathbb{C}^{*}$ from $q_{k,2j_1,2}$ to $q_{k,2j_1-1,2}$.

From $q_{k,2j_1-1,2}$, $g_{k,j_1+1,2}$ and $g_{k,j_1+2,2}$ extend in opposite directions along the real axis. Let $g_{k,j_1+1,2}$  extends to $t_{k,j_1,2}$. $g_{k,j_1+2,2}$  maybe ends at the real zero $z_{k,j_1,2}$. $g_{k,j_1+2,2}$  maybe leaves the real axis and ends the other zero of (2.2).

If $g_{k,j_1+1,2}$  does not intersect with the additional root loci at real breakaway points on the real axis, $g_{k,j_1+1,2}$ intersects with $g_{k,j_1,2}$  at a real breakaway point $q_{k,2j_1-2,2}$ within $L_{k,j_1,2}$. At point $q_{k,j_1-2,2}$, $g_{k,j_1,2}$ and $g_{k,j_1+1,2}$ leave the real axis and enters $\mathbb{C}^{*}$.

The following situation may occur. $g_{k,j_1+1,2}$  intersects with the additional root loci at real breakaway points on the real axis.  The additional root loci do not contain $g_{k,j_1,2}$. After $g_{k,j_1+1,2}$ intersects with the additional root loci on the real axis, $g_{k,j_1+1,2}$  intersects with $g_{k,j_1,2}$ at  the real breakaway point $q_{k,2j_1-2,2}$  in $L_{k,j_1,2}$. At point $q_{k,2j_1-2,2}$, $g_{k,j_1,2}$ and $g_{k,j_1+1,2}$  leave the real axis and enters $\mathbb{C}^{*}$.

Due to symmetry about the real axis, $g_{k,j_1,2}$ and $g_{k,j_1+1,2}$  intersect at  the real breakaway point $q_{k,2j_1-3,2}$  in $L_{k,j_1-1,2}$. $g_{k,j_1,2}$ and $g_{k,j_1+1,2}$ extend in  $\mathbb{C}^{*}$ from $q_{k,2j_1-2,2}$ to $q_{k,2j_1-3,2}$.
 $j_1=1,2\cdots,k-2$.

$g_{k,1,2}$ and the root locus $g_{k,2,2}$ intersect, along with other possible root loci, at $Q_{k,2}$ in $L_{k,1,2}$.

The first kind of non-standard linked root loci consist of two main lines with multiple branches. Except for the terminal root loci emitted from the non-real poles of (2.2), the rest main root loci  originate from the real poles of (2.2).

In $\mathbb{C}^{*}$, the section of the root locus $g_{k,k+1,2}$ from the non-real zero $s_{k,k+1,2}$ to the real breakaway point $q_{k,2k-1,2}$ constitutes the terminus part of two main lines. In $\mathbb{C}^{*}$, the section of the root locus $\overline{g_{k,k+1,2}}$ from the non-real zero $\overline{s_{k,k+1,2}}$ to $q_{k,2k-1,2}$ constitutes the terminus part of two main lines.

If $g_{k,k+1,2}$ does not intersect with the additional root loci on the real axis, then the interval on the real axis from $q_{k,2k-1,2}$ to the real breakaway point $q_{k,2k-2,2}$ constitutes a part of two main lines.

If $g_{k,k+1,2}$ intersects with the additional root loci on the real axis, then, between $q_{k,2k-1,2}$ and $q_{k,2k-2,2}$, $g_{k,k+1,2}$ intersects with the additional root loci, the part of $g_{k,k+1,2}$  is distributed on the real axis and the other part is symmetrically in $\mathbb{C}^{*}$.

$g_{k,k+1,2}$  extends from $q_{k,2k-1,2}$ to the real breakaway point $q_{k,2k-2,2}$. The part of $g_{k,k+1,2}$ distributed in this interval constitutes a part of two main lines. When $g_{k,k+1,2}$ is distributed in $\mathbb{C}^{*}$, the $g_{k,k+1,2}$ and the part which is symmetric about the real axis are both a part of two main lines.

Here, for both cases, we introduce two concepts. If $g_{k,k+1,2}$ does not intersect with the additional root loci on the real axis, the interval on the real axis from $q_{k,2k-1,2}$ to the real breakaway point $q_{k,2k-2,2}$ is a part of $g_{k,k+1,2}$. We call the segment of the root locus of $g_{k,k+1,2}$ from $q_{k,2k-1,2}$ to the real breakaway point $q_{k,2k-2,2}$ as the root locus segment on the real axis.

If $g_{k,k+1,2}$ intersects with the additional root loci on the real axis, then in the interval on the real axis from $q_{k,2k-1,2}$ to the real breakaway point $q_{k,2k-2,2}$,  $g_{k,k+1,2}$  is distributed on the real axis. $g_{k,k+1,2}$ and the additional root locus are symmetrically in $\mathbb{C}^{*}$. We call the segment of the root locus of $g_{k,k+1,2}$  from $q_{k,2k-1,2}$ to the real breakaway point $q_{k,2k-2,2}$ as the root locus segment on the real axis and outside the real axis.

In the first kind of non-standard linked root loci, there are $k$ intervals between adjacent real zeros and real poles of (2.2). Within each of the interval $L_{k,j_1,2}$ ($j_1=1,\ldots,k$) between the adjacent real zero $z_{k,j_1,2}$ and the real pole $t_{k,j_1,2}$ of (2.2), there exists a segment extending from the real breakaway point $q_{k,2j_1-2,2}$ to the real breakaway point $q_{k,2j_1-1,2}$. For the first kind of non-standard linked root loci, the distribution of root loci on this segment presents one of two possible cases. In contrast, for the standard linked root loci, the distribution of root loci on this segment is uniquely determined. For the first kind of non-standard linked root loci, we provide two specific definitions for the two possible cases of the distribution of root loci on this segment.

\begin{definition}
In $L_{k,j_1,2}$, if $g_{k,j_1+1,2}$  does not intersect with the additional root loci on the real axis, the interval on the real axis from $q_{k,2j_1-1,2}$ to $q_{k,2j_1-2,2}$ is a part of $g_{k,j_1+1,2}$. We refer to the segment of the root locus of $g_{k,j_1+1,2}$ from $q_{k,2j_1-1,2}$ to $q_{k,2j_1-2,2}$ as the root locus segment on the real axis. $1\le k<n$, $j_1=1,\cdots,k-1$.
\end{definition}
In $L_{k,k,2}$, if $g_{k,k+1,2}$ does not intersect with the additional root loci on the real axis, the interval on the real axis from $q_{k,2k-1,2}$ to $q_{k,2k-2,2}$ is a part of $g_{k,k+1,2}$. We refer to the segment of the root locus of $g_{k,k+1,2}$  from $q_{k,2k-1,2}$ to $q_{k,2k-2,2}$ as the root locus segment on the real axis.

\begin{definition}
In $L_{k,j_1,2}$, if $g_{k,j_1+1,2}$  intersects with the additional root loci on the real axis,  between $q_{k,2j_1-1,2}$ and $q_{k,2j_1-2,2}$, the part of $g_{k,j_1+1,2}$  is distributed both on the real axis and the other part is symmetrically to the additional root locus in $\mathbb{C}^{*}$. We refer to the segment of the root locus of $g_{k,j_1+1,2}$  from $q_{k,2j_1-1,2}$ to $q_{k,2j_1-2,2}$ as the root locus segment on the real axis and outside the real axis. $1\le k<n$, $j_1=1,\cdots,k-1$.
\end{definition}
In $L_{k,k,2}$, if $g_{k,k+1,2}$ intersects with the additional root loci on the real axis, between $q_{k,2k-1,2}$ and $q_{k,2k-2,2}$, the part of $g_{k,k+1,2}$  is distributed both on the real axis and the other part is symmetrically to the additional toot locus in $\mathbb{C}^{*}$. We refer to the segment of the root locus of $g_{k,k+1,2}$   from $q_{k,2k-1,2}$ to $q_{k,2k-2,2}$ as the root locus segment on the real axis and outside the real axis.

Therefore, the interval on the real axis from $q_{k,2k-1,2}$ to the real breakaway point $q_{k,2k-2,2}$ constitutes a part of two main lines. This part is the root locus segment on the real axis or the root locus segment on the real axis and outside the real axis.

In $\mathbb{C} ^{*}$, the section of the root locus $g_{k,k,2}$ from $q_{k,2k-2,2}$ to the real breakaway point $q_{k,2k-3,2}$ constitutes a part of two main lines. In $\mathbb{C} ^{*}$, the section of the root locus $g_{k,k+1,2}$ from $q_{k,2k-2,2}$ to $q_{k,2k-3,2}$ constitutes a part of two main lines.

The interval on the real axis from $q_{k,2k-3,2}$ to the real breakaway point $q_{k,2k-4,2}$ constitutes a part of two main lines. This part is the root locus segment on the real axis or the root locus segment on the real axis and outside the real axis.

In $\mathbb{C} ^{*}$, the section of the root locus $g_{k,k-1,2}$ from $q_{k,2k-4,2}$ to the real breakaway point $q_{k,2k-5,2}$ constitutes a part of two main lines. In $\mathbb{C} ^{*}$, the section of the root locus $g_{k,k,2}$ from $q_{k,2k-4,2}$ to $q_{k,2k-5,2}$ constitutes a part of two main lines.

In $\mathbb{C} ^{*}$, the section of the root locus $g_{k,j_1+1,2}$ from the real breakaway point $q_{k,2j_1,2}$ to the real breakaway point $q_{k,2j_1-1,2}$ constitutes a part of two main lines. In $\mathbb{C} ^{*}$, the section of the root locus $g_{k,j_1+2,2}$ from the real breakaway point $q_{k,2j_1,2}$ to the real breakaway point $q_{k,2j_1-1,2}$ constitutes a part of two main lines.

The interval on the real axis from $q_{k,2j_1-1,2}$ to $q_{k,2j_1-2,2}$ constitutes a part of two main lines.  This part is the root locus segment on the real axis or the root locus segment on the real axis and outside the real axis.

In $\mathbb{C} ^{*}$, the section of the root locus $g_{k,j_1,2}$ from $q_{k,2j_1-2,2}$ to the real breakaway point $q_{k,2j_1-3,2}$ constitutes a part of two main lines. In $\mathbb{C} ^{*}$, the section of the root locus $g_{k,j_1+1,2}$ from $q_{k,2j_1-2,2}$ to $q_{k,2j_1-3,2}$ constitutes a part of two main lines. $j_1=1,2,\cdots,k-2$.

In $\mathbb{C} ^{*}$, the section of the root locus $g_{k,2,2}$ from $q_{k,2,2}$ to the real breakaway point $q_{k,1,2}$ constitutes a part of two main lines. In $\mathbb{C} ^{*}$, the section of the root locus $g_{k,3,2}$ from $q_{k,2,2}$ to $q_{k,1,2}$ constitutes a part of two main lines.

The interval on the real axis from $q_{k,1,2}$ to the real breakaway point $Q_{k,2}$ constitutes a part of two main lines. This part is the root locus segment on the real axis or the root locus segment on the real axis and outside the real axis.

All the parts given previously, when connected together, form two main lines of the first kind of non-standard linked root loci. In the first kind of non-standard linked root loci, there are only two main lines. Part of the two main lines is distributed on the real axis. The other part of two main lines distributed in $\mathbb{C}^{*}$, and they are symmetric about the real axis.

The section of $\overline{g_{k,k+1,2}}$ from $q_{k,2k-1,2}$ to $z_{k,k,2}$ constitutes a branch. The section of $g_{k,k,2}$ from $t_{k,k,2}$ to $q_{k,2k-2,2}$ constitutes a branch. And so on $\cdots$. In the first kind of non-standard linked root loci, there are multiple branches.

In Lemma 2.22, in $\mathbb{C} ^{*}$, the section of the root locus $g_{2,3,2}$ from the non-real zero $s_{2,3,2}$ to the real breakaway point $q_{2,3,2}$ constitutes the terminus part of two main lines. In $\mathbb{C} ^{*}$, the section of the root locus $\overline{g_{2,3,2}}$ from the non-real zero $\overline{s_{2,3,2}}$ to $q_{2,3,2}$ constitutes the terminus part of two main lines.

The interval on the real axis from $q_{2,3,2}$ to the real breakaway point $q_{2,2,2}$ constitutes a part of two main lines. This part is the root locus segment on the real axis or the root locus segment on the real axis and outside the real axis.

In $\mathbb{C} ^{*}$, the section of the root locus $g_{2,2,2}$ from $q_{2,2,2}$ to the real breakaway point $q_{2,1,2}$ constitutes a part of two main lines. In $\mathbb{C} ^{*}$, the section of the root locus $g_{2,3,2}$ from $q_{2,2,2}$ to $q_{2,1,2}$ constitutes a part of two main lines.

The interval on the real axis from $q_{2,1,2}$ to the real breakaway point $Q_{2,2}$ constitutes a part of two main lines. This part is the root locus segment on the real axis or the root locus segment on the real axis and outside the real axis.

All the parts given previously, when connected together, form two main lines of the first kind of non-standard linked root loci. In the first kind of non-standard linked root loci, there are only two main lines. Part of the two main lines is distributed on the real axis. The other part of two main lines distributed in $\mathbb{C}^{*}$, and they are symmetric about the real axis.

The section of $\overline{g_{2,3,2}}$ from $q_{2,3,2}$ to $z_{2,2,2}$ constitutes a branch. The section of $g_{2,2,2}$ from $t_{2,2,2}$ to $q_{2,2,2}$ constitutes a branch. The section of $g_{2,3,2}$ from $q_{2,1,2}$ to $z_{2,1,2}$ constitutes a branch. In the first kind of non-standard linked root loci, there are four branches.

Therefore, according to  the above explanations of the two main lines of first kind of non-standard linked root loci given above, we can define the two main lines of the first kind of non-standard linked root loci of (2.2) as follows.
\begin{definition}
The non-real zeros $s_{k,k+1,2}$ and $\overline{s_{k,k+1,2}}$ emit the root loci $g_{k,k+1,2}$ and $\overline{g_{k,k+1,2}}$. In the first kind of non-standard linked root loci, beginning at the non-real zeros $s_{k,k+1,2}$ and $\overline{s_{k,k+1,2}}$, the two lines of $g_{k,k+1,2}$ or $\overline{g_{k,k+1,2}}$ that extend all the way to $Q_{k,2}$ constitute the two main lines of the first kind of non-standard linked root loci of (2.2).
\end{definition}

\begin{lemma}
In the first kind of non-standard linked root loci, a root locus $g_{k,k+1,2}$ or $\overline{g_{k,k+1,2}}$ originating from a non-real zero $s_{k,k+1,2}$ or $\overline{s_{k,k+1,2}}$ can be distributed on two main lines of the first kind of non-standard linked root loci. The root locus $g_{k,k+1,2}$ or $\overline{g_{k,k+1,2}}$ intersects at the real breakaway point $Q_{k,2}$. The root locus $g_{k,k+1,2}$ or $\overline{g_{k,k+1,2}}$ leaves from $Q_{k,2}$ then enters $\mathbb{C}^{*}$.
\end{lemma}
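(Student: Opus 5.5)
I will model the argument on the proof of Lemma 2.18, the analogous statement for the standard linked root loci. The only new feature is the additional root loci along the real-axis segments of the two main lines.

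\textbf{Step 1: the first and last segments.} Starting from Definition 2.29, I first record what is common to the standard case. The terminal loci $g_{k,k+1,2}$ and $\overline{g_{k,k+1,2}}$ leave $s_{k,k+1,2}$ and $\overline{s_{k,k+1,2}}$ symmetrically. By Lemma 2.27(1) they meet at $q_{k,2k-1,2}$ in $L_{k,k,2}$. By Theorem 1.28 they must then leave this point in opposite directions along the real axis. This fixes the first segment of the two main lines. Lemma 2.24 and the description preceding Definition 2.29 then give the chain of breakaway points $q_{k,2j_1-1,2}$ and $q_{k,2j_1-2,2}$, down to $Q_{k,2}$ in $L_{k,1,2}$.

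\textbf{Step 2: a distribution of endings.} As in Lemma 2.18, I choose the endings so that $g_{k,k+1,2}$ is the locus that continues along the main line at every breakaway point. Concretely, $\overline{g_{k,k+1,2}}$ ends at $z_{k,k,2}$ (or at another zero of (2.2)). Each $g_{k,j_1+1,2}$ takes the branch ending at $z_{k,j_1,2}$, for $j_1=1,\dots,k-1$. With this choice, the locus entering $q_{k,2k-2,2}$ from the real-axis side is $g_{k,k+1,2}$, and it leaves into $\mathbb{C}^{*}$. By symmetry it returns at $q_{k,2k-3,2}$ and is again the locus heading toward $t_{k,k-1,2}$. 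Iterating down the intervals $L_{k,k-1,2},\dots,L_{k,1,2}$ brings $g_{k,k+1,2}$ to $Q_{k,2}$, where it leaves into $\mathbb{C}^{*}$. The conjugate assignment does the same for $\overline{g_{k,k+1,2}}$. The justification for relabelling which locus continues is the "common root loci" property of Definition 1.18: at a breakaway point the intersecting loci have the same unit complex value, so the continuation may be attributed to any of them.

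\textbf{Step 3: segments that meet additional root loci.} Within each $L_{k,j_1,2}$, the segment from $q_{k,2j_1-1,2}$ to $q_{k,2j_1-2,2}$ is of one of two types. It is either the root locus segment on the real axis (Definition 2.26), or the root locus segment on the real axis and outside the real axis (Definition 2.27).

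In the first type the argument of Lemma 2.18 applies verbatim. In the second type, $g_{k,k+1,2}$ meets an additional root locus at a real breakaway point. By Theorem 1.16 the two share the same unit complex value, so the continuation can again be attributed to $g_{k,k+1,2}$. By Lemma 1.27(1), the part of the segment lying off the axis, together with its mirror image, belongs to the two main lines. By Definition 2.20, the additional loci do not include $g_{k,j_1,2}$. Hence the next intersection, at $q_{k,2j_1-2,2}$, is still with $g_{k,j_1,2}$, and the induction on $j_1$ proceeds as in Step 2.

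\textbf{Main obstacle.} The hardest point is the second-type segments. The paper gives no explicit structure for the additional root loci. One must therefore argue, only from their existence and from Theorem 1.16 and Theorem 1.28, that $g_{k,k+1,2}$ exits the detour through $\mathbb{C}^{*}$ at a real point still inside $L_{k,j_1,2}$ and still heading toward $t_{k,j_1,2}$. My first attempt would be to combine the symmetry of $2q\pi$ and $2q\pi+\pi$ degree loci (Lemma 1.27) with the fact that loci cannot overlap (Theorem 1.28) to force re-entry on the same interval. If that succeeds, the rest is the bookkeeping of Lemma 2.18.
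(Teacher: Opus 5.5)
Your Steps 1 and 2 match the paper's proof of this lemma (Lemma 2.28). The paper fixes the same distribution of endings: $\overline{g_{k,k+1,2}}$ ends at $z_{k,k,2}$, and each $g_{k,j_1+1,2}$ ends at $z_{k,j_1,2}$. It then invokes the ``common root loci'' property to let $g_{k,k+1,2}$ carry the continuation at every breakaway point. It traces $g_{k,k+1,2}$ from $q_{k,2k-1,2}$ through the alternating on-axis and off-axis pieces down to $Q_{k,2}$ and into $\mathbb{C}^{*}$, and repeats the argument with the conjugate assignment for $\overline{g_{k,k+1,2}}$.

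The gap is in Step 3. What you call the main obstacle is this: after meeting the additional root loci, $g_{k,j_1+1,2}$ (or $g_{k,k+1,2}$) must be back on the real axis inside $L_{k,j_1,2}$, and its next meeting must be with $g_{k,j_1,2}$ at $q_{k,2j_1-2,2}$. The tools you propose cannot deliver this. Conjugate symmetry (Lemma 1.27(1)) and the non-overlap part of Theorem 1.28 do not determine where, or whether, a symmetric pair of off-axis branches returns to the axis. A priori such a pair could end at non-real zeros of $p'$, or come back in another interval. Likewise, the fact that the additional loci do not contain $g_{k,j_1,2}$ does not imply that the next locus met is $g_{k,j_1,2}$, so your ``hence'' does not follow.

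In the paper this step is not derived at all; it is built into the structure of the first kind of non-standard linked root loci:
\begin{itemize}
\item Definition 2.20 (the first type of intersection) stipulates that after $g_{k,j_1+1,2}$ meets the additional root loci, it meets $g_{k,j_1,2}$ at $q_{k,2j_1-2,2}$.
\item Lemmas 2.23 and 2.24 place that point in $L_{k,j_1,2}$, and place $q_{k,2k-2,2}$ in $L_{k,k,2}$ for $g_{k,k+1,2}$.
\item Definition 2.26 names the resulting piece the ``root locus segment on the real axis and outside the real axis''.
\end{itemize}
The proof of the lemma then only says that $g_{k,k+1,2}$ runs along ``the root locus segment on the real axis or the root locus segment on the real axis and outside the real axis'' in each interval. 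Replace your attempted derivation with these citations and your argument becomes the paper's. As in the paper, the conclusion then rests entirely on these stipulations and on the relabelling convention for common root loci.

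Smaller corrections:
\begin{itemize}
\item The standard-case analogue is Lemma 2.19, not 2.18.
\item The main-line definition is 2.27, and the two segment definitions are 2.25 and 2.26.
\item In Step 1, the meeting at $q_{k,2k-1,2}$ comes from Lemma 2.24, not from symmetry; also, ``Lemma 2.27(1)'' should read Lemma 1.27(1).
\item That intersecting loci share a unit complex value is the contrapositive of Theorem 1.15, not Theorem 1.16.
\end{itemize}
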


\begin{proof}
In the explanation of two main lines, for the root loci that eventually reach the real zeros in (2.2) in the first kind of non-standard linked root loci, one of their distribution as follows. The root locus $\overline{g_{k,k+1,2}}$ ends at $z_{k,k,2}$.  The root locus $g_{k,l,2}$ ends at $z_{k,l-2,2}$. $l=3,\cdots, k+1$. Based on this distributed way, neither the root locus $g_{k,k+1,2}$ nor $\overline{g_{k,k+1,2}}$ directly intersect at $Q_{k,2}$.

The intersecting root loci are referred to as "common root loci". They possess "common" properties. Beginning at the non-real zeros $s_{k,k+1,2}$ and $\overline{s_{k,k+1,2}}$, the two main lines of the first kind of non-standard linked root loci both extend towards the breakaway point $Q_{k,2}$. Therefore, the root locus emitted from the non-real zero $s_{k,k+1,2}$ or $\overline{s_{k,k+1,2}}$ can be distributed on two main lines of the first kind of non-standard linked root loci. That is, among the root loci intersecting at $Q_{k,2}$, one is $g_{k,k+1,2}$ or $\overline{g_{k,k+1,2}}$.

The root locus $g_{k,k+1,2}$ from a non-real zero $s_{k,k+1,2}$, distributed on two main lines of the first kind of non-standard linked root loci in the following way, and intersect at $Q_{k,2}$.

The root locus $\overline{g_{k,k+1,2}}$ ends at $z_{k,k,2}$. The root locus $g_{k,k,2}$ ends at $z_{k,k-1,2}$. The root locus $g_{k,l_1,2}$  ends at $z_{k,l_1-1,2}$. $l_1=3,\cdots, k-1$. $g_{k,3,2}$ ends at $z_{k,2,2}$. $g_{k,2,2}$ ends at $z_{k,1,2}$. $g_{k,k+1,2}$ must intersect at $Q_{k,2}$.

The root locus $g_{k,k+1,2}$ originates from the non-real zero $s_{k,k+1,2}$ and extends to $q_{k,2k-1,2}$. At $q_{k,2k-1}$, $g_{k,k+1,2}$ enters the real axis. $g_{k,k+1,2}$ extends along the root locus segment on the real axis or the root locus segment on the real axis and outside the real axis from $q_{k,2k-1,2}$ to $q_{k,2k-2,2}$. Then $g_{k,k+1,2}$ enters $\mathbb{C}^{*}$ from $q_{k,2k-2,2}$ and extends to $q_{k,2k-3,2}$ in $\mathbb{C}^{*}$. And so on $\cdots$. At $q_{k,3,2}$, $g_{k,k+1,2}$ re-enters the real axis. $g_{k,k+1,2}$ extends along the root locus segment on the real axis or the root locus segment on the real axis and outside the real axis from $q_{k,3,2}$ to $q_{k,2,2}$. Then $g_{k,k+1,2}$ enters $\mathbb{C}^{*}$ from $q_{k,2,2}$ and extends to $q_{k,1,2}$ in $\mathbb{C}^{*}$. At $q_{k,1,2}$, $g_{k,k+1,2}$ re-enters the real axis. $g_{k,k+1,2}$ extends along the root locus segment on the real axis or the root locus segment on the real axis and  outside the real axis from $q_{k,1,2}$ to $Q_{k,2}$. Then $g_{k,k+1,2}$ enters $\mathbb{C}^{*}$ from $Q_{k,2}$.

The root locus $g_{k,k+1,2}$ ends at $z_{k,k,2}$. The root locus $g_{k,k,2}$ ends at $z_{k,k-1,2}$. The root locus $g_{k,l_1,2}$ ends at $z_{k,l_1-1,2}$. $l_1=3,\cdots, k-1$. $g_{k,3,2}$ ends at $z_{k,2,2}$. $g_{k,2,2}$ ends at $z_{k,1,2}$. Under this case, $\overline{g_{k,k+1,2}}$ must intersect at $Q_{k,2}$.

The root locus $\overline{g_{k,k+1,2}}$ originates from the non-real zero $\overline{s_{k,k+1,2}}$ and extends to $q_{k,2k-1,2}$. At $q_{k,2k-1,2}$, $\overline{g_{k,k+1,2}}$ enters the real axis. $\overline{g_{k,k+1,2}}$ extends along the root locus segment on the real axis or the root locus segment on the real axis and outside the real axis from $q_{k,2k-1,2}$ to $q_{k,2k-2,2}$. Then $\overline{g_{k,k+1,2}}$ enters $\mathbb{C}^{*}$ from $q_{k,2k-2,2}$ and extends to $q_{k,2k-3,2}$ in $\mathbb{C}^{*}$. And so on $\cdots$. At $q_{k,3,2}$, $\overline{g_{k,k+1,2}}$ re-enters the real axis. $\overline{g_{k,k+1,2}}$ extends along the root locus segment on the real axis or the root locus segment on the real axis and outside the real axis from $q_{k,3,2}$ to $q_{k,2,2}$. Then $\overline{g_{k,k+1,2}}$ enters $\mathbb{C}^{*}$ from $q_{k,2,2}$ and extends to $q_{k,1,2}$ in $\mathbb{C}^{*}$. At $q_{k,1,2}$, $\overline{g_{k,k+1,2}}$ re-enters the real axis. $\overline{g_{k,k+1,2}}$ extends along the root locus segment on the real axis or the root locus segment on the real axis and outside the real axis from $q_{k,1,2}$ to $Q_{k,2}$. Then $\overline{g_{k,k+1,2}}$ enters $\mathbb{C}^{*}$ from $Q_{k,2}$.

Therefore, in the first kind of non-standard linked root loci,  a root locus $g_{k,k+1,2}$ or $\overline{g_{k,k+1,2}}$ originating from a non-real zero $s_{k,k+1,2}$ or $\overline{s_{k,k+1,2}}$ can be distributed on two main lines of the first kind of non-standard linked root loci. The root locus $g_{k,k+1,2}$ or $\overline{g_{k,k+1,2}}$  intersects at the real breakaway point $Q_{k,2}$. The root locus $g_{k,k+1,2}$ or $\overline{g_{k,k+1,2}}$  leaves from $Q_{k,2}$ then enters $\mathbb{C}^{*}$.
\end{proof}
If no root loci intersect in the interval $L_{k,1,2}$, $g_{k,1,2}$ can only remain on the real axis. The root locus $g_{k,1,2}$ of the real zero leaves the real axis and enters $\mathbb{C}^{*}$, which is only a result.  The reason for this result is that $g_{k,k+1,2}$ or $\overline{g_{k,k+1,2}}$ intersects on the real axis, generating the first kind of non-standard linked root loci. The root loci $g_{k,k+1,2}$ and $\overline{g_{k,k+1,2}}$ of the non-real zeros $s_{k,k+1,2}$ and $\overline{s_{k,k+1,2}}$ intersect on the real axis, generating the first kind of non-standard linked root loci. This is the fundamental reason.

On the two main lines of each of the first kind of non-standard linked root loci, the direction of extension is from the non-real zeros $s_{k,k+1,2}$ or $\overline{s_{k,k+1,2}}$ originating, all the way extending to the real breakaway point $Q_{k,2}$. The two main lines can be the root loci $g_{k,k+1,2}$ or $\overline{g_{k,k+1,2}}$ of the non-real zeros $s_{k,k+1,2}$ or $\overline{s_{k,k+1,2}}$. That is, $g_{k,k+1,2}$ or $\overline{g_{k,k+1,2}}$ successively intersects with the root loci of real zeros on the real axis, and lets the two main lines as its root loci, extends all the way to the breakaway point $Q_{k,2}$.

The root locus $g_{1,1,3}$ is emitted from a real zero $t_{1,1,3}$ of $p(s)$. The root loci $g_{1,2,3}$ and $\overline{g_{1,2,3}}$ are separately emitted from the non-real zero $s_{1,2,3}$ and $\overline{s_{1,2,3}}$. The root loci of (2.2) of $2q\pi$ and $2q\pi+\pi$ degrees are symmetric about the real axis. The conjugate non-real zeros and poles of (2.2) are symmetric about the real axis. $g_{1,2,3}$ and $\overline{g_{1,2,3}}$ are symmetric about the real axis. After $\overline{g_{1,2,3}}$ and $g_{1,2,3}$ intersect at $q_{1,1,3}$, from the real breakaway point $q_{1,1,3}$, $\overline{g_{1,2,3}}$ and $g_{1,2,3}$ extend oppositely on the real axis.  Let $g_{1,2,3}$ extend to $t_{1,1,3}$.

Unlike the standard linked root loci and the first kind of non-standard linked root loci, firstly, $g_{1,1,3}$ intersects with the additional root loci at real breakaway points on the real axis. The additional root loci do not include $g_{1,2,3}$. After $g_{1,1,3}$ intersects with the additional root loci on the real axis, $g_{1,1,3}$  then intersects  $g_{1,2,3}$, along with other possible root loci, at the real breakaway point $Q_{1,3}$.
Unlike the standard linked root loci, there are the additional root loci that join into the original standard linked root loci.

Therefore, $g_{1,1,3}$ together with $g_{1,2,3}$ and $\overline{g_{1,2,3}}$ form a connected structure. We call them the second kind of non-standard linked root loci.

$g_{k,1,3}$ originates from the real zero $t_{k,1,3}$ of $p(s)$. $g_{k,2,3}$ originates from the real zero $t_{k,2,3}$ of $p(s)$. If  $g_{k,1,3}$ doesn't intersect with the additional root loci at real breakaway points on the real axis. On the real axis, $g_{k,2,3}$ intersects $g_{k,1,3}$, along with other possible root loci, at $Q_{k,3}$. From $Q_{k,3}$, $g_{k,1,3}$ and $g_{k,2,3}$ leave the real axis and enter $\mathbb{C}^{*}$. $1\le k<n$.

Unlike the standard linked root loci and the first kind of non-standard linked root loci, it is possible that $g_{k,1,3}$ intersects with the additional root loci at real breakaway points on the real axis. The additional root loci do not include $g_{k,2,3}$. After $g_{k,1,3}$ intersects with the additional root loci on the real axis, $g_{k,1,3}$  then intersects $g_{k,2,3}$, along with other possible root loci,  at the real breakaway point $Q_{k,3}$. From $Q_{k,3}$, $g_{k,1,3}$ and $g_{k,2,3}$ leave the real axis and enters $\mathbb{C}^{*}$.

By Lemma 2.3, there is at least one real zero of (2.2) between $t_{k,1,3}$ and $t_{k,2,3}$ to separate them. Therefore, on the real axis, $g_{k,2,3}$ cannot directly intersect with $g_{k,1,3}$. $g_{k,2,3}$ must leave the real axis and enter $\mathbb{C}^{*}$.

If  $g_{k,2,3}$ doesn't intersect with the additional root loci at real breakaway points on the real axis.  $g_{k,2,3}$ leaves the real axis and enters $\mathbb{C}^{*}$. By Lemma 2.7, this result requires that the third root locus, denoted as $g_{k,3,3}$, intersects $g_{k,2,3}$ at $q_{k,2,3}$ on the real axis. Suppose $g_{k,3,3}$ originates from the real zero $t_{k,3,3}$ of $p$. From $q_{k,2,3}$, $g_{k,2,3}$ and $g_{k,3,3}$ leave the real axis and enter $\mathbb{C}^{*}$.

If $g_{k,2,3}$ intersects with the additional root loci at real breakaway points on the real axis. The additional root loci do not include $g_{k,3,3}$. After $g_{k,2,3}$ intersects with the additional root loci on the real axis, $g_{k,2,3}$ leaves the real axis and enters $\mathbb{C}^{*}$. By Lemma 2.7, this result requires that the third root locus, denoted as $g_{k,3,3}$, intersects $g_{k,2,3}$ at $q_{k,2,3}$ on the real axis. Suppose $g_{k,3,3}$ originates from the real zero $t_{k,3,3}$ of $p$. From $q_{k,2,3}$, $g_{k,2,3}$ and $g_{k,3,3}$ leave the real axis and enter $\mathbb{C}^{*}$.

Because $g_{k,2,3}$ needs to intersect with $g_{k,1,3}$ at $Q_{k,3}$, $g_{k,2,3}$ and $g_{k,3,3}$ leave the real axis and enter $\mathbb{C}^{*}$. Due to symmetry about the real axis, $g_{k,2,3}$ and $g_{k,3,3}$ which have left the real axis must intersect again at a real breakaway point $q_{k,1,3}$. From $q_{k,1,3}$, $g_{k,2,3}$ and $g_{k,3,3}$ extend in opposite directions along the real axis. Let $g_{k,2,3}$  extends toward $t_{k,1,3}$. Then $g_{k,1,3}$ and the root locus $g_{k,2,3}$ extend toward each other and intersect, along with any other possible root loci, at $Q_{k,3}$.

By Lemma 2.3, there must be at least one real zero of (2.2) between $t_{k,2,3}$ and $t_{k,3,3}$ to separate them. Therefore, on the real axis, $g_{k,3,3}$ cannot directly intersect $g_{k,2,3}$. $g_{k,3,3}$ must leave the real axis and enter $\mathbb{C}^{*}$.

If the intersecting root loci continue to originate from real zeros, such a process must be carried on.

The intersection points of root loci are breakaway points of (2.2). By Theorem 1.28, the number of breakaway points of (2.2) is finite, implying the number of intersecting root loci is finite. Hence, this process cannot continue indefinitely. However, based on prior results, such an intersection chain can only terminate when a root locus $g_{k,k+1,3}$ from a non-real zero $s_{k,k+1,3}$ of $p(s)$ intersects a root locus $g_{k,k,3}$ from a real zero at $q_{k,2k-2,3}$ on the real axis. $g_{k,k,3}$ originates from the real zero $t_{k,k,3}$ of $p(s)$.

The root locus $g_{k,k+1,3}$ is emitted from the conjugate non-real zero $s_{k,k+1,3}$.
Because the root loci of (2.2) of $2q\pi$ and $2q\pi+\pi$ degrees are symmetrical about the real axis. The conjugate non-real zeros and poles of (2.2) are also symmetrical about the real axis. The root locus $\overline{g_{k,k+1,3}}$ is emitted from the conjugate non-real zero $\overline{s_{k,k+1,3}}$. Therefore, $g_{k,k+1,3}$ and $\overline{g_{k,k+1,3}}$ are symmetrical about the real axis.

If $g_{k,k+1,3}$ and $g_{k,k,3}$ intersect on the real axis, then $\overline{g_{k,k+1,3}}$ must intersect with $g_{k,k+1,3}$ first at $q_{k,2k-1,3}$. After $\overline{g_{k,k+1,3}}$ intersects with $g_{k,k+1,3}$, on the real axis, from $q_{k,2k-1,3}$, they extend in opposite directions. Let $g_{k,k+1,3}$  extend toward $t_{k,k,3}$.

If $g_{k,k,3}$ does not intersect with the additional root loci on the real axis, $g_{k,k+1,3}$  then intersects with $g_{k,k,3}$ at a real breakaway point $q_{k,2k-2,3}$. From $q_{k,2k-2,3}$, $g_{k,k,3}$ and $g_{k,k+1,3}$ leave the real axis and enter $\mathbb{C}^{*}$.

If $g_{k,k,3}$ intersects with the additional root loci at the real breakaway points on the real axis. The additional root loci do not include $g_{k,k+1,3}$. After $g_{k,k,3}$ intersects with the additional root loci on the real axis, $g_{k,k+1,3}$ then intersects with $g_{k,k,3}$ at a real breakaway point $q_{k,2k-2,3}$. From $q_{k,2k-2,3}$, $g_{k,k,3}$ and $g_{k,k+1,3}$ leave the real axis and enter $\mathbb{C}^{*}$.

In the process of forming the non-standard linked root loci, a root locus emitted from a real zero intersects with the additional root loci and then intersects again with a root locus emitted from a real zero. Alternatively, a root locus emitted from a real zero intersects with the additional root loci and then intersects with a root locus emitted from a non-real zero. Such a scenario occurs at least once. There are the additional root loci that join into the original standard linked root loci. Therefore, $g_{k,1,3}$, $g_{k,2,3}$, $\cdots$, together with $g_{k,k+1,3}$ and $\overline{g_{k,k+1,3}}$ form a connected structure. We call them the second kind of non-standard linked root loci.

In the  second kind of non-standard linked root loci, $k$ root loci emitted from the real poles of (2.2) connect with a pair of root loci emitted from the non-real zeros. The additional root loci join the standard linked root loci.

The root locus emitted from a real pole of (2.2) may intersect with the root locus emitted from the next real pole of (2.2). The root locus emitted from a real pole of (2.2) may intersect with the additional root loci, then the root locus emitted from a real pole of (2.2) intersects with the root locus emitted from the next real pole of (2.2). Or, the root locus emitted from a real pole of (2.2) intersects with the root locus emitted from a pair of non-real poles of (2.2). The root locus emitted from a real pole of (2.2) may intersect with the additional root loci, then the root locus emitted from a real pole of (2.2) intersects with the root locus emitted from a pair of non-real poles of (2.2). These intersection of the root loci occurs within the intervals between real poles and real zeros. The rule for forming the second kind of non-standard linked root loci is straightforward.

In the process of forming the second kind of non-standard linked root loci, consider a root locus $g_{k,j_1,3}$ emitted from the real zero $t_{k,j_1,3}$, and two root loci $g_{k,j_1+1,3}$ and $g_{k,j_1+2,3}$ emitted from the real zeros ($t_{k,j_1+1,3}$, $t_{k,j_1+2,3}$). These two root loci $g_{k,j_1+1,3}$ and $g_{k,j_1+2,3}$ intersect at the real breakaway point $q_{k,2j_1,3}$. From $q_{k,2j_1,3}$, the two root loci $g_{k,j_1+1,3}$ and $g_{k,j_1+2,3}$ leave the real axis and enter $\mathbb{C}^{*}$. Subsequently, the root loci $g_{k,j_1+1,3}$ and $g_{k,j_1+2,3}$ intersect on the real axis at the real breakaway point $q_{k,2j_1-1,3}$, and then extend further along the real axis in opposite directions. Let $g_{k,j_1+1,3}$ extend toward $t_{k,j_1,3}$. The root locus $g_{k,j_1,3}$ intersects with the additional root loci on the real axis. After this intersection, $g_{k,j_1,3}$ then intersects with the root locus $g_{k,j_1+1,3}$ on the real axis at the real breakaway point $q_{k,2j_1-2,3}$. This specific manner of intersection between $g_{k,j_1,3}$ and the additional root loci is formally termed. We refer to this intersection between $g_{k,j_1,3}$ and the additional root loci as the second type of intersection with the additional root loci. Here,  $j_1= 1, \cdots, k-1$.

In the process of forming the second kind of non-standard linked root loci, consider a root locus $g_{k,k,3}$ emitted from the real zero $t_{k,k,3}$, and two root loci $g_{k,k+1,3}$ and $\overline{g_{k,k+1,3}}$ emitted from the non-real zeros ($s_{k,k+1,3}$ or $\overline{s_{k,k+1,3}}$). The root loci $g_{k,k+1,3}$ and $\overline{g_{k,k+1,3}}$ intersect on the real axis at the real breakaway point $q_{k,2k-1,3}$, and then extend further along the real axis in opposite directions. Let $g_{k,k+1,3}$ extend toward $t_{k,k,3}$. The root locus $g_{k,k,3}$ intersects with the additional root loci on the real axis. After this intersection, $g_{k,k,3}$ then intersects with the root locus $g_{k,k+1,3}$ on the real axis at the real breakaway point $q_{k,2k-2,3}$. This specific manner of intersection between $g_{k,k,3}$ and the additional root loci is formally termed. We refer to this intersection between $g_{k,k,3}$ and the additional root loci as the second type of intersection with the additional root loci.

\begin{definition}
The root locus $g_{k,j_1,3}$ intersects with the additional root loci on the real axis. After $g_{k,j_1,3}$ intersects with the additional root loci on the real axis, it subsequently intersects with $g_{k,j_1+1,3}$ on the real axis at the real breakaway point $q_{k,2j_1-2,3}$. This pattern of intersection with the additional root loci is defined as the second type of intersection with the additional root loci, where $1\le k<n$, $j_1 = 1, \cdots, k-1$.
\end{definition}
The root locus $g_{k,k,3}$ intersects with the additional root loci on the real axis. After $g_{k,k,3}$ intersects with the additional root loci on the real axis, it subsequently intersects with $g_{k,k+1,3}$ on the real axis at the real breakaway point $q_{k,2k-2,3}$. This pattern of intersection with the additional root loci is also defined as the second type of intersection with the additional root loci.

Therefore, in the second kind of non-standard linked root loci, there exists at least one instance of the second type of intersection with the additional root loci.

\begin{definition}
When $k>1$ and a root locus $g_{k,1,3}$ emitted from a real zero $t_{k,1,3}$ of $p(s)$ leaves real axis and enters $\mathbb{C}^{*}$. Beginning from $g_{k,1,3}$, consider root loci $g_{k,j_1,3}$ (for $1\le k<n$, $j_1=1,\cdots,k$, $0<2j_1-2<2k-2$, $0<2j_1-1<2k-1$) emitted from distinct real poles $t_{k,j_1,3}$ of (2.2). At the end point of the structure, there are a pair of root loci $g_{k,k+1,3}$ and $\overline{g_{k,k+1,3}}$ are emitted from a pair of non-real zeros $s_{k,k+1,3}$ and $\overline{s_{k,k+1,3}}$. These root loci intersect at real breakaway points $Q_{k,3}$, $q_{k,2j_1-2,3}$ $q_{k,2j_1-1,3}$ on the real axis. These intersecting root loci ($g_{k,1,3}$, $g_{k,j_1,3}$, $g_{k,k+1,3}$ and $\overline{g_{k,k+1,3}}$) are connected by their real breakaway points $Q_{k,3}$, $q_{k,2j_1-1,3}$, $q_{k,2j_1-2,3}$. There must be the additional root loci that join into the original standard linked root loci. There is at least one instance of the second type of intersection with the additional root loci. The structure formed by these root loci ($g_{k,1,3}$, $g_{k,j_1,3}$, $g_{k,k+1,3}$ and $\overline{g_{k,k+1,3}}$ and the additional root loci) is called \emph{the second kind of non-standard linked root loci of (2.2)}.
\end{definition}
When $k=1$ and a root locus $g_{1,1,3}$ emitted from a real zero $t_{1,1,3}$ of $p(s)$ leaves real axis and enters $\mathbb{C}^{*}$, $k=1$. Beginning from $g_{1,1,3}$, at the end point of the structure, there are a pair of root loci $g_{1,2,3}$ and $\overline{g_{1,2,3}}$ are emitted from a pair of non-real zeros $s_{1,2,3}$ and $\overline{s_{1,2,3}}$. These root loci intersect at real breakaway points $Q_{1,3}$, $q_{1,1,3}$ on the real axis. These intersecting root loci ($g_{1,1,3}$, $g_{1,2,3}$ and $\overline{g_{1,2,3}}$) are connected by their real breakaway points $Q_{1,3}$, $q_{1,1,3}$. There must be the additional root loci that join into the original standard linked root loci. There is at least one instance of the second type of intersection with the additional root loci. The structure formed by these root loci ($g_{1,1,3}$, $g_{1,2,3}$ and $\overline{g_{1,2,3}}$ and the additional root loci) is called \emph{the second kind of non-standard linked root loci of (2.2)}.

Let $g_{2,1,3}$ be the root locus emitted from a real zero $t_{2,1,3}$ of $p$, and let $z_{2,1,3}$ be the real zero of (2.2) adjacent to $t_{2,1,3}$. The root locus $g_{2,1,3}$ would leave the real axis in the interval $L_{2,1,3}$ between $t_{2,1,3}$ and $z_{2,1,3}$ to enter $\mathbb{C}^{*}$. However, if no other root locus intersects $g_{2,1,3}$ in $L_{2,1,3}$, then $g_{2,1,3}$ must end at $z_{2,1,3}$ and remain on the real axis. Thus, for $g_{2,1,3}$ to leave the real axis and enter  $\mathbb{C}^{*}$, another root locus must intersect $g_{2,1,3}$ within $L_{2,1,3}$. We denote this intersecting root locus by $g_{2,2,3}$.

Suppose the root locus $g_{2,2,3}$ departs from another real zero $t_{2,2,3}$ of $p$. Let $z_{2,2,3}$ be the real zero of (2.2) adjacent to $t_{2,2,3}$. $g_{2,2,3}$ leaves the real axis from the interval $L_{2,2,3}$ between $t_{2,2,3}$ and $z_{2,2,3}$ to enter $\mathbb{C}^{*}$. If no other root loci intersect $g_{2,2,3}$ within $L_{2,2,3}$, then $g_{2,2,3}$ emitted from $t_{2,2,3}$ must end at $z_{2,2,3}$ and remain on the real axis. Therefore, for $g_{2,2,3}$ to leave the real axis and enter  $\mathbb{C}^{*}$, another root locus must intersect $g_{2,2,3}$ within $L_{2,2,3}$. We denote this intersecting root locus by $g_{2,3,3}$. $g_{2,3,3}$ and $\overline{g_{2,3,3}}$ are emitted from a pair of the non-real zeros $s_{2,3,3}$ and $\overline{s_{2,3,3}}$ of $p(s)$.

Here, at least one of the two cases must occur. Case 1. $g_{2,1,3}$ intersects with the additional root loci at real breakaway points on the real axis. The additional root loci do not contain $g_{2,2,3}$. After $g_{2,1,3}$ intersects with the additional root loci on the real axis, $g_{2,1,3}$ then intersects with $g_{2,2,3}$ at a real breakaway point.

Case 2. $g_{2,2,3}$  intersects with the additional root loci at real breakaway points on the real axis. The additional root loci do not contain $g_{2,3,3}$. After $g_{2,2,3}$ intersects with the additional root loci on the real axis, $g_{2,2,3}$  then intersects with $g_{2,3,3}$ at a real breakaway point.
\begin{lemma}
The root loci $g_{2,1,3}$, $g_{2,2,3}$, and possibly other root loci intersect at a real breakaway point $Q_{2,3}$. At $Q_{2,3}$, $g_{2,1,3}$ leaves the real axis and enters  $\mathbb{C}^{*}$. $g_{2,3,3}$ and $\overline{g_{2,3,3}}$ are emitted from a pair of the non-real zeros $s_{2,3,3}$ and $\overline{s_{2,3,3}}$ of $p(s)$. At least one of the two cases must occur.  Then the root loci $g_{2,1,3}$, $g_{2,2,3}$, $g_{2,3,3}$, $\overline{g_{2,3,3}}$  and the additional root loci form the second kind of non-standard linked root loci. The terminus of this structure consists of $g_{2,3,3}$ or $\overline{g_{2,3,3}}$.
\end{lemma}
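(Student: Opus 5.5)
The plan is to mirror the proof of Lemma 2.22, with the roles of the root loci reversed. In the second kind of non-standard linked root loci, the additional root loci attach to the loci coming from the real poles $g_{2,1,3}$ and $g_{2,2,3}$. They do not attach to the incoming loci from the next link. The terminal pair enters first. By symmetry of the $2q\pi$ and $2q\pi+\pi$ degree root loci about the real axis (Theorem 1.27), $g_{2,3,3}$ and $\overline{g_{2,3,3}}$ must meet at a real breakaway point $q_{2,3,3}$ in $L_{2,2,3}$. By Theorem 1.28 they cannot overlap, so they leave $q_{2,3,3}$ in opposite directions along the real axis. We let $g_{2,3,3}$ run toward $t_{2,2,3}$, while $\overline{g_{2,3,3}}$ ends at $z_{2,2,3}$ or at another zero of (2.2). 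The zeros of (2.2) receive only loci coming from zeros of $p$ (Lemma 2.4).

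Next, I would split on whether Case 2 occurs in $L_{2,2,3}$. If it does not, then $g_{2,2,3}$ and $g_{2,3,3}$ move toward each other and meet at $q_{2,2,3}$. If it does, then $g_{2,2,3}$ first meets the additional root loci, none of which is $g_{2,3,3}$, and then meets $g_{2,3,3}$ at $q_{2,2,3}$. In both subcases, Lemma 2.7 and Theorem 2.5 say that $g_{2,2,3}$ can leave the real axis only at such an intersection. So both loci enter $\mathbb{C}^{*}$ at $q_{2,2,3}$. Symmetry then forces them to return and meet again at a real breakaway point $q_{2,1,3}$ in $L_{2,1,3}$, and from there they leave in opposite directions. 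We send $g_{2,2,3}$ toward $t_{2,1,3}$ and let $g_{2,3,3}$ end at $z_{2,1,3}$ or at another zero.

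The same split is repeated in $L_{2,1,3}$ using Case 1. Either $g_{2,1,3}$ meets $g_{2,2,3}$ directly, or it first meets additional root loci that do not contain $g_{2,2,3}$. In both cases the meeting point is $Q_{2,3}$, possibly with other loci, and there $g_{2,1,3}$ leaves the real axis. The hypothesis that at least one of the two cases occurs guarantees that additional root loci actually join, with a second-type intersection in the sense of Definition 2.30. Together with the finiteness of breakaway points (Theorem 1.28), this yields the structure of the definition of the second kind of non-standard linked root loci, and its terminus is $g_{2,3,3}$ or $\overline{g_{2,3,3}}$. Finally, I would note, as in Lemma 2.15, that breakaway points may coincide, for instance $Q_{2,3}=q_{2,1,3}$, without affecting the argument.

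The main obstacle is justifying rigorously that, in the non-standard case, $g_{2,2,3}$ (respectively $g_{2,1,3}$) still meets the intended partner after intersecting the additional loci, rather than being diverted. My first attempt would use the monotonicity of the phase along the real interval (Lemma 1.27(5)) together with the non-overlap of Theorem 1.28. The aim is to show that between $t_{2,j,3}$ and the partner locus there is no zero of (2.2) that could absorb it.
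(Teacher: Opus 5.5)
Your proposal is essentially the paper's own proof of this lemma. The terminal pair $g_{2,3,3},\overline{g_{2,3,3}}$ meets at $q_{2,3,3}$ in $L_{2,2,3}$ and splits, the Case 2 split in $L_{2,2,3}$ gives $q_{2,2,3}$, symmetry returns the loci to $q_{2,1,3}$ in $L_{2,1,3}$, the Case 1 split there gives $Q_{2,3}$, and the paper closes with the same coincidence remark taken from Lemma 2.15. The ``main obstacle'' you flag is simply asserted in the paper, so you need nothing more to match it; note also that Lemma 1.27(5) gives constancy, not monotonicity, of the degree on a pole--zero interval, and that no zero of (2.2) can intervene anyway because $L_{2,j,3}$ is an adjacent pole--zero interval, so that route would not add any real justification.
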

\begin{proof}
$g_{2,3,3}$ and $\overline{g_{2,3,3}}$ intersect at a real breakaway point $q_{2,3,3}$ within $L_{2,2,3}$. From $q_{2,3,3}$, $g_{2,3,3}$ and $\overline{g_{2,3,3}}$  extend oppositely on the real axis. Suppose $g_{2,3,3}$ extends toward $t_{2,2,3}$. $\overline{g_{2,3,3}}$ maybe ends at $z_{2,2,3}$. $\overline{g_{2,3,3}}$ maybe ends at the other zero of (2.2). If  $g_{2,2,3}$ doesn't intersect with the additional root loci at real breakaway points on the real axis, then $g_{2,2,3}$ and $g_{2,3,3}$ extend toward each other and intersect at a real breakaway point $q_{2,2,3}$ within $L_{2,2,3}$.

If $g_{2,2,3}$ intersects with the additional root loci at the real breakaway points on the real axis, the additional root loci do not contain $g_{2,3,3}$. After $g_{2,2,3}$ intersects with the additional root loci on the real axis, then $g_{2,2,3}$ and $g_{2,3,3}$ extend toward each other and intersect at a real breakaway point $q_{2,2,3}$ within $L_{2,2,3}$.

At $q_{2,2,3}$, $g_{2,2,3}$ and $g_{2,3,3}$ leave the real axis. $g_{2,2,3}$ and $g_{2,3,3}$  enter  $\mathbb{C}^{*}$.
Due to symmetry about the real axis, $g_{2,2,3}$ and $g_{2,3,3}$ which have left the real axis must intersect again at a real breakaway point $q_{2,1,3}$ in $L_{2,1,3}$. Therefore, $g_{2,2,3}$ and $g_{2,3,3}$ extend in  $\mathbb{C}^{*}$ from $q_{2,2,3}$ to $q_{2,1,3}$. From $q_{2,1,3}$, $g_{2,2,3}$ and $g_{2,3,3}$  extend oppositely on the real axis. Suppose $g_{2,2,3}$ extends toward $t_{2,1,3}$. $g_{2,3,3}$ maybe ends at $z_{2,1,3}$. $g_{2,3,3}$ maybe ends at the other zero of (2.2).

If  $g_{2,1,3}$ doesn't intersect with the additional root loci at real breakaway points on the real axis, then $g_{2,1,3}$ and the root locus $g_{2,2,3}$ extend toward each other and intersect, along with  other possible root loci, at $Q_{2,3}$ in $L_{2,1,3}$.

If $g_{2,1,3}$ intersects with the additional root loci at the real breakaway points on the real axis, the additional root loci do not contain $g_{2,2,3}$. After $g_{2,1,3}$ intersects with the additional root loci on the real axis, then $g_{2,1,3}$ and the root locus $g_{2,2,3}$ extend toward each other and intersect, along with  other possible root loci, at $Q_{2,3}$ in $L_{2,1,3}$.

Therefore, $g_{2,1,3}$, $g_{2,2,3}$, $g_{2,3,3}$, $\overline{g_{2,3,3}}$  and the additional root loci form the second kind of non-standard linked root loci. $g_{2,1,3}$ and $g_{2,2,3}$ leave the real axis and enter  $\mathbb{C}^{*}$.

The same as the case in Lemma 2.15, the real breakaway points may coincide. This overlapping situation has no impact on the proof or the results of this lemma. Since we are unable to provide the situation of "the additional root loci", the exact situation of the real breakaway points overlapping cannot be accurately given either. We only need to know that such a situation exists.
\end{proof}

Let $g_{k,j_1,3}$ be the root locus emitted from a real zero $t_{k,j_1,3}$ of $p$. Let $z_{k,j_1,3}$ be the real zero of (2.2) adjacent to $t_{k,j_1,3}$. $g_{k,j_1,3}$ leaves the real axis at a point in the interval $L_{k,j_1,3}$ between $t_{k,j_1,3}$ and $z_{k,j_1,3}$.  The root loci from the non-real zeros $s_{k,k+1,3}$ and $\overline{s_{k,k+1,3}}$ are $g_{k,k+1,3}$ and $\overline{g_{k,k+1,3}}$.

$g_{k,j_1+1,3}$ and $g_{k,j_1+2,3}$ which have left the real axis must intersect again at a real breakaway point $q_{k,2j_1-1,3}$ in $L_{k,j_1,3}$. From $q_{k,2j_1-1,3}$, $g_{k,j_1+1,3}$ and $g_{k,j_1+2,3}$  extend oppositely on the real axis. Suppose $g_{k,j_1+1,3}$ extends toward $t_{k,j_1,3}$.

Here, at least one of the $k$ cases must occur. Case $j_1$. $j_1=1,\cdots,k-1$. $g_{k,j_1,3}$  intersects with the additional root loci at real breakaway points on the real axis. The additional root loci do not contain $g_{k,j_1+1,3}$. After $g_{k,j_1,3}$ intersects with the additional root loci on the real axis,  then $g_{k,j_1,3}$ intersects with $g_{k,j_1+1,3}$  at a real breakaway point.

$g_{k,k+1,3}$ and $\overline{g_{k,k+1,3}}$ intersect  at a real breakaway point $q_{k,2k-1,3}$ in $L_{k,k,3}$. From $q_{k,2k-1,3}$, $g_{k,k+1,3}$ and $\overline{g_{k,k+1,3}}$  extend oppositely on the real axis. Suppose $g_{k,k+1,3}$ extends toward $t_{k,k,3}$.

Case $k$. $g_{k,k,3}$ intersects with the additional root loci at real breakaway points on the real axis. The additional root loci do not contain $g_{k,k+1,3}$. After  $g_{k,k,3}$ intersects with the additional root loci on the real axis, $g_{k,k,3}$ then intersects with  $g_{k,k+1,3}$ at a real breakaway point.

\begin{lemma}
When a root locus $g_{k,1,3}$ emitted from a real zero of $p(s)$ leaves the real axis and enters $\mathbb{C}^{*}$, then, there must exist a pair of non-real zeros and $k-1$ other real zeros.  At least one of the $k$ cases must occur.  These root loci $g_{k,k+1,3}$ and $\overline{g_{k,k+1,3}}$, $g_{k,j_1,3}$  (for $1\le k<n$, $j_1=1,\cdots,k$, $0<2j_1-2<2k-2$, $0<2j_1-1<2k-1$) intersect at real breakaway points $Q_{k,3}$, $q_{k,2j_1-1,3}$, $q_{k,2j_1-2,3}$  on the real axis. At the end point of the structure, there are a pair of root loci $g_{k,k+1,3}$ and $\overline{g_{k,k+1,3}}$ are emitted from a pair of non-real zeros $s_{k,k+1,3}$ and $\overline{s_{k,k+1,3}}$.
\end{lemma}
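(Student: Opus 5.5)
The proof will follow the proof of Lemma 2.23 almost line for line. The only change is that the \emph{second} type of intersection with the additional root loci (the definition preceding the present statement) replaces the first type. I would argue by induction along the chain of intervals $L_{k,j_1,3}$, starting from the hypothesis that $g_{k,1,3}$ leaves the real axis.

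\textbf{Base step.} By Lemma 2.7, $g_{k,1,3}$ cannot leave the real axis inside $L_{k,1,3}$ unless some other root locus meets it there. By Lemma 2.3, the real pole $t_{k,2,3}$ is separated from $t_{k,1,3}$ by a real zero of (2.2). Hence the locus $g_{k,2,3}$ meeting $g_{k,1,3}$ at $Q_{k,3}$ must itself have left the real axis. In Case $1$ of the statement, $g_{k,1,3}$ first meets additional root loci, which do not contain $g_{k,2,3}$, before reaching $Q_{k,3}$. Either way, $Q_{k,3}$ is a real breakaway point through which $g_{k,1,3}$ and $g_{k,2,3}$ enter $\mathbb{C}^{*}$. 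This is exactly the situation of Lemma 2.26 for $k=2$, and the $k=1$ case is already described before Definition 2.30.

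\textbf{Inductive step.} Assume $g_{k,j_1+1,3}$ and $g_{k,j_1+2,3}$ have met at $q_{k,2j_1,3}$ in $L_{k,j_1+1,3}$ and entered $\mathbb{C}^{*}$.
\begin{enumerate}
\item By the symmetry of the $2q\pi$ and $2q\pi+\pi$ degree loci (Lemma 1.27(1)), they must return to the real axis at a second breakaway point $q_{k,2j_1-1,3}$ in $L_{k,j_1,3}$.
\item By Theorem 1.28 they cannot overlap, so from $q_{k,2j_1-1,3}$ they continue in opposite directions along the axis. One of them, say $g_{k,j_1+1,3}$, runs toward $t_{k,j_1,3}$.
\item If Case $j_1$ does not occur, $g_{k,j_1,3}$ and $g_{k,j_1+1,3}$ run toward each other and meet at $q_{k,2j_1-2,3}$.
\item If Case $j_1$ does occur, $g_{k,j_1,3}$ first crosses additional root loci that exclude $g_{k,j_1+1,3}$, and then meets $g_{k,j_1+1,3}$ at $q_{k,2j_1-2,3}$.
\item In both cases the pair leaves the axis at $q_{k,2j_1-2,3}$, so the argument repeats in $L_{k,j_1-1,3}$.
\end{enumerate}
Reading the chain in the other direction, each new real pole $t_{k,j_2,3}$ whose locus must leave $L_{k,j_2,3}$ forces, by Lemma 2.7, a further locus $g_{k,j_2+1,3}$ intersecting in that interval.

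\textbf{Termination.} By Theorem 1.28 there are only finitely many breakaway points, so the chain must stop. It cannot stop at a root locus from a real zero, because such a locus would again need a partner by Lemma 2.7 together with Lemma 2.3. It therefore stops only when a locus $g_{k,k+1,3}$ from a non-real zero $s_{k,k+1,3}$ meets $g_{k,k,3}$ in $L_{k,k,3}$. Its conjugate $\overline{g_{k,k+1,3}}$ first meets it at $q_{k,2k-1,3}$, again by symmetry, and the possible Case $k$ crossing is handled as above. This yields the pair of non-real zeros $s_{k,k+1,3}$, $\overline{s_{k,k+1,3}}$ and the $k-1$ further real zeros $t_{k,2,3},\dots,t_{k,k,3}$. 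The requirement that at least one of the $k$ cases occurs is simply carried along from the definition of the second kind of non-standard linked root loci.

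\textbf{Main obstacle.} The hard step is this termination argument in the presence of the additional root loci. One must exclude the chain being ``absorbed'' by an additional locus that comes from a real zero, so that it never reaches a non-real zero. I would first try to show that an additional locus crossing $g_{k,j_1,3}$ on the real axis does not replace $g_{k,j_1+1,3}$ as the partner at $q_{k,2j_1-2,3}$; this is what ``does not contain $g_{k,j_1+1,3}$'' encodes. With that in hand, the parity and direction bookkeeping along the main line is the same as in Lemmas 2.16 and 2.23, and Lemma 2.8 with Corollary 2.1 shows that only a non-real pair can close the chain.
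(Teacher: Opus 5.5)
Your proposal is essentially the paper's own proof of this lemma. It refers the first two links back to the $k=1$ discussion and to the $k=2$ lemma (numbered 2.31 in the paper, not 2.26), and it uses the same inductive step: the symmetric return of $g_{k,j_1+1,3}$ and $g_{k,j_1+2,3}$ at $q_{k,2j_1-1,3}$, followed by $g_{k,j_1+1,3}$ meeting $g_{k,j_1,3}$ at $q_{k,2j_1-2,3}$ with or without a prior second-type crossing. Termination at the pair $g_{k,k+1,3},\overline{g_{k,k+1,3}}$ is obtained as in the paper, from the finiteness of breakaway points (Theorem 1.28) together with the requirement that every real-zero link have a further partner (Lemma 2.7).

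The only real difference is your ``main obstacle'' paragraph, which goes beyond what the paper does. The paper never proves that an additional locus cannot replace $g_{k,j_1+1,3}$ as the partner at $q_{k,2j_1-2,3}$; it builds this into the statement of Case $j_1$. It also does not invoke Lemma 2.8 or Corollary 2.1, and these could not close the chain anyway: they only give the global existence of some non-real pair of zeros of $p$ and say nothing about which locus terminates this particular chain.
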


\begin{proof}
In Lemma 2.31, there are four main root loci in the second kind of non-standard linked root loci, and two intervals between zeros and poles of (2.2).

$g_{k,1,3}$, $g_{k,2,3}$, and possibly other root loci intersect at a real breakaway point $Q_{k,3}$  in $L_{k,1,3}$. At $Q_{k,3}$, $g_{k,1,3}$  and $g_{k,2,3}$ leave the real axis and enter  $\mathbb{C}^{*}$. The situation of $g_{k,1,3}$ and $g_{k,2,3}$ is identical to the situation given in Lemma 2.31. At the front of Definition 2.29, we have given the result of $k=1$. When $k=2$, Lemma 2.31 give results. For general $k>2$, as following.

$g_{k,j_1+1,3}$ and $g_{k,j_1+2,3}$ intersect at the real breakaway point $q_{k,2j_1,3}$ in $L_{k,j_1+1,3}$. From $q_{k,2j_1,3}$, $g_{k,j_1+1,3}$ and $g_{k,j_1+2,3}$ leave the real axis and enter   $\mathbb{C}^{*}$. Due to symmetry about the real axis, $g_{k,j_1+1,3}$ and $g_{k,j_1+2,3}$ intersect at  the real breakaway point $q_{k,2j_1-1,3}$  in $L_{k,j_1,3}$.

$g_{k,j_1+1,3}$ and $g_{k,j_1+2,3}$ extend in  $\mathbb{C}^{*}$ from $q_{k,2j_1,3}$ to $q_{k,2j_1-1,3}$. $t_{k,j_1,3}$ emits $g_{k,j_1,3}$ to interval $L_{k,j_1,3}$. From $q_{k,j_1-1,3}$, $g_{k,j_1+1,3}$ and $g_{k,j_1+2,3}$  extend oppositely on the real axis. Suppose $g_{k,j_1+1,3}$ extends toward $t_{k,j_1,3}$. $g_{k,j_1+2,3}$ maybe ends at zero  $z_{k,j_1,3}$. $g_{k,j_1+2,3}$ maybe ends at the other zero of (2.2).

If $g_{k,j_1,3}$  does not intersect with the additional root loci at real breakaway points on the real axis.  Both $g_{k,j_1,3}$ and $g_{k,j_1+1,3}$ extend towards each other, so $g_{k,j_1+1,3}$ intersects with $g_{k,j_1,3}$ at a real breakaway point $q_{k,2j_1-2,3}$ within $L_{k,j_1,3}$. At point $q_{k,2j_1-2,3}$, $g_{k,j_1,3}$ and $g_{k,j_1+1,3}$ leaves the real axis and enters  $\mathbb{C}^{*}$.

The following situation may occur. $g_{k,j_1,3}$ intersects with the additional root loci at real breakaway points on the real axis. The additional root loci do not contain $g_{k,j_1+1,3}$. After $g_{k,j_1,3}$ intersects with the additional root loci on the real axis, both $g_{k,j_1,3}$ and $g_{k,j_1+1,3}$ extend towards each other, so $g_{k,j_1+1,3}$ intersects with $g_{k,j_1,3}$ at a real breakaway point $q_{k,2j_1-2,3}$ within $L_{k,j_1,3}$. At point $q_{k,2j_1-2,3}$, $g_{k,j_1,3}$ and $g_{k,j_1+1,3}$ leaves the real axis and enters  $\mathbb{C}^{*}$.

Due to symmetry about the real axis, $g_{k,j_1,3}$ and $g_{k,j_1+1,3}$ which have left the real axis must intersect again at a real breakaway point $q_{k,2j_1-3,3}$  in $L_{k,j_1-1,3}$. $g_{k,j_1,3}$ and $g_{k,j_1+1,3}$ extend in  $\mathbb{C}^{*}$ from $q_{k,2j_1-2,3}$ to $q_{k,2j_1-3,3}$. $j_1=3,\dots,k-2$.

When the root loci intersecting in $L_{k,j_2,3}$ originates from a real zero $t_{k,j_2,3}$. $j_2>k-2$. The root locus $g_{k,j_2,3}$ emitted from $t_{k,j_2,3}$ leaves interval $L_{k,j_2,3}$. However, if no other root locus intersects in $L_{k,j_2,3}$, then $g_{k,j_2,3}$ must end at $z_{k,j_2,3}$ and remain on the real axis. There must be a root locus $g_{k,j_2+1,3}$ intersecting in the interval $L_{k,j_2,3}$. If $g_{k,j_2+1,3}$ comes from a real zero $t_{k,j_2+1,3}$ in another interval $L_{k,j_2+1,3}$. If such a process arises indefinitely, the non-standard linked root loci cannot end and must continue to extend downward.

The breakaway points of the root loci are the intersection points of the root loci. By Theorem 1.28, the number of finite breakaway points of (2.2) is finite. The number of the intersection points of the root loci of (2.2) is finite. The number of the intersection root loci is finite. Therefore, the intersection of root loci of (2.2) must end. There must exist an intersection of root loci emitted from non-real zeros on the real axis, and root loci emitted from non-real zeros intersect in an interval between a zero and a pole. Therefore, such linked root loci cannot continue infinitely.

If the root locus emitted from a non-real zero $s_{k,k+1,3}$ intersects the interval $L_{k,k,3}$, the second kind of non-standard linked root loci end and cannot extend further downward. The second kind of non-standard linked root loci also end at the interval where the root loci emitted from a non-real zero of $p(s)$ intersect. Conversely, if no root locus emitted from a non-real zero intersects within the interval, the second kind of non-standard linked root loci cannot end either.

Therefore, based on the results we obtained earlier, we have thus proved that there must exist a pair of non-real zeros $s_{k,k+1,3}$ and $\overline{s_{k,k+1,3}}$. The root loci $g_{k,k+1,3}$ and $\overline{g_{k,k+1,3}}$ emitted from $s_{k,k+1,3}$ and $\overline{s_{k,k+1,3}}$ intersect within a certain interval $L_{k,k,3}$.
\end{proof}

\begin{lemma}
When a root locus $g_{k,1,3}$ emitted from a real zero of $p(s)$ leaves the real axis and enters $\mathbb{C}^{*}$, then, there must exist a pair of non-real zeros and $k-1$ other real zeros.  At least one of the $k$ cases must occur.   These root loci $g_{k,k+1,3}$, $\overline{g_{k,k+1,3}}$  and $g_{k,j_1,3}$  (for $1\le k<n$, $j_1=1,\cdots,k$, $0<2j_1-2<2k-2$, $0<2j_1-1<2k-1$) intersect at real breakaway points $Q_{k,3}$, $q_{k,2j_1-1}$, $q_{k,2j_1-2}$ on the real axis. These real breakaway points connect $g_{k,1,3}$, $g_{k,j_1,3}$, $g_{k,k+1,3}$, and $\overline{g_{k,k+1,3}}$ together, forming a second kind of non-standard linked root locus. The terminus of this structure consists of $g_{k,k+1,3}$ and $\overline{g_{k,k+1,3}}$.
\end{lemma}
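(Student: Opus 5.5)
The plan follows the template already used for the standard linked root loci in Lemma 2.17 and for the first kind of non-standard linked root loci in Lemma 2.24. The new ingredient is the second type of intersection with the additional root loci (Definition 2.28). In that type, the locus emitted from the real zero $t_{k,j_1,3}$ meets additional root loci before it meets $g_{k,j_1+1,3}$.

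\textbf{Step 1: the terminus.} By Lemma 2.32, the chain starting from $g_{k,1,3}$ terminates in a pair of root loci $g_{k,k+1,3}$ and $\overline{g_{k,k+1,3}}$ from the non-real zeros $s_{k,k+1,3}$ and $\overline{s_{k,k+1,3}}$. By the symmetry in Lemma 1.27(1), these two loci meet at a real breakaway point $q_{k,2k-1,3}$ in $L_{k,k,3}$. By Theorem 1.28 they cannot overlap, so they leave $q_{k,2k-1,3}$ in opposite directions along the real axis, and $g_{k,k+1,3}$ heads toward $t_{k,k,3}$.

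\textbf{Step 2: the descending induction.} Next I would run an induction on $j_1$ from $k$ down to $1$. In each interval $L_{k,j_1,3}$ two cases arise.
\begin{enumerate}
\item If $g_{k,j_1,3}$ meets no additional root loci, the argument of Lemma 2.17 applies verbatim.
\item If Case $j_1$ occurs, $g_{k,j_1,3}$ first meets the additional root loci, which do not contain $g_{k,j_1+1,3}$. Afterwards, since $g_{k,j_1,3}$ and $g_{k,j_1+1,3}$ extend toward each other on the real axis, they still meet at $q_{k,2j_1-2,3}$ in $L_{k,j_1,3}$.
\end{enumerate}
In either case both loci leave the real axis at $q_{k,2j_1-2,3}$. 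By symmetry they return to the real axis at $q_{k,2j_1-3,3}$ in $L_{k,j_1-1,3}$ and separate there along the axis. One of them continues toward $t_{k,j_1-1,3}$ and the other ends at $z_{k,j_1-1,3}$ or at another zero of (2.2), by Lemma 2.4. At $j_1=1$ the chain reaches $Q_{k,3}$, where $g_{k,1,3}$ and $g_{k,2,3}$ (possibly with others) meet and enter $\mathbb{C}^{*}$.

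\textbf{Step 3: conclusion.} Concatenating these pieces shows that the real breakaway points $Q_{k,3}$, $q_{k,2j_1-1,3}$ and $q_{k,2j_1-2,3}$ link $g_{k,1,3}$, every $g_{k,j_1,3}$, and the pair $g_{k,k+1,3}$, $\overline{g_{k,k+1,3}}$ into one connected structure. Since at least one Case $j_1$ occurs, Definition 2.29 identifies this structure as a second kind of non-standard linked root locus. Its terminus consists of $g_{k,k+1,3}$ and $\overline{g_{k,k+1,3}}$. Coincident breakaway points are handled as in Lemma 2.17 and do not change the argument.

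\textbf{Main obstacle.} The delicate step is Case $j_1$ of Step 2. I must check that the intersection of $g_{k,j_1,3}$ with the additional root loci does not divert it off the real axis permanently, and does not let it end at $z_{k,j_1,3}$ before it meets $g_{k,j_1+1,3}$. My first approach is Lemma 2.7 together with the finiteness of breakaway points in Theorem 1.28. Any excursion into $\mathbb{C}^{*}$ caused by additional loci must, by conjugate symmetry, return to a real breakaway point inside the same interval $L_{k,j_1,3}$. From that point the locus resumes its extension toward $g_{k,j_1+1,3}$.
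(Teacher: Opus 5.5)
Your proposal follows the paper's own proof of this lemma essentially step for step: the terminus from Lemma 2.32, the departure of $g_{k,k+1,3}$ and $\overline{g_{k,k+1,3}}$ in opposite directions from $q_{k,2k-1,3}$ by Theorem 1.28, and then the descent through $L_{k,k,3},L_{k,k-1,3},\dots$. In that descent $g_{k,j_1,3}$, whether or not it first meets additional root loci, meets $g_{k,j_1+1,3}$ at $q_{k,2j_1-2,3}$, and the pair leaves the axis and returns symmetrically at $q_{k,2j_1-3,3}$, down to $Q_{k,3}$, where the definition of the second kind of non-standard linked root loci applies. The paper settles Case $j_1$ by bare assertion and has nothing like your ``main obstacle'' argument, and that argument should not be relied on as stated: conjugate symmetry does not force an excursion into $\mathbb{C}^{*}$ to return to the real axis at all, let alone inside the same interval $L_{k,j_1,3}$ (for $p_{e2}$ the locus from the real zero $-1$ leaves the axis at $b_{e2,1}$ and ends at a non-real zero of $p_{e2}^{'}$, and in the chain itself loci leaving in $L_{k,j_1,3}$ return in $L_{k,j_1-1,3}$).
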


\begin{proof}
According to the condition of this lemma, by Lemma 2.32, at the end point of the structure, there are a pair of root loci $g_{k,k+1,3}$ and $\overline{g_{k,k+1,3}}$ are emitted from a pair of non-real zeros  $s_{k,k+1,3}$ and $\overline{s_{k,k+1,3}}$.

The non-real zeros $s_{k,k+1,3}$ and $\overline{s_{k,k+1,3}}$ are in  $\mathbb{C}^{*}$. $s_{k,k+1,3}$ and $\overline{s_{k,k+1,3}}$ emit the root locus $g_{k,k+1,3}$ and $\overline{g_{k,k+1,3}}$ in  $\mathbb{C}^{*}$ separately. $g_{k,k+1,3}$ and $\overline{g_{k,k+1,3}}$ intersect at a real breakaway point $q_{k,2k-1,3}$ within $L_{k,k,3}$. $g_{k,k+1,3}$ and $\overline{g_{k,k+1,3}}$ depart from $q_{k,2k-1,3}$. By Theorem 1.28, $g_{k,k+1,3}$ and $\overline{g_{k,k+1,3}}$ cannot extend in the same direction, and they must extend in opposite directions along the real axis. Suppose $g_{k,k+1,3}$ extends towards $t_{k,k,3}$. $\overline{g_{k,k+1,3}}$ maybe ends at  zero  $z_{k,k,3}$. $\overline{g_{k,k+1,3}}$ maybe ends at the other zero of (2.2).

If $g_{k,k,3}$  does not intersect with the additional root loci at the real breakaway point on the real axis, both $g_{k,k,3}$ and $g_{k,k+1,3}$ extend towards each other, so $g_{k,k+1,3}$ intersects with $g_{k,k,3}$ at a real breakaway point $q_{k,2k-2,3}$ within $L_{k,k,3}$.

However, the following situation may also occur. $g_{k,k,3}$  intersects with the additional root loci at real breakaway points on the real axis. The additional root loci do not contain $g_{k,k+1,3}$. After $g_{k,k,3}$ intersects with the additional root loci on the real axis, both $g_{k,k,3}$ and $g_{k,k+1,3}$ extend towards each other, so $g_{k,k+1,3}$ intersects with $g_{k,k,3}$ at a real breakaway point $q_{k,2k-2,3}$ within $L_{k,k,3}$.

At point $q_{k,2k-2,3}$, $g_{k,k,3}$ and  $g_{k,k+1,3}$ leave the real axis and enter  $\mathbb{C}^{*}$.

Due to symmetry about the real axis, $g_{k,k+1,3}$ and $g_{k,k,3}$ which have left the real axis must intersect again at a real breakaway point $q_{k,2k-3,3}$ within $L_{k,k-1,3}$. Therefore, $g_{k,k,3}$ and $g_{k,k+1,3}$ extend in $\mathbb{C}^{*}$ from $q_{k,2k-2,3}$ to $q_{k,2k-3,3}$.

$g_{k,k,3}$ and $g_{k,k+1,3}$ depart from $q_{k,2k-3,3}$. They must extend in opposite directions along the real axis. Suppose $g_{k,k,3}$ extends towards $t_{k,k-1,3}$. $g_{k,k+1}$ maybe ends at  zero  $z_{k,k-1,3}$. $g_{k,k+1,3}$ maybe ends at the other zero of (2.2).

If $g_{k,k-1,3}$  does not intersect with the additional root loci at the real breakaway point on the real axis, both $g_{k,k-1,3}$ and $g_{k,k,3}$ extend towards each other, so $g_{k,k,3}$ intersects with $g_{k,k-1,3}$ at a real breakaway point $q_{k,2k-4,3}$ within $L_{k,k-1,3}$.

However, the following situation may also occur. $g_{k,k-1,3}$  intersects with the additional root loci at real breakaway points on the real axis. The additional root loci do not contain $g_{k,k,3}$. After $g_{k,k-1,3}$ intersects with the additional root loci on the real axis, both $g_{k,k-1}$ and $g_{k,k,3}$ extend towards each other, so $g_{k,k-1,3}$ intersects with $g_{k,k,3}$ at a real breakaway point $q_{k,2k-4,3}$ within $L_{k,k-1,3}$.

At point $q_{k,2k-4,3}$, $g_{k,k-1}$ and  $g_{k,k}$ leave the real axis and enter  $\mathbb{C}^{*}$.

Due to symmetry about the real axis, $g_{k,k,3}$ and $g_{k,k-1,3}$ which have left the real axis must intersect again at a real breakaway point $q_{k,2k-5,3}$ within $L_{k,k-2,3}$. Therefore, $g_{k,k,3}$ and $g_{k,k-1k,3}$ extend in $\mathbb{C}^{*}$ from $q_{k,2k-4,3}$ to $q_{k,2k-5,3}$. Let $g_{k,k-1,3}$ extend to $t_{k,k-2,3}$. $g_{k,k,3}$ maybe ends at $z_{k,k-2,3}$. $g_{k,k,3}$ maybe ends at the other zero of (2.2).  $g_{k,k-1,3}$ intersects with $g_{k,k-2,3}$.

When such a situation appears, starting from the root locus $g_{k,1,3}$ emitted from the real zero $t_{k,1,3}$ of $p(s)$, the second kind of non-standard linked root loci terminate and do not extend further. The terminus of the second kind of non-standard linked root loci is composed of the root loci $g_{k,k+1,3}$ and $\overline{g_{k,k+1,3}}$.
The root loci $g_{k,1,3}$, $g_{k,2,3}$, $\cdots$, $g_{k,k+1,3}$, and $\overline{g_{k,k+1,3}}$  and the additional root loci form the second kind of  non-standard linked root loci.

According to the previously proved result of the second kind of non-standard linked root loci, the root locus $g_{k,1,3}$ at the beginning of the second kind of non-standard linked root loci and the root loci $g_{k,k+1,3}$ and $\overline{g_{k,k+1,3}}$ at the end of the second kind of non-standard linked root loci, by the root loci $g_{k,j_1,3}$ of real zeros in the middle, must be connected in the second kind of non-standard linked root loci. The beginning of the second kind of non-standard linked root loci is a root locus $g_{k,1,3}$ emitted from a real zero $t_{k,1,3}$, and $g_{k,1,3}$ leaves the real axis and enters  $\mathbb{C}^{*}$. The end of the second kind of non-standard linked root loci must be a pair of root loci $g_{k,k+1,3}$ and $\overline{g_{k,k+1,3}}$. The root loci $g_{k,k+1,3}$ and $\overline{g_{k,k+1,3}}$ intersect on the real axis, and they are emitted from the non-real zeros $s_{k,k+1,3}$ and $\overline{s_{k,k+1,3}}$.

In Lemma 2.33, there are $k+2$ main root loci in the second kind of non-standard linked root loci, and $k$ intervals between zeros and poles of (2.2).

The same as the case in Lemma 2.17, the real breakaway points may coincide. This overlapping situation has no impact on the proof or the results of this lemma. Since we are unable to provide the situation of "the additional root loci", the exact situation of the real breakaway points overlapping cannot be accurately given either. We only need to know that such a situation exists.
\end{proof}

To make it clear for us to explain the concept of two main lines of the second kind of non-standard linked root loci below, it starts from the terminal root locus and follows all the way back to the beginning root locus.

The root loci $g_{k,k+1,3}$ and $\overline{g_{k,k+1,3}}$ emitted from the non-real zero $s_{k,k+1,3}$ and $\overline{s_{k,k+1,3}}$ first intersect at the real breakaway point $q_{k,2k-1,3}$ in $L_{k,k,3}$. $g_{k,k+1,3}$ and $\overline{g_{k,k+1,3}}$ depart from $q_{k,2k-1}$. $g_{k,k+1,3}$ and $\overline{g_{k,k+1,3}}$ extend in opposite directions along the real axis. Let $g_{k,k+1,3}$ extend towards $t_{k,k,3}$. $\overline{g_{k,k+1,3}}$ may end at the real zero $z_{k,k,3}$. $\overline{g_{k,k+1,3}}$ maybe leaves the real axis and ends the other zero of (2.2).

If $g_{k,k,3}$ does not intersect with the additional root loci at real breakaway points on the real axis,   $g_{k,k+1,3}$ intersects with $g_{k,k,3}$ at a real breakaway point $q_{k,2k-2,3}$ in $L_{k,k,3}$. At point $q_{k,2k-2,3}$, $g_{k,k,3}$ and $g_{k,k+1,3}$ leave the real axis and enter $\mathbb{C}^{*}$.

However, the following situation may also occur. $g_{k,k,3}$  intersects with the additional root loci at real breakaway points on the real axis.  The additional root loci do not contain $g_{k,k+1,3}$.

In this situation, there is the second type of intersection with the additional root loci.  After $g_{k,k,3}$ intersects with the additional root loci on the real axis,  $g_{k,k+1,3}$ intersects with $g_{k,k,3}$ at a real breakaway point $q_{k,2k-2,3}$ in $L_{k,k,3}$. At point $q_{k,2k-2,3}$, $g_{k,k,3}$ and $g_{k,k+1,3}$ leave the real axis and enter $\mathbb{C}^{*}$.

Due to symmetry about the real axis, $g_{k,k,3}$ and $g_{k,k+1,3}$ which have left the real axis must intersect again at a real breakaway point $q_{k,2k-3,3}$ in $L_{k,k-1,3}$. $g_{k,k,3}$ and $g_{k,k+1,3}$ extend in  $\mathbb{C}^{*}$ from $q_{k,2k-2,3}$ to $q_{k,2k-3,3}$.

$g_{k,k,3}$ and $g_{k,k+1,3}$ depart from $q_{k,2k-3,3}$.  $g_{k,k,3}$ and $g_{k,k+1,3}$  must extend in opposite directions along the real axis. Let  $g_{k,k,3}$  extend towards $t_{k,k-1,3}$. $g_{k,k+1,3}$ may end at the real zero $z_{k,k-1,3}$. $g_{k,k+1,3}$ maybe leaves the real axis and ends the other zero of (2.2).

If $g_{k,k-1,3}$ does not intersect with the additional root loci, $g_{k,k,3}$ intersects with $g_{k,k-1,3}$ at a real breakaway point $q_{k,2k-4,3}$ in $L_{k,k-1,3}$. At point $q_{k,2k-4,3}$, $g_{k,k,3}$ and $g_{k,k-1,3}$ leave the real axis and enter $\mathbb{C}^{*}$.

However, the following situation may also occur. $g_{k,k-1,3}$  intersects with the additional root loci at real breakaway points on the real axis.  The additional root loci do not contain $g_{k,k,3}$. In this situation, there is the second type of intersection with the additional root loci.  After $g_{k,k-1,3}$ intersects with the additional root loci on the real axis,  $g_{k,k,3}$ intersects with $g_{k,k-1,3}$ at a real breakaway point $q_{k,2k-4,3}$ in $L_{k,k-1,3}$. At point $q_{k,2k-4,3}$, $g_{k,k-1,3}$ and $g_{k,k,3}$ leave the real axis and enter $\mathbb{C}^{*}$.

Due to symmetry about the real axis, $g_{k,k-1,3}$ and $g_{k,k,3}$ which have left the real axis must intersect again at a real breakaway point $q_{k,2k-5,3}$ in $L_{k,k-2,3}$. $g_{k,k-1,3}$ and $g_{k,k,3}$ extend in  $\mathbb{C}^{*}$ from $q_{k,2k-4,3}$ to $q_{k,2k-5,3}$.

$g_{k,j_1+1,3}$ and $g_{k,j_1+2,3}$ intersect at the real breakaway point $q_{k,2j_1,3}$ in $L_{k,j_1+1,3}$. From $q_{k,2j_1,3}$, $g_{k,j_1+1,3}$ and $g_{k,j_1+2,3}$ leave the real axis and enter   $\mathbb{C}^{*}$.  $g_{k,j_1+1,3}$ and $g_{k,j_1+2,3}$ intersect at  the real breakaway point $q_{k,2j_1-1,3}$  in $L_{k,j_1,3}$. $g_{k,j_1+1,3}$ and $g_{k,j_1+2,3}$ extend in  $\mathbb{C}^{*}$ from $q_{k,2j_1,3}$ to $q_{k,2j_1-1,3}$. $g_{k,j_1+1}$ and $g_{k,j_1+2,3}$ depart from $q_{k,2j_1-1,3}$. $g_{k,j_1+1}$ and $g_{k,j_1+2}$  must extend in opposite directions along the real axis. Suppose $g_{k,j_1+1,3}$ extends towards $t_{k,j_1,3}$.
$g_{k,j_1+2,3}$ maybe ends at zero  $z_{k,j_1,3}$. $g_{k,j_1+2,3}$ maybe ends at the other zero of (2.2).

If $g_{k,j_1,3}$  does not intersect with the additional root loci at real breakaway points on the real axis.  $g_{k,j_1+1,3}$ intersects with $g_{k,j_1,3}$ at a real breakaway point $q_{k,2j_1-2,3}$ within $L_{k,j_1,3}$. At point $q_{k,2j_1-2,3}$, $g_{k,j_1,3}$ and $g_{k,j_1+1,3}$ leaves the real axis and enters  $\mathbb{C}^{*}$.

If $g_{k,j_1,3}$  intersects with the additional root loci at real breakaway points on the real axis. The additional root loci do not contain $g_{k,j_1+1,3}$. After $g_{k,j_1,3}$ intersects with the additional root loci on the real axis, $g_{k,j_1+1,3}$ intersects with $g_{k,j_1,3}$ at a real breakaway point $q_{k,2j_1-2,3}$ within $L_{k,j_1,3}$. At point $q_{k,2j_1-2,3}$, $g_{k,j_1,3}$ and $g_{k,j_1+1,3}$ leaves the real axis and enters  $\mathbb{C}^{*}$.

Due to symmetry about the real axis, the root loci $g_{k,j_1,3}$ and $g_{k,j_1+1,3}$ intersect at the real breakaway point $q_{k,2j_1-3,3}$  in $L_{k,j_1-1,3}$. $g_{k,j_1,3}$ and $g_{k,j_1+1}$ extend in $\mathbb{C}^{*}$ from $q_{k,2j_1-2,3}$ to $q_{k,2j_1-3,3}$. $j_1=1,2,\cdots,k-2$.

$g_{k,1,3}$ and the root locus $g_{k,2,3}$ intersect, along with other possible root loci, at $Q_{k,3}$  in $L_{k,1,3}$.

The second kind of non-standard linked root loci consist of two main lines with multiple branches. Except for the terminal root loci emitted from the non-real poles of (2.2), the rest main root loci  originate from the real poles of (2.2).

In $\mathbb{C} ^{*}$, the section of the root locus $g_{k,k+1,3}$ from the non-real zero $s_{k,k+1,3}$ to the real breakaway point $q_{k,2k-1,3}$ constitutes the terminus part of two main lines. In $\mathbb{C} ^{*}$, the section of the root locus $\overline{g_{k,k+1,3}}$ from the non-real zero $\overline{s_{k,k+1,3}}$ to $q_{k,2k-1,3}$ constitutes the terminus part of two main lines.

The interval on the real axis from $q_{k,2k-1,3}$ to the real breakaway point $q_{k,2k-2,3}$ constitutes a part of two main lines. In $\mathbb{C} ^{*}$, the section of the root locus $g_{k,k,3}$ from $q_{k,2k-2,3}$ to the real breakaway point $q_{k,2k-3,3}$ constitutes a part of two main lines. In $\mathbb{C} ^{*}$, the section of the root locus $g_{k,k+1,3}$ from $q_{k,2k-2,3}$ to $q_{k,2k-3,3}$ constitutes a part of two main lines.

The interval on the real axis from $q_{k,2k-3,3}$ to the real breakaway point $q_{k,2k-4,3}$ constitutes a part of two main lines. In $\mathbb{C} ^{*}$, the section of the root locus $g_{k,k-1,3}$ from $q_{k,2k-4,3}$ to the real breakaway point $q_{k,2k-5,3}$ constitutes a part of two main lines. In $\mathbb{C} ^{*}$, the section of the root locus $g_{k,k,3}$ from $q_{k,2k-4}$ to $q_{k,2k-5,3}$ constitutes a part of two main lines.

In $\mathbb{C} ^{*}$, the section of the root locus $g_{k,j_1+1,3}$ from the real breakaway point $q_{k,2j_1,3}$ to the real breakaway point $q_{k,2j_1-1,3}$ constitutes a part of two main lines. In $\mathbb{C} ^{*}$, the section of the root locus $g_{k,j_1+2,3}$ from the real breakaway point $q_{k,2j_1,3}$ to the real breakaway point $q_{k,2j_1-1,3}$ constitutes a part of two main lines. The interval on the real axis from $q_{k,2j_1-1,3}$ to $q_{k,2j_1-2,3}$ constitutes a part of two main lines.  In $\mathbb{C} ^{*}$, the section of the root locus $g_{k,j_1,3}$ from $q_{k,2j_1-2,3}$ to the real breakaway point $q_{k,2j_1-3,3}$ constitutes a part of two main lines. In $\mathbb{C} ^{*}$, the section of the root locus $g_{k,j_1+1,3}$ from $q_{k,2j_1-2,3}$ to $q_{k,2j_1-3,3}$ constitutes a part of two main lines. $j_1=1,2,\cdots,k-2$.

In $\mathbb{C} ^{*}$, the section of the root locus $g_{k,2,3}$ from $q_{k,2,3}$ to the real breakaway point $q_{k,1,3}$ constitutes a part of two main lines. In $\mathbb{C} ^{*}$, the section of the root locus $g_{k,3,3}$ from $q_{k,2,3}$ to $q_{k,1,3}$ constitutes a part of two main lines. The interval on the real axis from $q_{k,1,3}$ to the real breakaway point $Q_{k,3}$ constitutes a part of two main lines.

All the parts given previously, when connected together, form two main lines of the second kind of non-standard linked root loci. In the second kind of non-standard linked root loci, there are only two main lines. Part of the two main lines is distributed on the real axis. The other part of two main lines distributed in $\mathbb{C}^{*}$, and they are symmetric about the real axis.

The section of $\overline{g_{k,k+1,3}}$ from $q_{k,2k-1,3}$ to $z_{k,k,3}$ constitutes a branch. The section of $g_{k,k,3}$ from $t_{k,k,3}$ to $q_{k,2k-2,3}$ constitutes a branch. And so on $\cdots$. In the second non-standard linked root loci, there are multiple branches.

In Lemma 2.31, in $\mathbb{C} ^{*}$, the section of the root locus $g_{2,3,3}$ from the non-real zero $s_{2,3,3}$ to the real breakaway point $q_{2,3,3}$ constitutes the terminus part of two main lines. In $\mathbb{C} ^{*}$, the section of the root locus $\overline{g_{2,3,3}}$ from the non-real zero $\overline{s_{2,3,3}}$ to $q_{2,3,3}$ constitutes the terminus part of two main lines.

The interval on the real axis from $q_{2,3,3}$ to the real breakaway point $q_{2,2,3}$ constitutes a part of two main lines. In $\mathbb{C} ^{*}$, the section of the root locus $g_{2,2,3}$ from $q_{2,2,3}$ to the real breakaway point $q_{2,1,3}$ constitutes a part of two main lines. In $\mathbb{C} ^{*}$, the section of the root locus $g_{2,3,3}$ from $q_{2,2,3}$ to $q_{2,1,3}$ constitutes a part of two main lines.

The interval on the real axis from $q_{2,1,3}$ to the real breakaway point $Q_{2,3}$ constitutes a part of two main lines.

All the parts given previously, when connected together, form two main lines of the second kind of non-standard linked root loci. In the second kind of non-standard linked root loci, there are only two main lines. Part of the two main lines is distributed on the real axis. The other part of two main lines distributed in $\mathbb{C}^{*}$, and they are symmetric about the real axis.

The section of $\overline{g_{2,3,3}}$ from $q_{2,3,3}$ to $z_{2,2,3}$ constitutes a branch. The section of $g_{2,2,3}$ from $t_{2,2,3}$ to $q_{2,2,3}$ constitutes a branch. The section of $g_{2,3,3}$ from $q_{2,1,3}$ to $z_{2,1,3}$ constitutes a branch. In the second kind of non-standard linked root loci, there are four branches.

Therefore, based on the above explanations of the two main lines of the second kind of non-standard linked root loci, we can define the two main lines of the second kind of non-standard linked root loci of (2.2) as follows.

\begin{definition}
The non-real zeros $s_{k,k+1,3}$ and $\overline{s_{k,k+1,3}}$ emit root loci $g_{k,k+1,3}$ and $\overline{g_{k,k+1,3}}$. In the second kind of non-standard linked root loci, beginning at the non-real zeros $s_{k,k+1,3}$ and $\overline{s_{k,k+1,3}}$, the two lines which $g_{k,k+1,3}$ and $\overline{g_{k,k+1,3}}$ can extend all the way to $Q_{k,3}$ are the two main lines of the second kind of non-standard linked root loci of (2.2).
\end{definition}

\begin{lemma}
In the second kind of non-standard linked root loci, a root locus $g_{k,k+1,3}$ or $\overline{g_{k,k+1,3}}$ originating from a non-real zero $s_{k,k+1,3}$ or $\overline{s_{k,k+1,3}}$ can be distributed on two main lines of the second kind of non-standard linked root loci. The root locus $g_{k,k+1,3}$ or $\overline{g_{k,k+1,3}}$  intersects at the real breakaway point $Q_{k,3}$. The root locus $g_{k,k+1,3}$ or $\overline{g_{k,k+1,3}}$  leaves from $Q_{k,3}$ then enters $\mathbb{C}^{*}$.
\end{lemma}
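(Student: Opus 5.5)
I would prove this lemma by following the template of Lemma 2.18 and Lemma 2.26, which state the same conclusion for the standard and first-kind non-standard linked root loci. The structure of the second kind of non-standard linked root loci was already described, branch by branch, in the paragraphs preceding Definition 2.34. First I would recall that the two main lines run from $s_{k,k+1,3}$ and $\overline{s_{k,k+1,3}}$ through the real breakaway points $q_{k,2k-1,3}, q_{k,2k-2,3},\dots,q_{k,1,3}$ to $Q_{k,3}$. I would also recall that on each interval $L_{k,j_1,3}$ the main lines lie on the real axis between $q_{k,2j_1-1,3}$ and $q_{k,2j_1-2,3}$. Unlike the first kind, the additional root loci now attach to $g_{k,j_1,3}$ on the branch toward $t_{k,j_1,3}$ (the second type of intersection, Definition 2.28), so they do not disturb the segment of the main line itself.

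Second, I would invoke the ``common root loci'' property of Definition 1.18. At every breakaway point on the main lines, the intersecting root loci share the same unit complex value, since by Theorem 1.15 loci with distinct unit values cannot meet. Hence any incoming locus may be continued along any outgoing branch of the same degree. I would then exhibit one explicit admissible assignment of endpoints, as in Lemma 2.18: $\overline{g_{k,k+1,3}}$ ends at $z_{k,k,3}$, $g_{k,k,3}$ ends at $z_{k,k-1,3}$, $g_{k,l_1,3}$ ends at $z_{k,l_1-1,3}$ for $l_1=3,\dots,k-1$, and $g_{k,2,3}$ ends at $z_{k,1,3}$. With this assignment the locus $g_{k,k+1,3}$ is forced to traverse the whole pair of main lines. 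Tracing it step by step, it goes from $s_{k,k+1,3}$ to $q_{k,2k-1,3}$, along the real axis to $q_{k,2k-2,3}$, into $\mathbb{C}^{*}$ to $q_{k,2k-3,3}$, and so on down to $q_{k,1,3}$. From there it runs along the real axis to $Q_{k,3}$ and leaves into $\mathbb{C}^{*}$. The mirror assignment yields the same conclusion for $\overline{g_{k,k+1,3}}$, using the symmetry from Lemma 1.27(1).

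The main obstacle is the interval where a second-type intersection occurs. There $g_{k,j_1,3}$ meets additional root loci before meeting $g_{k,j_1+1,3}$ at $q_{k,2j_1-2,3}$, and one must check that these additional intersections do not interrupt the traced path. I would handle this by noting that the additional root loci meet $g_{k,j_1,3}$ on the segment between $t_{k,j_1,3}$ and $q_{k,2j_1-2,3}$, which is a branch and not part of the main lines. The path of $g_{k,k+1,3}$ from $q_{k,2j_1-1,3}$ to $q_{k,2j_1-2,3}$ is therefore unaffected, and the additional loci can be assigned to end at their own zeros. Coincident breakaway points, as in Lemma 2.17, would be treated as degenerate cases of the same tracing.
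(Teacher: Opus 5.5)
Your proposal is essentially the paper's own proof. The paper likewise invokes the ``common root loci'' property and fixes the same endpoint assignment ($\overline{g_{k,k+1,3}}$ to $z_{k,k,3}$, $g_{k,l_1,3}$ to $z_{k,l_1-1,3}$, $g_{k,2,3}$ to $z_{k,1,3}$); it traces $g_{k,k+1,3}$ through $q_{k,2k-1,3},\dots,q_{k,1,3}$ to $Q_{k,3}$ and into $\mathbb{C}^{*}$, and then repeats the argument with the roles of $g_{k,k+1,3}$ and $\overline{g_{k,k+1,3}}$ swapped at $q_{k,2k-1,3}$. Your placement of the second-type intersections on the branch from $t_{k,j_1,3}$ to $q_{k,2j_1-2,3}$, off the main lines, is exactly the justification the paper gives immediately after the lemma for why these main lines coincide with those of the standard linked root loci.
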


\begin{proof}
In the explanation of two main lines, for the root loci that eventually reach the real zeros in (2.2) in the second kind of non-standard linked root loci, one of their distribution as follows. The root locus $\overline{g_{k,k+1,3}}$ ends at $z_{k,k,3}$. The root locus $g_{k,l,3}$ ends at $z_{k,l-2,3}$. $l=3,\cdots, k+1$. Based on this distributed way, neither the root locus $g_{k,k+1,3}$ nor $\overline{g_{k,k+1,3}}$ directly intersect at $Q_{k,3}$.

The intersecting root loci are referred to as "common root loci". They possess "common" properties. Beginning at the non-real zeros $s_{k,k+1,3}$ and $\overline{s_{k,k+1,3}}$, the two main lines of the second kind of non-standard linked root loci both extend towards the breakaway point $Q_{k,3}$. Therefore, the root locus emitted from the non-real zero $s_{k,k+1,3}$ or $\overline{s_{k,k+1,3}}$ can be distributed on two main lines of the second kind of non-standard linked root loci. That is, among the root loci intersecting at $Q_{k,3}$, one is $g_{k,k+1,3}$ or $\overline{g_{k,k+1,3}}$.

The root locus $g_{k,k+1,3}$ from a non-real zero $s_{k,k+1,3}$, distributed on two main lines of the second kind of non-standard linked root loci in the following way, and intersect at $Q_{k,3}$.

The root locus $\overline{g_{k,k+1,3}}$ ends at $z_{k,k,3}$. The root locus $g_{k,k,3}$ ends at $z_{k,k-1,3}$. The root locus $g_{k,l_1,3}$  ends at $z_{k,l_1-1,3}$. $l_1=3,\cdots, k-1$. $g_{k,3,3}$ ends at $z_{k,2,3}$. $g_{k,2,3}$ ends at $z_{k,1,3}$. Under this case, $g_{k,k+1,3}$ must intersect at $Q_{k,3}$.

The root locus $g_{k,k+1,3}$ originates from the non-real zero $s_{k,k+1,3}$ and extends to $q_{k,2k-1,3}$. At $q_{k,2k-1,3}$, $g_{k,k+1,3}$ enters the real axis. On the real axis, $g_{k,k+1,3}$ extends from $q_{k,2k-1,3}$ to $q_{k,2k-2,3}$. Then $g_{k,k+1,3}$ enters $\mathbb{C}^{*}$ from $q_{k,2k-2,3}$ and extends to $q_{k,2k-3,3}$ in $\mathbb{C}^{*}$. And so on $\cdots$. At $q_{k,3,3}$, $g_{k,k+1,3}$ re-enters the real axis. On the real axis, $g_{k,k+1,3}$ extends from $q_{k,3,3}$ to $q_{k,2,3}$. Then $g_{k,k+1,3}$ enters $\mathbb{C}^{*}$ from $q_{k,2,3}$ and extends to $q_{k,1,3}$ in $\mathbb{C}^{*}$. At $q_{k,1,3}$, $g_{k,k+1,3}$ re-enters the real axis. On the real axis, $g_{k,k+1,3}$ extends from $q_{k,1,3}$ to $Q_{k,3}$. Then $g_{k,k+1,3}$ enters $\mathbb{C}^{*}$ from $Q_{k,3}$.

The root locus $g_{k,k+1,3}$ ends at $z_{k,k,3}$. The root locus $g_{k,k,3}$ ends at $z_{k,k-1,3}$. The root locus $g_{k,l_1,3}$ ends at $z_{k,l_1-1,3}$. $l_1=3,\cdots, k-1$. $g_{k,3,3}$ ends at $z_{k,2,3}$. $g_{k,2,3}$ ends at $z_{k,1,3}$. Under this case, $\overline{g_{k,k+1,3}}$ must intersect at $Q_{k,3}$.

The root locus $\overline{g_{k,k+1,3}}$ originates from the non-real zero $\overline{s_{k,k+1,3}}$ and extends to $q_{k,2k-1,3}$. At $q_{k,2k-1,3}$, $\overline{g_{k,k+1,3}}$ enters the real axis. On the real axis, $\overline{g_{k,k+1,3}}$ extends from $q_{k,2k-1,3}$ to $q_{k,2k-2,3}$. Then $\overline{g_{k,k+1,3}}$ enters $\mathbb{C}^{*}$ from $q_{k,2k-2,3}$ and extends to $q_{k,2k-3,3}$ in $\mathbb{C}^{*}$. And so on $\cdots$. At $q_{k,3,3}$, $\overline{g_{k,k+1,3}}$ re-enters the real axis. On the real axis, $\overline{g_{k,k+1,3}}$ extends from $q_{k,3,3}$ to $q_{k,2,3}$. Then $\overline{g_{k,k+1,3}}$ enters $\mathbb{C}^{*}$ from $q_{k,2,3}$ and extends to $q_{k,1,3}$ in $\mathbb{C}^{*}$. At $q_{k,1,3}$, $\overline{g_{k,k+1,3}}$ re-enters the real axis. On the real axis, $\overline{g_{k,k+1,3}}$ extends from $q_{k,1,3}$ to $Q_{k,3}$. Then $\overline{g_{k,k+1,3}}$ enters $\mathbb{C}^{*}$ from $Q_{k,3}$.

Therefore, in the second kind of non-standard linked root loci,  a root locus $g_{k,k+1,3}$ or $\overline{g_{k,k+1,3}}$ originating from a non-real zero $s_{k,k+1,3}$ or $\overline{s_{k,k+1,3}}$ can be distributed on two main lines of the second kind of non-standard linked root loci. The root locus $g_{k,k+1,3}$ or $\overline{g_{k,k+1,3}}$ intersects at the real breakaway point $Q_{k,3}$. The root locus $g_{k,k+1,3}$ or $\overline{g_{k,k+1,3}}$ leaves from $Q_{k,3}$ then enters $\mathbb{C}^{*}$.
\end{proof}
If no root loci intersect in the interval $L_{k,1,3}$, $g_{k,1,3}$ can only remain on the real axis.
The root locus $g_{k,1,3}$ of the real zero of $p(s)$ leaves the real axis and enters $\mathbb{C}^{*}$, which is only a result. The reason for this result is that $g_{k,k+1,3}$ or $\overline{g_{k,k+1,3}}$ intersects on the real axis, generating the second kind of non-standard linked root loci. The root locus $g_{k,k+1,3}$ or $\overline{g_{k,k+1,3}}$ of the non-real zeros $s_{k,k+1,3}$ or $\overline{s_{k,k+1,3}}$ intersects on the real axis, generating the second kind of non-standard linked root loci. This is the fundamental reason.

On the two main lines of each of the second kind of non-standard linked root loci, the direction of extension is from the non-real zeros $s_{k,k+1,3}$ or $\overline{s_{k,k+1,3}}$ originating, all the way extending to the real breakaway point $Q_{k,3}$. The two main lines can be the root loci $g_{k,k+1,3}$ or $\overline{g_{k,k+1,3}}$ of the non-real zeros $s_{k,k+1,3}$ or $\overline{s_{k,k+1,3}}$. That is, $g_{k,k+1,3}$ or $\overline{g_{k,k+1,3}}$ successively intersects with the root loci of real zeros on the real axis, and lets the two main lines as its root loci, extends all the way to the breakaway point $Q_{k,3}$.

The first kind of non-standard linked root loci must satisfy four conditions.

The first condition: Two root loci $g_{k,j_1+1,2}$ and $g_{k,j_1+2,2}$ emitted from the real zeros ($t_{k,j_1+1,2}$, $t_{k,j_1+2,2}$). Two root loci $g_{k,k+1,2}$ and $\overline{g_{k,k+1,2}}$ emitted from the non-real zeros ($s_{k,k+1,2}$, $\overline{s_{k,k+1,2}}$). The real zero $t_{k,j_1,2}$ emits the root locus $g_{k,j_1,2}$. The real zero $t_{k,k,2}$ emits the root locus $g_{k,k,2}$.

The second condition: The two root loci $g_{k,j_1+1,2}$ and $g_{k,j_1+2,2}$ must intersect at the real breakaway point $q_{k,2j_1-1,2}$ on the real axis. From $q_{k,2j_1-1,2}$, the two root loci $g_{k,j_1+1,2}$ and $g_{k,j_1+2,2}$ extend further along the real axis in opposite directions. The two root loci $g_{k,k+1,2}$ and $\overline{g_{k,k+1,2}}$ must intersect at the real breakaway point $q_{k,2k-1,2}$ on the real axis. From $q_{k,2k-1,2}$, the two root loci $g_{k,k+1,2}$ and $\overline{g_{k,k+1,2}}$ extend further along the real axis in opposite directions. The preceding sentence states a condition. The following sentence describes a property shared by all pairs of root loci intersecting on the real axis, rather than constituting an additional condition.

The third condition: The root locus $g_{k,j_1+1,2}$ further intersects with the additional root loci on the real axis. The root locus $g_{k,k+1,2}$ further intersects with the additional root loci on the real axis.

The fourth condition: After $g_{k,j_1+1,2}$ intersects with the additional root loci on the real axis, it then intersects with $g_{k,j_1,2}$ on the real axis at the real breakaway point $q_{k,2j_1-2,2}$. After $g_{k,k+1,2}$ intersects with the additional root loci on the real axis, it then intersects with $g_{k,k,2}$ on the real axis at the real breakaway point $q_{k,2k-2,2}$.

The root loci satisfying these four conditions constitute the first kind of non-standard linked root loci.

The second kind of non-standard linked root loci must satisfy three conditions.

The first condition: Two root loci $g_{k,j_1+1,3}$ and $g_{k,j_1+2,3}$ emitted from the real zeros ($t_{k,j_1+1,3}$, $t_{k,j_1+2,3}$). Two root loci $g_{k,k+1,3}$ and $\overline{g_{k,k+1,3}}$ emitted from the non-real zeros ($s_{k,k+1,3}$ or $\overline{s_{k,k+1,3}}$). The real zero $t_{k,j_1,3}$ emits a root locus $g_{k,j_1,3}$.
The real zero $t_{k,k,3}$ emits a root locus $g_{k,k,3}$.

The second condition: The root locus $g_{k,j_1,3}$ intersects with the additional root loci on the real axis.

The third condition: After $g_{k,j_1,3}$ intersects with the additional root loci on the real axis, it then intersects with $g_{k,j_1+1,3}$ on the real axis at the real breakaway point $q_{k,2j_1-2,3}$. After $g_{k,k,3}$ intersects with the additional root loci on the real axis, it then intersects with the root locus $g_{k,k+1,3}$ on the real axis at the real breakaway point $q_{k,2k-2,3}$.

The root loci satisfying these three conditions constitute the second kind of non-standard linked root loci.

There are distinct differences between the first type of intersection with the additional root loci and the second type of intersection with the additional root loci.

The first difference: For the first type of intersection with the additional root loci, the root locus $g_{k,j_1+1,2}$ originates from one endpoint of another interval. Alternatively, for the first type, the root locus $g_{k,k+1,2}$ originates from a non-real zero. For the second type of intersection with the additional root loci, the root locus $g_{k,j_{2},3}$ originates from an endpoint within its own interval. Alternatively, the root locus $g_{k,k,3}$ originates from the endpoint of the final interval on the real axis within the second kind of non-standard linked root loci.

The second difference: For the first type of intersection with additional root loci, initially $g_{k,j_1+1,2}$ intersects with $g_{k,j_1+2,2}$ at a real breakaway point $q_{k,2j_1-1,2}$. Subsequently, $g_{k,j_1+1,2}$ intersects with the additional root loci on the real axis. After this intersection, $g_{k,j_1+1,2}$ then intersects with $g_{k,j_1,2}$ on the real axis at the real breakaway point $q_{k,2j_1-2,2}$. Similarly for the first type, initially $g_{k,k+1,2}$ intersects with $\overline{g_{k,k+1,2}}$ at a real breakaway point $q_{k,2k-1,2}$. Subsequently, $g_{k,k+1,2}$ intersects with the additional root loci on the real axis. After this intersection, $g_{k,k+1,2}$ then intersects with $g_{k,k,2}$ on the real axis at the real breakaway point $q_{k,2k-2,2}$.

For the second type of intersection with additional root loci, the root locus $g_{k,j_1,3}$, emitted from an interval endpoint, first intersects with the additional root loci on the real axis. After this intersection, $g_{k,j_1,3}$ intersects with $g_{k,j_1+1,3}$ at the real breakaway point $q_{k,2j_1-2,3}$. Similarly for the second type, the root locus $g_{k,k,3}$, emitted from an interval endpoint, first intersects with the additional root loci on the real axis. After this intersection, $g_{k,k,3}$ intersects with $g_{k,k+1,3}$ at the real breakaway point $q_{k,2k-2,3}$.

The specific differences between the first and second kinds of non-standard linked root loci, as detailed above, can also be expressed more succinctly as follows:

In the first kind of non-standard linked root loci, only the first type of intersection with the additional root loci occurs; the second type of intersection with the additional root loci does not exist. In the second kind of non-standard linked root loci, only the second type of intersection with the additional root loci occurs; the first type of intersection with the additional root loci does not exist.

The two main lines of the second non-standard linked root loci are identical to the two main lines of the standard linked root loci. This outcome arises for a specific reason.

In the standard linked root loci, the real zero $t_{k,j_1,1}$ emits the root locus $g_{k,j_1,1}$. Two root loci $g_{k,j_1+1,1}$ and $g_{k,j_1+2,1}$ emitted from the real zeros ($t_{k,j_1+1,1}$, $t_{k,j_1+2,1}$). These two root loci $g_{k,j_1+1,1}$ and $g_{k,j_1+2,1}$ leave the real axis and enter $\mathbb{C}^{*}$ from the breakaway point $q_{k,2j_1,1}$. They intersect at the real breakaway point $q_{k,2j_1-1,1}$. From $q_{k,2j_1-1,1}$, the two root loci $g_{k,j_1+1}$ and $g_{k,j_1+2,1}$ extend in opposite directions. The root locus $g_{k,j_1+1,1}$ intersects with the root locus $g_{k,j_1,1}$ at the real breakaway point $q_{k,2j_1-2,1}$. From $q_{k,2j_1-2,1}$, the root loci $g_{k,j_1+1,1}$ and $g_{k,j_1,1}$ leave the real axis and enter $\mathbb{C}^{*}$. In the standard linked root loci, the real zero $t_{k,k,1}$ emits the root locus $g_{k,k,1}$. Two root loci $g_{k,k+1,1}$ and $\overline{g_{k,k+1,1}}$ emitted from the non-real zeros ($s_{k,k+1,1}$, $\overline{s_{k,k+1,1}}$). These two root loci intersect at the real breakaway point $q_{k,2k-1,1}$. From $q_{k,2k-1,1}$, the two root loci $g_{k,k+1,1}$ and $\overline{g_{k,k+1,1}}$ extend in opposite directions. The root locus $g_{k,k+1,1}$ intersects with the root locus $g_{k,k,1}$ at the real breakaway point $q_{k,2k-2,1}$. From $q_{k,2k-2,1}$, the root loci $g_{k,k+1,1}$ and $g_{k,k,1}$ leave the real axis and enter $\mathbb{C}^{*}$.

Based on the result concerning the two main lines of the standard linked root loci, two main lines of the standard linked root loci are as follows: In $\mathbb{C}^{*}$, from $q_{k,2j_1,1}$ to $q_{k,2j_1-1,1}$, they are segments of the two root loci $g_{k,j_1+1,1}$ and $g_{k,j_1+2,1}$. On the real axis, from $q_{k,2j_1-1,1}$ to $q_{k,2j_1-2,1}$, this interval is a segment of the root locus $g_{k,j_1+1,1}$. In $\mathbb{C}^{*}$, from $q_{k,2j_1-2,1}$ to $q_{k,2j_1-3,1}$, they are segments of the root loci $g_{k,j_1+1,1}$ and $g_{k,j_1,1}$. In $\mathbb{C}^{*}$, from $s_{k,k+1,1}$ or $\overline{s_{k,k+1,1}}$ to $q_{k,2k-1,1}$, they are segments of the two root loci $g_{k,k+1,1}$ or $\overline{g_{k,k+1,1}}$. On the real axis, from $q_{k,2k-1,1}$ to $q_{k,2k-2,1}$, this interval is a segment of the root locus $g_{k,k+1,1}$. In $\mathbb{C}^{*}$, from $q_{k,2k-2}$ to $q_{k,2k-3,1}$, they are segments of the root loci $g_{k,k+1,1}$ and $g_{k,k,1}$.

In the second kind of non-standard linked root loci, the real zero $t_{k,j_1,3}$ emits the root locus $g_{k,j_1,3}$. Two root loci $g_{k,j_1+1,3}$ and $g_{k,j_1+2,3}$ emitted from the real zeros ($t_{k,j_1+1,3}$, $t_{k,j_1+2,3}$). These two root loci $g_{k,j_1+1,3}$ and $g_{k,j_1+2,3}$ leave the real axis and enter $\mathbb{C}^{*}$ from the breakaway point $q_{k,2j_1,3}$. They intersect at the real breakaway point $q_{k,2j_1-1,3}$. From $q_{k,2j_1-1,3}$, the two root loci $g_{k,j_1+1,3}$ and $g_{k,j_1+2,3}$ extend in opposite directions. The root locus $g_{k,j_1+1,3}$ intersects with the root locus $g_{k,j_1,3}$ at the real breakaway point $q_{k,2j_1-2,3}$. From $q_{k,2j_1-2,3}$, the root loci $g_{k,j_1+1,3}$ and $g_{k,j_1,3}$ leave the real axis and enter $\mathbb{C}^{*}$.
In the second kind of non-standard linked root loci, the real zero $t_{k,k,3}$ emits the root locus $g_{k,k,3}$. Two root loci $g_{k,k+1,3}$ and $\overline{g_{k,k+1,3}}$ emitted from the non-real zeros ($s_{k,k+1,3}$, $\overline{s_{k,k+1,3}}$). These two root loci intersect at the real breakaway point $q_{k,2k-1,3}$. From $q_{k,2k-1,3}$, the two root loci $g_{k,k+1,3}$ and $\overline{g_{k,k+1,3}}$ extend in opposite directions. The root locus $g_{k,k+1,3}$ intersects with the root locus $g_{k,k,3}$ at the real breakaway point $q_{k,2k-2,3}$. From $q_{k,2k-2,3}$, the root loci $g_{k,k+1,3}$ and $\Gamma_{k,k,3}$ leave the real axis and enter $\mathbb{C}^{*}$.

Based on the result concerning the two main lines of the second kind of non-standard linked root loci,  two main lines of the second kind of non-standard linked root loci are as follows: In $\mathbb{C}^{*}$, from $q_{k,2j_1,3}$ to $q_{k,2j_1-1,3}$, they are segments of the two root loci $g_{k,j_1+1,3}$ and $g_{k,j_1+2,3}$. On the real axis, from $q_{k,2j_1-1,3}$ to $q_{k,2j_1-2,3}$, this interval is a segment of the root locus $g_{k,j_1+1,3}$. In $\mathbb{C}^{*}$, from $q_{k,2j_1-2,3}$ to $q_{k,2j_1-3,2}$, they are segments of the root loci $g_{k,j_1+1,3}$ and $g_{k,j_1,3}$. In $\mathbb{C}^{*}$, from $s_{k,k+1,3}$ or $\overline{s_{k,k+1,3}}$ to $q_{k,2k-1,3}$, they are segments of the two root loci $g_{k,k+1,3}$ or $\overline{g_{k,k+1,3}}$. On the real axis, from $q_{k,2k-1,3}$ to $q_{k,2k-2,3}$, this interval is a segment of the root locus $g_{k,k+1,3}$. In $\mathbb{C}^{*}$, from $q_{k,2k-2,3}$ to $q_{k,2k-3,3}$, they are segments of the root loci $g_{k,k+1,3}$ and $g_{k,k,3}$.

In the standard linked root loci, the root locus $g_{k,j_1,1}$ emitted from the real zero $t_{k,j_1,1}$ and extends to the real breakaway point $q_{k,2j_1-2,1}$. According to this direction of extension, this segment of the root locus constitutes a branch of the standard linked root loci. It cannot lie on the two main lines of the standard linked root loci.

In the second kind of non-standard linked root loci, the root locus $g_{k,j_1,3}$ emitted from the real zero $t_{k,j_1,3}$ and extends to the real breakaway point $q_{k,2j_1-2,3}$. During its extension, before it reaches $q_{k,2j_1-2,3}$, it may intersect with the additional root loci on the real axis. Although the root locus $g_{k,j_1,3}$ may intersect with the additional root loci on the real axis, the direction of its extension remains from $t_{k,j_1,3}$ to the real breakaway point $q_{k,2j_1-2,3}$. According to this direction of extension, this segment of the root locus constitutes a branch of the second kind of non-standard linked root loci. It cannot lie on the two main lines of the second kind of non-standard linked root loci.

Thus, this demonstrates that the two main lines of the second kind of non-standard linked root loci are identical to the two main lines of the standard linked root loci.

The root locus $g_{1,1,4}$ is emitted from a real zero $t_{1,1,4}$ of $p(s)$. The root loci $g_{1,2,4}$ and $\overline{g_{1,2,4}}$ are separately emitted from the non-real zero $s_{1,2,4}$ and $\overline{s_{1,2,4}}$. $g_{1,1,4}$ intersects with the additional root loci at real breakaway points on the real axis. The additional root loci do not include $g_{1,2,4}$.

The root loci of (2.2) of $2q\pi$ and $2q\pi+\pi$ degrees are symmetric about the real axis. The conjugate non-real zeros and poles of (2.2) are symmetric about the real axis. The root locus $\overline{g_{1,2,4}}$ emitted from the conjugate zero $\overline{s_{1,2,4}}$, so $g_{1,2,4}$ and $\overline{g_{1,2,4}}$ are symmetric about the real axis. After $\overline{g_{1,2,4}}$ and $g_{1,2,4}$ intersect at $q_{1,1,4}$, from the real breakaway point $q_{1,1,4}$, $\overline{g_{1,2,4}}$ and $g_{1,2,4}$ extend oppositely on the real axis. Let $g_{1,2,4}$ extend to $t_{1,1,4}$. $g_{1,2,4}$ intersects with the additional root loci at real breakaway points on the real axis. The additional root loci do not include $g_{1,1,4}$. After $g_{1,2,4}$  intersects with the additional root loci on the real axis, and after $g_{1,1,4}$ intersects with the additional root loci on the real axis, $g_{1,1,4}$,  $g_{1,2,4}$ then intersects  $g_{1,1,4}$, along with other possible root loci, at the real breakaway point $Q_{1,4}$.  There exist the first type and the second type of intersections with the additional root loci that join into the original standard linked root loci.

In the above paragraph, two "the additional root loci" are mentioned. The two "the additional root loci" have no any relationship, and they cannot intersect or connect to each other.  They represent two categories of additional root loci distributed at different locations.

Therefore, $g_{1,1,4}$ together with $g_{1,2,4}$ and $\overline{g_{1,2,4}}$ form a connected structure. We call them the third kind of non-standard linked root loci.

$g_{k,1,4}$ originates from the real zero $t_{k,1,4}$ of $p(s)$. $g_{k,2,4}$ originates from the real zero $t_{k,2,4}$ of $p(s)$.  If  $g_{k,1,4}$ doesn't intersect with the additional root loci at real breakaway points on the real axis, and if  $g_{k,2,4}$  doesn't intersect with the additional root loci at real breakaway points on the real axis,  $g_{k,2,4}$ intersects  $g_{k,1,4}$, along with other possible root loci, at $Q_{k,4}$ on the real axis. From $Q_{k,4}$, $g_{k,1,4}$ and $g_{k,2,4}$ leave the real axis and enters $\mathbb{C}^{*}$.  By Lemma 2.3, there is at least one real zero of (2.2) between $t_{k,1,4}$ and $t_{k,2,4}$ to separate them. Therefore, on the real axis, $g_{k,2,4}$ cannot directly intersect with $g_{k,1,4}$. $g_{k,2,4}$ must leave the real axis and enter $\mathbb{C}^{*}$. $1\le k<n$.

$g_{k,2,4}$ leaves the real axis and enters $\mathbb{C}^{*}$. By Lemma 2.7, this result requires that the third root locus, denoted as $g_{k,3,4}$, intersects $g_{k,2,4}$ at $q_{k,2,4}$ on the real axis. Suppose $g_{k,3,4}$ originates from the real zero $t_{k,3,4}$ of $p$.  From $q_{k,2,4}$, $g_{k,2,4}$ and $g_{k,3,4}$ leave the real axis and enter $\mathbb{C}^{*}$.

Because $g_{k,2,4}$ needs to intersect with $g_{k,1,4}$ at $Q_{k,4}$, $g_{k,2,4}$ and $g_{k,3,4}$ leave the real axis and enter $\mathbb{C}^{*}$. Due to symmetry about the real axis, $g_{k,2,4}$ and $g_{k,3,4}$ which have left the real axis must intersect again at a real breakaway point $q_{k,1,4}$. From $q_{k,1,4}$, $g_{k,2,4}$ and $g_{k,3,4}$ extend in opposite directions along the real axis. Suppose $g_{k,2,4}$ extends toward $t_{k,1,4}$. Then $g_{k,1,4}$ and the root locus $g_{k,2,4}$ extend toward each other and intersect, along with other possible root loci, at $Q_{k,4}$.

If $g_{k,1,4}$ intersects with the additional root loci at real breakaway points on the real axis, and if  $g_{k,2,4}$ doesn't intersect with the additional root loci at real breakaway points on the real axis. The additional root loci do not include $g_{k,2,4}$. After $g_{k,1,4}$ intersects with the additional root loci on the real axis, $g_{k,2,4}$ intersects  $g_{k,1,4}$ along with other possible root loci, at $Q_{k,4}$ on the real axis. From $Q_{k,4}$, $g_{k,1,4}$ and $g_{k,2,4}$ leave the real axis and enters $\mathbb{C}^{*}$.  By Lemma 2.3, there is at least one real zero of (2.2) between $t_{k,1,4}$ and $t_{k,2,4}$ to separate them. Therefore, on the real axis, $g_{k,2,4}$ cannot directly intersect with $g_{k,1,4}$. $g_{k,2,4}$ must leave the real axis and enter $\mathbb{C}^{*}$.

$g_{k,2,4}$ leaves the real axis and enters $\mathbb{C}^{*}$. By Lemma 2.7, this result requires that the third root locus, denoted as $g_{k,3,4}$, intersects $g_{k,2,4}$ at $q_{k,2,4}$ on the real axis. Suppose $g_{k,3,4}$ originates from the real zero $t_{k,3,4}$ of $p$.  From $q_{k,2,4}$, $g_{k,2,4}$ and $g_{k,3,4}$ leave the real axis and enter $\mathbb{C}^{*}$.

Because $g_{k,2,4}$ needs to intersect with $g_{k,1,4}$ at $Q_{k,4}$, $g_{k,2,4}$ and $g_{k,3,4}$ leave the real axis and enter $\mathbb{C}^{*}$. Due to symmetry about the real axis, $g_{k,2,4}$ and $g_{k,3,4}$ which have left the real axis must intersect again at a real breakaway point $q_{k,1,4}$. From $q_{k,1,4}$, $g_{k,2,4}$ and $g_{k,3,4}$ extend in opposite directions along the real axis.  Suppose  $g_{k,2,4}$  extends toward $t_{k,1,4}$. Then $g_{k,1,4}$ and the root locus $g_{k,2,4}$ extend toward each other and intersect, along with other possible root loci, at $Q_{k,4}$.

If $g_{k,1,4}$ intersects with the additional root loci at real breakaway points on the real axis, the additional root loci do not include $g_{k,2,4}$. If $g_{k,2,4}$  intersects with the additional root loci at the real breakaway points on the real axis, the additional root loci do not include $g_{k,1,4}$. After $g_{k,1,4}$ intersects with the additional root loci on the real axis,
and after $g_{k,2,4}$ intersects with the additional root loci on the real axis, repeated the previous proof, we get: $g_{k,2,4}$ and $g_{k,3,4}$ extend from $q_{k,2,4}$ to $q_{k,1,4}$ in $\mathbb{C}^{*}$. $g_{k,1,4}$ and the root locus $g_{k,2,4}$ extend toward each other and intersect, along with other possible root loci, at $Q_{k,4}$.

If $g_{k,1,4}$ doesn't intersect with the additional root loci at real breakaway points on the real axis, if $g_{k,2,4}$  intersects with the additional root loci at the real breakaway points on the real axis, the additional root loci do not include $g_{k,1,4}$. After $g_{k,2,4}$ intersects with the additional root loci on the real axis, repeated the previous proof, we get: $g_{k,2,4}$ and $g_{k,3,4}$ extend from $q_{k,2,4}$ to $q_{k,1,4}$ in $\mathbb{C}^{*}$. $g_{k,1,4}$ and the root locus $g_{k,2,4}$ extend toward each other and intersect, along with other possible root loci, at $Q_{k,4}$.

By Lemma 2.3, there must be at least one real zero of (2.2) between $t_{k,2,4}$ and $t_{k,3,4}$ to separate them. Therefore, on the real axis, $g_{k,3,4}$ cannot directly intersect $g_{k,2,4}$. $g_{k,3,4}$ must leave the real axis and enter $\mathbb{C}^{*}$.

If the intersecting root loci continue to originate from real zeros, such a process must be carried on.

The intersection points of root loci are breakaway points of (2.2). By Theorem 1.28, the number of breakaway points of (2.2) is finite, implying the number of intersecting root loci is finite. Hence, this process cannot continue indefinitely. However, based on prior results, such an intersection chain can only terminate when a root locus  from a non-real zero  of $p(s)$ intersects a root locus from a real zero at the real breakaway point on the real axis.

The root locus $g_{k,k+1,4}$ is emitted from the conjugate non-real zero $s_{k,k+1,4}$.
Because the root loci of (2.2) of $2q\pi$ and $2q\pi+\pi$ degrees are symmetrical about the real axis. The conjugate non-real zeros and poles of (2.2) are also symmetrical about the real axis. The root locus $\overline{g_{k,k+1,4}}$ is emitted from the conjugate non-real zero $\overline{s_{k,k+1,4}}$. Therefore, $g_{k,k+1,4}$ and $\overline{g_{k,k+1,4}}$ are symmetrical about the real axis. $g_{k,k,4}$ originates from the real zero $t_{k,k,4}$ of $p(s)$.

If $g_{k,k+1,4}$ and $g_{k,k,4}$ intersect on the real axis, then $\overline{g_{k,k+1,4}}$ must intersect with $g_{k,k+1,4}$ first at $q_{k,2k-1,4}$. After $\overline{g_{k,k+1,4}}$ intersects with $g_{k,k+1,4}$, on the real axis, they extend in opposite directions after $q_{k,2k-1,4}$. Let $g_{k,k+1,4}$ extend to $t_{k,k,4}$.

If $g_{k,k,4}$ doesn't intersect with the additional root loci at real breakaway points on the real axis, and if  $g_{k,k+1,4}$ doesn't intersect with the additional root loci at real breakaway points on the real axis, on the real axis, $g_{k,k+1,4}$  intersects with $g_{k,k,4}$ at $q_{k,2k-2,4}$. From $q_{k,2k-2,4}$, $g_{k,k,4}$ and $g_{k,k+1,4}$  leave the real axis and enter $\mathbb{C}^{*}$.

If $g_{k,k,4}$ intersects with the additional root loci at real breakaway points on the real axis, the additional root loci do not include $g_{k,k+1,4}$. If  $g_{k,k+1,4}$  doesn't intersect with the additional root loci at real breakaway points on the real axis, after $g_{k,k,4}$ intersects with the additional root loci at real breakaway points on the real axis, $g_{k,k+1,4}$  intersects with $g_{k,k,4}$ at $q_{k,2k-2,4}$. From $q_{k,2k-2,4}$, $g_{k,k,4}$ and $g_{k,k+1,4}$  leave the real axis and enter $\mathbb{C}^{*}$.

If $g_{k,k,4}$ intersects with the additional root loci at real breakaway points on the real axis, the additional root loci do not include $g_{k,k+1,4}$. If  $g_{k,k+1,4}$   intersects with the additional root loci at real breakaway points on the real axis, the additional root loci do not include $g_{k,k,4}$. After $g_{k,k,4}$ intersects with the additional root loci at real breakaway points on the real axis,  and after $g_{k,k+1,4}$  intersects with the additional root loci at real breakaway points on the real axis, $g_{k,k+1,4}$  intersects with $g_{k,k,4}$ at $q_{k,2k-2,4}$. From $q_{k,2k-2,4}$, $g_{k,k,4}$ and $g_{k,k+1,4}$  leave the real axis and enter $\mathbb{C}^{*}$.

If $g_{k,k,4}$ doesn't intersect with the additional root loci at real breakaway points on the real axis, and if  $g_{k,k+1,4}$   intersects with the additional root loci at real breakaway points on the real axis, the additional root loci do not include $g_{k,k,4}$. After $g_{k,k+1,4}$  intersects with the additional root loci at real breakaway points on the real axis, $g_{k,k+1,4}$  intersects with $g_{k,k,4}$ at $q_{k,2k-2,4}$. From $q_{k,2k-2,4}$, $g_{k,k,4}$ and $g_{k,k+1,4}$ leave the real axis and enter $\mathbb{C}^{*}$.

During the formation of the third kind of non-standard linked root loci, a root locus emitted from a real zero intersects with the additional root loci and then intersects again with a root locus emitted from a real zero. Alternatively, a root locus emitted from a non-real zero intersects with the additional root loci and then intersects with a root locus emitted from a real zero. There exists at least one instance of the first type of intersection with the additional root loci.

A root locus emitted from a real zero intersects with the additional root loci and then intersects again with a root locus emitted from a real zero. Alternatively, a root locus emitted from a real zero intersects with the additional root loci and then intersects with a root locus emitted from a pair of non-real zero.
There exists at least one instance of the second type of intersection with the additional root loci.
These are the additional root loci that join into the original standard linked root loci.

Therefore, $g_{k,1,4}$, $g_{k,2,4}$, $\cdots$, together with $g_{k,k+1,4}$ and $\overline{g_{k,k+1,4}}$ form a connected structure. We call them the third kind of non-standard linked root loci.

In the third kind of non-standard linked root loci, $k$ root loci emitted from the real poles of (2.2) connect with a pair of root loci emitted from the non-real zeros. The additional root loci join the linked root loci.

The root locus emitted from a real pole of (2.2) may intersect with the root locus emitted from the next real pole of (2.2). The root locus emitted from a real pole of (2.2) may intersect with the additional root loci, then the root locus emitted from a real pole of (2.2) intersects with the root locus emitted from the next real pole of (2.2). The root locus emitted from a pair of non-real pole of (2.2) may intersect with the root locus emitted from the real pole of (2.2). The root locus emitted from a pair of non-real pole of (2.2) may intersect with the additional root loci, then the root locus emitted from a pair of non-real pole of (2.2) intersects with the root locus emitted from the real pole of (2.2). These intersection of the root loci occurs within the intervals between real poles and real zeros.
The rule for forming the third non-standard linked root loci is straightforward.
\begin{definition}
When $k>1$ and a root locus $g_{k,1,4}$ emitted from a real zero $t_{k,1,4}$ of $p(s)$ leaves real axis and enters $\mathbb{C}^{*}$, beginning from $g_{k,1,4}$, consider root loci $g_{k,j_1,4}$ (for $1\le k<n$, $j_1=1,\cdots,k$, $0<2j_1-2<2k-2$, $0<2j_1-1<2k-1$) emitted from distinct real poles $t_{k,j_1,4}$ of (2.2). At the end point of the structure, there are a pair of root loci $g_{k,k+1,4}$ and $\overline{g_{k,k+1,4}}$ are emitted from a pair of non-real zeros $s_{k,k+1,4}$ and $\overline{s_{k,k+1,4}}$. These root loci intersect at real breakaway points $Q_{k,4}$, $q_{k,2j_1-2,4}$ $q_{k,2j_1-1,4}$ on the real axis. These intersecting root loci ($g_{k,1,4}$, $g_{k,j_1,4}$, $g_{k,k+1,4}$ and $\overline{g_{k,k+1,4}}$) are connected by their real breakaway points $Q_{k,4}$, $q_{k,2j_1-1,4}$, $q_{k,2j_1-2,4}$. There must be the additional root loci that join into the original standard linked root loci. There exists at least one instance of the first type of intersection with the additional root loci and one instance of the second type of intersection with the additional root loci. The structure formed by these root loci ($g_{k,1,4}$, $g_{k,j_1,4}$, $g_{k,k+1,4}$ and $\overline{g_{k,k+1,4}}$ and the additional root loci) is called \emph{the third kind of non-standard linked root loci of (2.2)}.
\end{definition}
When $k=1$ and a root locus $g_{1,1,4}$ emitted from a real zero $t_{1,1,4}$ of $p(s)$ leaves real axis and enters $\mathbb{C}^{*}$, $k=1$. Beginning from $g_{1,1,4}$, at the end point of the structure, there are a pair of root loci $g_{1,2,4}$ and $\overline{g_{1,2,4}}$ are emitted from a pair of non-real zeros $s_{1,2,4}$ and $\overline{s_{1,2,4}}$. These root loci intersect at real breakaway points $Q_{1,4}$, $q_{1,1,4}$ on the real axis. These intersecting root loci ($g_{1,1,4}$, $g_{1,2,4}$ and $\overline{g_{1,2,4}}$) are connected by their real breakaway points $Q_{1,4}$, $q_{1,1,4}$. There must be the additional root loci that join into the original standard linked root loci. There exists at least one instance of the first type of intersection with the additional root loci and one instance of the second type of intersection with the additional root loci. The structure formed by these root loci ($g_{1,1,4}$, $g_{1,2,4}$ and $\overline{g_{1,2,4}}$ and the additional root loci) is called \emph{the third kind of non-standard linked root loci of (2.2)}.

Let $g_{2,1,4}$ be the root locus emitted from a real zero $t_{2,1,4}$ of $p$, and let $z_{2,1,4}$ be the real zero of (2.2) adjacent to $t_{2,1,4}$. The root locus $g_{2,1,4}$ would leave the real axis in the interval $L_{2,1,4}$ between $t_{2,1,4}$ and $z_{2,1,4}$ to enter $\mathbb{C}^{*}$. However, if no other root locus intersects $g_{2,1,4}$ in $L_{2,1,4}$, then $g_{2,1,4}$ must end at $z_{2,1,4}$ and remain on the real axis. Thus, for $g_{2,1,4}$ to leave the real axis and enter  $\mathbb{C}^{*}$, another root locus must intersect $g_{2,1,4}$ within $L_{2,1,4}$. We denote this intersecting root locus by $g_{2,2,4}$.

Suppose the root locus $g_{2,2,4}$ departs from another real zero $t_{2,2,4}$ of $p$. Let $z_{2,2,4}$ be the real zero of (2.2) adjacent to $t_{2,2,4}$. $g_{2,2,4}$ leaves the real axis from the interval $L_{19,2}$ between $t_{2,2,4}$ and $z_{2,2,4}$ to enter $\mathbb{C}^{*}$. If no other root loci intersect $g_{2,2,4}$ within $L_{2,2,4}$, then $g_{2,2,4}$ emitted from $t_{2,2,4}$ must end at $z_{2,2,4}$ and remain on the real axis. Therefore, for $g_{2,2,4}$ to leave the real axis and enter  $\mathbb{C}^{*}$, another root locus must intersect $g_{2,2,4}$ within $L_{2,2,4}$. We denote this intersecting root locus by $g_{2,3,4}$. $g_{2,3,4}$ and $\overline{g_{2,3,4}}$ are emitted from a pair of non-real zeros $s_{2,3,4}$ and $\overline{s_{2,3,4}}$ of $p(s)$.

Regarding the first type of intersection with the additional root loci under the conditions of Lemma 2.37, at least one of the two cases must occur. Case 1. $g_{2,2,4}$  intersects with the additional root loci at real breakaway points on the real axis. The additional root loci do not contain $g_{2,1,4}$. After $g_{2,2,4}$ intersects with the additional root loci on the real axis, $g_{2,2,4}$ then intersects with $g_{2,1,4}$ at a real breakaway point.

Case 2. $g_{2,3,4}$ intersects with the additional root loci at real breakaway points on the real axis. The additional root loci do not contain $g_{2,2,4}$. After $g_{2,3,4}$ intersects with the additional root loci on the real axis, $g_{2,3,4}$  then intersects with $g_{2,2,4}$ at a real breakaway point.

Regarding the second type of intersection with the additional root loci under the conditions of Lemma 2.37, at least one of the two cases must occur. Case 1. $g_{2,1,4}$  intersects with the additional root loci at real breakaway points on the real axis. The additional root loci do not contain $g_{2,2,4}$. After $g_{2,1,4}$  intersects with the additional root loci on the real axis, $g_{2,1,4}$  then intersects with  $g_{2,2,4}$ at a real breakaway point.

Case 2. $g_{2,2,4}$  intersects with the additional root loci at real breakaway points on the real axis. The additional root loci do not contain $g_{2,3,4}$. After $g_{2,2,4}$  intersects with the additional root loci on the real axis, $g_{2,2,4}$ then intersects with $g_{2,3,4}$  at a real breakaway point.

\begin{lemma}
The root loci $g_{2,1,4}$, $g_{2,2,4}$, and possibly other root loci intersect at a real breakaway point $Q_{2,4}$. At $Q_{2,4}$, $g_{2,1,4}$ leaves the real axis and enters  $\mathbb{C}^{*}$. $g_{2,3,4}$ and $\overline{g_{2,3,4}}$ are emitted from a pair of non-real zeros $s_{2,3,4}$ and $\overline{s_{2,3,4}}$ of $p(s)$. Regarding the first type of intersection with the additional root loci under the conditions of this lemma, at least one of the two cases must occur. Regarding the second type of intersection with the additional root loci under the conditions of this lemma, at least one of the two cases must occur. Then the root loci $g_{2,1,4}$, $g_{2,2,4}$, $g_{2,3,4}$, $\overline{g_{2,3,4}}$  and the additional root loci form the third kind of non-standard linked root loci. The terminus of this structure consists of $g_{2,3,4}$ or $\overline{g_{2,3,4}}$.
\end{lemma}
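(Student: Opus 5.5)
The plan is to follow the same template as the proofs of Lemma 2.15, Lemma 2.22 and Lemma 2.31. The root loci are traced from the terminal pair $g_{2,3,4}$, $\overline{g_{2,3,4}}$ back to $Q_{2,4}$. At each interval, the possible insertion of additional root loci is handled by a case split, using both the first-type cases and the second-type cases listed before the statement.

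First, I would use the symmetry of the $2q\pi$ and $2q\pi+\pi$ degree root loci about the real axis (Lemma 1.27(1)). By that symmetry, $g_{2,3,4}$ and $\overline{g_{2,3,4}}$, emitted from $s_{2,3,4}$ and $\overline{s_{2,3,4}}$, meet at a real breakaway point $q_{2,3,4}\in L_{2,2,4}$. By Theorem 1.28 they cannot overlap, so from $q_{2,3,4}$ they extend oppositely along the real axis. Let $g_{2,3,4}$ head toward $t_{2,2,4}$; then $\overline{g_{2,3,4}}$ ends at $z_{2,2,4}$ or at another zero of (2.2).

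Next, I would split into four subcases according to whether $g_{2,3,4}$ (first type, Case 2) and/or $g_{2,2,4}$ (second type, Case 2) meet additional root loci on the real axis. In each subcase the conclusion is the same: after any such intersections, $g_{2,2,4}$ and $g_{2,3,4}$ extend toward each other and meet at $q_{2,2,4}\in L_{2,2,4}$. From $q_{2,2,4}$ they enter $\mathbb{C}^{*}$, and by symmetry they return to the real axis at $q_{2,1,4}\in L_{2,1,4}$, where they separate again, with $g_{2,2,4}$ heading toward $t_{2,1,4}$.

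In $L_{2,1,4}$ I would repeat the four-way split, now according to whether $g_{2,2,4}$ (first type, Case 1) and/or $g_{2,1,4}$ (second type, Case 1) meet additional root loci. In every subcase $g_{2,1,4}$ and $g_{2,2,4}$ meet, possibly together with other root loci, at $Q_{2,4}$ and leave into $\mathbb{C}^{*}$. This assembles the structure of the third kind of non-standard linked root loci, with terminus $g_{2,3,4}$ or $\overline{g_{2,3,4}}$. The hypotheses that at least one case of each type occurs guarantee that the definition of the third kind, which requires one first-type and one second-type intersection, is met.

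The main obstacle is justifying that the additional root loci do not disrupt the continuation. Concretely, after $g_{2,3,4}$ or $g_{2,2,4}$ meets them, the locus must still proceed along the real axis to the next main root locus, rather than being diverted or terminated at an intermediate zero of (2.2). I would handle this as the paper does. The additional root loci are explicitly assumed not to contain the partner locus, and the two families of additional root loci are disjoint. Lemma 2.7 together with Theorem 1.28 (finitely many, isolated breakaway points, and no overlapping) then forces the main locus to resume toward the adjacent real pole. Possible coincidences of breakaway points would be noted as not affecting the argument.
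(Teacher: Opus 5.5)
Your proposal follows the paper's proof essentially step for step. The pair $g_{2,3,4},\overline{g_{2,3,4}}$ meets at $q_{2,3,4}\in L_{2,2,4}$ and separates; a four-way split on whether $g_{2,3,4}$ and/or $g_{2,2,4}$ meet additional root loci leads to $q_{2,2,4}$; symmetry brings the loci back to the real axis at $q_{2,1,4}\in L_{2,1,4}$; a second four-way split on $g_{2,2,4}$ and/or $g_{2,1,4}$ leads to $Q_{2,4}$; and coinciding breakaway points are remarked on at the end.

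The one correction concerns your last paragraph. The paper does not derive the continuation past the additional root loci from Lemma 2.7 and Theorem 1.28, and those results would not give it: Lemma 2.7 covers only intervals where no other loci intersect, and Theorem 1.28 gives only finiteness and non-overlap. Instead, the paper takes the continuation directly from the hypotheses, since each listed Case already states that after meeting the additional root loci the locus then meets its partner at a real breakaway point. You should cite those hypotheses instead.
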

\begin{proof}
$g_{2,3,4}$ and $\overline{g_{2,3,4}}$ intersect at a real breakaway point $q_{2,3,4}$ within $L_{2,2,4}$. From $q_{2,3,4}$, $g_{2,3,4}$ and $\overline{g_{2,3,4}}$  extend oppositely on the real axis, suppose $g_{2,3,4}$ extends toward $t_{2,2,4}$. $\overline{g_{2,3,4}}$ maybe ends at $z_{2,2,4}$. $\overline{g_{2,3,4}}$ maybe ends at the other zero of (2.2).

If $g_{2,3,4}$  doesn't intersect with the additional root loci on the real axis, and if $g_{2,2,4}$  doesn't intersect with the additional root loci on the real axis, then $g_{2,2,4}$ and $g_{2,3,4}$ extend toward each other and intersect at a real breakaway point $q_{2,2,4}$ within $L_{2,2,4}$.

If $g_{2,3,4}$ doesn't intersect with the additional root loci on the real axis, and if $g_{2,2,4}$  intersects with the additional root loci on the real axis, the additional root loci do not contain $g_{2,3,4}$. After $g_{2,2,4}$  intersects with the additional root loci on the real axis, then $g_{2,2,4}$ and $g_{2,3,4}$ extend toward each other and intersect at a real breakaway point $q_{2,2,4}$ within $L_{2,2,4}$.

If $g_{2,3,4}$   intersects with the additional root loci on the real axis, the additional root loci do not contain $g_{2,2,4}$. If $g_{2,2,4}$  intersects with the additional root loci on the real axis, the additional root loci do not contain $g_{2,3,4}$.  After $g_{2,3,4}$  intersects with the additional root loci on the real axis, and after $g_{2,2,4}$ intersects with the additional root loci on the real axis, then $g_{2,2,4}$ and $g_{2,3,4}$ extend toward each other and intersect at a real breakaway point $q_{2,2,4}$ within $L_{2,2,4}$.

If $g_{2,3,4}$  intersects with the additional root loci on the real axis, the additional root loci do not contain $g_{2,2,4}$. If $g_{2,2,4}$ doesn't intersect with the additional root loci on the real axis, after $g_{2,3,4}$ intersects with the additional root loci on the real axis, then $g_{2,2,4}$ and $g_{2,3,4}$ extend toward each other and intersect at a real breakaway point $q_{2,2,4}$ within $L_{2,2,4}$.

At $q_{2,2,4}$, $g_{2,2,4}$ and $g_{2,3,4}$ leave the real axis. $g_{2,2,4}$ and $g_{2,3,4}$  enter  $\mathbb{C}^{*}$.  Due to symmetry about the real axis, $g_{2,2,4}$ and $g_{2,3,4}$ which have left the real axis must intersect again at a real breakaway point $q_{2,1,4}$ in $L_{2,1,4}$. Therefore, $g_{2,2,4}$ and $g_{2,3,4}$ extend in  $\mathbb{C}^{*}$ from $q_{2,2,4}$ to $q_{2,1,4}$. From $q_{2,1,4}$, $g_{2,2,4}$ and $g_{2,3,4}$ extend oppositely on the real axis, suppose $g_{2,2,4}$ extends toward $t_{2,1,4}$. $g_{2,3,3}$ maybe ends at $z_{2,1,4}$. $g_{2,3,4}$ maybe ends at the other zero of (2.2).

If $g_{2,2,4}$ doesn't intersect with the additional root loci on the real axis, and if $g_{2,1,4}$  doesn't intersect with the additional root loci on the real axis, then $g_{2,1,4}$ and the root locus $g_{2,2,4}$ extend toward each other and intersect, along with other possible root loci, at $Q_{2,4}$  in $L_{2,1,3}$.

If $g_{2,2,4}$ doesn't intersect with the additional root loci on the real axis, and if $g_{2,1,4}$  intersects with the additional root loci on the real axis, the additional root loci do not contain $g_{2,2,4}$. After $g_{2,1,4}$  intersects with the additional root loci on the real axis, then $g_{2,1,4}$ and the root locus $g_{2,2,4}$ extend toward each other and intersect, along with other possible root loci, at $Q_{2,4}$  in $L_{2,1,3}$.

If $g_{2,2,4}$ intersects with the additional root loci on the real axis, the additional root loci do not contain $g_{2,1,4}$. If $g_{2,1,4}$  intersects with the additional root loci on the real axis, the additional root loci do not contain $g_{2,2,4}$.  After $g_{2,2,4}$  intersects with the additional root loci on the real axis, and after $g_{2,1,4}$ intersects with the additional root loci on the real axis, then $g_{2,1,4}$ and the root locus $g_{2,2,4}$ extend toward each other and intersect, along with other possible root loci, at $Q_{2,4}$  in $L_{2,1,3}$.

If $g_{2,2,4}$  intersects with the additional root loci on the real axis, the additional root loci do not contain $g_{2,1,4}$. If $g_{2,1,4}$ doesn't intersect with the additional root loci on the real axis, after $g_{2,2,4}$  intersects with the additional root loci on the real axis, then $g_{2,1,4}$ and the root locus $g_{2,2,4}$ extend toward each other and intersect, along with other possible root loci, at $Q_{2,4}$  in $L_{2,1,3}$.

Therefore, $g_{2,1,4}$, $g_{2,2,4}$, $g_{2,3,4}$, $\overline{g_{2,3,4}}$  and the additional root loci form the third kind of non-standard linked root loci. $g_{2,1,4}$ and $g_{2,2,4}$  leave the real axis and enter  $\mathbb{C}^{*}$.

The same as the case in Lemma 2.15, the real breakaway points may coincide. This overlapping situation has no impact on the proof or the results of this lemma. Since we are unable to provide the situation of "the additional root loci", the exact situation of the real breakaway points overlapping cannot be accurately given either. We only need to know that such a situation exists.
\end{proof}
Let $g_{k,j_1,4}$ be the root locus emitted from a real zero $t_{k,j_1,4}$ of $p$. Let $z_{k,j_1,4}$ be the real zero of (2.2) adjacent to $t_{k,j_1,4}$. $g_{k,j_1,4}$ leaves the real axis at a point in the interval $L_{k,j_1,4}$ between $t_{k,j_1,4}$ and $z_{k,j_1,4}$.  The root loci from the non-real zeros $s_{k,k+1,4}$ and $\overline{s_{k,k+1,4}}$ are $g_{k,k+1,4}$ and $\overline{g_{k,k+1,4}}$.

$g_{k,j_1+1,4}$ and $g_{k,j_1+2,4}$ intersect at a real breakaway point $q_{k,2j_1-1,4}$ within $L_{k,j_1,4}$. From $q_{k,2j_1-1,4}$,  $g_{k,j_1+1,4}$ and $g_{k,j_1+2,4}$ extend  oppositely on the real axis. Let $g_{k,j_1+1}$  extend to $t_{k,j_1,4}$.

Regarding the first type of intersection with the additional root loci under the conditions of Lemma 2.38, at least one of the $k$ cases must occur. Case $j_1$. $j_1=1,\cdots,k-1$. $g_{k,j_1+1,4}$ intersects with the additional root loci at real breakaway points on the real axis. The additional root loci do not contain $g_{k,j_1,4}$. After $g_{k,j_1+1,4}$ intersects with the additional root loci on the real axis, $g_{k,j_1+1,4}$ then intersects with $g_{k,j_1,4}$ at a real breakaway point.

$g_{k,k+1,4}$ and $\overline{g_{k,k+1,4}}$ intersect at a real breakaway point $q_{k,2k-1,4}$ within $L_{k,k,4}$. From $q_{k,2k-1,4}$,  $g_{k,k+1,4}$ and $\overline{g_{k,k+1,4}}$ extend  oppositely on the real axis. Let $g_{k,k+1,4}$  extend to $t_{k,k,4}$.

Case $k$. $g_{k,k+1,4}$  intersects with the additional root loci at real breakaway points on the real axis. The additional root loci do not contain $g_{k,k,4}$. After $g_{k,k+1,4}$ intersects with the additional root loci on the real axis, $g_{k,k+1,4}$ then intersects with $g_{k,k,4}$ at a real breakaway point.

Regarding the second type of intersection with the additional root loci under the conditions of Lemma 2.38, at least one of the $k$ cases must occur.  Case $j_1$. $j_1=1,\cdots,k-1$. $g_{k,j_1,4}$  intersects with the additional root loci at real breakaway points on the real axis. The additional root loci do not contain $g_{k,j_1+1,4}$. After $g_{k,j_1,4}$ intersects with the additional root loci on the real axis,  then $g_{k,j_1,4}$ intersects with $g_{k,j_1+1,4}$  at a real breakaway point.

Case $k$. $g_{k,k,4}$ intersects with the additional root loci at real breakaway points on the real axis. The additional root loci do not contain $g_{k,k+1,4}$. After  $g_{k,k,4}$ intersects with the additional root loci on the real axis, $g_{k,k,4}$ then intersects with $g_{k,k+1,4}$ at a real breakaway point.

\begin{lemma}
When a root locus $g_{k,1,4}$ emitted from a real zero of $p(s)$ leaves the real axis and enters $\mathbb{C}^{*}$, then, there must exist a pair of non-real zeros and $k-1$ other real zeros. Regarding the first type of intersection with the additional root loci under the conditions of this lemma,  at least one of the $k$ cases must occur. Regarding the second type of intersection with the additional root loci under the conditions of this lemma,  at least one of the $k$ cases must occur. These root loci $g_{k,k+1,4}$, $\overline{g_{k,k+1,4}}$ and $g_{k,j_1,4}$ (for $1\le k<n$, $j_1=1,\cdots,k$, $0<2j_1-2<2k-2$, $0<2j_1-1<2k-1$) intersect at real breakaway points $Q_{k,4}$, $q_{k,2j_1-1,4}$,  $q_{k,2j_1-2,4}$  on the real axis. At the end point of the structure, there are a pair of root loci $g_{k,k+1,4}$ and $\overline{g_{k,k+1,4}}$ are emitted from a pair of non-real zeros $s_{k,k+1,4}$ and $\overline{s_{k,k+1,4}}$.
\end{lemma}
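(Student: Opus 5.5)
I will follow the pattern already used for the standard linked root loci (Lemma 2.16), the first kind (Lemma 2.23) and the second kind (Lemma 2.32). The proof is an induction along the chain of intersecting root loci that starts at $g_{k,1,4}$, with one extra step: at each link I must allow both the first and the second type of intersection with the additional root loci.

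\textbf{Base cases.} The case $k=1$ is the construction given just before Definition 2.36. The case $k=2$ is Lemma 2.37. For general $k>2$ I start from $Q_{k,4}$ in $L_{k,1,4}$, where $g_{k,1,4}$ and $g_{k,2,4}$ leave the real axis and enter $\mathbb{C}^{*}$.

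\textbf{Inductive step.} Suppose $g_{k,j_1+1,4}$ and $g_{k,j_1+2,4}$ intersect at $q_{k,2j_1,4}$ in $L_{k,j_1+1,4}$. By the symmetry in Lemma 1.27(1), after leaving the real axis they must meet again at $q_{k,2j_1-1,4}$ in $L_{k,j_1,4}$, and from there they extend in opposite directions along the real axis. Let $g_{k,j_1+1,4}$ be the one that extends toward $t_{k,j_1,4}$. I then split into the four subcases already listed before Lemma 2.37:
\begin{enumerate}
\item neither $g_{k,j_1+1,4}$ nor $g_{k,j_1,4}$ meets additional root loci;
\item only $g_{k,j_1+1,4}$ meets additional root loci (first type);
\item only $g_{k,j_1,4}$ meets additional root loci (second type);
\item both meet additional root loci.
\end{enumerate}
In each subcase, once the additional intersections on the real axis are done, $g_{k,j_1,4}$ and $g_{k,j_1+1,4}$ extend toward each other and meet at $q_{k,2j_1-2,4}$. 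From there they enter $\mathbb{C}^{*}$, and by symmetry they re-intersect at $q_{k,2j_1-3,4}$ in $L_{k,j_1-1,4}$. The hypotheses of the lemma guarantee that at least one instance of each type occurs somewhere along the chain, but the linking argument does not depend on which subcase occurs at a given link.

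\textbf{Propagation.} By Lemma 2.7, a root locus emitted from a real pole $t_{k,j_2,4}$ leaves $L_{k,j_2,4}$ only if some other root locus $g_{k,j_2+1,4}$ intersects it inside $L_{k,j_2,4}$; otherwise it ends at $z_{k,j_2,4}$ on the real axis. By Lemma 2.3, adjacent real poles are separated by a real zero of (2.2), so $g_{k,j_2+1,4}$ cannot reach $g_{k,j_2,4}$ directly along the real axis and must itself enter $\mathbb{C}^{*}$. This forces the chain to continue to the next interval.

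\textbf{Termination.} Theorem 1.28 says the breakaway points are finite in number, so the chain must stop. It can stop only when the loci entering the last interval $L_{k,k,4}$ do not come from a real pole. By Lemma 2.4, every real zero of (2.2) receives root loci emitted only from zeros of $p(s)$. Since the real-pole alternative is exhausted, the terminating loci must come from a non-real zero $s_{k,k+1,4}$. By Lemma 1.27(1) its conjugate $\overline{s_{k,k+1,4}}$ emits the mirror-image locus $\overline{g_{k,k+1,4}}$, and the two meet at $q_{k,2k-1,4}$ on the real axis. Counting the intervals traversed gives the $k-1$ further real zeros $t_{k,2,4},\dots,t_{k,k,4}$.

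\textbf{Main obstacle.} The hardest step is justifying termination rigorously. I must exclude two possibilities: that the additional root loci themselves supply the locus that closes the chain in $L_{k,k,4}$, and that the chain closes on something other than a non-real pair. I would first try to argue that an additional locus meeting the chain at a real breakaway point only adds intersections on the real axis, without changing which locus extends toward $t_{k,j_1,4}$. The main lines would then be unaffected, and the terminal pair must still be non-real.
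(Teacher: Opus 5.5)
Your proposal reproduces the paper's proof of this lemma essentially step for step. It uses the same base cases ($k=1$ from the construction preceding Definition 2.36, $k=2$ from Lemma 2.37) and the same link step: $g_{k,j_1+1,4}$ and $g_{k,j_1+2,4}$ leave at $q_{k,2j_1,4}$, re-meet ``by symmetry'' at $q_{k,2j_1-1,4}$, and reach $q_{k,2j_1-2,4}$ under the same four subcases for first- and second-type intersections with the additional root loci, followed by the same propagation via Lemmas 2.7 and 2.3 and the same termination via the finiteness of breakaway points (Theorem 1.28). At the step you flag as the main obstacle, the paper goes no further than you: it simply asserts that the chain can end only where a locus from a non-real zero enters $L_{k,k,4}$, and your suggested remedy (additional loci only add real-axis intersections without changing which locus continues toward $t_{k,j_1,4}$) is exactly what the paper builds into each subcase by stipulating that the additional root loci ``do not contain'' the adjacent main locus.
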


\begin{proof}
In Lemma 2.37, there are four main root loci in the third kind of non-standard linked root loci, and two intervals between zeros and poles of (2.2).

$g_{k,1,4}$, $g_{k,2,4}$, and possibly other root loci intersect at a real breakaway point $Q_{k,4}$  in $L_{k,1,4}$. At $Q_{k,4}$, $g_{k,1,4}$ and $g_{k,2,4}$ leave the real axis and enter  $\mathbb{C}^{*}$. The situation of $g_{k,1,4}$ and $g_{k,2,4}$ is identical to the situation given in Lemma 2.37. At the front of Definition 2.36, we have given the result of $k=1$. When $k=2$, Lemma 2.37 give results. For general $k>2$, as following.

$g_{k,j_1+1,4}$ and $g_{k,j_1+2,4}$ intersect at the real breakaway point $q_{k,2j_1,4}$ in $L_{k,j_1+1,4}$. From $q_{k,2j_1,4}$, $g_{k,j_1+1,4}$ and $g_{k,j_1+2,4}$ leaves the real axis and enters   $\mathbb{C}^{*}$. Due to symmetry about the real axis, $g_{k,j_1+1,4}$ and $g_{k,j_1+2,4}$ intersect at  the real breakaway point $q_{k,2j_1-1,4}$  in $L_{k,j_1,4}$.

$g_{k,j_1+1,4}$ and $g_{k,j_1+2,4}$ extend in  $\mathbb{C}^{*}$ from $q_{k,2j_1,4}$ to $q_{k,2j_1-1,4}$. $t_{k,j_1,4}$ emits $g_{k,j_1,4}$ to interval $L_{k,j_1,4}$. From $q_{k,j_1-1,4}$, $g_{k,j_1+1,4}$ and $g_{k,j_1+2,4}$  extend oppositely on the real axis. Suppose $g_{k,j_1+1,4}$ extends toward $t_{k,j_1,4}$. $g_{k,j_1+2,4}$ maybe ends at zero  $z_{k,j_1,4}$. $g_{k,j_1+2,4}$ maybe ends at the other zero of (2.2).

If $g_{k,j_1+1,4}$  does not intersect with the additional root loci at real breakaway points on the real axis, and if $g_{k,j_1,4}$ does not intersect with the additional root loci at real breakaway points on the real axis, both $g_{k,j_1,4}$ and $g_{k,j_1+1,4}$ extend towards each other, so $g_{k,j_1+1,4}$ intersects with $g_{k,j_1,4}$ at a real breakaway point $q_{k,2j_1-2,4}$ within $L_{k,j_1,4}$.

If $g_{k,j_1+1,4}$  does not intersect with the additional root loci at real breakaway points on the real axis, and if $g_{k,j_1,4}$  intersects with the additional root loci at real breakaway points on the real axis, the additional root loci do not contain $g_{k,j_1+1,4}$. After $g_{k,j_1,4}$ intersects with the additional root loci on the real axis, both $g_{k,j_1,4}$ and $g_{k,j_1+1,4}$ extend towards each other, so $g_{k,j_1+1,4}$ intersects with $g_{k,j_1,4}$ at a real breakaway point $q_{k,2j_1-2,4}$ within $L_{k,j_1,4}$.

If $g_{k,j_1+1,4}$ intersects with the additional root loci at real breakaway points on the real axis, the additional root loci do not contain $g_{k,j_1,4}$. If $g_{k,j_1,4}$  intersects with the additional root loci at real breakaway points on the real axis, the additional root loci do not contain $g_{k,j_1+1,4}$. After $g_{k,j_1+1,4}$  intersects with the additional root loci on the real axis, and after $g_{k,j_1,4}$ intersects with the additional root loci on the real axis, $g_{k,j_1+1,4}$  intersects with $g_{k,j_1,4}$ at  the real breakaway point $q_{k,2j_1-2,4}$  in $L_{k,j_1,4}$.

If $g_{k,j_1+1,4}$ intersects with the additional root loci at real breakaway points on the real axis, the additional root loci do not contain $g_{k,j_1,4}$. If $g_{k,j_1,4}$ does not intersect with the additional root loci at real breakaway points on the real axis, after $g_{k,j_1+1,4}$ intersects with the additional root loci on the real axis, $g_{k,j_1+1,4}$  intersects with $g_{k,j_1,4}$ at  the real breakaway point $q_{k,2j_1-2,4}$  in $L_{k,j_1,4}$.

At point $q_{k,2j_1-2,4}$, $g_{k,j_1,4}$ and $g_{k,j_1+1,4}$ leave the real axis and enters $\mathbb{C}^{*}$.
Due to symmetry about the real axis, $g_{k,j_1,4}$ and $g_{k,j_1+1,4}$ which have left the real axis must intersect again at a real breakaway point $q_{k,2j_1-3,4}$  in $L_{k,j_1-1,4}$. $g_{k,j_1,4}$ and $g_{k,j_1+1,4}$ extend in  $\mathbb{C}^{*}$ from $q_{k,2j_1-2,4}$ to $q_{k,2j_1-3,4}$. $j_1=2,3,\dots,k-2$.

When the root loci intersecting in $L_{k,j_2,4}$ originates from a real zero $t_{k,j_2,4}$. $j_2>k-2$. The root locus $g_{k,j_2,4}$ emitted from $t_{k,j_2,4}$ leaves interval $L_{k,j_2,4}$. However, if no other root locus intersects in $L_{k,j_2,4}$, then $g_{k,j_2,4}$ must end at $z_{k,j_2,4}$ and remain on the real axis. There must be a root locus $g_{k,j_2+1,4}$ intersecting in the interval $L_{k,j_2,4}$. If $g_{k,j_2+1,4}$ comes from a real zero $t_{k,j_2+1,4}$ in another interval $L_{k,j_2+1,4}$. If such a process arises indefinitely, the non-standard linked root loci cannot end and must continue to extend downward.

The breakaway points of the root loci are the intersection points of the root loci. By Theorem 1.28, the number of finite breakaway points of (2.2) is finite. The number of the intersection points of the root loci of (2.2) is finite. The number of the intersection root loci is finite. Therefore, the intersection of root loci of (2.2) must end. There must exist an intersection of root loci emitted from non-real zeros on the real axis, and root loci emitted from non-real zeros intersect in an interval between a zero and a pole. Therefore, such linked root loci cannot continue infinitely.

If the root locus emitted from a non-real zero $s_{k,k+1,4}$ intersects the interval $L_{k,k,4}$, the third kind of non-standard linked root loci end and cannot extend further downward. The third kind of non-standard linked root loci also end at the interval where the root loci emitted from a non-real zero of $p(s)$ intersect. Conversely, if no root locus emitted from a non-real zero intersects within the interval, the third kind of non-standard linked root loci cannot end either.

Therefore, based on the results we obtained earlier, we have thus proved that there must exist a pair of non-real zeros $s_{k,k+1,4}$ and $\overline{s_{k,k+1,4}}$. The root loci $g_{k,k+1,4}$ and $\overline{g_{k,k+1,4}}$ emitted from $s_{k,k+1,4}$ and $\overline{s_{k,k+1,4}}$ intersect within a certain interval $L_{k,k,4}$.
\end{proof}

\begin{lemma}
When a root locus $g_{k,1,4}$ emitted from a real zero of $p(s)$ leaves the real axis and enters $\mathbb{C}^{*}$, then, there must exist a pair of non-real zeros and $k-1$ other real zeros. Regarding the first type of intersection with the additional root loci under the conditions of this lemma,  at least one of the $k$ cases must occur.  Regarding the second type of intersection with the additional root loci under the conditions of this lemma, at least one of the $k$ cases must occur. These root loci $g_{k,k+1,4}$, $\overline{g_{k,k+1,4}}$ and $g_{k,j_1,4}$ (for $1\le k<n$, $j_1=1,\cdots,k$, $0<2j_1-2<2k-2$, $0<2j_1-1<2k-1$) intersect at real breakaway points $Q_{k,4}$, $q_{k,2j_1-1,4}$, $q_{k,2j_1-2,4}$  on the real axis. These real breakaway points connect $g_{k,j_1,4}$, $g_{k,k+1,4}$, and $\overline{g_{k,k+1,4}}$ together, forming a third kind of non-standard linked root locus. The terminus of this structure consists of $g_{k,k+1,4}$ and $\overline{g_{k,k+1,4}}$.
\end{lemma}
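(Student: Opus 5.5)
The plan is to follow the template already used for Lemma 2.17 and Lemma 2.24/2.33, now for the third kind of non-standard linked root loci. By the hypothesis that $g_{k,1,4}$ leaves the real axis and enters $\mathbb{C}^{*}$, Lemma 2.38 gives the terminal data: a pair of non-real zeros $s_{k,k+1,4}$, $\overline{s_{k,k+1,4}}$ of $p(s)$ whose root loci $g_{k,k+1,4}$ and $\overline{g_{k,k+1,4}}$ intersect in some interval $L_{k,k,4}$, together with the $k-1$ other real zeros $t_{k,j_1,4}$. The remaining work is to trace the structure backward from this terminus to $Q_{k,4}$. At each real breakaway point we check that the root loci connect as claimed, allowing for both the first and the second type of intersection with the additional root loci.

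First, start in $L_{k,k,4}$. By Lemma 1.27(1), $g_{k,k+1,4}$ and $\overline{g_{k,k+1,4}}$ are symmetric about the real axis, so they meet at a real breakaway point $q_{k,2k-1,4}$. By Theorem 1.28 they cannot overlap, so they must extend in opposite directions along the real axis. Take $g_{k,k+1,4}$ to be the one moving toward $t_{k,k,4}$. Next split into the four cases according to whether $g_{k,k+1,4}$ (first type) and/or $g_{k,k,4}$ (second type) meets additional root loci on the real axis. In each case, after those intersections, $g_{k,k+1,4}$ and $g_{k,k,4}$ extend toward each other and meet at $q_{k,2k-2,4}$, where both leave the real axis. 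By symmetry they meet again at $q_{k,2k-3,4}$ in $L_{k,k-1,4}$.

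Second, iterate this step downward. In $L_{k,j_1,4}$ the pair $g_{k,j_1+1,4}, g_{k,j_1+2,4}$ arrives at $q_{k,2j_1-1,4}$, and $g_{k,j_1+1,4}$ travels toward $t_{k,j_1,4}$. The same four-case split then yields the intersection with $g_{k,j_1,4}$ at $q_{k,2j_1-2,4}$, the departure into $\mathbb{C}^{*}$, and the symmetric return at $q_{k,2j_1-3,4}$. The partner root locus ends at $z_{k,j_1,4}$ or at another zero of (2.2); Lemma 2.4 and Lemma 2.2 guarantee such a receiving zero exists. Repeating this until $L_{k,1,4}$, $g_{k,2,4}$ meets $g_{k,1,4}$ (possibly together with other root loci) at $Q_{k,4}$. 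This connects $g_{k,j_1,4}$, $g_{k,k+1,4}$ and $\overline{g_{k,k+1,4}}$ through the points $Q_{k,4}$, $q_{k,2j_1-1,4}$, $q_{k,2j_1-2,4}$. By Definition 2.36 this is a third kind of non-standard linked root locus, and its terminus consists of $g_{k,k+1,4}$ and $\overline{g_{k,k+1,4}}$. The hypotheses require at least one case of each type, and the existence of both types is exactly what distinguishes this structure from the first and second kinds.

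The main obstacle is the step where additional root loci intervene. We must ensure that, after $g_{k,j_1+1,4}$ or $g_{k,j_1,4}$ meets them, the main locus still continues on the real axis toward its partner rather than being captured. For this I would use Theorem 1.15 and Theorem 1.28: all loci involved have unit value $\pm1$, so they can only cross at isolated breakaway points, and the two categories of additional loci do not interact. Finally I would note, as in Lemma 2.17, that coincident breakaway points such as $q_{k,2j_1-1,4}=q_{k,2j_1-2,4}$ do not affect the argument.
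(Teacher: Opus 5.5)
Your plan follows the paper's proof of this lemma step for step: Lemma 2.38 for the terminal pair, Theorem 1.28 for the opposite departure from $q_{k,2k-1,4}$, the four-case split, the symmetric return, and iteration down to $Q_{k,4}$. It therefore inherits that proof's gap.

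The load-bearing step is ``by symmetry they meet again at $q_{k,2k-3,4}$ in $L_{k,k-1,4}$'', together with its iterates, and nothing you cite yields it. Lemma 1.27 only says that the two branches leaving the axis at $q_{k,2k-2,4}$ are mirror images, so if one returns to the real axis the other returns at the same point. It forces neither a return nor a return inside the specific interval $L_{k,k-1,4}$. The same objection applies to your use of Lemma 1.27 to produce $q_{k,2k-1,4}$: symmetric curves need not meet the axis at all.

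Along these branches the gain $K=|p/p'|$ increases strictly. They can just as well end at a conjugate pair of non-real zeros of $p'$, which are zeros of (2.2). The paper's own example $p_{e2}(s)=s^{21}+\cdots+s+1$ shows this: the loci leaving the axis at $b_{e2,1}=0.688$ end at the non-real zeros $s'_{e2,19,20}$ of $p_{e2}'$ and never come back. Appealing to Lemma 2.38 does not close the gap, because its proof uses the same ``due to symmetry \dots\ must intersect again'' step.

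Your treatment of the ``main obstacle'' does not work either. Theorem 1.15 only separates $+1$ loci from $-1$ loci, and Theorem 1.28 only gives isolatedness. Neither says which branch continues where after a multiple intersection, and the claim that the two families of additional loci do not interact is unproved. In fact the behaviour you want is generically impossible. At a simple real zero $b$ of $(p'/p)'$ with $c=p'(b)/p(b)\neq 0$, one has $p'/p-c\approx a(s-b)^2$ with $a\in\mathbb{R}$. So near $b$ the locus is the real axis plus the vertical line, and $K$ has a strict extremum along the axis at $b$. A locus arriving along the axis with increasing gain must therefore leave the axis at $b$, and cannot meet additional loci there and then continue along the axis to its partner.

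What can honestly be proved is weaker: trace from $Q_{k,4}$ in the direction of \emph{decreasing} gain.
Along such a path $|p'/p|$ increases strictly, so there are no revisits, no zero of $p'/p$ and no approach to $\infty$, and only finitely many arcs are traversed. Real zeros of $p$ can only be reached along the real axis, since near them the locus is the axis. You avoid them by leaving each real breakaway point along the axis away from the real zero of $p$ bounding its interval, or into $\mathbb{C}^{*}$ at a higher-order point. The path must then end at a non-real zero of $p$. This does not give the rigid interval-by-interval structure the lemma asserts.
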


\begin{proof}
According to the condition of this lemma, by Lemma 2.38, at the end point of the structure, there are a pair of root loci $g_{k,k+1,4}$ and $\overline{g_{k,k+1,4}}$ are emitted from a pair of non-real zeros.

The non-real zeros $s_{k,k+1,4}$ and $\overline{s_{k,k+1,4}}$ are in  $\mathbb{C}^{*}$. $s_{k,k+1,4}$ and $\overline{s_{k,k+1,4}}$ emit the root locus $g_{k,k+1,4}$ and $\overline{g_{k,k+1,4}}$ in  $\mathbb{C}^{*}$ separately. $g_{k,k+1,4}$ and $\overline{g_{k,k+1,4}}$ intersect at a real breakaway point $q_{k,2k-1,4}$ within $L_{k,k,4}$. $g_{k,k+1,4}$ and $\overline{g_{k,k+1,4}}$ depart from $q_{k,2k-1,4}$. By Theorem 1.28, $g_{k,k+1,4}$ and $\overline{g_{k,k+1,4}}$ cannot extend in the same direction, and they must extend in opposite directions along the real axis.  Suppose $g_{k,k+1,4}$ extends towards $t_{k,k,4}$. $\overline{g_{k,k+1,4}}$ maybe ends at  zero  $z_{k,k,4}$. $\overline{g_{k,k+1,4}}$ maybe ends at the other zero of (2.2).

If $g_{k,k+1,4}$ does not intersect with the additional root loci at real breakaway points on the real axis, and if $g_{k,k,4}$ does not intersect with the additional root loci at real breakaway points on the real axis, both $g_{k,k,4}$ and $g_{k,k+1,4}$ extend towards each other, so $g_{k,k+1,4}$ intersects with $g_{k,k,4}$ at a real breakaway point $q_{k,2k-2,4}$ within $L_{k,k,4}$.

If $g_{k,k+1,4}$ does not intersect with the additional root loci at real breakaway points on the real axis, and if $g_{k,k,4}$  intersects with the additional root loci at real breakaway points on the real axis, the additional root loci do not contain $g_{k,k+1,4}$. After $g_{k,k,4}$ intersects with the additional root loci on the real axis, both $g_{k,k,4}$ and $g_{k,k+1,4}$ extend towards each other, so $g_{k,k+1,4}$ intersects with $g_{k,k,4}$ at a real breakaway point $q_{k,2k-2,4}$ within $L_{k,k,4}$.

If $g_{k,k+1,4}$ intersects with the additional root loci at real breakaway points on the real axis, the additional root loci do not contain $g_{k,k,4}$. If $g_{k,k,4}$  intersects with the additional root loci at real breakaway points on the real axis, the additional root loci do not contain $g_{k,k+1,4}$. After $g_{k,k+1,4}$ intersects with the additional root loci on the real axis, and after $g_{k,k,4}$ intersects with the additional root loci on the real axis, $g_{k,k+1,4}$ intersects with $g_{k,k,4}$ at  the real breakaway point $q_{k,2k-2,4}$  in $L_{k,k,4}$.

If $g_{k,k+1,4}$ intersects with the additional root loci at real breakaway points on the real axis, the additional root loci do not contain $g_{k,k,4}$. If $g_{k,k,4}$ does not intersect with the additional root loci at real breakaway points on the real axis, after $g_{k,k+1,4}$ intersects with the additional root loci on the real axis, $g_{k,k+1,4}$ intersects with $g_{k,k,4}$ at  the real breakaway point $q_{k,2k-2,4}$  in $L_{k,k,4}$.

At point $q_{k,2k-2,4}$, $g_{k,k,4}$ and $g_{k,k+1,4}$ leave the real axis and enters $\mathbb{C}^{*}$.

Due to symmetry about the real axis, $g_{k,k+1,4}$ and $g_{k,k,4}$ which have left the real axis must intersect again at a real breakaway point $q_{k,2k-3,4}$  within $L_{k,k-1,4}$. Therefore, $g_{k,k,4}$ and $g_{k,k+1,4}$ extend in $\mathbb{C}^{*}$ from $q_{k,2k-2,4}$ to $q_{k,2k-3,4}$.

$g_{k,k,4}$ and $g_{k,k+1,4}$ depart from $q_{k,2k-3,4}$. They must extend in opposite directions along the real axis. Suppose $g_{k,k,4}$ extends towards $t_{k,k-1,4}$. $g_{k,k+1,4}$ maybe ends at $z_{k,k-1,4}$. $g_{k,k+1,4}$ maybe ends at the other zero of (2.2).

If $g_{k,k,4}$ does not intersect with the additional root loci at real breakaway points on the real axis, and if $g_{k,k-1,4}$ does not intersect with the additional root loci at real breakaway points on the real axis, both $g_{k,k-1,4}$ and $g_{k,k,4}$ extend towards each other, so $g_{k,k,4}$ intersects with $g_{k,k-1,4}$ at a real breakaway point $q_{k,2k-4,4}$ within $L_{k,k-1,4}$.

If $g_{k,k,4}$ does not intersect with the additional root loci at real breakaway points on the real axis, and if $g_{k,k-1,4}$  intersects with the additional root loci at real breakaway points on the real axis, the additional root loci do not contain $g_{k,k,4}$. After $g_{k,k-1,4}$ intersects with the additional root loci on the real axis, both $g_{k,k-1,4}$ and $g_{k,k,4}$ extend towards each other, so $g_{k,k,4}$ intersects with $g_{k,k-1,4}$ at a real breakaway point $q_{k,2k-4,4}$ within $L_{k,k-1,4}$.

If $g_{k,k,4}$ intersects with the additional root loci at real breakaway points on the real axis, the additional root loci do not contain $g_{k,k-1,4}$. If $g_{k,k-1,4}$  intersects with the additional root loci at real breakaway points on the real axis, the additional root loci do not contain $g_{k,k,4}$. After $g_{k,k,4}$ intersects with the additional root loci on the real axis, and after $g_{k,k-1,4}$ intersects with the additional root loci on the real axis, $g_{k,k,4}$ intersects with $g_{k,k-1,4}$ at  the real breakaway point $q_{k,2k-4,4}$  in $L_{k,k-1,4}$.

If $g_{k,k,4}$ intersects with the additional root loci at real breakaway points on the real axis, the additional root loci do not contain $g_{k,k-1,4}$. If $g_{k,k-1,4}$ does not intersect with the additional root loci at real breakaway points on the real axis, after $g_{k,k,4}$ intersects with the additional root loci on the real axis, $g_{k,k,4}$  intersects with $g_{k,k-1,4}$ at  the real breakaway point $q_{k,2k-4,4}$  in $L_{k,k-1,4}$.

At point $q_{k,2k-4,4}$, $g_{k,k-1,4}$ and $g_{k,k,4}$ leave the real axis and enter  $\mathbb{C}^{*}$.

Due to symmetry about the real axis, $g_{k,k-1,4}$ and $g_{k,k,4}$ which have left the real axis must intersect again at a real breakaway point $q_{k,2k-5,4}$. Therefore, $g_{k,k-1,4}$ and $g_{k,k,4}$ extend in $\mathbb{C}^{*}$ from $q_{k,2k-4,4}$ to $q_{k,2k-5,4}$. Let $g_{k,k-1,4}$ extend to $t_{k,k-2,4}$. $g_{k,k,4}$ maybe ends at $z_{k,k-2,4}$. $g_{k,k,4}$ maybe ends at the other zero of (2.2).  $g_{k,k-1,4}$ intersects with $g_{k,k-2,4}$.

When such a situation appears, starting from the root locus $g_{k,1,4}$ emitted from the real zero $t_{k,1,4}$ of $p(s)$, the third kind of non-standard linked root loci terminate and do not extend further. The terminus of the third kind of non-standard linked root loci is composed of the root loci $g_{k,k+1,4}$ and $\overline{g_{k,k+1,4}}$.
The root loci $g_{k,1,4}$, $g_{k,2,4}$, $\cdots$, $g_{k,k+1,4}$, and $\overline{g_{k,k+1,4}}$  and the additional root loci form the third kind of non-standard linked root loci.

According to the previously proved result of the third kind of non-standard linked root loci, the root locus $g_{k,1,4}$ at the beginning of the third kind of non-standard linked root loci and the root loci $g_{k,k+1,4}$ and $\overline{g_{k,k+1,4}}$ at the end of the third kind of non-standard linked root loci, by the root loci $g_{k,j_1,4}$ of real zeros in the middle, must be connected in the third kind of non-standard linked root loci. The beginning of the third kind of non-standard linked root loci is a root locus $g_{k,1,4}$ emitted from a real zero $t_{k,1,4}$, and $g_{k,1,4}$ leaves the real axis and enters  $\mathbb{C}^{*}$. The end of the third kind of non-standard linked root loci must be a pair of root loci $g_{k,k+1,4}$ and $\overline{g_{k,k+1,4}}$. The root loci $g_{k,k+1,4}$ and $\overline{g_{k,k+1,4}}$ intersect on the real axis, and they are emitted from the non-real zeros $s_{k,k+1,4}$ and $\overline{s_{k,k+1,4}}$.

In Lemma 2.39, there are $k+2$ main root loci in the third kind of non-standard linked root loci, and $k$ intervals between zeros and poles of (2.2).

The same as the case in Lemma 2.17, the real breakaway points may coincide. This overlapping situation has no impact on the proof or the results of this lemma. Since we are unable to provide the situation of "the additional root loci," the exact situation of the real breakaway points overlapping cannot be accurately given either. We only need to know that such a situation exists.
\end{proof}

To make it clear for us to explain the concept of two main lines of the third kind of non-standard linked root loci below, it starts from the terminal root locus and follows all the way back to the beginning root locus.

The non-real zeros $s_{k,k+1,4}$ and $\overline{s_{k,k+1,4}}$ are in  $\mathbb{C}^{*}$. $s_{k,k+1,4}$ and $\overline{s_{k,k+1,4}}$ emit the root locus $g_{k,k+1,4}$ and $\overline{g_{k,k+1,4}}$ in  $\mathbb{C}^{*}$ separately. $g_{k,k+1,4}$ and $\overline{g_{k,k+1,4}}$ intersect at a real breakaway point $q_{k,2k-1,4}$ within $L_{k,k,4}$.

$g_{k,k+1,4}$ and $\overline{g_{k,k+1,4}}$ depart from $q_{k,2k-1,4}$. $g_{k,k+1,4}$ and $\overline{g_{k,k+1,4}}$ extend in opposite directions along the real axis. Let $g_{k,k+1,4}$  extend to $t_{k,k,4}$. $\overline{g_{k,k+1,4}}$ maybe ends at  zero  $z_{k,k,4}$. $\overline{g_{k,k+1,4}}$ maybe ends at the other zero of (2.2).

If $g_{k,k+1,4}$  does not intersect with the additional root loci at real breakaway points on the real axis, and if $g_{k,k,4}$ does not intersect with the additional root loci at real breakaway points on the real axis, $g_{k,k+1,4}$ intersects with $g_{k,k,4}$ at a real breakaway point $q_{k,2k-2,4}$ within $L_{k,k,4}$. At point $q_{k,2k-2,4}$, $g_{k,k,4}$ and $g_{k,k+1,4}$ leave the real axis and enter $\mathbb{C}^{*}$. In this situation, there doesn't exist an instance of the first type of intersection with the additional root loci and an instance of the second type of intersection with the additional root loci.

If $g_{k,k+1,4}$  does not intersect with the additional root loci at real breakaway points on the real axis, and if $g_{k,k,4}$  intersects with the additional root loci at real breakaway points on the real axis, the additional root loci do not contain $g_{k,k+1,4}$. After $g_{k,k,4}$ intersects with the additional root loci on the real axis, $g_{k,k+1,4}$ intersects with $g_{k,k,4}$ at a real breakaway point $q_{k,2k-2,4}$ within $L_{k,k,4}$. At point $q_{k,2k-2,4}$, $g_{k,k,4}$ and $g_{k,k+1,4}$ leave the real axis and enter $\mathbb{C}^{*}$. In this situation, there exists  one instance of the second type of intersection with the additional root loci.

If $g_{k,k+1,4}$  intersects with the additional root loci at real breakaway points on the real axis, the additional root loci do not contain $g_{k,k,4}$. If $g_{k,k,4}$ intersects with the additional root loci at real breakaway points on the real axis, the additional root loci do not contain $g_{k,k+1,4}$. After $g_{k,k+1,4}$  intersects with the additional root loci on the real axis, and after $g_{k,k,4}$ intersects with the additional root loci on the real axis, $g_{k,k+1,4}$ intersects with $g_{k,k,4}$ at a real breakaway point $q_{k,2k-2,4}$ within $L_{k,k,4}$. At point $q_{k,2k-2,4}$, $g_{k,k,4}$ and $g_{k,k+1,4}$ leave the real axis and enter $\mathbb{C}^{*}$. In this situation, there exists at least one instance of the first type of intersection with the additional root loci and one instance of the second type of intersection with the additional root loci.

If $g_{k,k+1,4}$  intersects with the additional root loci at real breakaway points on the real axis, the additional root loci do not contain $g_{k,k,4}$. If $g_{k,k,4}$ does not intersect with the additional root loci at real breakaway points on the real axis, after $g_{k,k+1,4}$ intersects with the additional root loci on the real axis, $g_{k,k+1,4}$ intersects with $g_{k,k,4}$ at a real breakaway point $q_{k,2k-2,4}$ within $L_{k,k,4}$. At point $q_{k,2k-2,4}$, $g_{k,k,4}$ and $g_{k,k+1,4}$ leave the real axis and enter $\mathbb{C}^{*}$. In this situation, there exists  one instance of the first type of intersection with the additional root loci.

Due to symmetry about the real axis, $g_{k,k+1,4}$ and $g_{k,k,4}$ which have left the real axis must intersect again at a real breakaway point $q_{k,2k-3,4}$ within $L_{k,k-1,4}$. Therefore, $g_{k,k,4}$ and $g_{k,k+1,4}$ extend in $\mathbb{C}^{*}$ from $q_{k,2k-2,4}$ to $q_{k,2k-3,4}$.
$g_{k,k,4}$ and $g_{k,k+1,4}$ depart from $q_{k,2k-3,4}$.  They must extend in opposite directions along the real axis. Let $g_{k,k,4}$ extend to $t_{k,k-1,4}$. $g_{k,k+1,4}$ maybe ends at $z_{k,k-1,4}$. $g_{k,k+1,4}$ maybe ends at the other zero of (2.2).

If $g_{k,k,4}$  does not intersect with the additional root loci at real breakaway points on the real axis, and if $g_{k,k-1,4}$ does not intersect with the additional root loci at real breakaway points on the real axis, $g_{k,k,4}$ intersects with $g_{k,k-1,4}$ at  the real breakaway point $q_{k,2k-4,4}$  in $L_{k,k-1,4}$. At point $q_{k,2k-4,4}$, $g_{k,k-1,4}$ and $g_{k,k,4}$ leave the real axis and enter  $\mathbb{C}^{*}$. In this situation, there doesn't exist an instance of the first type of intersection with the additional root loci and an instance of the second type of intersection with the additional root loci.

If $g_{k,k,4}$ does not intersect with the additional root loci at real breakaway points on the real axis, and if  $g_{k,k-1,4}$  intersects with the additional root loci at real breakaway points on the real axis, the additional root loci do not contain $g_{k,k,4}$. After $g_{k,k-1,4}$ intersects with the additional root loci on the real axis, $g_{k,k,4}$ intersects with $g_{k,k-1,4}$ at  the real breakaway point $q_{k,2k-4,4}$  in $L_{k,k-1,4}$. At point $q_{k,2k-4,4}$, $g_{k,k-1,4}$ and $g_{k,k,4}$ leave the real axis and enter  $\mathbb{C}^{*}$. In this situation, there exists  one instance of the second type of intersection with the additional root loci.

If $g_{k,k,4}$ intersects with the additional root loci at real breakaway points on the real axis, the additional root loci do not contain $g_{k,k-1,4}$. If $g_{k,k-1,4}$  intersects with the additional root loci at real breakaway points on the real axis, the additional root loci do not contain $g_{k,k,4}$. After $g_{k,k,4}$ intersects with the additional root loci on the real axis, and after $g_{k,k-1,4}$ intersects with the additional root loci on the real axis, $g_{k,k,4}$ intersects with $g_{k,k-1,4}$ at  the real breakaway point $q_{k,2k-4,4}$  in $L_{k,k-1,4}$. At point $q_{k,2k-4,4}$, $g_{k,k-1,4}$ and $g_{k,k,4}$ leave the real axis and enter  $\mathbb{C}^{*}$. In this situation, there exists at least one instance of the first type of intersection with the additional root loci and one instance of the second type of intersection with the additional root loci.

If $g_{k,k,4}$ intersects with the additional root loci at real breakaway points on the real axis, the additional root loci do not contain $g_{k,k-1,4}$. If $g_{k,k-1,4}$ does not intersect with the additional root loci at real breakaway points on the real axis, after $g_{k,k,4}$ intersects with the additional root loci on the real axis, $g_{k,k,4}$ intersects with $g_{k,k-1,4}$ at  the real breakaway point $q_{k,2k-4,4}$  in $L_{k,k-1,4}$. At point $q_{k,2k-4,4}$, $g_{k,k-1,4}$ and $g_{k,k,4}$ leave the real axis and enter  $\mathbb{C}^{*}$. In this situation, there exists  one instance of the first type of intersection with the additional root loci.

Due to symmetry about the real axis, $g_{k,k-1,4}$ and $g_{k,k,4}$ which have left the real axis must intersect again at a real breakaway point $q_{k,2k-5,4}$ within $L_{k,k-2,4}$. Therefore, $g_{k,k-1,4}$ and $g_{k,k,4}$ extend in $\mathbb{C}^{*}$ from $q_{k,2k-4,4}$ to $q_{k,2k-5,4}$.

$g_{k,j_1+1,4}$ and $g_{k,j_1+2,4}$ intersect at the real breakaway point $q_{k,2j_1,4}$ in $L_{k,j_1+1,4}$. From $q_{k,2j_1,4}$, $g_{k,j_1+1,4}$ and $g_{k,j_1+2,4}$ leave the real axis and enters $\mathbb{C}^{*}$. $g_{k,j_1+1,4}$ and $g_{k,j_1+2,4}$ intersect at  the real breakaway point $q_{k,2j_1-1,4}$  in $L_{k,j_1,4}$. $g_{k,j_1+1,4}$ and $g_{k,j_1+2,4}$ extend in  $\mathbb{C}^{*}$ from $q_{k,2j_1,4}$ to $q_{k,2j_1-1,4}$. From $q_{k,2j_1-1,4}$, $g_{k,j_1+1,4}$ and $g_{k,j_1+2,4}$ must extend in opposite directions along the real axis. Let $g_{k,j_1+1,4}$ extend to $t_{k,j_1,4}$. $g_{k,j_1+2,4}$ maybe ends at zero  $z_{k,j_1,4}$. $g_{k,j_1+2,4}$ maybe ends at the other zero of (2.2).

If $g_{k,j_1+1,4}$  does not intersect with the additional root loci at real breakaway points on the real axis, and if $g_{k,j_1,4}$ does not intersect with the additional root loci at real breakaway points on the real axis, $g_{k,j_1+1,4}$  intersects with $g_{k,j_1,4}$ at  the real breakaway point $q_{k,2j_1-2,4}$  in $L_{k,j_1,4}$. At point $q_{k,2j_1-2,4}$, $g_{k,j_1,4}$ and $g_{k,j_1+1,4}$ leave the real axis and enters $\mathbb{C}^{*}$. In this situation, there doesn't exist an instance of the first type of intersection with the additional root loci and an instance of the second type of intersection with the additional root loci.

If $g_{k,j_1+1,4}$  does not intersect with the additional root loci at real breakaway points on the real axis, and if $g_{k,j_1,4}$  intersects with the additional root loci at real breakaway points on the real axis, the additional root loci do not contain $g_{k,j_1+1,4}$. After $g_{k,j_1,4}$ intersects with the additional root loci on the real axis, $g_{k,j_1+1,4}$  intersects with $g_{k,j_1,4}$ at  the real breakaway point $q_{k,2j_1-2,4}$  in $L_{k,j_1,4}$. At point $q_{k,2j_1-2,4}$, $g_{k,j_1,4}$ and $g_{k,j_1+1,4}$ leave the real axis and enters $\mathbb{C}^{*}$. In this situation, there exists  one instance of the second type of intersection with the additional root loci.

If $g_{k,j_1+1,4}$ intersects with the additional root loci at real breakaway points on the real axis, the additional root loci do not contain $g_{k,j_1,4}$. If $g_{k,j_1,4}$  intersects with the additional root loci at real breakaway points on the real axis, the additional root loci do not contain $g_{k,j_1+1,4}$. After $g_{k,j_1+1,4}$ intersects with the additional root loci on the real axis, and after $g_{k,j_1,4}$ intersects with the additional root loci on the real axis, $g_{k,j_1+1,4}$  intersects with $g_{k,j_1,4}$ at  the real breakaway point $q_{k,2j_1-2,4}$  in $L_{k,j_1,4}$. At point $q_{k,2j_1-2,4}$, $g_{k,j_1,4}$ and $g_{k,j_1+1,4}$ leave the real axis and enters $\mathbb{C}^{*}$. In this situation, there exists at least one instance of the first type of intersection with the additional root loci and one instance of the second type of intersection with the additional root loci.

If $g_{k,j_1+1,4}$ intersects with the additional root loci at real breakaway points on the real axis, the additional root loci do not contain $g_{k,j_1,4}$. If $g_{k,j_1,4}$ does not intersect with the additional root loci at real breakaway points on the real axis, after $g_{k,j_1+1,4}$ intersects with the additional root loci on the real axis, $g_{k,j_1+1,4}$  intersects with $g_{k,j_1,4}$ at  the real breakaway point $q_{k,2j_1-2,4}$  in $L_{k,j_1,4}$. At point $q_{k,2j_1-2,4}$, $g_{k,j_1,4}$ and $g_{k,j_1+1,4}$ leave the real axis and enters $\mathbb{C}^{*}$. In this situation, there exists  one instance of the first type of intersection with the additional root loci.

Due to symmetry about the real axis, $g_{k,j_1,4}$ and $g_{k,j_1+1,4}$ which have left the real axis must intersect again at a real breakaway point $q_{k,2j_1-3,4}$  in $L_{k,j_1-1,4}$. $g_{k,j_1,4}$ and $g_{k,j_1+1,4}$ extend in  $\mathbb{C}^{*}$ from $q_{k,2j_1-2,4}$ to $q_{k,2j_1-3,4}$. $j_1=1,2,\cdots,k-2$.

$g_{k,1,4}$ and the root locus $g_{k,2,4}$ intersect, along with other possible root loci, at $Q_{k,4}$  in $L_{k,1,4}$.

The third kind of non-standard linked root loci consist of two main lines with multiple branches. Except for the terminal root loci emitted from the non-real poles of (2.2), the rest main root loci  originate from the real poles of (2.2).

In  $\mathbb{C}^{*}$, the section of the root locus $g_{k,k+1,4}$ from the non-real zero $s_{k,k+1,4}$ to the real breakaway point $q_{k,2k-1,4}$ constitutes the terminus part of two main lines. In  $\mathbb{C}^{*}$, the section of the root locus $\overline{g_{k,k+1,4}}$ from the non-real zero $\overline{s_{k,k+1,4}}$ to $q_{k,2k-1,4}$ constitutes the terminus part of two main lines.

If $g_{k,k+1,4}$  does not intersect with the additional root loci on the real axis, then the interval on the real axis from $q_{k,2k-1,4}$ to the real breakaway point $q_{k,2k-2,4}$ constitutes a part of two main lines.

If $g_{k,k+1,4}$  intersects with the additional root loci on the real axis, then, between $q_{k,2k-1,4}$ to the real breakaway point $q_{k,2k-2,4}$, $g_{k,k+1,4}$ intersects with the additional root loci, the part of $g_{k,k+1,4}$ is distributed on the real axis and the other part is symmetrically in $\mathbb{C}^{*}$.

$g_{k,k+1,4}$  extends from $q_{k,2k-1,4}$ to the real breakaway point $q_{k,2k-2,4}$. The part of $g_{k,k+1,4}$  distributed in this interval constitutes a part of two main lines. When $g_{k,k+1,4}$ is distributed in $\mathbb{C}^{*}$, the $g_{k,k+1,4}$ and the part which is symmetric about the real axis are both a part of two main lines.

Here, for both cases, we introduce two concepts. If $g_{k,k+1,4}$  does not intersect with the additional root loci on the real axis, the interval on the real axis from $q_{k,2k-1,4}$ to the real breakaway point $q_{k,2k-2,4}$ is a part of $g_{k,k+1,4}$. We call the segment of the root locus of $g_{k,k+1,4}$  from $q_{k,2k-1,4}$ to the real breakaway point $q_{k,2k-2,4}$ as the root locus segment on the real axis.

If $g_{k,k+1,4}$  intersects with the additional root loci on the real axis, then in the interval on the real axis from $q_{k,2k-1,4}$ to the real breakaway point $q_{k,2k-2,4}$, $g_{k,k+1,4}$ is distributed both on the real axis and symmetrically in $\mathbb{C}^{*}$. We call the segment of the root locus of $g_{k,k+1,4}$  from $q_{k,2k-1,4}$ to the real breakaway point $q_{k,2k-2,4}$ as the root locus segment on the real axis and outside the real axis.

In the third kind of non-standard linked root loci, there are $k$ intervals between adjacent real zeros and real poles of (2.2). It is also established that within each of the interval $L_{k,j_1,4}$ ($j_1=1,\cdots,k$) between the adjacent real zero $z_{k,j_1,4}$ and real pole $t_{k,j_1,4}$ of (2.2), there exists a segment extending from the real breakaway point $q_{k,2j_1-2,4}$ to the real breakaway point $q_{k,2j_1-1,4}$. For the third kind of non-standard linked root loci, the distribution of root loci on this segment presents one of two possible cases. In contrast, for the standard linked root loci, the distribution of root loci on this segment is uniquely determined. For the third kind of non-standard linked root loci, we provide two specific definitions for the two possible cases of the distribution of root loci on this segment.

\begin{definition}
In $L_{k,j_1,4}$, if $g_{k,j_1+1,4}$  does not intersect with the additional root loci on the real axis, the interval on the real axis from $q_{k,2j_1-1,4}$ to $q_{k,2j_1-2,4}$ is a part of $g_{k,j_1+1,4}$ . We refer to the segment of the root locus of $g_{k,j_1+1,4}$  from $q_{k,2j_1-1,4}$ to $q_{k,2j_1-2,4}$ as the root locus segment on the real axis. $j_1=1,2,\cdots,k-1$.
\end{definition}
In $L_{k,k,4}$, if $g_{k,k+1,4}$ does not intersect with the additional root loci on the real axis, the interval on the real axis from $q_{k,2k-1,4}$ to $q_{k,2k-2,4}$ is a part of $g_{k,k+1,4}$. We refer to the segment of the root locus of $g_{k,k+1,4}$  from $q_{k,2k-1,4}$ to $q_{k,2k-2,4}$ as the root locus segment on the real axis.

\begin{definition}
In $L_{k,j_1,4}$, if $g_{k,j_1+1,4}$   intersects with the additional root loci on the real axis,  between $q_{k,2j_1-1,4}$ and $q_{k,2j_1-2,4}$, the part of $g_{k,j_1+1,4}$  is distributed both on the real axis and the other part is symmetrically to the additional root locus in $\mathbb{C}^{*}$. We refer to the segment of the root locus of $g_{k,j_1+1,4}$   from $q_{k,2j_1-1,4}$ to $q_{k,2j_1-2,4}$ as the root locus segment on the real axis and outside the real axis. $j_1=1,2,\cdots,k-1$.
\end{definition}
In $L_{k,k,4}$, if $g_{k,k+1,4}$ intersects with the additional root loci on the real axis, between $q_{k,2k-1,4}$ and $q_{k,2k-2,4}$, the part of $g_{k,k+1,4}$  is distributed both on the real axis and the other part is symmetrically to the additional toot locus in $\mathbb{C}^{*}$. We refer to the segment of the root locus of $g_{k,k+1,4}$ from $q_{k,2k-1,4}$ to $q_{k,2k-2,4}$ as the root locus segment on the real axis and outside the real axis.

Therefore, the interval on the real axis from $q_{k,2k-1,4}$ to the real breakaway point $q_{k,2k-2,4}$ constitutes a part of two main lines. This part is the root locus segment on the real axis or the root locus segment on the real axis and outside the real axis.

In $\mathbb{C} ^{*}$, the section of the root locus $g_{k,k,4}$ from $q_{k,2k-2,4}$ to the real breakaway point $q_{k,2k-3,4}$ constitutes a part of two main lines. In $\mathbb{C} ^{*}$, the section of the root locus $g_{k,k+1,4}$ from $q_{k,2k-2,4}$ to $q_{k,2k-3,4}$ constitutes a part of two main lines.

The interval on the real axis from $q_{k,2k-3,4}$ to the real breakaway point $q_{k,2k-4,5}$ constitutes a part of two main lines. This part is the root locus segment on the real axis or the root locus segment on the real axis and outside the real axis.

In $\mathbb{C} ^{*}$, the section of the root locus $g_{k,k-1,4}$ from $q_{k,2k-4,4}$ to the real breakaway point $q_{k,2k-5,4}$ constitutes a part of two main lines. In $\mathbb{C} ^{*}$, the section of the root locus $g_{k,k,4}$ from $q_{k,2k-4,4}$ to $q_{k,2k-5,4}$ constitutes a part of two main lines.

In $\mathbb{C} ^{*}$, the section of the root locus $g_{k,j_1+1,4}$  from $q_{k,2j_1,4}$ to the real breakaway point $q_{k,2j_1-1,4}$ constitutes a part of two main lines. In $\mathbb{C} ^{*}$, the section of the root locus $g_{k,j_1+2,4}$ from $q_{k,2j_1,4}$ to $q_{k,2j_1-1,4}$ constitutes a part of two main lines.

The interval on the real axis from $q_{k,2j_1-1,4}$ to the real breakaway point $q_{k,2j_1-2,4}$ constitutes a part of two main lines. This part is the root locus segment on the real axis or the root locus segment on the real axis and outside the real axis.

In $\mathbb{C} ^{*}$, the section of the root locus $g_{k,j_1,4}$ from $q_{k,2j_1-2,4}$ to the real breakaway point $q_{k,2j_1-3,4}$ constitutes a part of two main lines. In $\mathbb{C} ^{*}$, the section of the root locus $g_{k,j_1+1,4}$ from $q_{k,2j_1-2,4}$ to $q_{k,2j_1-3,4}$ constitutes a part of two main lines. $j_1=1,\cdots,k-2$.

In $\mathbb{C} ^{*}$, the section of the root locus $g_{k,2,4}$ from $q_{k,2,4}$ to the real breakaway point $q_{k,1,4}$ constitutes a part of two main lines. In $\mathbb{C} ^{*}$, the section of the root locus $g_{k,3,4}$ from $q_{k,2,4}$ to $q_{k,1,4}$ constitutes a part of two main lines.

The interval on the real axis from $q_{k,1,4}$ to the real breakaway point $Q_{k,4}$ constitutes a part of two main lines. This part is the root locus segment on the real axis or the root locus segment on the real axis and outside the real axis.

All the parts given previously, when connected together, form two main lines of the third kind of non-standard linked root loci. In the third kind of non-standard linked root loci, there are only two main lines. Part of the two main lines is distributed on the real axis. The other part of two main lines distributed in $\mathbb{C}^{*}$, and they are symmetric about the real axis.

The section of $\overline{g_{k,k+1,4}}$ from $q_{k,2k-1,4}$ to $z_{k,k,4}$ constitutes a branch. The section of $g_{k,k,4}$ from $t_{k,k,4}$ to $q_{k,2k-2,4}$ constitutes a branch. And so on $\cdots$. In the third kind of non-standard linked root loci, there are multiple branches.

In Lemma 2.37, in $\mathbb{C} ^{*}$, the section of the root locus $g_{2,3,4}$ from the non-real zero $s_{2,3,4}$ to the real breakaway point $q_{2,3,4}$ constitutes the terminus part of two main lines. In $\mathbb{C} ^{*}$, the section of the root locus $\overline{g_{2,3,4}}$ from the non-real zero $\overline{s_{2,3,4}}$ to $q_{2,3,4}$ constitutes the terminus part of two main lines.

The interval on the real axis from $q_{2,3,4}$ to the real breakaway point $q_{2,2,4}$ constitutes a part of two main lines. This part is the root locus segment on the real axis or the root locus segment on the real axis and outside the real axis.

In $\mathbb{C} ^{*}$, the section of the root locus $g_{2,2,4}$ from $q_{2,2,4}$ to the real breakaway point $q_{2,1,4}$ constitutes a part of two main lines. In $\mathbb{C} ^{*}$, the section of the root locus $g_{2,3,4}$ from $q_{2,2,4}$ to $q_{2,1,4}$ constitutes a part of two main lines.

The interval on the real axis from $q_{2,1,4}$ to the real breakaway point $Q_{2,4}$ constitutes a part of two main lines. This part is the root locus segment on the real axis or the root locus segment on the real axis and outside the real axis.

All the parts given previously, when connected together, form two main lines of the third kind of non-standard linked root loci. In the third kind of non-standard linked root loci, there are only two main lines. Part of the two main lines is distributed on the real axis. The other part of two main lines distributed in $\mathbb{C}^{*}$, and they are symmetric about the real axis.

The section of $\overline{g_{2,3,4}}$ from $q_{2,3,4}$ to $z_{2,2,4}$ constitutes a branch. The section of $g_{2,2,4}$ from $t_{2,2,4}$ to $q_{2,2,4}$ constitutes a branch. The section of $g_{2,3,4}$ from $q_{2,1,4}$ to $z_{2,1,4}$ constitutes a branch. In the third kind of non-standard linked root loci, there are four branches.

Therefore, according to  the above explanations of the two main lines of third kind of non-standard linked root loci given above, we can define the two main lines of the third kind of non-standard linked root loci of (2.2) as follows.
\begin{definition}
The non-real zeros $s_{k,k+1,4}$ and $\overline{s_{k,k+1,4}}$ emit the root loci $g_{k,k+1,4}$ and $\overline{g_{k,k+1,4}}$. In the third kind of non-standard linked root loci, beginning at the non-real zeros  $s_{k,k+1,4}$ and $\overline{s_{k,k+1,4}}$, the two lines of $g_{k,k+1,4}$ and $\overline{g_{k,k+1,4}}$ that extend all the way to $Q_{k,4}$ constitute the two main lines of the third kind of non-standard linked root loci of (2.2).
\end{definition}

\begin{lemma}
In the third kind of non-standard linked root loci, a root locus $g_{k,k+1,4}$ or $\overline{g_{k,k+1,4}}$ originating from a non-real zero $s_{k,k+1,4}$ or $\overline{s_{k,k+1,4}}$ can be distributed on two main lines of the third kind of non-standard linked root loci. The root locus $g_{k,k+1,4}$ or $\overline{g_{k,k+1,4}}$ intersects at the real breakaway point $Q_{k,4}$. The root locus $g_{k,k+1,4}$ or $\overline{g_{k,k+1,4}}$ leaves from $Q_{k,4}$ then enters $\mathbb{C}^{*}$.
\end{lemma}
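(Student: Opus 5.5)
The proof of this lemma follows the template already used for Lemma 2.18 (standard linked root loci), Lemma 2.26 (first kind) and Lemma 2.35 (second kind), with the description of the two main lines of the third kind of non-standard linked root loci in Definition 2.43 substituted for the earlier ones. I would first fix the structure provided by Lemma 2.38 and Lemma 2.39. There is a pair of non-real zeros $s_{k,k+1,4}$, $\overline{s_{k,k+1,4}}$ whose root loci $g_{k,k+1,4}$, $\overline{g_{k,k+1,4}}$ meet at $q_{k,2k-1,4}$ in $L_{k,k,4}$. There is a chain of real-zero loci $g_{k,j_1,4}$ joined at $q_{k,2j_1-1,4}$, $q_{k,2j_1-2,4}$. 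The chain begins at $Q_{k,4}$ in $L_{k,1,4}$. I would then record the alternation pattern. Each real segment from $q_{k,2j_1-1,4}$ to $q_{k,2j_1-2,4}$ is either a root locus segment on the real axis (Definition 2.41) or a root locus segment on the real axis and outside the real axis (Definition 2.42). Each segment in $\mathbb{C}^{*}$ from $q_{k,2j_1-2,4}$ to $q_{k,2j_1-3,4}$ is a symmetric pair.

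Next I would exhibit the distributions of the branch loci that free the main lines for $g_{k,k+1,4}$. If $\overline{g_{k,k+1,4}}$ ends at $z_{k,k,4}$ and $g_{k,l_1,4}$ ends at $z_{k,l_1-1,4}$ for $l_1=2,\dots,k$, then every branch is accounted for by a real-zero locus. By the ``common root loci'' property at each breakaway point, $g_{k,k+1,4}$ may then be continued along the main line. I would trace it explicitly. It runs from $s_{k,k+1,4}$ to $q_{k,2k-1,4}$, then along the real (or real-and-outside) segment to $q_{k,2k-2,4}$. It passes into $\mathbb{C}^{*}$ and reaches $q_{k,2k-3,4}$, and so on down to $q_{k,1,4}$. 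From there it goes along the last real segment to $Q_{k,4}$, and then leaves into $\mathbb{C}^{*}$. The symmetric choice, with $g_{k,k+1,4}$ ending at $z_{k,k,4}$, gives the same trace for $\overline{g_{k,k+1,4}}$. Both traces are justified by the symmetry of $2q\pi$ and $2q\pi+\pi$ degree loci (Theorem 1.27) and by the non-overlap of Theorem 1.28, which forces opposite extension after each real breakaway point.

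The main obstacle is the segments where both the first and second type of intersection with the additional root loci occur. Here the real segment is partly off the axis, and the main-line locus shares breakaway points with two unrelated families of additional root loci. I must argue that the main line still passes through. For the first-type intersections I would reuse the argument of Lemma 2.26: the main-line locus itself meets the additional loci and continues, so it remains on the main line. For the second-type intersections I would reuse the observation made before the third kind was introduced. There, the real-zero locus $g_{k,j_1,4}$ meets the additional loci while still extending from $t_{k,j_1,4}$ toward $q_{k,2j_1-2,4}$, so it stays a branch and does not alter the main line. These two mechanisms act at different locations, and the two families of additional root loci do not connect. Combining them segment by segment therefore yields that $g_{k,k+1,4}$ or $\overline{g_{k,k+1,4}}$ reaches $Q_{k,4}$ and leaves it into $\mathbb{C}^{*}$, which concludes the lemma.
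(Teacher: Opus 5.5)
Your proposal is correct within the paper's framework and follows essentially the same route as the paper's proof: you reassign the branch endpoints ($\overline{g_{k,k+1,4}}$ ending at $z_{k,k,4}$ and each $g_{k,l_1,4}$ ending at $z_{k,l_1-1,4}$), invoke the ``common root loci'' property, and trace $g_{k,k+1,4}$ from $s_{k,k+1,4}$ through $q_{k,2k-1,4},\dots,q_{k,1,4}$ to $Q_{k,4}$ and on into $\mathbb{C}^{*}$, with the symmetric assignment giving the same conclusion for $\overline{g_{k,k+1,4}}$. Your extra paragraph on the first- and second-type intersections goes beyond what the paper's proof spells out (the paper only allows each real segment to be ``on the real axis'' or ``on the real axis and outside the real axis''), and it agrees with the paper's later remark that $g_{k,j_1,4}$ remains a branch; note, however, that the second-type intersections lie on those branches from $t_{k,j_1,4}$ to $q_{k,2j_1-2,4}$, not on the main line itself.
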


\begin{proof}
In the explanation of two main lines, for the root loci that eventually reach the real zeros in (2.2) in the third kind of non-standard linked root loci, one of their distribution as follows. The root locus $\overline{g_{k,k+1}}$ ends at $z_{k,k,4}$.  The root locus $g_{k,l,4}$ ends at $z_{k,l-2,4}$. $l=3,\cdots, k+1$. Based on this distributed way, neither the root locus $g_{k,k+1,4}$ nor $\overline{g_{k,k+1,4}}$ directly intersect at $Q_{k,4}$.

The intersecting root loci are referred to as "common root loci". They possess "common" properties. Beginning at the non-real zeros $s_{k,k+1,4}$ and $\overline{s_{k,k+1,4}}$, the two main lines of the third kind of non-standard linked root loci both extend towards the breakaway point $Q_{k,4}$. Therefore, the root locus emitted from the non-real zero $s_{k,k+1,4}$ or $\overline{s_{k,k+1,4}}$ can be distributed on two main lines of the third kind of non-standard linked root loci. That is, among the root loci intersecting at $Q_{k,4}$, one is $g_{k,k+1,4}$ or $\overline{g_{k,k+1,4}}$.

The root locus $g_{k,k+1,4}$ from a non-real zero $s_{k,k+1,4}$, distributed on two main lines of the third kind of non-standard linked root loci in the following way, and intersect at $Q_{k,4}$.

The root locus $\overline{g_{k,k+1,4}}$ ends at $z_{k,k,4}$. The root locus $g_{k,k,4}$ ends at $z_{k,k-1,4}$. The root locus $g_{k,l_1,4}$ ends at $z_{k,l_1-1,4}$. $l_1=3,\cdots, k-1$. $g_{k,3,4}$ ends at $z_{k,2,4}$. $g_{k,2,4}$ ends at $z_{k,1,4}$. $g_{k,k+1,4}$ must intersect at $Q_{k,4}$.

The root locus $g_{k,k+1,4}$ originates from the non-real zero $s_{k,k+1,4}$ and extends to $q_{k,2k-1,4}$. At $q_{k,2k-1,4}$, $g_{k,k+1,4}$ enters the real axis. $g_{k,k+1,4}$ extends along the root locus segment on the real axis or the root locus segment on the real axis and outside the real axis from $q_{k,2k-1,4}$ to $q_{k,2k-2,4}$. Then $g_{k,k+1,4}$ enters $\mathbb{C}^{*}$ from $q_{k,2k-2,4}$ and extends to $q_{k,2k-3,4}$ in $\mathbb{C}^{*}$. And so on $\cdots$. At $q_{k,3,4}$, $g_{k,k+1,4}$ re-enters the real axis. $g_{k,k+1,4}$ extends along the root locus segment on the real axis or the root locus segment on the real axis and outside the real axis from $q_{k,3,4}$ to $q_{k,2,4}$. Then $g_{k,k+1,4}$ enters $\mathbb{C}^{*}$ from $q_{k,2,4}$ and extends to $q_{k,1,4}$ in $\mathbb{C}^{*}$. At $q_{k,1,4}$, $g_{k,k+1,4}$ re-enters the real axis. $g_{k,k+1,4}$ extends along the root locus segment on the real axis or the root locus segment on the real axis and  outside the real axis from $q_{k,1,4}$ to $Q_{k,4}$. Then $g_{k,k+1,4}$ enters $\mathbb{C}^{*}$ from $Q_{k,4}$.

The root locus $g_{k,k+1,4}$ ends at $z_{k,k,4}$. The root locus $g_{k,k,4}$ ends at $z_{k,k-1,4}$. The root locus $g_{k,l_1,4}$ ends at $z_{k,l_1-1,4}$. $l_1=3,\cdots, k-1$. $g_{k,3,4}$ ends at $z_{k,2,4}$. $g_{k,2,4}$ ends at $z_{k,1,4}$. Under this case, $\overline{g_{k,k+1,4}}$ must intersect at $Q_{k,4}$.

The root locus $\overline{g_{k,k+1,4}}$ originates from the non-real zero $\overline{s_{k,k+1,4}}$ and extends to $q_{k,2k-1,4}$. At $q_{k,2k-1,4}$, $\overline{g_{k,k+1,4}}$ enters the real axis. $\overline{g_{k,k+1,4}}$ extends along the root locus segment on the real axis or the root locus segment on the real axis and outside the real axis from $q_{k,2k-1,4}$ to $q_{k,2k-2,4}$. Then $\overline{g_{k,k+1,4}}$ enters $\mathbb{C}^{*}$ from $q_{k,2k-2,4}$ and extends to $q_{k,2k-3,4}$ in $\mathbb{C}^{*}$. And so on $\cdots$. At $q_{k,3,4}$, $\overline{g_{k,k+1,4}}$ re-enters the real axis. $\overline{g_{k,k+1,4}}$ extends along the root locus segment on the real axis or the root locus segment on the real axis and outside the real axis from $q_{k,3,4}$ to $q_{k,2,4}$. Then $\overline{g_{k,k+1,4}}$ enters $\mathbb{C}^{*}$ from $q_{k,2,4}$ and extends to $q_{k,1,4}$ in $\mathbb{C}^{*}$. At $q_{k,1,4}$, $\overline{g_{k,k+1,4}}$ re-enters the real axis. $\overline{g_{k,k+1,4}}$ extends along the root locus segment on the real axis or the root locus segment on the real axis and outside the real axis from $q_{k,1,4}$ to $Q_{k,4}$. Then $\overline{g_{k,k+1,4}}$ enters $\mathbb{C}^{*}$ from $Q_{k,4}$.

Therefore, in the third kind of non-standard linked root loci,  a root locus $g_{k,k+1,4}$ or $\overline{g_{k,k+1,4}}$ originating from a non-real zero $s_{k,k+1,4}$ or $\overline{s_{k,k+1,4}}$ can be distributed on two main lines of the third non-standard linked root loci. The root locus $g_{k,k+1,4}$ or $\overline{g_{k,k+1,4}}$  intersects at the real breakaway point $Q_{k,4}$. The root locus $g_{k,k+1,4}$ or $\overline{g_{k,k+1,4}}$  leaves from $Q_{k,4}$ then enters $\mathbb{C}^{*}$.
\end{proof}

The third kind of non-standard linked root loci must satisfy six conditions.

The first condition: Two root loci $g_{k,j_1+1,4}$ and $g_{k,j_1+2,4}$ emitted from the real zeros ($t_{k,j_1+1,4}$, $t_{k,j_1+2,4}$). The real zero $t_{k,j_1,4}$ emits a  root locus $g_{k,j_1,4}$. Two root loci $g_{k,k+1,4}$ and $\overline{g_{k,k+1,4}}$ emitted from the non-real zeros ($s_{k,k+1,4}$ or $\overline{s_{k,k+1,4}}$). The real zero $t_{k,k,4}$ emits a root locus $g_{k,k,4}$.

The second condition: The two root loci $g_{k,j_1+1,4}$ and $g_{k,j_1+2,4}$ must intersect at the real breakaway point $q_{k,2j_1-1,4}$ on the real axis. From $q_{k,2j_1-1,4}$, the two root loci $g_{k,j_1+1,4}$ and $g_{k,j_1+2,4}$ extend further along the real axis in opposite directions. The two root loci $g_{k,k+1,4}$ and $\overline{g_{k,k+1,4}}$ must intersect at the real breakaway point $q_{k,2k-1,4}$ on the real axis. From $q_{k,2k-1,4}$, the two root loci $g_{k,k+1,4}$ and $\overline{g_{k,k+1,4}}$ extend further along the real axis in opposite directions. The preceding sentence states a condition, while the following sentence describes a property shared by all pairs of root loci intersecting on the real axis, rather than constituting an additional condition.

The third condition: The root locus $g_{k,j_1+1,4}$ further intersects with the additional root loci on the real axis. The root locus $g_{k,k+1,4}$ further intersects with the additional root loci on the real axis.

The fourth condition: After $g_{k,j_1+1,4}$ intersects with the additional root loci on the real axis, it then intersects with $g_{k,j_1,4}$ on the real axis at the real breakaway point $q_{k,2j_1-2,4}$. After $g_{k,k+1,4}$ intersects with the additional root loci on the real axis, it then intersects with $g_{k,k,4}$ on the real axis at the real breakaway point $q_{k,2k-2,4}$.

The fifth condition: The real zero $t_{k,j_1,4}$ emits a root locus $g_{k,j_1,4}$. This root locus $g_{k,j_1,4}$ intersects with the additional root loci on the real axis.

The sixth condition: After $g_{k,j_1,4}$ intersects with the additional root loci on the real axis, it then intersects with $g_{k,j_1+1,4}$ on the real axis at the real breakaway point $q_{k,2j_1-2}$. After $g_{k,k,4}$ intersects with the additional root loci on the real axis, it then intersects with the root locus $g_{k,k+1,4}$ on the real axis at the real breakaway point $q_{k,2k-2,4}$.

The root loci satisfying these six conditions constitute the third kind of non-standard linked root loci.

After the root locus $g_{k,j_1+1,4}$ intersects with the additional root loci on the real axis, it may then intersect with a root locus $g_{k,j_1,4}$ that does not intersect with the additional root loci on the real axis. This is one scenario. Alternatively, the root locus $g_{k,j_1+1,4}$ may not intersect with the additional root loci on the real axis but may intersect with a root locus $g_{k,j_1,4}$ that does intersect with the additional root loci on the real axis. This is another scenario. It is also possible that after $g_{k,j_1+1,4}$ intersects with the additional root loci on the real axis, it then intersects with a root locus $g_{k,j_1,4}$ that also intersects with the additional root loci on the real axis. This is yet another scenario. Finally, the root locus $g_{k,j_1+1,4}$ may not intersect with the additional root loci on the real axis and may intersect with a root locus $g_{k,j_1,4}$ that also does not intersect with the additional root loci on the real axis. This constitutes a further scenario.

In the third kind of non-standard linked root loci, at least one instance of the first type of intersection with the additional root loci and at least one instance of the second type of intersection with the additional root loci must exist. This is precisely what distinguishes the third kind of non-standard linked root loci from the first and second kinds.

The two main lines of the first kind of non-standard linked root loci are identical to the two main lines of the third kind of non-standard linked root loci. This outcome arises for a specific reason.

In the first kind of non-standard linked root loci, the real zero $t_{k,j_1,2}$ emits the root locus $g_{k,j_1,2}$. Two root loci $g_{k,j_1+1,2}$ and $g_{k,j_1+2,2}$ emitted from the real zeros ($t_{k,j_1+1,2}$, $t_{k,j_1+2,2}$). These two root loci $g_{k,j_1+1,2}$ and $g_{k,j_1+2,2}$ leave the real axis and enter $\mathbb{C}^{*}$ from the breakaway point $q_{k,2j_1,2}$. They intersect at the real breakaway point $q_{k,2j_1-1,2}$. From $q_{k,2j_1-1,2}$, the two root loci $g_{k,j_1+1,2}$ and $g_{k,j_1+2,2}$ extend in opposite directions. If the root locus $g_{k,j_1+1,2}$ intersects with the additional root loci on the real axis, then after this intersection, $g_{k,j_1+1,2}$ subsequently intersects with the root locus $g_{k,j_1,2}$ at the real breakaway point $q_{k,2j_1-2,2}$. From $q_{k,2j_1-2,2}$, the root loci $g_{k,j_1+1,2}$ and $g_{k,j_1,2}$ leave the real axis and enter $\mathbb{C}^{*}$.

The real zero $t_{k,k,2}$ emits the root locus $g_{k,k,2}$. Two root loci $g_{k,k+1,2}$ and $\overline{g_{k,k+1,2}}$ emitted from the non-real zeros ($s_{k,k+1,2}$, $\overline{s_{k,k+1,2}}$). These two root loci intersect at the real breakaway point $q_{k,2k-1,2}$. From $q_{k,2k-1,2}$, the two root loci $g_{k,k+1,2}$ and $\overline{g_{k,k+1,2}}$ extend in opposite directions. If the root locus $g_{k,k+1,2}$ intersects with the additional root loci on the real axis, then after this intersection, $g_{k,k+1,2}$ subsequently intersects with the root locus $g_{k,k,2}$ at the real breakaway point $q_{k,2k-2,2}$. From $q_{k,2k-2,2}$, the root loci $g_{k,k+1,2}$ and $g_{k,k,2}$ leave the real axis and enter $\mathbb{C}^{*}$.

If the root locus $g_{k,j_1+1,2}$ does not intersect with the additional root loci on the real axis, then $g_{k,j_1+1,2}$ directly intersects with the root locus $g_{k,j_1,2}$ at the real breakaway point $q_{k,2j_1-2,2}$. From $q_{k,2j_1-2,2}$, the root loci $g_{k,j_1+1,2}$ and $g_{k,j_1,2}$ leave the real axis and enter $\mathbb{C}^{*}$.

If the root locus $g_{k,k+1,2}$ does not intersect with the additional root loci on the real axis, then $g_{k,k+1,2}$ directly intersects with the root locus $g_{k,k,2}$ at the real breakaway point $q_{k,2k-2,2}$. From $q_{k,2k-2,2}$, the root loci $g_{k,k+1,2}$ and $g_{k,k,2}$ leave the real axis and enter $\mathbb{C}^{*}$.

Based on the earlier result concerning the two main lines of the first kind of non-standard linked root loci, two main lines of the first kind of non-standard linked root loci are as follows: In $\mathbb{C}^{*}$, from $q_{k,2j_1,2}$ to $q_{k,2j_1-1,2}$, two main lines of the first kind of non-standard linked root loci are segments of the two root loci $g_{k,j_1+1,2}$ and $g_{k,j_1+2,2}$. The interval on the real axis from $q_{k,2j_1-1,2}$ to $q_{k,2j_1-2,2}$ constitutes a part of the two main lines. This part may be a root locus segment on the real axis or a root locus segment both on and off the real axis. This interval is a segment of the root locus $g_{k,j_1+1,2}$ or of the additional root loci. In $\mathbb{C}^{*}$, from $q_{k,2j_1-2,2}$ to $q_{k,2j_1-3,2}$, they are segments of the root loci $g_{k,j_1+1,2}$ and $g_{k,j_1,2}$.

In $\mathbb{C}^{*}$, from $s_{k,k+1,2}$ or $\overline{s_{k,k+1,2}}$ to $q_{k,2k-1,2}$, they are segments of the two root loci $g_{k,k+1,2}$ and $\overline{g_{k,k+1,2}}$. The interval on the real axis from $q_{k,2k-1,2}$ to $q_{k,2k-2,2}$ constitutes a part of the two main lines. This part may be a root locus segment on the real axis or a root locus segment both on and off the real axis. This interval is a segment of the root locus $g_{k,k+1,2}$ or of the additional root loci. In $\mathbb{C}^{*}$, from $q_{k,2k-2,2}$ to $q_{k,2k-3,2}$, they are segments of the root loci $g_{k,k+1,2}$ and $g_{k,k,2}$.

In the third kind of non-standard linked root loci, the real zero $t_{k,j_1,4}$ emits the root locus $g_{k,j_1,4}$. Two root loci $g_{k,j_1+1,4}$ and $g_{k,j_1+2,4}$ emitted from the real zeros ($t_{k,j_1+1,4}$, $t_{k,j_1+2,4}$). These two root loci $g_{k,j_1+1,4}$ and $g_{k,j_1+2,4}$ leave the real axis and enter $\mathbb{C}^{*}$ from the breakaway point $q_{k,2j_1,4}$. They intersect at the real breakaway point $q_{k,2j_1-1,4}$. From $q_{k,2j_1-1,4}$, the two root loci $g_{k,j_1+1,4}$ and $g_{k,j_1+2,4}$ extend in opposite directions. If the root locus $g_{k,j_1+1,4}$ intersects with the additional root loci on the real axis, then after this intersection, $g_{k,j_1+1,4}$ subsequently intersects with the root locus $g_{k,j_1,4}$ at the real breakaway point $q_{k,2j_1-2,4}$. From $q_{k,2j_1-2,4}$, the root loci $g_{k,j_1+1,4}$ and $g_{k,j_1,4}$ leave the real axis and enter $\mathbb{C}^{*}$.

The real zero $t_{k,k,4}$ emits the root locus $g_{k,k,4}$. Two root loci $g_{k,k+1,4}$ and $\overline{g_{k,k+1,4}}$ emitted from the non-real zeros ($s_{k,k+1,4}$, $\overline{s_{k,k+1,4}}$). These two root loci intersect at the real breakaway point $q_{k,2k-1,4}$. From $q_{k,2k-1,4}$, the two root loci $g_{k,k+1,4}$ and $\overline{g_{k,k+1,4}}$ extend in opposite directions. If the root locus $g_{k,k+1,4}$ intersects with the additional root loci on the real axis, then after this intersection, $g_{k,k+1,4}$ subsequently intersects with the root locus $g_{k,k,4}$ at the real breakaway point $q_{k,2k-2,4}$. From $q_{k,2k-2,4}$, the root loci $g_{k,k+1,4}$ and $g_{k,k,4}$ leave the real axis and enter $\mathbb{C}^{*}$.

If the root locus $g_{k,j_1+1,4}$ does not intersect with the additional root loci on the real axis, then $g_{k,j_1+1,4}$ directly intersects with the root locus $g_{k,j_1,4}$ at the real breakaway point $q_{k,2j_1-2,4}$. From $q_{k,2j_1-2,4}$, the root loci $g_{k,j_1+1,4}$ and $g_{k,j_1,4}$ leave the real axis and enter $\mathbb{C}^{*}$.

If the root locus $g_{k,k+1,4}$ does not intersect with the additional root loci on the real axis, then $g_{k,k+1,4}$ directly intersects with the root locus $g_{k,k,4}$ at the real breakaway point $q_{20,2k-2}$. From $q_{k,2k-2,4}$, the root loci $g_{k,k+1,4}$ and $g_{k,k,4}$ leave the real axis and enter $\mathbb{C}^{*}$.

Based on the earlier results concerning the two main lines of the third kind of non-standard linked root loci, the two main lines of the third kind of non-standard linked root loci are as follows: In $\mathbb{C}^{*}$, from $q_{k,2j_1,4}$ to $q_{k,2j_1-1,4}$, the two main lines of the third kind of non-standard linked root loci are segments of the two root loci $g_{k,j_1+1,4}$ and $g_{k,j_1+2,4}$. The interval on the real axis from $q_{k,2j_1-1,4}$ to $q_{k,2j_1-2,4}$ constitutes a part of the two main lines. This part may be a root locus segment on the real axis or a root locus segment both on and off the real axis. This interval is a segment of the root locus $g_{k,j_1+1,4}$ or of the additional root loci. In $\mathbb{C}^{*}$, from $q_{k,2j_1-2,4}$ to $q_{k,2j_1-3,4}$, they are segments of the root loci $g_{k,j_1+1,4}$ and $g_{k,j_1,4}$.

In $\mathbb{C}^{*}$, from $s_{k,k+1,4}$ or $\overline{s_{k,k+1,4}}$ to $q_{k,2k-1,4}$, they are segments of the two root loci $g_{k,k+1,4}$ or $\overline{g_{k,k+1,4}}$. The interval on the real axis from $q_{k,2k-1,4}$ to $q_{k,2k-2,4}$ constitutes a part of the two main lines. This part is a root locus segment on the real axis or a root locus segment both on and off the real axis. This interval is a segment of the root locus $g_{k,k+1,4}$ or of the additional root loci. In $\mathbb{C}^{*}$, from $q_{k,2k-2,4}$ to $q_{k,2k-3,4}$, they are segments of the root loci $g_{k,k+1,4}$ and $g_{k,k,4}$.

In the first kind of non-standard linked root loci, the root locus $g_{k,j_1,2}$ emitted from the real zero $t_{k,j_1,2}$. $g_{k,j_1,2}$ starts from $t_{k,j_1,2}$ and extends to the real breakaway point $q_{k,2j_1-2,2}$. According to this direction of extension, this portion constitutes a branch of the first kind of non-standard linked root loci. It cannot lie on the two main lines of the first kind of non-standard linked root loci.

In the first kind of non-standard linked root loci, the root locus $g_{k,k,2}$ emitted from the real zero $t_{k,k,2}$. $g_{k,k,2}$ starts from $t_{k,k,2}$ and extends to the real breakaway point $q_{k,2k-2,2}$. According to this direction of extension, this portion constitutes a branch of the first kind of non-standard linked root loci. It cannot lie on the two main lines of the first kind of non-standard linked root loci.

In the third kind of non-standard linked root loci, the root locus $g_{k,j_1,4}$ emitted from the real zero $t_{k,j_1,4}$. $g_{k,j_1,4}$ starts from $t_{k,j_1,4}$ and extends to the real breakaway point $q_{k,2j_1-2,4}$. During its extension, it may intersect with the additional root loci on the real axis. Although the root locus $g_{k,j_1,4}$ may intersect with the additional root loci on the real axis, the direction of its extension remains from $t_{k,j_1,4}$ to the real breakaway point $q_{k,2j_{2}-2,4}$. According to this direction of extension, this portion constitutes a branch of the third kind of non-standard linked root loci. It cannot lie on the two main lines of the third kind of non-standard linked root loci.

In the third kind of non-standard linked root loci, the root locus $g_{k,k,4}$ emitted from the real zero $t_{k,k,4}$. It starts from $t_{k,k,4}$ and extends to the real breakaway point $q_{k,2k-2,4}$. During its extension, it may intersect with the additional root loci on the real axis. Although the root locus $g_{k,k,4}$ may intersect with the additional root loci on the real axis, the direction of its extension remains from $t_{k,k,4}$ to the real breakaway point $q_{k,2k-2,4}$. According to this direction of extension, this portion constitutes a branch of the third kind of non-standard linked root loci. It cannot lie on the two main lines of the third kind of non-standard linked root loci.

Therefore, this demonstrates that the two main lines of the first kind of non-standard linked root loci are identical to the two main lines of the third kind of non-standard linked root loci.

Based on the previously presented results for linked root loci, there can only be these four scenarios for generating linked root loci.

The first scenario: In the linked root loci, there exist neither the first type nor the second type of intersection with the additional root loci. The linked root loci are the standard linked root loci.

The second scenario: In the linked root loci, only the first type of intersection with the additional root loci exists; the second type does not exist. The linked root loci are the first kind of non-standard linked root loci.

The third scenario: In the linked root loci, only the second type of intersection with the additional root loci exists; the first type does not exist. The linked root loci are the second kind of non-standard linked root loci.

The fourth scenario: In the linked root loci, both the first type and the second type of intersection with the additional root loci exist. The linked root loci are the third kind of non-standard linked root loci.

\begin{definition}
Standard linked root loci, the first kind of non-standard linked root loci, the second kind of non-standard linked root loci, and the third kind of non-standard linked root loci together constitute the linked root loci.
\end{definition}

\begin{definition}
The non-real zeros $s_{k,k+1,j_3}$ and $\overline{s_{k,k+1,j_3}}$ emit root loci $g_{k,k+1,j_3}$ and $\overline{g_{k,k+1,j_3}}$. In the linked root loci, beginning at the non-real zeros $s_{k,k+1,j_3}$ and $\overline{s_{k,k+1,j_3}}$, the two lines which $g_{k,k+1,j_3}$ and $\overline{g_{k,k+1,j_3}}$ can extend all the way to $Q_{k,j_3}$ are the two main lines of the linked root loci of (2.2). $j_3=1,2,3,4$.
\end{definition}
For each of these four scenarios, two lemmas are provided. They are respectively: Lemma 2.17 and Lemma 2.19; Lemma 2.24 and Lemma 2.28; Lemma 2.33 and Lemma 2.35; Lemma 2.39 and Lemma 2.43.  Lemma 2.17, Lemma 2.24, Lemma 2.33, and Lemma 2.39 can be combined into Lemma 2.46. Lemma 2.19, Lemma 2.28, Lemma 2.35, and Lemma 2.43 can be combined into Lemma 2.47. This yields the two merged lemmas.

\begin{lemma}
When a root locus $g_{k,1,j_3}$ emitted from a real zero of $p(s)$ leaves the real axis and enters $\mathbb{C}^{*}$, then, there must exist a pair of non-real zeros and $k-1$ other real zeros. The root loci from the non-real zeros are $g_{k,k+1,j_3}$ and $\overline{g_{k,k+1,j_3}}$, and root loci from the other real zeros are $g_{k,j_1,j_3}$ (for $1\le k<n$, $j_1=1,\cdots,k$, $0<2j_1-2<2k-2$, $0<2j_1-1<2k-1$, $j_3=1,2,3,4$). These root loci $g_{k,j_1,j_3}$, $g_{k,k+1,j_3}$ and $\overline{g_{k,k+1,j_3}}$  intersect at real breakaway points $Q_{k,j_3}$, $q_{k,2j_1-2,j_3}$, $q_{k,2j_1-1,j_3}$ on the real axis. These real breakaway points $Q_{k,j_3}$, $q_{k,2j_1-2,j_3}$, $q_{k,2j_1-1,j_3}$ connect $g_{k,j_1,j_3}$, $g_{k,k+1,j_3}$, and $\overline{g_{k,k+1,j_3}}$ together, forming a  linked root locus. The terminus of this structure consists of $g_{k,k+1,j_3}$ and $\overline{g_{k,k+1,j_3}}$.
\end{lemma}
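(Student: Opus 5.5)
The plan is to obtain the statement as a merger of the four per-scenario results already established, namely Lemma 2.17, Lemma 2.24, Lemma 2.33 and Lemma 2.39, indexed by $j_3=1,2,3,4$. Fix a root locus $g_{k,1,j_3}$ emitted from a real zero $t_{k,1,j_3}$ of $p(s)$ that leaves the real axis and enters $\mathbb{C}^{*}$. The first step classifies the linked structure that this locus necessarily starts, using the four scenarios listed just before the definition of the linked root loci. The classification uses two yes/no questions: whether at least one \emph{first type} of intersection with the additional root loci occurs, and whether at least one \emph{second type} occurs. The four answers correspond to the standard linked root loci ($j_3=1$) and to the first, second and third kinds of non-standard linked root loci ($j_3=2,3,4$). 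Since these four cases are mutually exclusive and exhaust all possibilities, every such $g_{k,1,j_3}$ falls into exactly one of them.

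In each case I will quote the corresponding lemma directly.
\begin{itemize}
\item For $j_3=1$, Lemma 2.17 gives the pair $s_{k,k+1,1},\overline{s_{k,k+1,1}}$, the $k-1$ further real zeros $t_{k,j_1,1}$, and the real breakaway points $Q_{k,1}$, $q_{k,2j_1-2,1}$, $q_{k,2j_1-1,1}$. These connect the loci into a standard linked root locus whose terminus is $g_{k,k+1,1}$, $\overline{g_{k,k+1,1}}$.
\item For $j_3=2,3,4$, Lemmas 2.24, 2.33 and 2.39 give the same conclusions with the second index replaced by $j_3$. The only additional information in those lemmas is that at least one of the $k$ cases of the relevant type of intersection with the additional root loci occurs.
\end{itemize}
The present statement does not mention the additional root loci, so that extra information can simply be discarded. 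Beyond that, one only checks that the index ranges $1\le k<n$, $j_1=1,\dots,k$, $0<2j_1-2<2k-2$, $0<2j_1-1<2k-1$ agree across the four lemmas; they do. With these two observations, each case yields exactly the assertion of the statement. Finally, by the definition of the linked root loci as the union of the four kinds, the structure obtained in each case is a ``linked root locus'' in the sense used here.

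The step I expect to be the main obstacle is the exhaustiveness of the four-scenario classification. The earlier lemmas each presuppose the scenario they treat, so one must argue that no linked structure escapes all four. I would handle this directly through the two binary questions above. Each intersection of a main root locus with additional root loci on the real axis is, by Definitions 2.21 and 2.30, either of the first type (the locus $g_{k,j_1+1,j_3}$ arriving from the neighbouring interval meets the additional loci) or of the second type (the locus $g_{k,j_1,j_3}$ emitted within its own interval meets them). Hence the presence or absence of each type gives exactly four combinations, and no others.

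A second point to check is the termination of the chain in each case. This rests uniformly on Theorem 1.28: there are finitely many breakaway points, so the chain of intersections cannot continue indefinitely. By the argument of Lemmas 2.16, 2.23, 2.32 and 2.38, the chain can end only when the pair of loci $g_{k,k+1,j_3}$, $\overline{g_{k,k+1,j_3}}$ from the non-real zeros $s_{k,k+1,j_3}$, $\overline{s_{k,k+1,j_3}}$ meets the real axis. This is what guarantees the stated terminus in all four cases.
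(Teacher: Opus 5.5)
Your proposal is correct and follows the paper's own route: the paper obtains this lemma simply by merging Lemmas 2.17, 2.24, 2.33 and 2.39 (one for each $j_3$). Your exhaustiveness and termination remarks make explicit the four-scenario list stated before Definition 2.44 and the finiteness argument of Lemmas 2.16, 2.23, 2.32 and 2.38, with one citation fix: the first and second types of intersection are defined in Definitions 2.20 and 2.29, whereas Definitions 2.21 and 2.30 define the first and second kinds of non-standard linked root loci.
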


\begin{lemma}
In the  linked root loci, a root locus $g_{k,k+1,j_3}$ or $\overline{g_{k,k+1,j_3}}$ originating from a non-real zero $s_{k,k+1,j_3}$ or $\overline{s_{k,k+1,j_3}}$ can be distributed on two main lines of the linked root loci. The root locus $g_{k,k+1,j_3}$ or $\overline{g_{k,k+1,j_3}}$ intersects at the real breakaway point $Q_{k,j_3}$. The root locus $g_{k,k+1,j_3}$ or $\overline{g_{k,k+1,j_3}}$ leaves from $Q_{k,j_3}$ then enters $\mathbb{C}^{*}$. (for $1\le k<n$, $j_1=1,\cdots,k$, $0<2j_1-2<2k-2$, $0<2j_1-1<2k-1$, $j_3=1,2,3,4$).
\end{lemma}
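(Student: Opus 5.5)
The statement to be proved is Lemma 2.47. My plan is to treat it exactly as the paper announces just before Lemma 2.46: as the merger of the four scenario-specific results, Lemma 2.19 ($j_3=1$), Lemma 2.28 ($j_3=2$), Lemma 2.35 ($j_3=3$) and Lemma 2.43 ($j_3=4$). The proof proceeds in three steps.

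\textbf{Step 1: exhaustiveness of the four kinds.} By Definition 2.44, any linked root loci of (2.2) falls into exactly one of four scenarios:
\begin{itemize}
\item no intersection with the additional root loci (standard linked root loci);
\item only the first type of intersection (first kind of non-standard);
\item only the second type of intersection (second kind of non-standard);
\item both types of intersection (third kind of non-standard).
\end{itemize}
Presence or absence of each type of intersection is a dichotomy, so these four cases exhaust all linked root loci. By Lemma 2.46, each linked root locus starts from a root locus $g_{k,1,j_3}$ that leaves the real axis. Its terminus is the pair $g_{k,k+1,j_3}$, $\overline{g_{k,k+1,j_3}}$ emitted from a pair of non-real zeros, joined by the real breakaway points $Q_{k,j_3}$, $q_{k,2j_1-2,j_3}$, $q_{k,2j_1-1,j_3}$.

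\textbf{Step 2: reduction to the scenario lemmas.} Fix $j_3\in\{1,2,3,4\}$ and apply the corresponding lemma. Each of these lemmas traces a main line from $s_{k,k+1,j_3}$ or $\overline{s_{k,k+1,j_3}}$. The main line alternates between real segments $[q_{k,2j_1-1,j_3},q_{k,2j_1-2,j_3}]$ and symmetric arcs in $\mathbb{C}^{*}$ from $q_{k,2j_1-2,j_3}$ to $q_{k,2j_1-3,j_3}$. It continues down to the segment from $q_{k,1,j_3}$ to $Q_{k,j_3}$, where the locus of the non-real zero passes through $Q_{k,j_3}$ and then enters $\mathbb{C}^{*}$.

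To make the four cases uniform, I would use the main-line definitions. By the discussion preceding Definition 2.44, the main lines for $j_3=3$ coincide with those for $j_3=1$, and the main lines for $j_3=4$ coincide with those for $j_3=2$. So the argument really needs only two variants: a real-axis segment, and a "segment on and outside the real axis" where additional loci appear. In both variants the chosen ending pattern applies: $\overline{g_{k,k+1,j_3}}$ ends at $z_{k,k,j_3}$ and $g_{k,l,j_3}$ ends at $z_{k,l-1,j_3}$. Under this pattern, the locus carried along the main line into $Q_{k,j_3}$ is the one emitted from the non-real zero.

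\textbf{Step 3: conclusion.} Having established the conclusion for every $j_3$, I would state it uniformly over the index ranges $1\le k<n$, $j_1=1,\dots,k$, $j_3=1,2,3,4$. That yields Lemma 2.47.

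\textbf{Main obstacle.} The hard step is justifying, for $j_3=2,4$, that the additional root loci on a "segment on and outside the real axis" do not divert the main line away from $Q_{k,j_3}$. Here I would invoke Theorem 1.28, which gives finitely many breakaway points and no overlapping segments. I would combine it with the symmetry in Lemma 1.27: the off-axis part of the segment is symmetric about the real axis and must return to the axis before $q_{k,2j_1-2,j_3}$. The main line therefore resumes on the real axis and reaches the next breakaway point as claimed.
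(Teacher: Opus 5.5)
Your Steps 1--3 are the paper's own proof. The paper proves Lemma 2.47 in a single sentence: Lemmas 2.19, 2.28, 2.35 and 2.43 "can be combined into" it. This rests on the same ingredients you use:
\begin{itemize}
\item the four-scenario classification behind Definition 2.44;
\item the identification of the $j_3=3$ (resp.\ $j_3=4$) main lines with the $j_3=1$ (resp.\ $j_3=2$) ones;
\item the same choice of continuation at the breakaway points ($\overline{g_{k,k+1,j_3}}$ ends at $z_{k,k,j_3}$, $g_{k,l,j_3}$ ends at $z_{k,l-1,j_3}$), which is what makes $g_{k,k+1,j_3}$ the locus carried into $Q_{k,j_3}$.
\end{itemize}

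The step that would fail is the one you add beyond the paper: your resolution of the ``main obstacle''. Theorem 1.28 only gives finiteness of breakaway points and non-overlap of segments. Lemma 1.27(1) only says the two off-axis halves mirror each other. Neither forces a symmetric pair that has left the real axis to return to it before $q_{k,2j_1-2,j_3}$. Such a pair can simply end at a conjugate pair of non-real zeros of $p'$. The paper's own example $p_{e2}$ shows this: the $2q\pi$ loci leaving the axis at $b_{e2,1}$ end at $s'_{e2,19,20}$ and never come back.

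For simple critical points, the return can even be impossible. Let $r$ be the point where $g_{k,j_1+1,j_3}$ meets the additional loci along the axis. Both $r$ and $q_{k,2j_1-2,j_3}$ are local maxima of $K$ restricted to the axis, since loci arrive at each along the axis. Because $K$ increases strictly along every branch, the branch leaving at $r$ can re-enter the axis only at a real critical point of gain larger than $K(r)$. If the only real critical point strictly between $q_{k,2j_1-2,j_3}$ and $r$ is the local minimum of $K$ that separates them, its gain is below $K(r)$. Then there is no way back onto the axis segment leading to $q_{k,2j_1-2,j_3}$.

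The paper never derives this step. That $g_{k,j_1+1,j_3}$, after meeting the additional loci, still goes on to meet $g_{k,j_1,j_3}$ at $q_{k,2j_1-2,j_3}$ (and likewise $g_{k,k+1,j_3}$ at $q_{k,2k-2,j_3}$) is written into Definition 2.20, the first type of intersection. Lemmas 2.28 and 2.43 then just follow the ``segment on the real axis and outside the real axis'' of Definitions 2.26 and 2.41. So drop the proposed justification and cite the scenario lemmas as the paper does; as written, that part of your plan does not go through.
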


When all root loci intersecting at a real breakaway point of (2.2) originate directly from real zeros of $p$, the real breakaway points are generated in two distinct ways, classified as follows:

\begin{enumerate}
\item \textbf{First category of real breakaway points:} At least two root loci intersect at a real breakaway point $Q_{a}$ in an interval. One root locus  comes from a real zero of $p(s)$ at an endpoint of the interval.

    (1). If this root locus comes from a real zero of $p(s)$ at an endpoint of the interval doesn't intersect with the other root loci on the real axis and it remains on the real axis before intersecting the root loci originated from real zeros at $Q_{a}$.  This root locus extends along the real axis and intersect with the root loci from real zeros of $p(s)$, generating a real breakaway point $Q_{a}$.

    (2). If this root locus comes from a real zero of $p(s)$ at an endpoint of the interval intersects with the other root loci on the real axis, this situation is like as the second type of the additional root loci intersection, after this root locus intersects with the other root loci, this root locus extends along the real axis and intersect with the root loci from real zeros of $p(s)$, generating a real breakaway point $Q_{a}$.

\item \textbf{Second category of real breakaway points:} All root loci generating the real breakaway point $Q_{a}$ originate from real zeros of $p(s)$ but leave the real axis and enter $\mathbb{C}^{*}$. Among these root loci which originate from real zeros of $p(s)$, like first category of real breakaway points, there possibly exist the situation like as the second type of the additional root loci intersection. The root loci that  originate from the real zero of $p(s)$ must leave the real axis, and  enter $\mathbb{C}^{*}$. The root loci that originate from the real zeros of $p(s)$ then intersect at $Q_{a}$ on the real axis.
\end{enumerate}

Here, the other root loci are not the additional root loci. For the first category of real breakaway points, a root locus emitted from an interval endpoint may first intersect with the other root loci and subsequently intersect at $Q_{a}$. In fact, for both subcases, regardless of which subcase the real breakaway point $Q_{a}$ belongs to, it can be proven to be associated with at least one pair of non-real zeros of the polynomial $p(s)$.

Since $p$ has at least one real zero, Lemma 2.3 implies that each real pole of (2.2)
is separated by at least one real zero of $p^{'}$. Consequently, no two real poles of (2.2)
are adjacent. Therefore, on the real axis, intervals exist between adjacent real zeros
and real poles of (2.2), and may exist between adjacent real zeros, but not between
adjacent real poles.

\begin{lemma}
Suppose the real polynomial $p$ has at least one real zero. Consider first-category of real breakaway points. If all root loci intersecting at a real breakaway point $Q_{a}$ in (2.2) originate directly from real zeros of $p$, then, at least one pair of the non-real zeros of $p$ emits a $2q\pi$ or $2q\pi+\pi$ degree root locus, this root locus intersects at $Q_{a}$ with other root loci on the real axis. The intersection point $Q_{a}$ of the root loci is a first kind of zero of $(p^{'}/p)^{'}$.
\end{lemma}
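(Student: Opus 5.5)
The plan is to reduce the first-category breakaway point $Q_{a}$ to the initial breakaway point $Q_{k,j_3}$ of a linked root locus, and then read off the non-real zeros from Lemma 2.46 and Lemma 2.47.

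\textbf{Step 1: two loci meet at $Q_{a}$.} Let $Q_{a}$ lie in an interval bounded by a real zero $t$ of $p$ (a real pole of (2.2)) and an adjacent real zero of (2.2). By hypothesis, one locus $g$ at $Q_{a}$ is emitted from $t$. Since $Q_{a}$ is a breakaway point, Definition 1.18 gives at least one further locus $h$ through $Q_{a}$. By hypothesis $h$ is emitted from another real zero $t'$ of $p$.

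\textbf{Step 2: $h$ must pass through $\mathbb{C}^{*}$.} By Lemma 2.3, $t$ and $t'$ are separated on the real axis by at least one real zero of $p^{'}$. By Theorem 2.5, the locus $h$ leaves $t'$ along the real axis. It cannot reach $Q_{a}$ through that separating zero, since a zero of (2.2) only receives loci (Lemma 2.4, Lemma 2.13). Hence $h$ must leave the real axis at some point of its own interval, travel in $\mathbb{C}^{*}$, and return to the real axis before arriving at $Q_{a}$. This is exactly the hypothesis ``a root locus emitted from a real zero of $p$ leaves the real axis and enters $\mathbb{C}^{*}$.''

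\textbf{Step 3: attach the linked root locus.} By Lemma 2.7, $h$ can leave the real axis only if some other locus intersects it in its interval. I relabel $h$ as $g_{k,1,j_3}$ and apply Lemma 2.46. This produces a linked root locus (standard, or of the first, second or third non-standard kind) whose terminus is a pair $g_{k,k+1,j_3}$, $\overline{g_{k,k+1,j_3}}$ emitted from conjugate non-real zeros $s_{k,k+1,j_3}$, $\overline{s_{k,k+1,j_3}}$. I then identify $Q_{a}$ with the initial breakaway point $Q_{k,j_3}$, where $g_{k,1,j_3}$ meets the locus coming from $t$. By Lemma 2.47, one of $g_{k,k+1,j_3}$ or $\overline{g_{k,k+1,j_3}}$ runs along the two main lines and intersects at $Q_{k,j_3}=Q_{a}$. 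Since $g_{k,k+1,j_3}$ and $\overline{g_{k,k+1,j_3}}$ are symmetric about the real axis (Lemma 1.27), the locus lying on the real axis there has degree $2q\pi$ or $2q\pi+\pi$.

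\textbf{Step 4: the two subcases.} In subcase (1), $g$ runs directly along the real axis to $Q_{a}$, and the argument above applies verbatim. In subcase (2), $g$ first meets other loci on the real axis. This is the second type of intersection with additional root loci, which by the discussion before Lemma 2.46 is absorbed into the second or third kind of non-standard linked loci. Those kinds have the same main lines, so Lemma 2.47 still applies.

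\textbf{Step 5: $Q_{a}$ is of the first kind.} By Theorems 1.25 and 1.26, $Q_{a}$ is a real critical point of $p^{'}/p$, that is, a real zero of $(p^{'}/p)^{'}$. Theorem 1.26 explicitly excludes multiple zeros of (2.2), so a breakaway point is not a multiple zero of $p^{'}$. Hence $Q_{a}$ is of the first kind and not the second.

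\textbf{Main obstacle.} I expect the hardest step to be the identification $Q_{a}=Q_{k,j_3}$ in Step 3. This requires that the locus $h$ reaching $Q_{a}$ from $\mathbb{C}^{*}$ is the \emph{beginning} of a linked chain, rather than an intermediate member whose intersection point is some $q_{k,2j_1-2,j_3}$. My first attempt is to choose the labelling so that the locus from $t$ is the one that does not continue the chain. By the definition of the first category, $t$ is an interval endpoint whose locus has not yet left the real axis, so it fits that role, and $h$ then plays $g_{k,2,j_3}$ in the chain. The finiteness of breakaway points (Theorem 1.28) guarantees that this chain terminates at non-real zeros, as in Lemma 2.16.
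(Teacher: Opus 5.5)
There is a genuine gap in Step 3: the identification $Q_a=Q_{k,j_3}$ fails. Once you set $g_{k,1,j_3}=h$, the point $Q_{k,j_3}$ in Lemma 2.46 is, by the paper's construction of linked root loci, the breakaway point in $L_{k,1,j_3}$. That interval is adjacent to $t'$, and $Q_{k,j_3}$ is where $h$ itself leaves the real axis and enters $\mathbb{C}^{*}$ together with $g_{k,2,j_3}$. It is not the point where $h$ comes back and meets the locus from $t$. Your own Step 2 shows that $Q_a$ lies in the interval adjacent to $t$, separated from $t'$ by a zero of $p^{'}$. Hence $Q_a\neq Q_{k,j_3}$, and Lemma 2.47 only brings $g_{k,k+1,j_3}$ or $\overline{g_{k,k+1,j_3}}$ to $Q_{k,j_3}$, not to $Q_a$.

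The fallback in your last paragraph does not repair this. There you make the locus from $t$ the chain's $g_{k,1,j_3}$ and $h$ its $g_{k,2,j_3}$. But Lemma 2.46 requires its $g_{k,1,j_3}$ to leave the real axis and enter $\mathbb{C}^{*}$. The first-category hypothesis only says that the locus from $t$ travels along the real axis up to $Q_a$; it never says that this locus leaves the axis. Moreover, Lemma 2.46 asserts that some chain exists, not that the given $h$ is its second member.

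The missing step is the continuation past $Q_{k,j_3}$, and this is how the paper closes the argument:
\begin{enumerate}
\item Keep $h=g_{k,1,j_3}$ as the beginning of the linked root loci, as in your Step 3.
\item By Lemma 2.47, the non-real locus $g_{k,k+1,j_3}$ or $\overline{g_{k,k+1,j_3}}$ runs along the two main lines to $Q_{k,j_3}$. Its last assertion says this locus then \emph{leaves $Q_{k,j_3}$ and enters $\mathbb{C}^{*}$}, along with $h$.
\item Like $h$, it then either re-enters the real axis in the interval of $t$ and meets $Q_a$ there, or reaches $Q_a$ directly from $\mathbb{C}^{*}$.
\end{enumerate}
So $Q_{k,j_3}$ and $Q_a$ are two distinct breakaway points joined by $h$'s excursion through $\mathbb{C}^{*}$. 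This excursion, not an identification of the two points, connects the non-real zeros to $Q_a$.

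With this organisation your Step 4 also simplifies. The chain is attached to $h$, so whatever the locus from $t$ meets on the real axis before $Q_a$ plays no role. The paper says explicitly that these ``other root loci'' are not additional root loci. You therefore do not need to route subcase (2) through the second or third kind of non-standard linked root loci.
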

\begin{proof}
Let $t_{k,0,j_3}$ be a real pole of (2.2) with adjacent real zero $z_{k,0,j_3}$ of (2.2), forming interval $L_{k,0,j_3}$. The pole  $t_{k,0,j_3}$ emits root locus $R_{k,0,j_3}$. Let $R_{k,i_3,j_3}$ be the $r_2$ branches of root loci emitted from the other real zeros $t_{k,i_3,j_3}$ of $p$, where $i_3=1,2,\cdots, r_2$. $r_2\ge 2$ is a positive integer. $R_{k,i_3,j_3}$ intersects with $R_{k,0,j_3}$ at the real breakaway point $Q_{a}$ in $L_{k,0,j_3}$.

Before $R_{k,0,j_3}$ intersects with the root loci $R_{k,i_3,j_3}$ at $Q_{a}$, if this root locus $R_{k,0,j_3}$ doesn't intersect with the other root loci on the real axis, $R_{k,0,j_3}$ does not leave the real axis nor enter $\mathbb{C}^{*}$. This root locus $R_{k,0,j_3}$ extends along the real axis and intersect with the root loci $R_{k,i_3,j_3}$ from real zeros of $p(s)$, generating a real breakaway point $Q_{a}$. $Q_{a}$ is a first-kind zero of $(p^{'}(s)/p(s))^{'}$ and $r_2+1$ branches of root loci intersect there.

Before $R_{k,0,j_3}$ intersects with the root loci $R_{k,i_3,j_3}$ at $Q_{a}$, if this root locus $R_{k,0,j_3}$  intersects with the other root loci on the real axis, this situation is like as the second type of the other root loci intersection. After this root locus $R_{k,0,j_3}$ intersects with the other root loci, this root locus $R_{k,0,j_3}$ extends along the real axis and intersect with the root loci $R_{k,i_3,j_3}$ from real zeros of $p(s)$, generating a real breakaway point $Q_{a}$. $Q_{a}$ is a first-kind zero of $(p^{'}(s)/p(s))^{'}$ and $r_2+1$ branches of root loci intersect there.

The root locus $R_{k,i_3,j_3}$ comes from another real zero of $p(s)$. Before $R_{k,i_3,j_3}$ intersects with other root loci at $Q_a$, $R_{k,i_3,j_3}$ leaves the real axis and enters $\mathbb{C}^{*}$.

Now consider root locus $g_{k,1,j_3}=R_{k,1,j_3}$ directly emitted from real zero $t_{k,1,j_3}$ of $p$. Let $z_{k,1,j_3}$ be the real zero of (2.2) adjacent to real pole $t_{k,1,j_3}$, forming interval $L_{k,1,j_3}$. The root locus $g_{k,1,j_3}$ leaves the interval $L_{k,1,j_3}$ and enters $\mathbb{C}^{*}$. Then, the root locus $g_{k,1,j_3}$ with the root loci $R_{k,i_3,j_3}$ and $R_{k,0,j_3}$ intersect at $Q_{a}$ in the interval $L_{k,1,j_3}$.

Therefore, root locus $g_{k,1,j_3}$ satisfies all conditions of Lemma 2.46.  Applying Lemma 2.46,  there exist root loci intersecting with $g_{k,1,j_3}$ and causing it to leave the real axis. These root loci are successively emitted from $k-1$ real zeros of $p$. The root loci emitted from these real zeros form the linked root loci. The root loci from the non-real zeros are $g_{k,k+1,j_3}$ and $\overline{g_{k,k+1,j_3}}$, and those from the real zeros are $g_{k,j_1,j_3}$ (for $j_1=2,3,\cdots,k$). These root loci intersect at real breakaway points $q_{k,2j_1-1,j_3}$, $q_{k,2j_1-2,j_3}$  on the real axis.

Therefore, $g_{k,1,j_3}$, $g_{k,j_1,j_3}$, $g_{k,k+1,j_3}$, $\overline{g_{k,k+1,j_3}}$ form the linked root loci.

The root locus $g_{k,1,j_3}$ is the beginning of the linked root loci. $g_{k,k+1,j_3}$ and $\overline{g_{k,k+1,j_3}}$  are as terminating of the linked root loci. By Lemma 2.47, within the linked root loci between $g_{k,1,j_3}$ and $g_{k,k+1,j_3}$ and $\overline{g_{k,k+1,j_3}}$, the root locus originating from the non-real zeros $s_{k,k+1,j_3}$ and $\overline{s_{k,k+1,j_3}}$ can be distributed on two main lines of the linked root loci. $g_{k,k+1,j_3}$ or $\overline{g_{k,k+1,j_3}}$  originating from the non-real zeros $s_{k,k+1,j_3}$ and $\overline{s_{k,k+1,j_3}}$ intersects at the real breakaway point $Q_{k,j_3}$ in $L_{k,1,j_3}$. $g_{k,k+1,j_3}$ or $\overline{g_{k,k+1,j_3}}$  then enters $\mathbb{C}^{*}$ from $Q_{k,j_3}$. $g_{k,k+1,j_3}$ or $\overline{g_{k,k+1,j_3}}$  can enter the real axis within the interval $L_{k,0,j_3}$ and intersect at $Q_a$. $g_{k,k+1,j_3}$ or $\overline{g_{k,k+1,j_3}}$ may not enter the real axis and directly intersect at $Q_{a}$ in $\mathbb{C}^{*}$.

Therefore, $g_{k,k+1,j_3}$ or $\overline{g_{k,k+1,j_3}}$ intersects at $Q_{a}$. So, at least one pair of the non-real zeros $s_{k,k+1,j_3}$ or $\overline{s_{k,k+1,j_3}}$ of $p$ emits $2q\pi$ or $2q\pi+\pi$ degree root locus $g_{k,k+1,j_3}$ or $\overline{g_{k,k+1,j_3}}$, this root locus $g_{k,k+1,j_3}$ or $\overline{g_{k,k+1,j_3}}$  intersects at $Q_{a}$ with other root loci $R_{k,i_3,j_3}$ on the real axis. The intersection point $Q_{a}$ of the root loci is the first kind of zero of $(p^{'}/p)^{'}$.
\end{proof}

\begin{lemma}
Suppose the real polynomial $p$ has at least one real zero. Consider second-category of real breakaway points. If all root loci intersecting at a real breakaway point $Q_{a}$ in (2.2) originate directly from real zeros of $p$, then, at least one pair of the non-real zeros of $p$ emits a $2q\pi$ or $2q\pi+\pi$ degree root locus, this root locus intersects at $Q_{a}$ with other root loci on the real axis. The intersection point $Q_{a}$ of the root loci is a first kind of zero of $(p^{'}/p)^{'}$.
\end{lemma}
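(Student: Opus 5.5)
I would model the proof on that of Lemma 2.48. The only change is that in the second category there is no root locus that stays on the real axis from an endpoint of the interval up to $Q_a$.

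\textbf{Setup.} Let $Q_a$ lie in an interval $L_{k,0,j_3}$ bounded by a real pole $t_{k,0,j_3}$ and a real zero $z_{k,0,j_3}$ of (2.2). By the hypothesis, every root locus through $Q_a$ comes from a real zero of $p$; call these $R_{k,i_4,j_3}$, $i_4=1,\dots,r_3$, with $r_3\ge 2$. By the definition of the second category, each $R_{k,i_4,j_3}$ leaves the real axis and enters $\mathbb{C}^{*}$ before it meets the others at $Q_a$. As in the first category, some of these loci may first undergo an intersection of the second type with other root loci; I would handle that exactly as in case (2) of Lemma 2.48.

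\textbf{Main step.} Pick one of these loci, $g_{k,1,j_3}:=R_{k,1,j_3}$, emitted from the real zero $t_{k,1,j_3}$, and let $L_{k,1,j_3}$ be the interval between $t_{k,1,j_3}$ and the adjacent real zero $z_{k,1,j_3}$ of (2.2). By Theorem 2.5, $t_{k,1,j_3}$ emits only along the real axis. By Lemma 2.7, $g_{k,1,j_3}$ can leave the axis only through an intersection inside $L_{k,1,j_3}$. Hence $g_{k,1,j_3}$ satisfies the hypotheses of Lemma 2.46. That lemma gives $k-1$ further real zeros and a pair of non-real zeros $s_{k,k+1,j_3},\overline{s_{k,k+1,j_3}}$. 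Their loci $g_{k,j_1,j_3}$, $g_{k,k+1,j_3}$ and $\overline{g_{k,k+1,j_3}}$ form a linked root locus that starts at $g_{k,1,j_3}$ and terminates in the non-real pair. Lemma 2.47 then shows that $g_{k,k+1,j_3}$ or $\overline{g_{k,k+1,j_3}}$ lies on the two main lines, passes through $Q_{k,j_3}$ in $L_{k,1,j_3}$, and leaves it into $\mathbb{C}^{*}$.

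\textbf{Reaching $Q_a$.} Since $g_{k,1,j_3}$ goes from $Q_{k,j_3}$ through $\mathbb{C}^{*}$ to $Q_a$, I would use the common-root-locus identification from the proof of Lemma 2.47 to argue the following. The main-line continuation of $g_{k,k+1,j_3}$ (or of its conjugate) beyond $Q_{k,j_3}$ follows the same path, so it also arrives at $Q_a$, either from $\mathbb{C}^{*}$ or after re-entering the axis inside $L_{k,0,j_3}$. Thus a $2q\pi$ or $2q\pi+\pi$ degree root locus from the non-real pair meets the other loci at $Q_a$. By Theorems 1.25 and 1.26, $Q_a$ is a critical point of $p'/p$. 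It is not a multiple zero of $p'$, because it is a breakaway point and not a zero of (2.2). Therefore $Q_a$ is a first-kind zero of $(p'/p)'$.

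\textbf{Expected obstacle.} The hard step is the last one. Here the loci reach $Q_a$ from $\mathbb{C}^{*}$, not along the axis, so I must justify that the main line leaving $Q_{k,j_3}$ into $\mathbb{C}^{*}$ is the branch that arrives at $Q_a$ and not some other branch. I would first try to use the symmetry of the $2q\pi$ and $2q\pi+\pi$ loci about the real axis (Theorem 1.27) together with the non-overlap of root loci (Theorem 1.28). Together these should force the upper and lower branches leaving $Q_{k,j_3}$ to close up at the next real breakaway point, which must be $Q_a$.
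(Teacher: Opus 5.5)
Your route is the paper's own proof of this lemma. You take $g_{k,1,j_3}=R_{k,1,j_3}$, use Theorem 2.5 and Lemma 2.7 to place it under Lemma 2.46, and obtain the linked root loci ending in $g_{k,k+1,j_3}$ and $\overline{g_{k,k+1,j_3}}$. You then use Lemma 2.47 to send one of them through $Q_{k,j_3}$ into $\mathbb{C}^{*}$ and continue it to $Q_a$. You depart from the paper only in your plan for the last step, and that plan would fail. Lemma 1.27 only says the lower arcs mirror the upper ones, and Theorem 1.28 only says distinct arcs do not overlap. Neither forces a branch that leaves the real axis to return to it, let alone at $Q_a$. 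A branch entering $\mathbb{C}^{*}$ may simply end at a non-real zero of $p'$, which is a zero of (2.2). The paper's own example with $p_{e2}$ shows this: the $2q\pi$ locus from $s_{e2,21}=-1$ leaves the axis at $b_{e2,1}=0.688$ and ends at $s^{'}_{e2,19,20}$ without coming back. Also, if $Q_{k,j_3}$ is a multiple breakaway point with several arcs into the upper half-plane, these two results give no way to decide which arc the main line follows.

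Closure is in any case not what needs proving, because the second-category hypothesis already supplies an arc of $g_{k,1,j_3}$ from $Q_{k,j_3}$ through $\mathbb{C}^{*}$ to $Q_a$. What must be justified is that the locus from $s_{k,k+1,j_3}$ (or its conjugate) may be continued along that particular arc. The paper asserts this without further argument. It rests on the ``common root loci'' device from the proof of Lemma 2.19, which your ``Reaching $Q_a$'' paragraph already invokes. At a breakaway point the colliding roots of $Kp'(s)=\pm p(s)$ can be relabelled, so an incoming branch may be continued along any outgoing branch. To use this properly, state it explicitly as a convention and check that the concatenated path is a genuine root-locus branch. First, it carries one unit value throughout, since Theorem 1.15 forces all loci through $Q_{k,j_3}$ to share it. Second, the gain $K=|p/p'|$ increases strictly along it: the main line reaches $Q_{k,j_3}$ with $K$ increasing, and $g_{k,1,j_3}$ traverses its arc from $Q_{k,j_3}$ to $Q_a$ with $K$ increasing. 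No symmetry argument is then needed. Bear in mind that this is a relabelling convention rather than a derived fact. The step therefore shows only that a gain-monotone path in the locus joins the non-real pair to $Q_a$, and all the substantive content sits in Lemma 2.46.
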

\begin{proof}
Let $t_{k,0,j_3}$ be a real pole of (2.2) with adjacent real zero $z_{k,0,j_3}$ of (2.2), forming interval $L_{k,0,j_3}$. The pole $t_{k,0,j_3}$ emits root locus $R_{k,0,j_3}$. Let $R_{k,i_4,j_3}$ be the $r_3$ branches of root loci emitted from the other real zeros $t_{k,i_4,j_3}$ of $p$, where $i_4=1,2,\cdots, r_3$. $r_3\ge 2$ is a positive integer. $R_{k,i_4,j_3}$ intersects with $R_{k,0,j_3}$ at the real breakaway point $Q_{a}$ in an interval $L_{k,0,j_3}$. Before $R_{k,0,j_3}$ intersects with $R_{k,i_4,j_3}$ at $Q_{a}$, $R_{k,0,j_3}$ must leave the real axis and enter $\mathbb{C}^{*}$.  Before $R_{k,i_4,j_3}$ intersects with the root loci $R_{k,0,j_3}$ at $Q_a$, $R_{k,i_4,j_3}$ leaves the real axis and enters $\mathbb{C}^{*}$.

Unlike the first case, in the second case, $Q_{a}$ may lie in one of several intervals. This problem doesn't affect the proof here, it is not a problem that requires special attention. We only need to know that $Q_{a}$ exists within a certain interval $L_{k,0,j_3}$ on the real axis.

Before $R_{k,1,j_3}$ intersects with the root loci $R_{k,i_4,j_3}$ and $R_{k,0,j_3}$ at $Q_{a}$, if this root locus $R_{k,1,j_3}$ doesn't intersect with the other root loci on the real axis, this root locus $R_{k,1,j_3}$ intersect with the root loci $R_{k,i_4,j_3}$ and $R_{k,0,j_3}$ from real zeros of $p(s)$, generating a real breakaway point $Q_{a}$. $Q_{a}$ is a first-kind zero of $(p^{'}/p)^{'}$ and $r_3+1$ branches of root loci intersect there.

Before $R_{k,1,j_3}$ intersects with the root loci $R_{k,i_4,j_3}$ and $R_{k,0,j_3}$ at $Q_{a}$, if this root locus $R_{k,1,j_3}$ intersects with the other root loci on the real axis, this situation is like as the second type of the other root loci intersection. After this root locus $R_{k,1,j_3}$ intersects with the other root loci, this root locus $R_{k,1,j_3}$ intersect with the root loci $R_{k,i_4,j_3}$ and $R_{k,0,j_3}$ from real zeros of $p(s)$, generating a real breakaway point $Q_{a}$. $Q_{a}$ is a first-kind zero of $(p^{'}/p)^{'}$ and $r_3+1$ branches of root loci intersect there.

Now consider root locus $R_{k,1,j_3}=g_{k,1,j_3}$ directly emitted from real zero $t_{k,1,j_3}$ of $p$. Let $z_{k,1,j_3}$ be the real zero of (2.2) adjacent to real pole $t_{k,1,j_3}$, forming interval $L_{k,1,j_3}$. The root locus $g_{k,1,j_3}$ leaves the interval $L_{k,1,j_3}$ and enters $\mathbb{C}^{*}$. Then, the root locus $g_{k,1,j_3}$ intersects at $Q_{a}$ in the interval $L_{k,0,j_3}$.

Therefore, root locus $g_{k,1,j_3}$ satisfies all conditions of Lemma 2.46.  Applying Lemma 2.46,  there exist root loci intersecting with $g_{k,1,j_3}$ and causing it to leave the real axis. These root loci are successively emitted from $k-1$ real zeros of $p$. The root loci emitted from these real zeros form the linked root loci. The root loci from the non-real zeros are $g_{k,k+1,j_3}$ and $\overline{g_{k,k+1,j_3}}$, and those from the real zeros are $g_{k,j_1,j_3}$ (for $j_1=2,3,\cdots,k$). These root loci intersect at real breakaway points $q_{k,2j_1-1,j_3}$, $q_{k,2j_1-2,j_3}$  on the real axis.

Therefore, $g_{k,1,j_3}$, $g_{k,j_1,j_3}$, $g_{k,k+1,j_3}$, $\overline{g_{k,k+1,j_3}}$ form the linked root loci.

The root locus $g_{k,1,j_3}$ is the beginning of the linked root loci. $g_{k,k+1,j_3}$ and $\overline{g_{k,k+1,j_3}}$  are as terminating of the linked root loci. By Lemma 2.47, within the linked root loci between $g_{k,1,j_3}$ and $g_{k,k+1,j_3}$ and $\overline{g_{k,k+1,j_3}}$, the root locus $g_{k,k+1,j_3}$ or $\overline{g_{k,k+1,j_3}}$ originating from the non-real zeros $s_{k,k+1,j_3}$ and $\overline{s_{k,k+1,j_3}}$ can be distributed on two main lines of the linked root loci. $g_{k,k+1,j_3}$ or $\overline{g_{k,k+1,j_3}}$  originating from the non-real zeros $s_{k,k+1,j_3}$ and $\overline{s_{k,k+1,j_3}}$ intersects at the real breakaway point $Q_{k,j_3}$ in $L_{k,1,j_3}$. $g_{k,k+1,j_3}$ or $\overline{g_{k,k+1,j_3}}$  then enters $\mathbb{C}^{*}$ from $Q_{k,j_3}$. $g_{k,k+1,j_3}$ or $\overline{g_{k,k+1,j_3}}$  can enter the real axis within the interval $L_{k,0,j_3}$ and intersect at $Q_{a}$. $g_{k,k+1,j_3}$ or $\overline{g_{k,k+1,j_3}}$ may not enter the real axis and directly intersect at $Q_{a}$ in $\mathbb{C}^{*}$.

Therefore, $g_{k,k+1,j_3}$ or $\overline{g_{k,k+1,j_3}}$ intersects at $Q_{a}$. So, at least one pair of the non-real zeros $s_{k,k+1,j_3}$ or $\overline{s_{k,k+1,j_3}}$ of $p$ emits $2q\pi$ or $2q\pi+\pi$ degree root locus $g_{k,k+1,j_3}$ or $\overline{g_{k,k+1,j_3}}$, this root locus $g_{k,k+1,j_3}$ or $\overline{g_{k,k+1,j_3}}$  intersects at $Q_{a}$ with the root loci $R_{k,i_4,j_3}$ on the real axis. The intersection point $Q_{a}$ of the root loci is the first kind of zero of $(p^{'}/p)^{'}$.
\end{proof}

Combining Lemmas 2.48 and 2.49 yields Lemma 2.50.

\begin{lemma}
Suppose the real polynomial $p$ has at least one real zero. If all root loci intersecting at a real breakaway point $Q_{a}$ in (2.2) originate directly from real zeros of $p$, then, at least one pair of the non-real zeros of $p$ emits a $2q\pi$ or $2q\pi+\pi$ degree root locus, this root locus intersects at $Q_{a}$ with other root loci on the real axis. On the real axis, the intersection point $Q_{a}$ of the root loci is a first kind of zero of $(p^{'}/p)^{'}$.
\end{lemma}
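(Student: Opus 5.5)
The plan is to obtain the statement as a direct case-combination of Lemma 2.48 and Lemma 2.49. Let $Q_a$ be a real breakaway point of (2.2) at which every intersecting root locus originates directly from a real zero of $p$. The first step is to check that the dichotomy stated before Lemma 2.48 is exhaustive. Among the root loci meeting at $Q_a$, consider those whose emitting real zero $t_{k,0,j_3}$ is an endpoint of the interval $L_{k,0,j_3}$ containing $Q_a$.

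\begin{itemize}
\item If some such root locus reaches $Q_a$ along the real axis (possibly after intersecting other root loci there, as in the second type of intersection), then $Q_a$ is of the first category.
\item Otherwise every root locus reaching $Q_a$ has left the real axis and entered $\mathbb{C}^{*}$ before arriving at $Q_a$, and $Q_a$ is of the second category.
\end{itemize}

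To justify that no third possibility exists, I would use Theorem 2.5. Each real pole of (2.2) emits exactly one $2q\pi$ and one $2q\pi+\pi$ degree root locus, and both go onto the real axis. Hence a root locus from a real zero either stays on the real axis up to $Q_a$ or leaves it at some earlier real breakaway point.

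In the first category, Lemma 2.48 applies verbatim. It produces a pair $s_{k,k+1,j_3},\overline{s_{k,k+1,j_3}}$ of non-real zeros whose root locus $g_{k,k+1,j_3}$ or $\overline{g_{k,k+1,j_3}}$ passes through $Q_a$. In the second category, Lemma 2.49 applies with the same conclusion. In both cases the underlying mechanism is the same. A root locus $g_{k,1,j_3}$ from a real zero leaves the real axis, Lemma 2.46 then gives a linked root locus terminating at a conjugate pair, and Lemma 2.47 places that pair's root locus on the two main lines through $Q_{k,j_3}$ and on to $Q_a$.

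Finally, I would identify $Q_a$ as a first kind zero. By Theorems 1.25 and 1.26, $Q_a$ is a real critical point of $p'/p$, i.e. a real zero of $(p'/p)'$. It is not a multiple zero of $p'$, since multiple zeros of (2.2) are excluded from being breakaway points. Therefore $Q_a$ is not of the second kind, so it is of the first kind.

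The main obstacle is the exhaustiveness check of the two categories. The hardest sub-case is where the endpoint root locus first meets other (non-additional) root loci on the real axis before reaching $Q_a$. There I would argue, following the remark after the category list, that such intermediate intersections do not change which side of the real axis the root locus is on when it arrives at $Q_a$. This keeps every configuration inside one of the two lemmas' hypotheses.
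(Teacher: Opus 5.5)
Your proposal is correct relative to the paper and follows its route: the paper proves this lemma in one line by combining Lemma 2.48 (first-category breakaway points) with Lemma 2.49 (second-category breakaway points), which is exactly your case split. Your exhaustiveness check via Theorem 2.5 and your identification of $Q_a$ as a first-kind zero via Theorems 1.25 and 1.26 only make explicit what the paper asserts without proof in the classification preceding Lemma 2.48 and builds into the conclusions of Lemmas 2.48 and 2.49.
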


For root loci intersecting at a real breakaway point $Q_{a}$ in (2.2), if at least one of them originates directly from a non-real zero $s_a$ or $\overline{s_a}$ of $p$, under the condition, by Lemma 2.11, we have: at least one pair of the non-real zeros of $p$ emits a $2q\pi$ or $2q\pi+\pi$ degree root locus, and this root locus intersects with other root loci on the real axis. On the real axis, the intersection point $Q_{a}$ of the root loci is a first kind of zero of $(p^{'}/p)^{'}$.

If all root loci intersecting at a real breakaway point $Q_{a}$ in (2.2) originate directly from the real zeros of $p$, under the condition,  by Lemma 2.50, we have: at least one pair of the non-real zeros of $p$ emits a $2q\pi$ or $2q\pi+\pi$ degree root locus, and this root locus intersects with other root loci on the real axis. On the real axis, the intersection point $Q_{a}$ of the root loci is a first kind of zero of $(p^{'}/p)^{'}$.

Therefore, combining Lemmas 2.11 and 2.50 yields Lemma 2.51.

\begin{lemma}
Suppose the real polynomial $p$ has at least one real zero. When the root loci intersect at the real breakaway point $Q_{a}$. Then, at least one pair of the non-real zeros of $p$ emits a $2q\pi$ or $2q\pi+\pi$ degree root locus, this root locus intersects with other root loci at $Q_{a}$ on the real axis. On the real axis, the intersection point $Q_{a}$ of the root loci is a first kind of zero of $(p^{'}/p)^{'}$.
\end{lemma}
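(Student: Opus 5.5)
The plan is to prove the statement by an exhaustive case split on where the root loci meeting at $Q_a$ come from, reducing each case to a result already established. Fix a real breakaway point $Q_a$ of (2.2), and consider the finitely many root loci of $2q\pi$ or $2q\pi+\pi$ degree that intersect there (Theorem 1.28 guarantees finiteness). By Lemma 2.1 every pole of (2.2) is a simple zero-factor of $p(s)$. By Lemma 2.2 the points at infinity are zeros of (2.2), so they only receive loci. Lemma 2.4 then shows that every root locus through $Q_a$ is emitted from some zero of $p(s)$, and each such zero is either real or non-real. So exactly one of two alternatives holds: at least one locus through $Q_a$ originates directly from a non-real zero $s_a$ or $\overline{s_a}$ of $p(s)$, or all of them originate directly from real zeros of $p(s)$.

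\emph{First alternative.} Here I would invoke Lemma 2.11 verbatim. It gives a pair of non-real zeros emitting a $2q\pi$ or $2q\pi+\pi$ degree locus that meets the other loci at $Q_a$ on the real axis, and it identifies $Q_a$ as a first kind zero of $(p^{'}/p)^{'}$.

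\emph{Second alternative.} Here I would invoke Lemma 2.50, which is itself the union of Lemma 2.48 (first category of real breakaway points) and Lemma 2.49 (second category). Before applying it, I would check that the hypothesis "$p$ has at least one real zero" of the present statement matches those lemmas. I would also check that the two categories really exhaust the ways all-real-origin loci can meet at $Q_a$. Either one locus stays on the real axis from an endpoint of the interval, possibly after meeting other loci, or every locus has left the real axis before meeting at $Q_a$. Lemma 2.50 then produces, through the linked root loci of Lemma 2.46 and the main-line property of Lemma 2.47, a locus $g_{k,k+1,j_3}$ or $\overline{g_{k,k+1,j_3}}$ from a non-real pair that passes through $Q_a$.

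In both alternatives, $Q_a$ is a first kind zero for the same reason. By Theorems 1.25 and 1.26 a real breakaway point is a critical point of $p^{'}/p$. By the exclusion in Theorem 1.26 it is not a multiple zero of $p^{'}$, so it is not of the second kind. Combining the two alternatives yields the statement.

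The main obstacle is the dichotomy itself. One must be sure that "originates directly" is well defined for a locus arriving at $Q_a$ after passing through earlier breakaway points, since at a breakaway point the incoming branches are indistinguishable "common root loci". My first attempt would be to trace each branch through $Q_a$ backward, in the direction of decreasing gain $K$, until it reaches a pole of (2.2). That pole is unique because $K$ is strictly monotone along a branch away from breakaway points. Labelling the branch by that pole makes the case split exhaustive and consistent with the conventions used in Lemmas 2.11 and 2.50.
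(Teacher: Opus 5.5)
Your proposal is essentially the paper's own proof: the paper likewise splits on whether some root locus meeting at $Q_a$ originates directly from a non-real zero (Lemma 2.11) or all of them originate from real zeros (Lemma 2.50), and simply combines the two. One correction to your last paragraph: tracing backward in decreasing $K$ does not single out a unique pole, because at $Q_a$ itself and at every earlier breakaway point at least two upstream branches meet (monotonicity of $K$ only prevents cycling; it does not choose a branch), and the paper's Lemma 2.47 depends on exactly this ``common root loci'' freedom to redistribute the non-real locus along the main lines---so drop the uniqueness claim and state the dichotomy existentially (either some branch through $Q_a$ can be followed back to a non-real zero or none can), which is trivially exhaustive and is all your argument needs.
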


The real breakaway point $Q_{a}$ is any real breakaway point. This real breakaway point can be produced by the intersection of root loci from non-real zeros. It can also be entirely generated by the intersection of root loci from real zeros. However, Lemma 2.46 proves that the root loci from real zeros and those of non-real zeros form linked root loci. Lemma 2.47  proves that the root locus from a non-real zero can be distributed on two main lines of the linked root loci. Therefore, this type of real breakaway point is also produced by the intersection of root loci from non-real zeros with other root loci. Thus, by Theorem 2.6 and Lemma 2.51, we have:

\begin{lemma}
Suppose the real polynomial $p$ has at least one real zero. Only the root locus emitted from a non-real zero of the real polynomial $p$ intersects on the real axis. A first kind of zero of $(p^{'}/p)^{'}$ can be generated. A $2q\pi$ or $2q\pi+\pi$ degree root locus emitted from at least one pair of the non-real zeros of $p$ intersects with other root loci on the real axis. On the real axis, the intersection point of the root loci is a first kind of zero of $(p^{'}/p)^{'}$.
\end{lemma}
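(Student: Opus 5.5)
The plan is to assemble the statement from results already established, in the order the paper itself indicates: Theorems 1.25 and 1.26, then Theorem 2.6, then Lemma 2.51.

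\textbf{Step 1: first-kind zeros are real breakaway points.} Let $Q$ be a first-kind zero of $(p^{'}/p)^{'}$. By definition, $Q$ is a real zero of $(p^{'}/p)^{'}$ that is not a real multiple zero of $p^{'}$. It is also not a zero of $p$, since the zeros of $p$ are poles of (2.2). So $Q\in\Delta$, and it is neither a multiple zero nor a multiple pole of (2.2). Theorem 1.26 (items 1 and 3) together with Theorem 1.25 then shows that $Q$ is a finite real breakaway point of the root loci of (2.2). By Lemma 1.27(2), the loci meeting there are of $2q\pi$ or $2q\pi+\pi$ degree. Conversely, by Theorem 1.26(2), every real breakaway point of (2.2) is a first-kind zero. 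Hence proving the statement about first-kind zeros is the same as proving it about real breakaway points of (2.2).

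\textbf{Step 2: the ``can be generated'' and ``intersects'' clauses.} Apply Lemma 2.51 to an arbitrary real breakaway point $Q_a$. It gives a pair $s_a,\overline{s_a}$ of non-real zeros of $p$ whose $2q\pi$ or $2q\pi+\pi$ degree root locus passes through $Q_a$ and intersects the other loci there. I would recall that Lemma 2.51 splits into two cases:
\begin{itemize}
\item some locus through $Q_a$ comes directly from a non-real zero, which is Lemma 2.11;
\item all loci through $Q_a$ come from real zeros, which is Lemma 2.50, built from Lemmas 2.48 and 2.49.
\end{itemize}
In the second case, the linked root loci of Lemma 2.46 and the two main lines of Lemma 2.47 carry a locus $g_{k,k+1,j_3}$ or $\overline{g_{k,k+1,j_3}}$ from a non-real zero all the way to $Q_{k,j_3}$, and from there to $Q_a$.

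\textbf{Step 3: the ``only'' clause.} Use Theorem 2.6 in contrapositive form. If no root locus emitted from a non-real zero of $p$ meets the real axis, then the loci from the real zeros stay on the axis. In that situation Lemma 2.7 applies on every interval $L_{0,1}$, and no first-kind zero arises. So whenever a first-kind zero exists, some locus from a non-real zero must reach the real axis. Step 2 upgrades this: that locus actually passes through the breakaway point itself. Together, Steps 2 and 3 give the full statement.

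I expect the main obstacle to be Step 2 in the case where every locus meeting at $Q_a$ starts at a real zero. There the argument depends on the chain of Lemma 2.46 terminating at a conjugate pair, and on Lemma 2.47 identifying the non-real locus with one of the loci through $Q_{k,j_3}$ and then $Q_a$. I would lean on Lemma 2.50 as stated rather than re-deriving it, and check only that its hypotheses hold for the point $Q$ from Step 1, namely that $p$ has a real zero and $Q$ is a real breakaway point.
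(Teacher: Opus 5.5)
Your proposal is essentially the paper's own proof. The paper obtains this lemma by combining Theorem 2.6 (for the ``only'' clause) with Lemma 2.51, which is Lemma 2.11 when a locus comes directly from a non-real zero and Lemma 2.50 (via the linked root loci and main lines of Lemmas 2.46--2.47) when all loci at $Q_a$ come from real zeros; first-kind zeros are identified with real breakaway points through Theorems 1.25--1.26. One small patch is needed in your Step 1: to get $Q\in\Delta$ you must also rule out $Q$ being a simple zero of $p'$, and this holds because at such a point $(p'/p)'(Q)=p''(Q)/p(Q)\neq 0$.
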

Lemma 2.9 proves the result which the real polynomial $p$ has no any real zeros. Lemma 2.52 proves the result which the real polynomial $p$ has at least one real zero.
Lemmas 2.9 and 2.52 together imply Theorem 0.3.

The proof of Theorem 0.4.
\begin{proof}
In Theorem 0.3, at least one pair of the non-real zeros of $p$ emits $2q\pi$ or $2q\pi+\pi$ degree root locus, and this root locus intersects with other root loci on the real axis. Through the $2q\pi$ or $2q\pi+\pi$ degree root locus emitted from the pair of the non-real zeros of $p$, each first kind of zero of $(p^{'}/p)^{'}$ is connected to a pair of the non-real zeros of the real polynomial $p$. When we prove Theorem 0.3, we use three lemmas. They are Lemma 2.9, Lemma 2.11, and Lemma 2.52, respectively.

In Lemma 2.9, only the root loci of non-real zeros generate first kind of zero of $(p^{'}/p)^{'}$. So, through the $2q\pi$ or $2q\pi+\pi$ degree root locus emitted from a pair of the non-real zeros of $p$, each first kind of zero of $(p^{'}/p)^{'}$ is connected to a pair of the non-real zeros of $p$.

In Lemma 2.11, for root loci intersecting at a real breakaway point $Q_a$ in (2.2), if at least one of them originates directly from a non-real zero $s_a$ or $\overline{s_a}$ of $p$, thus, through the $2q\pi$ or $2q\pi+\pi$ degree root locus emitted from a pair of the non-real zeros $s_a$ or $\overline{s_a}$ of $p$, the first kind of zero $Q_{a}$ of $(p^{'}/p)^{'}$ is connected to a pair of the non-real zeros $s_a$ or $\overline{s_a}$ of $p$.

In Lemma 2.52, for root loci intersecting at a real breakaway point $Q_{a}$ in (2.2), if all of them originate directly from real zeros $s_{k,j_1,j_3}$ of $p$. At least root locus from real zero of $p$ forms a linked root loci. Through two main lines of the linked root loci, the first kind of zero $Q_{a}$ of $(p^{'}/p)^{'}$ is connected to a pair of the non-real zeros $s_{k,k+1,j_3}$ or $\overline{s_{k,k+1,j_3}}$ of $p$.

The first kind of zero $Q_{a}$ of $(p^{'}/p)^{'}$ is any real Fourier critical point. Therefore, we prove: through at least one $2q\pi$ or $2q\pi+\pi$ degree root locus of the non-real zeros of $p$, each first kind of zero of $(p^{'}/p)^{'}$ is connected to a pair of the non-real zeros of $p$.
\end{proof}

In fact, Lemma 2.52 has a dual nature. Its condition requires that all root loci intersecting at a real breakaway point $Q_{a}$ in (2.2) originate directly from real zeros of $p$. However, the lemma concludes that the $2q\pi$ or $2q\pi+\pi$ degree root locus emitted from at least one pair of non-real zeros of $p$ intersects with other root loci at $Q_{a}$ on the real axis. While the root loci intersecting at $Q_{a}$ can originate exclusively from real zeros, root loci emitted from non-real zeros can also intersect with other root loci at $Q_{a}$. Both scenarios are reasonable. Crucially, if terminal root loci from non-real zeros do not intersect on the real axis, then the scenario where all root loci originate from real zeros and intersect at $Q_{a}$ cannot exist. Therefore, we only show Lemma 2.52.

We can give another result which the real Fourier critical point is generated:

Now suppose the real polynomial $p$ has at least one real zero. If all root loci intersecting at a real breakaway point $Q_{a}$ in (2.2) originate directly from real zeros of $p$, then at least one root locus starting from a real zero of $p$ becomes the beginning root locus of the linked root loci, while the terminal root locus is emitted from a non-real zero. Under the condition that the root locus from a non-real zero intersects the real axis, the $2q\pi$ or $2q\pi+\pi$ degree root locus emitted from non-real zeros of $p$ intersects at $Q_{a}$ on the real axis. Here, the intersection point $Q_{a}$ is a first-kind zero of $(p^{'}/p)^{'}$ on the real axis.

Under this condition, the root locus emitted from real zeros can generate real Fourier critical points. An alternative result on generating real Fourier critical points $Q_{a}$ presents this principle more comprehensively. Both Lemma 2.52 and this alternative result are accurate and reasonable. Lemma 2.52 is fully consistent with Lemma 2.9, whereas the alternative result is not. Therefore, we present only Lemma 2.52 and omit the alternative result on generating real Fourier critical points $Q_{a}$.

\begin{example}
The real polynomial $p_{e1}(s)$ is given by $p_{e1}(s)=s^{20}+s^{19}+s^{18}+s^{17}+s^{16}+s^{15}+s^{14}+s^{13}+s^{12}+s^{11}+s^{10}+s^9+s^8+s^7+s^6+s^5+s^4+s^3+s^2+s+1$.
Its derivative is: $p_{e1}^{'}(s)=20s^{19}+19s^{18}+18s^{17}+17s^{16}+16s^{15}+15s^{14}+14s^{13}+13s^{12}+12s^{11}+11s^{10}+10s^9+9s^8+8s^7+7s^6+6s^5+5s^4+4s^3+3s^2+2s+1$.
\end{example}

The real polynomial $p_{e1}(s)$ has twenty conjugate non-real zeros $s_{e1,1,2}=-0.989 \pm 0.149i$, $s_{e1,3,4}=-0.901 \pm 0.434i$, $s_{e1,5,6}=-0.733 \pm 0.68i$, $s_{e1,7,8}=-0.5 \pm 0.866i$, $s_{e1,9,10}=-0.233 \pm 0.975i$, $s_{e1,11,12}=0.0747 \pm 0.997i$, $s_{e1,13,14}=0.365 \pm 0.931i$, $s_{e1,15,16}=0.623 \pm 0.782i$. $s_{e1,17,18}=0.826 \pm 0.563i$, $s_{e1,19,20}=0.956 \pm 0.295i$. The real polynomial $p_{e1}^{'}(s)$ has eighteen conjugate non-real zeros $s^ {'}_{e1,1,2}=-0.795\pm 0.252i$, $s^{'}_{e1,3,4}=-0.683\pm 0.482i$. $s^{'}_{e1,5,6}=-0.507\pm 0.668i$, $s^{'}_{e1,7,8}=-0.282\pm 0.794i$. $s^{'}_{e1,9,10}=-0.0282\pm 0.847i$, $s^{'}_{e1,11,12}=0.232\pm 0.822i$. $s^{'}_{e1,13,14}=0.476\pm 0.721i$, $s^{'}_{e1,15,16}=0.685\pm 0.551i$. $s^{'}_{e1,17,18}=0.844\pm 0.322i$. In addition, it has a real zero $s^{'}_{e1,19}=-0.834$.

We establish the root locus equation of the rational function $\frac{p_{e1}^{'}(s)}{p_{e1}(s)}$: $K\frac{p_{e1}^{'}(s)}{p_{e1}(s)}=\pm 1$.

The interval $(-\infty, s^{'}_{e1,19})$ is the $2q\pi+\pi$ degree root locus of the root locus equation of this example. The interval $(-\infty, s^{'}_{e1,19})$ is the $2q\pi+\pi$ degree root locus between the two zeros of the root locus equation of this example. Since both the negative infinite zero $s=-\infty$ and the finite zero $s^{'}_{e1,19}$ need to receive the $2q\pi+\pi$ degree root locus, the two conjugate non-real zeros $s_{e1,1,2}$ of the polynomial $p_{e1}(s)$ each emits a $2q\pi+\pi$ degree root locus, which enters the real axis from the breakaway point $b_{e1,1}=-1.15$ in the interval. The two $2q\pi+\pi$ degree root loci finally end at the infinite zero $s=-\infty$ and the finite zero $s^{'}_{e1,19}$.

The interval $(s^{'}_{e1,19}, +\infty)$ is the $2q\pi$ degree root locus of the root locus equation of this example. The interval $(s^{'}_{e1,19}, +\infty)$ is the $2q\pi$ degree root locus between the two zeros of the root locus equation of this example. Since both the finite zero $s^{'}_{e1,19}$ and the infinite zero $+\infty$ need to receive the $2q\pi$ degree root locus, the two conjugate non-real zeros $s_{e1,19,20}$ of the polynomial $p_{e1}(s)$ emit the $2q\pi$ degree root locus to enter the real axis from the breakaway point $b_{e1,2}=1.25$ inside the interval. The two $2q\pi$ degree root loci finally end at the zero $s^{'}_{e1,19}$ and the infinite zero $+\infty$.

The zeros $s_{e1,1,2}$ of the polynomial $p_{e1}(s)$, each emits a $2q\pi+\pi$ degree root locus on the real axis in an infinite interval $(-\infty, s^{'}_{e1,19})$ that intersects. The zeros $s_ {e1,19,20}$ of the polynomial $p_{e1}(s)$, each emits a $2q\pi$ degree root locus intersecting on the real axis in another infinite interval $(s^{'}_{e1,19}, +\infty)$. These results generate the two real Fourier breakaway critical points $b_{e1,1}=-1.15$, $b_{e1,2}=1.25$ of the rational function. Through the $2q\pi$ degree and $2q\pi+\pi$ degree root loci, the two real Fourier breakaway critical points are connected to the four conjugate non-real zeros of the polynomial $p_{e1}(s)$.

Using MATLAB, we sketch the $2q\pi+\pi$ degree and the $2q\pi$ degree root locus of the root locus equation in this example. Figure 1 shows the $2q\pi+\pi$ degree and the $2q\pi$ degree root loci in this example.

\begin{example}
The real polynomial $p_{e2}(s)$ is given by $p_{e2}(s)=s^{21}+s^{20}+s^{19}+s^{18}+s^{17}+s^{16}+s^{15}+s^{14}+s^{13}+s^{12}+s^{11}+s^{10}+s^9+s^8+s^7+s^6+s^5+s^4+s^3+s^2+s+1$. Its derivative is: $p_{e2}^{'}(s)=21s^{20}+20s^{19}+19s^{18}+18s^{17}+17s^{16}+16s^{15}+15s^{14}+14s^{13}+13s^{12}+12s^{11}+11s^{10}+10s^9+9s^8+8s^7+7s^6+6s^5+5s^4+4s^3+3s^2+2s+1$. \end{example}

The real polynomial $p_{e2}(s)$ has twenty conjugate non-real zeros $s_{e2,1,2}=-0.959 \pm 0.282i$, $s_{e2,3,4}=-0.841 \pm 0.541i$, $s_{e2,5,6}=-0.655 \pm 0.756i$, $s_{e2,7,8}=-0.415 \pm 0.91i$, $s_{e2,9,10}=-0.142 \pm 0.99i$, $s_{e2,11,12}=0.142 \pm 0.99i$, $s_{e2,13,14}=0.415 \pm 0.91i$, $s_{e2,15,16}=0.655 \pm 0.756i$, $s_{e2,17,18}=0.841 \pm 0.541i$, $s_{e2,19,20}=0.959 \pm 0.282i$, and $p_{e2}(s)$ also has a real zero $s_{e2,21}=-1$. The real polynomial $p_{e2}^{'}(s)$ has twenty conjugate non-real zeros $s^{'}_{e2,1,2}=-0.83\pm 0.123i$, $s^{'}_{e2,3,4}=-0.76\pm 0.358i$. $s^{'}_{e2,5,6}=-0.626\pm 0.563i$, $s^{'}_{e2,7,8}=-0.438\pm 0.722i$, $s^{'}_{e2,9,10}=-0.211\pm 0.822i$. $s^{'}_{e2,11,12}=0.0354\pm 0.853i$. $s^{'}_{e2,13,14}=0.283\pm 0.813i$. $s^{'}_{e2,15,16}=0.512\pm 0.703i$. $s^{'}_{e2,17,18}=0.706\pm 0.523i$. $s^{'}_{e2,19,20}=0.853\pm 0.309i$.

We establish the root locus equation of the rational function $\frac{p_{e2}^{'}(s)}{p_{e2}(s)}$: $K\frac{p_{e2}^{'}(s)}{p_{e2}(s)}=\pm 1$.

The two conjugate non-real zeros $s_{e2,19,20}$ of the polynomial $p_{e2}(s)$ emit $2q\pi$ degree root loci respectively. These two root loci first intersect at the breakaway point $b_{e2,2}=1.24$ inside the interval $(s_{e2,21},+\infty)$ on the real axis. One of the root loci then intersects again with the $2q\pi$ degree root loci emitted by real zero $s_{e2,21}$ of the polynomial $p_{e2}(s)$ at the breakaway point $b_{e2,1}=0.688$. In this way, two real Fourier breakaway critical points $b_{e2,1}$ and $b_{e2,2}$ are produced. Through the $2q\pi$ degree root locus, the two real Fourier breakaway critical points ($b_{e2,1}$, $b_ {e2,2}$) are connected to the two conjugate non-real zeros of the polynomial $p_{e2}(s)$.

The interval $(-\infty, s_{e2,21})$ is the $2q\pi+\pi$ degree root locus of the root locus equation of this example. The interval $(s_{e2,21}, +\infty)$ is the $2q\pi$ degree root locus of the root locus equation of this example. Using MATLAB, we sketch the $2q\pi$ degree and the $2q\pi+\pi$ degree root locus of root locus equation of this example. We find that there are two breakaway points in the infinite interval $(s_{e2,21},+\infty)$. That is, two real Fourier breakaway critical points $b_{e2,1}$ and $b_{e2,2}$.

Using MATLAB, we computationally prove that the polynomial $p_{e2}(s)$ can only produce two real Fourier breakaway critical points. No other real Fourier breakaway critical point is produced. Figure 2 shows the $2q\pi+\pi$ degree and the $2q\pi$ degree root loci in this example.

Figure 1 and Figure 2 both show the poles (asterisks) and zeros (circles) of the root locus equation of two examples. The curves connecting the two types of points in the left root locus diagram are the root loci of $2q\pi+\pi$ degree of two examples, while the curves connecting the two types of points in the right root locus diagram are the root loci of $2q\pi$ degree.

Example 2.23 demonstrates that a linked root loci must exist. In the right root locus diagram of this example, the $2q\pi$ degree root loci from the non-real zeros $s_{e2,19,20}=0.959 \pm 0.282i$ intersect at the real breakaway point $b_{e2,2}=1.24$. One of these two root loci then intersects with the $2q\pi$ degree root locus emitted from the real zero $s_{e2,21}=-1$ at the real breakaway point $b_{e2,1}=0.688$. The root locus emitted from $s_{e2,21}=-1$ leaves the real axis and finally ends at one of the points $s^{'}_{e2,19,20}=0.853\pm 0.309i$. Together, the two $2q\pi$ degree root loci from the non-real zeros $s_{e2,19,20}=0.959 \pm 0.282i$ and the $2q\pi$ degree root locus from the real zero $s_{e2,21}=-1$ form the simplest linked root loci.

Both figures in Example 2.57 illustrate Lemma 2.9. Since $p_{e1}(s)$ has no real zeros, real Fourier critical points can only arise from non-real zeros.

While the left figure of Example 2.58 does not exemplify the results of this paper, the right figure of Example 2.58 exemplifies Lemma 2.11. In this right figure of Example 2.58, the root loci emitted from non-real zeros generate the real Fourier critical points. $b_{e2,2}$ is produced by the intersection of root loci from two conjugate non-real zeros $s_{e2,19,20}=0.959 \pm 0.282i$, and $b_{e2,1}$ by the intersection of root loci from non-real zeros $s_{e2,19,20}=0.959 \pm 0.282i$ and root locus from real zero $s_{e2,21}=-1$.

The right figure of Example 2.58 also exemplifies Lemma 2.13. Here, the simplest linked root loci are formed by a root locus emitted from the real zero $s_{e2,21}=-1$ of $p_{e2}(s)$, two root loci emitted from the conjugate non-real zeros $s_{e2,19,20}=0.959 \pm 0.282i$, and the two real breakaway points $b_{e2,2}$ and $b_{e2,1}$.

Moreover, the examples used belong to a highly specific category of Littlewood polynomials. These polynomials constitute a special class of rational functions whose root loci possess distinctive properties.

\medskip
Conflict of interest: On behalf of all the authors, the corresponding author states that there are no conflicts of
interest.

Declaration of interest: Authors declare that they have no conflict of interest.

\bibliographystyle{unsrt}

\end{document}